\newcommand{\formatswitch}{preprint}
\newcommand{\tref}[1]{(\ref{#1})}
\DeclareMathAlphabet\EuScript{U}{eus}{m}{n}
\DeclareMathAlphabet\EuScriptb{U}{eus}{b}{n}
\newcommand{\sscr}[1]{\EuScript{#1}}
\newcommand{\claimenum}{\renewcommand{\theenumi}{\alph{enumi}}
 \renewcommand{\labelenumi}{\textit{(\theenumi)}}
 \renewcommand{\theenumii}{\roman{enumii}}
 \renewcommand{\labelenumii}{\textit{(\theenumii)}}
 \begin{enumerate}}
\newcommand{\claimenumend}{\end{enumerate}}
\newcommand{\romanenum}{\renewcommand{\theenumi}{\roman{enumi}}
 \renewcommand{\labelenumi}{\textit{(\theenumi)}}
 \renewcommand{\theenumii}{\alph{enumii}}
 \renewcommand{\labelenumii}{\textit{(\theenumii)}}
 \begin{enumerate}}
\newcommand{\romanenumend}{\end{enumerate}}
\newtheorem{dummy}{realdumb}[section]
\newtheorem{thm}{Theorem}
\newtheorem{lemma}[dummy]{Lemma}
\theoremstyle{definition} }
\newtheorem{rmk}[dummy]{Remark}
{\theoremstyle{definition} }
\newtheorem{cor}{Corollary}[dummy]
\renewcommand{\text}{\mathrm}
\newcommand{\strutdepth}{\dp\strutbox}
\newcommand{\marginalnote}[1]
   {\strut\vadjust{\kern-\strutdepth\domarginalnote{#1}}}
\newcommand{\domarginalnote}[1]{\vtop to \strutdepth{
  \baselineskip\strutdepth
   \vss\llap{ #1\ \ }\null}}  
\newcounter{showlabelflag}
\newcounter{makelabelflag}
\newcommand{\showlabels}{\setcounter{showlabelflag}{1}}
\newcommand{\makelabels}{\setcounter{makelabelflag}{1}}
\newcommand{\hidelabels}{\setcounter{showlabelflag}{2}}
\newcommand{\mylabel}[1]{
  \ifthenelse{\value{makelabelflag}=1}
    {\label{#1}}{}
  \ifthenelse{\value{showlabelflag}=1}
    {\marginpar{#1}}{}\relax}
\newcommand{\R}{{\mathbf R}}
\newcommand{\Q}{{\mathbf Q}}
\newcommand{\Z}{{\mathbf Z}}
\newcommand{\scr}{\sscr}
\newcommand{\mymargin}[1]{
  \ifthenelse{\value{showlabelflag}=1}
    {\marginpar{#1}}{}\relax}
\newcounter{enumo}\setcounter{enumo}{0}
\newcommand{\RRsh}{\kern -1 pt \Rsh}
\newcounter{keepitemnum}
\newcounter{keepitemnumm}
\begin{document}

\bibliographystyle{amsplain}

\makelabels
\hidelabels

\renewcommand{\theenumi}{\roman{enumi}}

\begin{center}On Shavgulidze's Proof of the Amenability \\
of some Discrete Groups of Homeomorphisms \\
of the Unit Interval \end{center}

\begin{center}by Various\end{center}

\vspace{5pt}

\begin{center}\today\end{center}

\vspace{5pt}

Primarily, these notes have been created by the participants of a
seminar formed to go through the English language version, available
on the arXiv, of the paper \cite{shavgulidze} whose main result
implies the amenability of Thompson's group \(F\).  The seminar has
been running sporadically since July 9, 2009.  I (Matt Brin) have
been acting as recorder for the seminar.  A short paper
\cite{MR2486813} announcing and summarizing the technieques of
\cite{shavgulidze} is available, but will not be covered in these
notes.

Questions have arisen during our readings that have been answered
via email by several people from outside the seminar.  At least one
of our outside consultants is in touch with Shavgulidze, and so we
have gotten, indirectly, some of Shavgulidze's elaborations on some
of the points in his paper.  What follows is an alphabetical list of
all that are in the seminar as well as those outside that we have
been in touch with.  As time goes on and contributers are added, the
list will surely grow longer.
Azer Akhmedov,
Vadim Alekseev,
Matt Brin,
Ross Geoghegan,
Victor Guba,
Fernando Guzm\'an,
Marcin Mazur,
Tairi Roque,
Lucas Sabalka,
Mark Sapir,
Candace Schenk,
Anton Schick,
Matt Short,
Marco Varisco,
Xiangjin Xu.

It is the intention to update the notes as more of the paper is
digested.  Contributions from others is encouraged, but with some
conditions.  First, I (Matt Brin) need to understand the
contribution.  This is a heavy condition since I am unfamiliar with
most of these techniques.  The level of detail in what follows gives
a hint as to the level of detail that I need before I can claim to
understand anything.  Second, all that a contribution will get you
is that your name will be added to the list in the previous
paragraph.  If you have something truly original that you want your
name attached to, then you had best find your own public venue for
it.

I have been sending these notes out periodically to a short mailing
list.  I will stop doing that and just send out brief notifications
when this posting is updated.

\section{Amenability}

A group \(G\) is {\itshape amenable} if there is a measure
consisting of a function \[\mu:P(G)\rightarrow [0,1]\] where
\begin{enumerate} 
\item \(P(G)\) is the set of all subsets of \(G\),
\item \(\mu(G)=1\),
\item if \(A_1, A_2, \ldots, A_n\) are pairwise disjoint, then 
\[\mu\left (\bigcup_{i=1}^n A_i\right) = \sum_{i=1}^n \mu(A_i),\]
and
\item for all \(A\subseteq G\) and all \(g\in G\) we have
\[\mu(Ag) = \mu(A)\] where \(Ag = \{ag\mid a\in A\}\).
\end{enumerate}

Significances of the above are (1) the measure is defined on
{\itshape all} subsets of \(G\), (2) it is non-trivial and bounded,
(3) it is finitely additive, and (4) it is translation invariant.

All finite groups are obviously measurable.  If \(|G|=n\), then let
every singleton have measure \(1/n\) and extend by (3).

The definition above does not explain the name.  If \(G\) is a
group, let \(B(G)\) be the set of all functions \(f:G\rightarrow
\R\) so that each function is bounded (each \(f\in B(G)\) has a
compact interval \(I_f\subseteq \R\) with \(f(G)\subseteq I_f\)).
The group \(G\) acts on \(B(G)\) by \((gf)(h) = f(hg^{-1})\).  It is
a straightforward exercise that \(G\) is amenable if and only if
there is a function \(\mu':B(G)\rightarrow \R\) satisfying the
following.
\begin{enumerate}
\item For \(f\in B(G)\) if \(f(G)\subseteq I_f\) for a compact
interval \(I_f\subseteq \R\), then \(\mu'(f)\in I_f\).
\item The function \(\mu'\) is linear in that for all \(f_1,f_2\in
B(G)\) and \(r,s\in \R\), we have \[\mu'(rf_1+sf_2) = r\mu'(f_1) +
s\mu'(f_2).\]
\item The function \(\mu'\) is translation invariant in that for all
\(f\in B(G)\) and \(g\in G\) we have \(\mu'(gf) = \mu'(f)\).
\end{enumerate} 

Item (1) says that \(\mu'(f)\) must lie between the inf and sup of
\(f\).  In particular, \(\mu'(f)=C\) when \(f\) is the constant
function to \(C\).  One refers to \(\mu'\) as a {\itshape mean}
(i.e., average) of the bounded functions on \(G\).  Thus the
amenability of a group is equivalent to the exitence of a mean on
its bounded real functions.  The word ``amenable'' was attached to
the definition as a pun by Mahlon M.~Day \cite{MR19:1067c}.
Amenable groups lead to nice Hilbert spaces and so the pun was
chosen to express the niceness of the property.

A celebrated combinatorial condition on a group, known as the
F\o{}lner criterion \cite{folner}, is equivalent to amenability.
However, this criterion is not used by Shavgulidze.  His proof
proceeds by constructing the required mean.  Other than a brief
mention in the next few paragraphs, the F\o{}lner criterion will not
be discussed here.

It has been mentioned that finite groups are amenable.  Infinite
amenable groups exist.  The first known such was \(\R/\Z\)
\cite[Ch. II, \S 3(1)]{MR880204}.  The proof (due to Banach) was the
first application of what came to be known as the Hahn-Banach
theorem \cite[Theorem 1, P. 18]{MR880204}.  Thus the axiom of choice
was involved.

It was shortly noticed that Banach's proof extended to all abelian
groups (was this noticed by von Neumann?) and then it was observed
by von Neumann \cite{vonneumann} that the class of amenable groups
was closed under the operations of (1) taking subgroups, (2) taking
quotients, (3) taking extensions, and (4) taking direct limits.  The
smallest class of groups containing all finite and all amenable
groups and closed under (1--4) was called (by Day?) the class of
{\itshape elementary amenable} groups.

In spite of the large class of groups that were demonstrably
amenable, all proofs (other than for finite groups) up to the
appearance of the F\o{}lner criterion were based on the power of the
Hahn-Banach theorem, and thus the axiom of choice, even for as nice
a group as the integers.  The proof using the F\o{}lner criterion
that \(\Z\) is amenable takes about one line.

It was also observed by von Neumann \cite{vonneumann} that \(F_2\),
the free group on two generators, is not amenable.\footnote
{von Neumann was looking at the Banach-Tarski paradox.  He
observed that a ``paradox'' of the Banach-Tarski type was a property
of a group action and he proved that a certain group property, later
called amenability, was equivalent to the inability of a group to
participate in a paradoxical action.  He pointed out that the
existence of the paradox in 3 dimensions comes from the fact that
the isometry group of \(E^3\) is not amenable because it contains a
subgroup isomorphic to the free group on two generators.}
If we let \(F_2\) be freely generated by \(x\) and \(y\) and the
elements of \(F_2\) be represented by reduced words in \(x, y,
x^{-1}\) and \(y^{-1}\), we can define four sets as follows.  The
set \(X\) consists of all reduced words that end in \(x\),
\(X^{-1}\) is the set of all reduced words that end in \(x^{-1}\)
and similarly for \(Y\) and \(Y^{-1}\).  These four sets and
\(\{1\}\) disjointly cover all of \(F_2\).

We observe \[\begin{split} (X\cup Y\cup Y^{-1} \cup\{1\})x
&\subseteq X, \\ (X^{-1}\cup Y\cup Y^{-1} \cup\{1\})x^{-1}
&\subseteq X^{-1}, \\ (X\cup X^{-1}\cup Y \cup\{1\})y &\subseteq Y,
\\ (X\cup X^{-1}\cup Y^{-1} \cup\{1\})y^{-1} &\subseteq Y^{-1}.
\end{split}\] It is immediate that a singleton in an infinite group
has measure zero, and it is just as immediate from the facts above
that each of the four infinite sets discussed has measure zero.
Thus the entire group has measure zero contradicting one of the
requrements.

From von Neumann's observations, any group containing a subgroup
isomorphic to \(F_2\) cannot be amenable.  It is known that
Thompson's group \(F\) cannot contain a subgroup isomorphic to
\(F_2\) \cite{brin+squier, CFP}.  It has been a well known open
question for a few decades as to whether \(F\) is amenable.

It is elementary that \(F\) is not elementary amenable.  Results of
Chou \cite{chou} say the following.  Let \(EG_0\) be the class of
groups that are either finite or abelian, and define inductively for
an ordinal \(\alpha\) the class \(EG_\alpha\) to be the class of
groups obtained from groups in classes \(EG_\beta\) with
\(\beta<\alpha\) using the operations (3) extension, and (4) direct
limits mentioned above.  Note that taking subgroups or quotients is
not to be used.  Then each \(EG_\alpha\) is closed under (1) taking
subgroups and (2) taking quotients, and further the class of
elementary amenable groups is the union of the \(EG_\alpha\).  To
rule out an appearance of \(F\) in one of the \(EG_\alpha\), we need
three facts.  First, \(F\) is finitely generated which implies that
if \(F\) is a direct limit of groups, then one of the groups in the
limit will have \(F\) as a quotient.  Second, any non-trivial normal
subgroup of \(F\) contains subgroups that are isomorphic to \(F\).
This shows that if \(F\) is in some \(EG_\alpha\) with \(\alpha>0\),
then it must already be in some \(EG_\beta\) for some
\(\beta<\alpha\).  The third fact (or pair of facts) is that \(F\)
is neither finite nor abelian and is thus not in \(EG_0\).

\section{Thompson's group \protect\(F\protect\)}

There are several ways to define Thompson's group \(F\).  The one
that is closest to what is needed for this discussion is the easiest
and least revealing algebraically.  We define \(F\) to be the group
(with group operation composition) of those homeomorphisms
\(h:[0,1]\rightarrow [0,1]\) satisfying the following.
\begin{enumerate}
\item \(h\) is piecewise linear (PL) in that its graph consists of a
finite number of straight line segments.
\item The slopes of \(h\), where defined, are of the form \(2^n\),
\(n\in \Z\). 
\item The points in \([0,1]\) where the slope of \(h\) is not
defined are confined to the dyadic rationals (those points of the
form \(m/2^n\) for \(m,n\in \Z\)).
\end{enumerate}

We usually like to have elements of \(F\) act on the right, but to
agree with the papers we will be quoting, we reluctantly adopt the
convention that \(F\) acts on the left and composes from right to
left.

Note that (2) implies that all \(h\in F\) are increasing and so
preserve orientation.

The operation of differentiation is not defined for all \(t\in
[0,1]\) for non-identity elements of this definition of \(F\).
However, it is defined on all but finitely many points and given an
\(f\in F\) we can integrate \(f'\) quite successfully to reconstruct
\(f\) from \(f'\).  It follows that if \(f,g\in F\) are not equal,
then they have derivatives that are somewhere not equal.  Since the
values taken on by the derivatives are all integral powers of 2, it
follows that this version of \(F\) satisfies the following.
\mymargin{TheHypTwo}\begin{equation}\label{TheHypTwo}
\forall f\ne g\in F,\,\,\exists t\in [0,1]\,\, 
\bigg( |\log(f'(t))-\log(g'(t))|\ge\log(2)\bigg).
\end{equation}

This will match with one of the key hypotheses in the proof that
\(F\) is amenable.  However, another hypothesis will require that
all of the elements of \(F\) be at least three times continuously
differentiable.  Thus the version of \(F\) above will not do.

The following is a combination and slight extension (extracted from
the proofs) of two results, Theorems 1.13 and 2.3, from
\cite{ghys+serg}.

\begin{thm}\mylabel{GhysSer} For each integer \(r\) with \(1\le r\le
\infty\) and each \(C>0\) there is a monomorphism \(\theta\) of
\(F\) into \(\text{Diff}^r([0,1])\) that satisfies
\[
\forall f\ne g\in F,\,\,\exists t\in [0,1]\,\, 
\bigg( |\log((\theta f)'(t))-\log((\theta g)'(t))|\ge C\bigg).
\]
\end{thm}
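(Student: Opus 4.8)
The plan is to realize $\theta$ as conjugation of the standard piecewise-linear action by a single, carefully chosen homeomorphism $\phi\colon[0,1]\to[0,1]$, setting $\theta f=\phi\circ f\circ\phi^{-1}$. With this definition $\theta$ is automatically a homomorphism, since $\theta(fg)=\phi fg\phi^{-1}=(\phi f\phi^{-1})(\phi g\phi^{-1})=\theta(f)\theta(g)$, and it is injective because $\phi$ is a bijection. Thus the entire content of the theorem lies in two analytic claims: that $\phi$ can be chosen so that every conjugate $\phi f\phi^{-1}$ is genuinely $C^r$ on all of $[0,1]$ (this is the embedding into $\text{Diff}^r$, essentially Ghys--Sergiescu), and that the log-derivative separation survives conjugation and can be pushed past the prescribed constant $C$.

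\emph{Construction of $\phi$.} The corners of an element $f\in F$ occur only at dyadic rationals, and $f$ carries dyadics to dyadics. I would therefore choose $\phi$ so that $\log\phi'$ diverges at every dyadic point $d$ in a manner compatible with the self-similarity of the dyadic subdivision: up to an affine change of coordinates the behavior of $\phi$ near any dyadic should copy its behavior at the endpoints. For $r=\infty$ this means an exponential flattening, $\log\phi'(x)\approx -1/|x-d|$; for finite $r$ a power flattening $\log\phi'(x)\approx N\log|x-d|$ with $N$ large. This flatness is what rounds off the corners: writing the chain rule as $\log(\theta f)'(\phi(s))=\log\phi'(f(s))+\log f'(s)-\log\phi'(s)$, the one-sided limits at a breakpoint exist and agree precisely because $\phi$ is flat to the required order on both sides, and the quotient $\phi'(f(s))/\phi'(s)$ stays finite and positive, so $\theta f$ is an orientation-preserving $C^r$ diffeomorphism.

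\emph{Separation.} Given $f\ne g$, the set $\{s:f(s)=g(s)\}$ is a proper closed subset of $[0,1]$ containing $0$ and $1$; since $f-g$ is piecewise linear its complement has finitely many components, so there is an interior point $p$ with $f(p)=g(p)$ at which $f$ and $g$ are linear on one side with distinct slopes $2^a,2^b$, $a\ne b$. Being a breakpoint, $p$ is dyadic, and so is the common value $f(p)=g(p)$. Evaluating $\Delta(s):=\log(\theta f)'(\phi(s))-\log(\theta g)'(\phi(s))$ as $s\to p$ from the disagreement side, the base term $\log f'(s)-\log g'(s)$ is the constant $(a-b)\log 2$, while the correction $\log\phi'(f(s))-\log\phi'(g(s))$ compares $\phi'$ at the two points $f(p)+2^a u$ and $f(p)+2^b u$ that approach the common dyadic $f(p)$ at different rates. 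Here the flattening amplifies the slope gap: with power flattening of order $N$ this correction tends to $N(a-b)\log 2$, so $|\Delta(s)|\to(N+1)|a-b|\log 2\ge(N+1)\log 2$, while exponential flattening makes it diverge. Choosing the order of tangency of $\phi$ large enough relative to $C$ forces $|\Delta(s)|\ge C$ at points $s$ near $p$, which is the desired inequality.

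The main obstacle is the first analytic claim: producing a single homeomorphism $\phi$ whose one prescribed local profile simultaneously smooths the corners of \emph{every} element of $F$ to order $r$, given that the breakpoints of the various $f$ are dense in $[0,1]$. This forces the tangency data of $\phi$ at all dyadics to fit together coherently under the affine self-similarity of the dyadic structure, and checking that the resulting conjugates are uniformly $C^r$ up to and including the endpoints $0$ and $1$ is the technical heart of the argument, and the part genuinely due to Ghys--Sergiescu. By contrast, once such a $\phi$ with adjustable order of flatness is available, the monomorphism property is formal and the separation estimate is a short chain-rule computation, with the freedom in the flattening order absorbing the arbitrary constant $C$; one simply takes the order large enough to secure both $C^r$ smoothness and separation at least $C$ at once.
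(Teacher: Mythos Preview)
Your conjugation scheme has a genuine obstruction. You ask for a homeomorphism \(\phi\) of \([0,1]\) whose derivative vanishes (to high or infinite order) at \emph{every} dyadic rational, with a prescribed local profile \(\log\phi'(x)\approx N\log|x-d|\) or \(\approx -1/|x-d|\) near each dyadic \(d\). But the dyadics are dense in \([0,1]\); a function that tends to \(0\) at every point of a dense set and is continuous anywhere must vanish identically, so no such \(\phi'\) can belong to a homeomorphism. Your chain-rule computation and your separation argument both depend on this pointwise behaviour of \(\phi'\), so neither survives. You flag the construction of \(\phi\) as ``the technical heart'' and attribute it to Ghys--Sergiescu, but this is not what they do, and it cannot be done.

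The paper's route is structurally different: it does \emph{not} conjugate the PL action. Instead it observes that \(F\) (inside \(PL_2(\mathbf R)\)) is built from the affine group \(GA(\mathbf Q_2)\cong B(1,2)=\langle T_1,D\mid DT_1D^{-1}=T_1^2\rangle\), and it defines \(\theta_f\) by sending \(T_1\mapsto T_1\) and the doubling map \(D\mapsto f\), where \(f\) is a single \(C^r\) diffeomorphism of \(\mathbf R\) with \(f(x+1)=f(x)+2\), \(f(0)=0\), and the correct jet at \(0\). One then extends \(\theta_f\) piecewise to all of \(PL_2(\mathbf R)\). Smoothness of each \(\theta_f(h)\) follows because on any compact set \(h\) is a finite word in \(T_1,D_0\), hence \(\theta_f(h)\) is a finite composition of \(C^r\) maps; the jet condition (III\(_r\)) makes the pieces match to order \(r\) at the breakpoints. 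The separation condition is obtained by an entirely different mechanism from yours: one chooses \(f\) to have an interior fixed point \(z\in(0,1)\) with \(f'(z)\) large, and then for any \(h\ne 1\) the image \(\theta_f(h)\) contains, near the first break of \(h\), a conjugate of some \(f^n\) whose derivative at the (conjugated) fixed point is \((f'(z))^n\). This gives \(|\log(\theta_f h)'|\ge \log f'(z)\) at that point, and \(f'(z)\) can be made as large as you like. There is no conjugating homeomorphism \(\phi\) with controlled derivative anywhere in the argument; indeed the induced map \(r\mapsto\bar r\) on dyadics does not even extend continuously across \(0\).
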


What follows is a slight rewording of the proof from
\cite{ghys+serg}.  There are a series of definitions and lemmas to
do first.

We first simplify the conclusion.  We write \(f\) and \(g\) rather
than \(\theta f\) and \(\theta g\) to keep the notation simple.  We
have \[
\begin{split}
|\log(f'(t))-\log(g'(t))| 
&= |\log((f'(t))(g'(t))^{-1})| \\
&= |\log((f'(t))((g^{-1})'(g(t))))| \\
&= |\log((fg^{-1})'(g(t)))|
\end{split}
\]
Thus the conclusion of Theorem \ref{GhysSer} holds if and only if
the following holds.
\mymargin{CondARev}\begin{equation}\tag{b}\label{CondARev}
\forall f\ne 1\in F,\,\, \exists t\in [0,1]\,\, \bigg( |\log((\theta
f)'(t))| \ge C \bigg).
\end{equation}

The proof is based on the fact that the straight line pieces of the
graphs of elements of \(F\) come from a rather nice group.  The
re-embedding of \(F\) comes from a re-embedding of the group of
straight line pieces.  Let \(\Q_2\) denote the group of dyadic
rationals---rational numbers of the form \(p/2^q\) with both \(p\)
and \(q\) from \(\Z\).

Now let \(GA(\Q_2)\) be the group of affine transformations of
\(\Q_2\) of the form 
\mymargin{GAElt}\begin{equation}\label{GAElt}
x\mapsto 2^n x+p/2^q.
\end{equation}
The map in \tref{GAElt}
will be denoted by the pair \((2^n, p/2^q)\).

We let \(PL_2(\R)\) denote the self homeomorphisms \(f\) of \(R\)
that are piecewise linear (which implies that every point has a
neighborhood which has only finitely many points of discontinuity of
\(f'\)), and for which every point of continuity of \(f'\) has a
neighborhood on which \(f\) agrees with an element of \(GA(\Q_2)\).
Thus \(PL_2(\R)\) is the group of transformations of \(R\) that are
``piecewise \(GA(\Q_2)\).''  We have a homomorphic inclusion of
\(GA(\Q_2)\) into \(PL_2(\R)\).

We will need to refer to the structure of the group \(GA(\Q_2)\), so
we describe it in detail.  

If \(r\) is a dyadic rational, we use \(T_r\in GA(\Q_2)\) to denote
the translation by \(r\), and sending \(r\) to \(T_r\) is a
homomorphic embedding of \(\Q_2\) in \(GA(\Q_2)\).  We use \(D\) to
denote the doubling map \(x\mapsto 2x\).

For any \(r\in \Q_2\), we have \(DT_r=T_{2r}D\).  From this we have
\(T_{2r} = DT_rD^{-1}\) and from this \[T_{2^{q}} =
D^{q}T_1D^{-q}\] holds for all integral values of \(q\) or,
equivalently, \[T_{2^{-q}} =
D^{-q}T_1D^{q}.\]

If \(r=p/2^q\), then \[T_r =
T^p_{2^{-q}} = D^{-q}T^p_1D^{q} = D^{-q}T_pD^{q}.\] Since \(r\mapsto
T_r\) is a homomorphic embedding of \(\Q_2\) in \(GA(Q_2)\), we have
that this homomorphism can also be expressed by \[\frac{p}{2^q}
\mapsto D^{-q}T_1^pD^q = D^{-q}T_pD^q.\]
From \tref{GAElt} we see that the element \((2^n, p/2^q)\) of
\(GA(\Q_2)\) is given by 
\mymargin{AffineForm}\begin{equation}\label{AffineForm}
(2^n, p/2^q) = \left(D^{-q}T^p_1
D^q\right)D^n.
\end{equation}

We are now ready to show that \(GA(Q_2)\) is isomorphic to the
Baumslag-Solitar group \(B(1,2) = \langle t,d\mid
dtd^{-1}=t^2\rangle\).  We start with a general lemma since we will
need it again later.

\begin{lemma}\mylabel{BSChar} If a group \(G\) is generated by two
elements \(D\) and \(T\) that satisfy \(DTD^{-1}=T^2\) then every
element in \(G\) is represented by a word in the form
\((D^iT^pD^{-i})D^n\) where \(p\) is odd.  Further, if the words
\((D^iT^pD^{-i})D^n\) represent different elements when the triples
\((n,p,i)\) of integers with \(p\) odd are different, then sending
\(d\) to \(D\) and \(t\) to \(T\) extends to an isomorphism from
\(B(1,2)\) to \(G\).  \end{lemma}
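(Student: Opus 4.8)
The plan is to prove the two assertions of the lemma separately. The first assertion is purely a normal-form statement: starting from the relation $DTD^{-1}=T^2$, I want to show every element of $G$ can be written as $(D^iT^pD^{-i})D^n$ with $p$ odd. The second assertion is that if these normal forms are genuinely distinct for distinct parameter triples, then the assignment $d\mapsto D$, $t\mapsto T$ gives an isomorphism $B(1,2)\to G$.

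For the first assertion, I would first record the consequences of the defining relation that the excerpt has already been using in the $GA(\Q_2)$ discussion: from $DTD^{-1}=T^2$ one gets $DT^pD^{-1}=T^{2p}$ for all integers $p$, and hence $D^iT D^{-i}=T^{2^i}$ for $i\ge 0$, together with the inverse relation $D^{-1}TD=$ (a "half-power" of $T$) which only makes sense after we allow the conjugates $D^{-j}T^pD^{j}$. The key structural observation is that the subgroup generated by all conjugates $D^{i}TD^{-i}$ is normal and abelian—call it $A$—and that $G=A\rtimes\langle D\rangle$ in the sense that every element factors as (element of $A$) times $D^n$. So the plan is: (1) show the set of elements expressible as $(D^iT^pD^{-i})D^n$ is closed under multiplication and inversion, using the relation to push all $D$'s to the right and combine the $T$-parts; (2) reduce any such expression to one with $p$ odd, since $D^iT^{2p'}D^{-i}=D^{i-1}T^{p'}D^{-(i-1)}$ lets me cancel factors of $2$ out of the exponent by lowering $i$; and (3) note $G$ is generated by $D,T$, so this closed set is all of $G$.

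For the second assertion, I would use the universal property of the presentation $B(1,2)=\langle t,d\mid dtd^{-1}=t^2\rangle$. Since $D,T$ satisfy the defining relation, $d\mapsto D$, $t\mapsto T$ extends to a homomorphism $\varphi\colon B(1,2)\to G$; it is surjective because $D,T$ generate $G$. To get injectivity I would invoke the first part of the lemma applied to $B(1,2)$ itself: every element of $B(1,2)$ has a normal form $(d^it^pd^{-i})d^n$ with $p$ odd, and it is a standard fact (or one I would cite/verify via the HNN or Britton's-lemma structure of $B(1,2)$) that distinct triples $(n,p,i)$ give distinct elements of $B(1,2)$. Given the hypothesis that distinct triples also give distinct elements of $G$, the map $\varphi$ is a bijection on normal forms, hence injective.

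The \textbf{main obstacle} is the injectivity half of the second assertion: I must be careful that the normal form in $B(1,2)$ is itself unique, so that I am comparing two faithful parametrizations. The hypothesis only tells me the forms are distinct \emph{in $G$}; to conclude $\varphi$ is injective I need that equal images force equal triples, which requires knowing uniqueness of normal forms in the source $B(1,2)$. I would establish this either by exhibiting a concrete faithful action of $B(1,2)$ (for instance on $GA(\Q_2)$, which the preceding computation has essentially set up) or by appealing to Britton's lemma for the HNN-extension presentation of $B(1,2)$; the first part of the lemma, applied with $G=B(1,2)$, supplies the existence of the normal form, and uniqueness there is what makes the parameter-matching argument go through.
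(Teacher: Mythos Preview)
Your approach is correct and follows the same structure as the paper for both assertions: collecting conjugates of $T$ into a single $D^iT^pD^{-i}$ factor, reducing the exponent $p$ to an odd integer, and then using the universal property of the presentation to get a surjection $\psi:B(1,2)\to G$. The difference is that you have misidentified the main obstacle: uniqueness of normal forms in $B(1,2)$ is \emph{not} needed, and the paper's argument avoids it entirely.

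Here is the point. Suppose $\psi(w_1)=\psi(w_2)$ for $w_1,w_2\in B(1,2)$. By the first assertion (existence only), choose \emph{some} normal forms $w_1=(d^{i_1}t^{p_1}d^{-i_1})d^{n_1}$ and $w_2=(d^{i_2}t^{p_2}d^{-i_2})d^{n_2}$ with $p_1,p_2$ odd. Applying $\psi$ gives $(D^{i_1}T^{p_1}D^{-i_1})D^{n_1}=(D^{i_2}T^{p_2}D^{-i_2})D^{n_2}$ in $G$. The hypothesis on $G$, in contrapositive form, forces $(n_1,p_1,i_1)=(n_2,p_2,i_2)$. But then the two chosen normal-form words are literally the same word in $d$ and $t$, so $w_1=w_2$ in $B(1,2)$. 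No appeal to Britton's lemma or to a separately established faithful action is required. In fact, uniqueness of normal forms in $B(1,2)$ is a \emph{consequence} of this lemma once one exhibits any $G$ satisfying the hypothesis---which is exactly what the paper does immediately afterward with $GA(\Q_2)$---so invoking it beforehand would be logically backwards in this context.
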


\begin{proof} We work first in \(B(1,2)\) since its only defining
relation is \(dtd^{-1}=t^2\).

In \(B(1,2)\), define \(t_i=d^itd^{-i}\) for \(i\in
\Z\).  Since \(t_i=t^{2^i}\) for \(i\ge0\), we know that the
\(t_i\), \(i\ge0\), commute pairwise, and from that it follows that
all the \(t_i\) commute.  

It is standard (in any group) that any word in \(\{t,d\}\) and their
inverses is a product of conjugates of \(t\) by powers of \(d\)
followed by a power of \(d\).  Thus, in \(G\), any word is a product
of various \(t_i\) followed by a power of \(d\).  If \(i\) is the
smallest subscript in the product, then every other conjugate in that
product will be a power of \(t_i\).  Thus an arbitrary word is
equivalent to one of the form
\mymargin{BSForm}\begin{equation}\label{BSForm}
\left(d^it^pd^{-i}\right)d^n.
\end{equation}

If \(p\) is even and
\(p=2k\), then the expression can be altered by
\[\begin{split}
\left(d^it^{2k}d^{-i}\right)d^n &= 
\left(d^i(t^2)^kd^{-i}\right)d^n \\ &= 
\left(d^it^2d^{-i}\right)^kd^n \\ &= 
\left(d^{i-1}td^{-i+1}\right)^kd^n \\ &= 
\left(d^{i-1}t^kd^{-i+1}\right)d^n.
\end{split}
\]

Thus every word can be reduced to one of the form \tref{BSForm}
where \(p\) is odd.  This applies to any group satisfying the
defining relation of \(B(1,2)\) and so applies to \(G\).  This
verifies the first claim.

Since \(DTD^{-1}=T^2\), the assignment \(t\mapsto T\) and \(d\mapsto
D\) extends to an epimorphism \(\psi:B(1,2)\rightarrow G\).  But
given what we have proven, the hypotheses of the second claim imply
that this is a monomorphism.  \end{proof}

\begin{cor} Taking \(t\) to \(T_1\) and \(d\) to \(D\) extends to an
isomorphism \(\psi\) from \(B(1,2)\) to \(GA(Q_2)\).  \end{cor}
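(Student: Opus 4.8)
The plan is to apply Lemma \ref{BSChar} directly, taking $G = GA(\Q_2)$ with $D$ the doubling map and $T = T_1$ the translation by $1$. Two of the three hypotheses are already in hand from the discussion above: equation \tref{AffineForm} exhibits an arbitrary element $(2^n, p/2^q)$ as the word $(D^{-q}T_1^pD^q)D^n$, so $D$ and $T_1$ generate $GA(\Q_2)$; and the identity $DT_r = T_{2r}D$, specialized at $r=1$, gives $DT_1D^{-1} = T_2 = T_1^2$. Consequently $d \mapsto D$, $t \mapsto T_1$ already extends to an epimorphism $\psi\colon B(1,2) \to GA(\Q_2)$, and everything reduces to the injectivity clause of the lemma: that distinct triples $(n,p,i)$ of integers with $p$ odd yield distinct elements $(D^iT_1^pD^{-i})D^n$.

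To verify this I would compute the affine map represented by such a word explicitly. Keeping the right-to-left composition convention in mind, applying $D^{-i}$, then $T_1^p$, then $D^i$ sends $x$ to $2^i(2^{-i}x + p) = x + 2^ip$, so that $D^iT_1^pD^{-i} = T_{2^ip}$; composing on the right with $D^n$ then yields the map $x \mapsto 2^nx + 2^ip$, i.e.\ the pair $(2^n, 2^ip)$ in the notation of \tref{GAElt}. Now if two triples $(n,p,i)$ and $(n',p',i')$ with $p,p'$ odd give the same element, comparing slopes forces $2^n = 2^{n'}$ and hence $n = n'$, while comparing translation parts gives $2^ip = 2^{i'}p'$. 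Since $p$ and $p'$ are odd they are nonzero, so unique factorization (equivalently, the $2$-adic valuation) forces $i = i'$ and then $p = p'$. Thus distinct triples give distinct elements, the injectivity hypothesis holds, and the conclusion of Lemma \ref{BSChar} gives that $\psi$ is an isomorphism.

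The only genuine content is this last computation, and its crux is the parity condition: it is precisely the oddness of $p$ that allows the single integer $2^ip$ to recover both $i$ and $p$ through the valuation at $2$. Dropping that condition would make the parametrization redundant, exactly as reflected in the reduction $d^it^2d^{-i} = d^{i-1}td^{-i+1}$ from the proof of Lemma \ref{BSChar}. I do not anticipate any real obstacle here beyond tracking the composition convention correctly in the affine calculation.
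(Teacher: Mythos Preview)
Your proof is correct and follows essentially the same approach as the paper: both invoke Lemma~\ref{BSChar}, compute that the normal form word $(D^iT_1^pD^{-i})D^n$ acts as the affine map $(2^n,\,2^ip)$, and then observe that the slope determines $n$ while the $y$-intercept $2^ip$ with $p$ odd determines $i$ and $p$ uniquely. Your write-up is slightly more explicit about the $2$-adic valuation underlying the last step, but the argument is the same.
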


\begin{proof} From \tref{AffineForm} we know that \(\psi\) is an
epimorphism, and we know that a word in the form \tref{BSForm} is
taken by \(\psi\) to \((2^n, p2^{i})\).  However differening triples
\((n,p,i)\) with \(p\) odd give different elements of \(GA(Q_2)\)
since different values of \(n\) give different slopes and different
pairs \((p,i)\) with \(p\) odd give different values of \(p2^i\),
the \(y\)-intercept.  \end{proof}

We now re-embed \(PL_2(\R)\) in \(\text{Homeo}_+(\R)\), the group of
increasing self homeomorphisms of \(\R\), by first re-embedding
\(GA(\Q_2)\) in \(\text{Homeo}_+(\R)\).  The re-embedding of
\(GA(\Q_2)\) will be done by replacing \(D\) by another function
\(f\) so that \(T_1\) and \(f\) generate a copy of \(B(1,2)\) in a
manner identical to \(T_1\) and \(D\).  There is a small set of
properties that \(f\) will have to satisfy in order to do this, and
the flexibility in choosing this \(f\) will allow us to get extra
properties of the embedding by adding extra conditions to \(f\).  In
particular we will get that the image of \(PL_2(\R)\) in
\(\text{Homeo}_+(\R)\) can be made arbitrarily smooth and that given
any \(C>0\), condition \tref{CondARev} can be satisfied.

Let \(f\) be an element of \(\text{Homeo}_+(\R)\) that satisfies (I)
and (II) below.  
{\renewcommand{\theenumi}{\Roman{enumi}}
\begin{enumerate}
\item  For every real \(x\), we have \(f(x+1)=f(x)+2\).
\item \(f(0)=0\).
\end{enumerate}
}

In the following, we will always assume that (I) and (II) are
satsified.

We exploit the fact that for \(r\in \Q_2\), we have \(r=T_r(0)\).
For \(r=p/2^q\in \Q_2\), we note \[r=T_r(0) =
D^{-q}T_pD^q(0).\] With \(r\) as just given define
\mymargin{RBarDef}\begin{equation}\label{RBarDef} \overline{r} =
f^{-q}T_pf^q(0).  \end{equation}

(We will ignore the fact that \(D(0)=f(0)=0\) unless it becomes
convient to notice it.  When we do notice it, we will see that
\(\overline{r} = f^{-q}T_p(0) = f^{-q}(p)\).)

\begin{lemma} The map \(r\mapsto \overline{r}\) from \(\Q_2\) to
\(\R\) is well defined, strictly increasing, fixes the integers
pointwise, and commutes with \(T_1\).  \end{lemma}

\begin{proof} For well definedness, it suffices to show that
\(\overline{p/2^q} = \overline{2p/2^{q+1}}\).  This asks that
\[f^{-q}T_pf^q = f^{-q-1}T_{2p}f^{q+1}\] or \[T_p=f^{-1}T_{2p}f.\]
This becomes \(fT_p=T_{2p}f\) which is just \(f(x+p)=f(x)+2p\) which
follows from (I).

For the last claim, we note that if
\(r=D^{-q}T_pD^r(0)\), then \[\begin{split}
r+1 &= T_1D^{-q}T_pD^q(0) \\ 
&= D^{-q}T_{2^q}T_pD^q(0) \\ 
&= D^{-q}T_{2^q+p}D^q(0)
\end{split}\]
so that 
\[
\begin{split}
\overline{r+1}
&= f^{-q}T_{2^q+p}f^q(0) \\
&= f^{-q}T_{2^q}T_pf^q(0) \\
&= T_1f^{-q}T_pf^q(0) \\
&= \overline{r} + 1.
\end{split}\]

Using well definedness, we can represent two given elements in
\(\Q_2\) with the same denominator.  It is now convenient to notice
that \(\overline{p/2^q} =f^{-q}(p)\).
That \(\overline{p/2^q}< \overline{p'/2^q}\) when \(p<p'\) follows
from the fact that \(f\) is an increasing self homeomorphism of
\(\R\).

Lastly, when \(r=p\), an integer, then \(q=0\) in
\(\overline{p}=f^{-q}T_pf^q(0)\) and we get \(\overline{p}=p\).
\end{proof}

\begin{lemma} Sending \(T_1\) to itself and \(D\) to \(f\) induces a
homomorphic embedding \(\theta_f:GA(Q_2)\rightarrow
\text{Homeo}_+(\R)\).  \end{lemma}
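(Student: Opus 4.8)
The plan is to apply Lemma~\ref{BSChar} with \(D=f\) and \(T=T_1\), viewing the subgroup \(G=\langle f,T_1\rangle\le\text{Homeo}_+(\R)\) as a candidate image. First I would check the defining relation: by condition (I), \((fT_1)(x)=f(x+1)=f(x)+2=(T_1^2f)(x)\), so \(fT_1f^{-1}=T_1^2\). The first half of Lemma~\ref{BSChar} then produces an epimorphism \(\psi_1\colon B(1,2)\to G\) sending \(t\mapsto T_1\), \(d\mapsto f\); composing with the inverse of the isomorphism \(\psi\colon B(1,2)\to GA(\Q_2)\) of the preceding corollary gives a homomorphism \(\theta_f\colon GA(\Q_2)\to\text{Homeo}_+(\R)\) with \(T_1\mapsto T_1\) and \(D\mapsto f\). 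Everything after this is the injectivity statement, which by the second half of Lemma~\ref{BSChar} reduces to showing that distinct triples \((n,p,i)\) with \(p\) odd give distinct homeomorphisms \((f^iT_1^pf^{-i})f^n\).

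Before attacking injectivity I would record three algebraic facts about \(f\) and \(T_1\). By induction on the relation, \(f^jT_1f^{-j}=T_1^{2^j}\) for all \(j\ge0\). Consequently \(f^iT_1^pf^{-i}\) commutes with \(T_1\) for \emph{every} \(i\in\Z\): for \(i\ge0\) it equals the power \(T_1^{p2^i}\), and for \(i=-j<0\) one sets \(W=f^{-j}T_1f^{j}\), observes \(W^{2^j}=f^{-j}(f^jT_1f^{-j})f^j=T_1\), and notes that \(f^iT_1^pf^{-i}=W^{p}\) and \(T_1=W^{2^j}\) are both powers of \(W\), hence commute. Finally, \(f^m\) commutes with \(T_1\) only when \(m=0\), since \(f^mT_1f^{-m}\) is a translation \(T_1^{2^m}\) (or its \(2^{|m|}\)-th root) equal to \(T_1\) only for \(m=0\).

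For injectivity, suppose \((f^iT_1^pf^{-i})f^n=(f^{i'}T_1^{p'}f^{-i'})f^{n'}\) with \(p,p'\) odd, and abbreviate \(S=f^iT_1^pf^{-i}\), \(S'=f^{i'}T_1^{p'}f^{-i'}\). Cancelling gives \((S')^{-1}S=f^{\,n'-n}\); since \(S\) and \(S'\) lie in the centralizer of \(T_1\), so does \(f^{\,n'-n}\), whence the third fact forces \(n=n'\) and then \(S=S'\). Conjugating the equality \(S=S'\) by \(f^{-i}\) yields \(T_1^p=f^{\,j}T_1^{p'}f^{-j}\) with \(j=i'-i\). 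If \(j\ge0\) the right side is \(T_1^{2^jp'}\), so \(p=2^jp'\), and oddness of \(p\) forces \(j=0\), \(p=p'\); if \(j<0\) I would instead conjugate by \(f^{-i'}\), obtaining \(T_1^{2^{|j|}p}=T_1^{p'}\), which makes \(p'\) even, a contradiction. Thus \(i=i'\) and \(p=p'\), so the triple is determined and \(\theta_f\) is an embedding.

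The main obstacle is precisely this last parity computation. The subtlety is that the relation \(fT_1f^{-1}=T_1^2\) can only be used to push the exponent \(2^{\bullet}\) onto the integer-power side when the conjugating power of \(f\) is nonnegative; choosing the wrong direction of conjugation leaves a noninteger ``\(2^{-j}\)'' factor. Handling both signs of \(i'-i\) by conjugating on the appropriate side, together with carefully justifying that \(f^iT_1^pf^{-i}\) commutes with \(T_1\) even for negative \(i\) (where \(T_1\) appears only as a power of a root of \(f^{-j}T_1f^{j}\)), is where the real care is needed; the rest is formal bookkeeping with the normal form supplied by Lemma~\ref{BSChar}.
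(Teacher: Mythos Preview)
Your proof is correct, and the reduction via Lemma~\ref{BSChar} matches the paper's, but the injectivity verification takes a genuinely different route. The paper distinguishes the normal forms \(W=(f^{-q}T_1^{\,p}f^{q})f^{n}\) by \emph{evaluating} them at the points \(0\) and \(1\): it computes \(W(0)=f^{-q}(p)=\overline{p/2^{q}}\), which recovers \(p/2^{q}\) (hence the pair \((p,q)\) with \(p\) odd) because the previous lemma shows \(r\mapsto\overline r\) is strictly increasing; then \(W(1)=W(0)+2^{n}\) recovers \(n\). Your argument instead works internally in the group: you use that each \(f^{i}T_1^{\,p}f^{-i}\) centralizes \(T_1\) to force \(n=n'\), and then a parity comparison of powers of \(T_1\) to force \((i,p)=(i',p')\), relying only on the infinite order of \(T_1\) in \(\text{Homeo}_+(\R)\). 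Your route is more self-contained (it does not need the lemma on \(\overline r\)), whereas the paper's route is shorter once that lemma is in place and keeps the argument tied to the evaluation map \(r\mapsto\overline r\) that is reused in Lemma~\ref{PreCont} and its corollary. One cosmetic point: for \(m<0\) the element \(f^{m}T_1f^{-m}\) is not literally a translation, only a \(2^{|m|}\)-th root of \(T_1\); your conclusion that it equals \(T_1\) only for \(m=0\) is still valid, since \(R^{2^{|m|}}=T_1\) together with \(R=T_1\) forces \(T_1^{2^{|m|}-1}=\mathrm{id}\).
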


\begin{proof}  It suffices to show that sending \(t\) to \(T_1\) and
\(d\) to \(f\) extends to an isomorphism from \(B(1,2)\) to the
group \(G\) generated by \(T_1\) and \(f\).

Item (I) implies that \(T_1^2 = T_2 = f T_1 f^{-1}\).  Thus what we
have to show is that different words of the form
\(W=(f^{-q}T_1^pf^q)f^n\) with \(p\) odd correspond to different
elements of \(G\).

We have \(W(0)=f^{-q}(p) = \overline{p/2^q}\) and we know that these
differ as long as the values of \(p/2^q\) differ.

We have \[
\begin{split}
W(1) 
&= f^{-q}T_pf^{q+n}(1) \\
&= f^{-q}T_p(2^{q+n}) \\
&=f^{-q}(p+2^q2^n) \\
&=f^{-q}(p)+2^n
\end{split}\]
where the second and last equalities follow from the fact
\(f^q(x+m)=f^q(x)+2^qm\) that is easily derived from (I).

This is sufficient information to give the conclusion.  \end{proof}

Recall that sending \(r\in \Q_2\) to \(T_r\) homomorphically embeds
\(Q_2\) in \(GA(Q_2)\).  We regard \(Q_2\) as a subgroup of
\(GA(Q_2)\) for the next statement.

\begin{cor} The restriction of \(\theta_f\) to \(\Q_2\) takes
\(p/2^q\) to \(f^{-q}T_pf^q\) and is a homomorphic
embedding of \(\Q_2\) into \(\text{Homeo}_+(\R)\).  \end{cor}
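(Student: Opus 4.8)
The plan is to derive everything from the two ingredients already in hand: the identity $T_{p/2^q} = D^{-q}T_p D^q$ coming from the structure of $GA(\Q_2)$, and the preceding lemma, which tells us that $\theta_f$ is a homomorphic embedding of all of $GA(\Q_2)$ sending $T_1\mapsto T_1$ and $D\mapsto f$.

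First I would compute the image under $\theta_f$ of a general element of the subgroup $\Q_2$ of $GA(\Q_2)$. Since $r\mapsto T_r$ embeds $\Q_2$ in $GA(\Q_2)$ and we have recorded that $T_{p/2^q}=D^{-q}T_p D^q$, applying the homomorphism $\theta_f$ gives $\theta_f(T_{p/2^q})=\theta_f(D)^{-q}\,\theta_f(T_p)\,\theta_f(D)^q$. The only point needing care is the value of $\theta_f(T_p)$: since $T_p=T_1^p$ and $\theta_f$ fixes $T_1$, we get $\theta_f(T_p)=T_1^p=T_p$. Substituting $\theta_f(D)=f$ then yields $\theta_f(T_{p/2^q})=f^{-q}T_p f^q$, which is the asserted formula.

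For the second assertion I would simply observe that the restriction of an injective group homomorphism to a subgroup is again an injective group homomorphism. The preceding lemma already established that $\theta_f$ is injective on all of $GA(\Q_2)$; restricting it to the image of $\Q_2$ under $r\mapsto T_r$ therefore gives a homomorphic embedding of $\Q_2$ into $\text{Homeo}_+(\R)$.

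There is no real obstacle here, as the statement is essentially a corollary of the previous lemma; the one thing to say explicitly is why $\theta_f(T_p)=T_p$. If one preferred a self-contained verification of injectivity rather than invoking the restriction principle, one could instead use the first lemma of this sequence, which shows that $r\mapsto\overline{r}=f^{-q}(p)$ is strictly increasing. Since $\overline{p/2^q}=(f^{-q}T_p f^q)(0)$ is exactly the value of $\theta_f(T_{p/2^q})$ at the point $0$, distinct dyadic rationals already produce distinct images at $0$, giving injectivity directly.
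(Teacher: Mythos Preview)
Your proposal is correct and is exactly the intended argument: the paper states this as a corollary with no proof, since it follows immediately from the preceding lemma (the homomorphic embedding $\theta_f:GA(\Q_2)\to\text{Homeo}_+(\R)$) together with the formula $T_{p/2^q}=D^{-q}T_pD^q$ already recorded. Your remark that $\theta_f(T_p)=T_p$ because $T_p=T_1^p$ is the one small step worth making explicit, and your alternative injectivity check via the strict monotonicity of $r\mapsto\overline r$ is also fine but unnecessary.
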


We can gather some notational trivialities.

\begin{rmk}\mylabel{QTwoEmb} For \(r\in Q_2\), we have \(\theta_f(r)
= \theta_f(T_r)\).  In addition \(\overline{r} = \theta_f(r)(0) =
\theta_f(T_r)(0)\).  For \(p\in \Z\), we have \(\theta_f(p) =
\theta_f(T_p)=T_p\) and \(\overline{p} = \theta_f(p)(0) =
\theta_f(T_p)(0) = T_p(0)=p\).  \end{rmk}

The next is almost as trivial.

\begin{lemma}\mylabel{PreCont} If \(h\in GA(Q_2)\) takes \(x\in
Q_2\) to \(y\), then \(\theta_f(h)\) takes \(\overline{x}\) to
\(\overline{y}\).  \end{lemma}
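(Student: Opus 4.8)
The plan is to reduce the statement to an evaluation at $0$, using that every dyadic rational is the image of $0$ under a translation and that $\theta_f$ is a homomorphism. First I would rewrite the hypothesis $h(x)=y$ as an identity among group elements applied to $0$. Since $x=T_x(0)$ in $GA(\Q_2)$, the equation $h(x)=y$ is the same as $(hT_x)(0)=y$, and $hT_x$ is again an element of $GA(\Q_2)$.

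Next I would use the structure of $GA(\Q_2)$ to factor $hT_x$. Writing its linear part as $2^m$, the map $hT_x$ has the form $(2^m,y)$, because its value at $0$ is exactly its $y$-intercept, namely $y$. Hence $hT_x = T_y D^m$, since $T_y D^m$ sends $s \mapsto 2^m s + y$.

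The concluding step is to apply $\theta_f$ and evaluate at $0$. On one hand, using that $\theta_f$ is a homomorphism and that $\theta_f(T_x)(0)=\overline{x}$ (Remark \ref{QTwoEmb}), I get $\theta_f(hT_x)(0)=\theta_f(h)\big(\theta_f(T_x)(0)\big)=\theta_f(h)(\overline{x})$. On the other hand, $\theta_f(hT_x)=\theta_f(T_y)\,\theta_f(D^m)=\theta_f(T_y)\,f^m$, and condition (II) gives $f(0)=0$, so $f^m(0)=0$ and therefore $\theta_f(hT_x)(0)=\theta_f(T_y)(0)=\overline{y}$. Equating the two computations gives $\theta_f(h)(\overline{x})=\overline{y}$, as desired.

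The only point needing care — and the nearest thing to an obstacle — is the factorization $hT_x=T_y D^m$: it relies on the fact that the value of an affine map at $0$ equals its $y$-intercept, so that an element of $GA(\Q_2)$ is pinned down on $0$ by its $y$-intercept. Granting this, the rest is a mechanical use of the homomorphism property and the normalization $f(0)=0$.
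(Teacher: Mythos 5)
Your proof is correct, and it shares its skeleton with the paper's argument: both come down to computing \(\theta_f(hT_x)(0)\) in two ways, obtaining \(\theta_f(h)(\overline{x})\) on one side from the homomorphism property together with Remark \ref{QTwoEmb}, and \(\overline{y}\) on the other. Where you differ is in how the pivotal factorization of \(hT_x\) is justified. The paper regards \(hT_x\) as a word \(W(D,T_1)\) and invokes Lemma \ref{BSChar} to reduce it to the normal form \((D^{-k}T_1^mD^k)D^u\) — which is precisely a factorization of the type \(T_{m/2^k}D^u\) — and then argues that the same reduction persists after applying \(\theta_f\), since \(\theta_f\) is an isomorphism onto its image. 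You instead read the factorization \(hT_x = T_yD^m\) directly off the affine structure: an element of \(GA(\Q_2)\) is an affine map, hence determined by its slope \(2^m\) and its value at \(0\), and that value is \(y\) by hypothesis. Your route is more elementary (no appeal to the Baumslag--Solitar normal form) and makes the role of condition (II) explicit: \(f(0)=0\), hence \(f^m(0)=0\) for every \(m\in\Z\) including negative \(m\), which is exactly the fact the paper absorbs into the evaluation \((f^{-k}T_1^mf^k)f^u(0)=(f^{-k}T_1^mf^k)(0)\). What the paper's route buys is uniformity with the surrounding results, which are all channeled through Lemma \ref{BSChar}; what yours buys is brevity and a self-contained reliance on only the homomorphism property, Remark \ref{QTwoEmb}, and condition (II).
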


\begin{proof} We have that \(h\) is some \((2^n, p/2^q)\) or
\(h=(D^{-q}T_1^p D^q)D^n\) and \(x\) is some \(i/2^j\) or
\(x=D^{-j}T_1^i D^j(0)\).  Thus \[y=h(x) = (D^{-q}T_1^p D^q)D^n
D^{-j}T_1^i D^j(0).\] If we denote the word in \(D\) and \(T_1\) on
the right by \(W(D, T_1)\), then we have \(y=W(D,T_1)(0)\).  From
Lemma \ref{BSChar}, we know that in \(GA(Q_2)\) the word
\(W(D,T_1)\) reduces to a word in the form \((D^{-k}T_1^mD^k)D^u\)
so \[y= (D^{-k}T_1^mD^k)D^u(0) = (D^{-k}T_1^mD^k)(0).\]

If we let \(\overline{W}(f,T_1)\) be obtained from \(W(D,T_1)\) by
replacing every appearance of \(D\) by \(f\), then we know first
that \(\overline{W}(f,T_1)(0)\) gives \(\theta_f(\,\overline{x}\,)\)
by definition, and second we know that \(\overline{W}(f,T_1)(0)\)
reduces to \[\overline{y}= (f^{-k}T_1^mf^k)f^u(0) =
(f^{-k}T_1^mf^k)(0)\] because taking \(D\) to \(f\) and \(T_1\) to
itself is an isomorphism from \(GA(Q_2)\) to its image under
\(\theta_f\).  \end{proof}

\begin{cor} If \(h\in GA(Q_2)\) fixes an integer \(p\), then
\(\theta_f(h)\) fixes \(p\).  \end{cor}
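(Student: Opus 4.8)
The plan is to obtain this as an immediate specialization of Lemma \ref{PreCont}, the only real content being the observation that the bar map acts as the identity on integers.

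First I would record that since $p$ is an integer, we have $\overline{p} = p$. This was already noted in Remark \ref{QTwoEmb}, and in any case follows from the earlier lemma asserting that the map $r \mapsto \overline{r}$ fixes the integers pointwise. Next I would rephrase the hypothesis in the language of Lemma \ref{PreCont}: the statement that $h \in GA(Q_2)$ fixes $p$ means exactly that $h$ takes the point $x = p \in Q_2$ to $y = p$.

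Then I would apply Lemma \ref{PreCont} with this choice of $x$ and $y$, concluding that $\theta_f(h)$ takes $\overline{p}$ to $\overline{p}$. Substituting $\overline{p} = p$ into both sides, this says precisely that $\theta_f(h)$ takes $p$ to $p$, i.e., that $\theta_f(h)$ fixes $p$, which is the desired conclusion.

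I do not anticipate any obstacle: the entire argument is the identification $\overline{p} = p$ for integral $p$, after which the corollary is a direct instance of Lemma \ref{PreCont} applied to a fixed point. No additional properties of $f$ beyond those already assumed are needed.
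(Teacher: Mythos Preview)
Your proposal is correct and matches the paper's approach exactly: the corollary is stated without proof in the paper precisely because it is the immediate specialization of Lemma~\ref{PreCont} to the case $x = y = p$, combined with the already-established fact that $\overline{p} = p$ for integers.
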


\subsection{Extending \protect\(\theta_f\protect\) to 
\protect\(PL_2(\R)\protect\)}

Just as \(PL_2(\R)\) consists of functions made from pieces of
functions from \(GA(\Q_2)\), we extend \(\theta_f\) to embed all of
\(PL_2(\R)\) into \(\text{Homeo}_+(\R)\) by building the functions
in the image \(\theta_f(PL_2(\R))\) from pieces of functions from
\(\theta_f(GA(\Q_2))\).

Let \(h\) be in \(PL_2(\R)\).  There is a sequence
\((x_n)_{n\in\Z}\) in \(\Q_2\) with no accumulation point in \(R\)
and a sequence of functions \(\gamma_n\in GA(\Q_2)\) so that for
each \(n\) we have \[h|_{[x_n, x_{n+1}]} = \gamma_n|_{[x_n,
x_{n+1}]}.\]
The sequence \((x_n)\) is not unique for a given \(h\) since we can
always add more points.  We could ask for a smallest such sequence,
but that will not be necessary.

For this \(h\in PL_2(\R)\), we define \(\theta_f(h)\) in pieces.  It
will then have to be shown that the result is continuous.

Define \(\theta_f(h)\) so that \[
\theta_f(h)|_{[\,\overline{x}_n, \overline{x}_{n+1})} = 
\theta_f(\gamma_n)|_{[\,\overline{x}_n, \overline{x}_{n+1})}.\]
It is clear that this is well defined for a given sequence \((x_n)\)
on whose complement \(h'\) is defined.  Given two such sequences, we
can get a common ``refinement'' by taking their union, so we get
that the definition is independent of the choice of sequence
\((x_n)\) if it is shown to be invariant under the addition of a
finite number of points in a given neighborhood.  But if \(h\)
agrees with a given \(\gamma_n\) on two intervals, then the same
\(\theta_f(\gamma_n)\) is used on both intervals.  If the intervals
abut, then the result is \(\theta_f(\gamma_n)\) on the union of the
two intervals.  Thus \(\theta_f(h)\) is independent of the choice of
the sequence \((x_n)\).

It is also clear that the restriction of this \(\theta_f\) to
\(GA(\Q_2)\) agrees with the previous defintion of \(\theta_f\).

\subsection{Properties of the extension}

We first deal with continuity.

\begin{lemma}\mylabel{ThetaToHomeo} If \(h\in PL_2(\R)\), then
\(\theta_f(h)\) is a self homeomorphism of \(\R\).  \end{lemma}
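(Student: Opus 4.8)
The plan is to verify that \(\theta_f(h)\) is a continuous, strictly increasing bijection of \(\R\) onto itself; since a continuous strictly monotone bijection of \(\R\) automatically has a continuous inverse, this is enough to conclude it is a self homeomorphism. Fix a sequence \((x_n)_{n\in\Z}\) in \(\Q_2\) with no accumulation point and pieces \(\gamma_n\in GA(\Q_2)\) with \(h|_{[x_n,x_{n+1}]}=\gamma_n|_{[x_n,x_{n+1}]}\), as in the definition of \(\theta_f(h)\). Each \(\gamma_n\) carries \(\Q_2\) into \(\Q_2\), and since \(h\) is continuous we have \(\gamma_{n-1}(x_n)=\gamma_n(x_n)=h(x_n)=:y_n\in\Q_2\); thus \(\overline{y}_n\) is defined, and by Lemma~\ref{PreCont} the piece \(\theta_f(\gamma_n)\) sends \(\overline{x}_n\) to \(\overline{y}_n\) and \(\overline{x}_{n+1}\) to \(\overline{y}_{n+1}\).

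First I would settle continuity. On each interval \([\overline{x}_n,\overline{x}_{n+1})\) the map \(\theta_f(h)\) agrees with \(\theta_f(\gamma_n)\in\text{Homeo}_+(\R)\), which is continuous, so the only thing to check is agreement at the breakpoints \(\overline{x}_{n+1}\). The left limit there is \(\theta_f(\gamma_n)(\overline{x}_{n+1})=\overline{y}_{n+1}\), while the value assigned is \(\theta_f(\gamma_{n+1})(\overline{x}_{n+1})=\overline{y}_{n+1}\), both equalities coming from Lemma~\ref{PreCont} together with \(\gamma_n(x_{n+1})=\gamma_{n+1}(x_{n+1})=y_{n+1}\). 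The continuity of \(h\) is exactly what forces the two pieces to agree at \(x_{n+1}\), and Lemma~\ref{PreCont} transports this matching to the reembedded picture; this is the conceptual heart of the argument.

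Next I would handle monotonicity and surjectivity together. Because \(\theta_f\) lands in \(\text{Homeo}_+(\R)\), each \(\theta_f(\gamma_n)\) is strictly increasing and maps \([\overline{x}_n,\overline{x}_{n+1})\) bijectively onto \([\overline{y}_n,\overline{y}_{n+1})\). Every element of \(GA(\Q_2)\) has positive slope, so \(h\) is increasing and the \(y_n\) increase with \(n\); since \(r\mapsto\overline{r}\) is strictly increasing, both \((\overline{x}_n)\) and \((\overline{y}_n)\) are strictly increasing. The step that needs genuine care is showing that these half-open intervals tile all of \(\R\), i.e.\ that \(\overline{x}_n\to\pm\infty\) and \(\overline{y}_n\to\pm\infty\). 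The clean way to see this is the observation that \(\overline{r}\) always lies in the same unit interval as \(r\): if the integer \(m\) satisfies \(m\le r<m+1\), then, since \(r\mapsto\overline{r}\) is strictly increasing and fixes the integers, \(m=\overline{m}\le\overline{r}<\overline{m+1}=m+1\), so \(|\overline{r}-r|<1\). As \((x_n)\) is bi-infinite with no accumulation point we have \(x_n\to\pm\infty\), whence \(\overline{x}_n\to\pm\infty\) by this estimate, and likewise \(\overline{y}_n\to\pm\infty\). Consequently \(\{[\overline{x}_n,\overline{x}_{n+1})\}_n\) and \(\{[\overline{y}_n,\overline{y}_{n+1})\}_n\) are genuine partitions of \(\R\).

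Assembling the pieces, \(\theta_f(h)\) maps each domain interval bijectively onto the corresponding range interval, so it is a bijection of \(\R\); combined with continuity and strict monotonicity, it is a self homeomorphism. I expect the only real obstacle to be the exhaustion claim above — guaranteeing that the reembedded breakpoints \(\overline{x}_n,\overline{y}_n\) run off to \(\pm\infty\) rather than accumulating at a finite value — which is why isolating the estimate \(|\overline{r}-r|<1\) as a preliminary remark is worthwhile.
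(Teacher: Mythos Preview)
Your proof is correct and follows essentially the same route as the paper: continuity at the breakpoints via Lemma~\ref{PreCont} together with the continuity of \(h\), strict increase on each piece, and exhaustion of \(\R\) by the \(\overline{x}_n\). The paper's version is terser---it derives \(\overline{x}_n\to\pm\infty\) from the fact that \(r\mapsto\overline r\) commutes with \(T_1\) rather than from your estimate \(|\overline r-r|<1\), and it leaves the analogous claim for the \(\overline{y}_n\) implicit---but the substance is the same.
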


\begin{proof} Since \(x\mapsto \overline{x}\) is order preserving
and commutes with adding 1, we know that since the \(x_i\) go to
\(\pm\infty\) when \(i\) goes to \(\pm\infty\), so do the
\(\overline{x}_i\).  Thus \(\theta_f(h)\) is unbounded.  We know
that each piece is increasing, so we only need to concentrate on
continuity.

We only need worry about the points \(\overline{x}_n\), and what we
must verify is that \[\theta_f(\gamma_n)(\,\overline{x}_n) =
\theta_f(\gamma_{n-1})(\,\overline{x}_n).\]

But we know \(\gamma_n({x}_n) =
\gamma_{n-1}({x}_n)\) from the continuity of the original \(h\) and
what we want follows from Lemma \ref{PreCont}. \end{proof}

\begin{lemma} \(\theta_f:PL_2(\R) \rightarrow \text{Homeo}_+(\R)\)
is a homomorphism of groups.  \end{lemma}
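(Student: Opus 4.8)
The plan is to prove the equality \(\theta_f(hk)=\theta_f(h)\circ\theta_f(k)\) directly, piece by piece, rather than through a density argument. Both sides are self-homeomorphisms of \(\R\): the left side by Lemma \ref{ThetaToHomeo}, and the right side because it is a composite of two such homeomorphisms. So it suffices to show the two maps agree on each interval of a suitable subdivision of \(\R\).

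First I would set up a common subdivision adapted to both \(k\) and \(hk\). Choose a sequence \((x_n)\) in \(\Q_2\), with no accumulation point, such that on each \([x_n,x_{n+1}]\) the map \(k\) agrees with a single \(\gamma_n\in GA(\Q_2)\). Because \(\gamma_n\) has dyadic coefficients and \(x_n\in\Q_2\), the images \(k(x_n)=\gamma_n(x_n)\) lie in \(\Q_2\); moreover the breakpoints of \(h\) are dyadic, so their \(k\)-preimages are dyadic as well. Refining \((x_n)\) by these preimages, I may assume in addition that on each image interval \([k(x_n),k(x_{n+1})]\) the map \(h\) agrees with a single \(\delta_n\in GA(\Q_2)\). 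Then on \([x_n,x_{n+1}]\) the composite \(hk\) agrees with \(\delta_n\gamma_n\in GA(\Q_2)\), and since \(\theta_f\) restricted to \(GA(\Q_2)\) is a homomorphism, \(\theta_f(hk)=\theta_f(\delta_n\gamma_n)=\theta_f(\delta_n)\theta_f(\gamma_n)\) on \([\overline{x}_n,\overline{x}_{n+1})\).

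Next I would trace a point \(z\in[\overline{x}_n,\overline{x}_{n+1})\) through the right-hand side. By definition \(\theta_f(k)=\theta_f(\gamma_n)\) there, and by Lemma \ref{PreCont} the map \(\theta_f(\gamma_n)\) carries \(\overline{x}_n,\overline{x}_{n+1}\) to \(\overline{k(x_n)},\overline{k(x_{n+1})}\), so it sends \([\overline{x}_n,\overline{x}_{n+1})\) into \([\overline{k(x_n)},\overline{k(x_{n+1})})\). On this latter interval the defining recipe for \(\theta_f(h)\) uses precisely the piece \(\delta_n\), i.e. \(\theta_f(h)=\theta_f(\delta_n)\) there. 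Hence \(\theta_f(h)(\theta_f(k)(z))=\theta_f(\delta_n)(\theta_f(\gamma_n)(z))=\theta_f(\delta_n)\theta_f(\gamma_n)(z)\), which equals \(\theta_f(hk)(z)\) by the previous paragraph. As \(n\) ranges over \(\Z\) these half-open intervals tile \(\R\), giving \(\theta_f(hk)=\theta_f(h)\circ\theta_f(k)\) on the nose.

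I expect the only real obstacle to be the bookkeeping in the first step: one must guarantee that the breakpoints of \(h\) can be pulled back through \(k\) into \(\Q_2\) and inserted into \((x_n)\), so that each image interval \([\overline{k(x_n)},\overline{k(x_{n+1})})\) is exactly an interval on which the piecewise definition of \(\theta_f(h)\) is given by a single element of \(GA(\Q_2)\). This rests on the facts, already available, that elements of \(PL_2(\R)\) have dyadic breakpoints and map \(\Q_2\) into \(\Q_2\), together with the independence of the construction from the choice of subdivision. Once the subdivisions are aligned, the agreement on each piece is immediate from Lemma \ref{PreCont} and the homomorphism property of \(\theta_f\) on \(GA(\Q_2)\).
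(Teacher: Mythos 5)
Your proposal is correct and follows essentially the same route as the paper: merge the breakpoints of \(k\) with the \(k\)-preimages of the breakpoints of \(h\) so that both factors are affine on matching intervals, then reduce to the already-established homomorphism property of \(\theta_f\) on \(GA(\Q_2)\). You simply spell out the details the paper leaves implicit, namely that the pulled-back breakpoints are dyadic and that (via Lemma \ref{PreCont}) \(\theta_f(k)\) carries \([\,\overline{x}_n, \overline{x}_{n+1})\) onto the interval where \(\theta_f(h)\) is given by the matching affine piece.
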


\begin{proof} To discuss \(\theta_f(h_1\circ h_2)\), one takes a
sequence of ``break points'' for \(h_2\) and \(h_2^{-1}\) of a
sequence of ``break points'' for \(h_1\) and merges them into a
sequence \((x_n)_{n\in\Z}\) so that \(h_2\) is affine on each
\([x_n, x_{n+1}]\) and \(h_1\) is affine on each \([h_2(x_n),
h_2(x_{n+1})]\).  Now on each affine piece, \(\theta_f(h_i)\) is just
\(\theta_f\) of the corresponding affine function and we know that
\(\theta_f\) is a homomorphism on \(GA(\Q_2)\).  \end{proof}

\begin{lemma} If \(h\in PL_2(\R)\) is the identity on an interval
\([x,y]\) with \(x,y\in \Q_2\), then \(\theta_f(h)\) is the identity
on \([\,\overline{x}, \overline{y}\,]\).  In particular, if the
support of \(h\in PL_2(\R)\) is in \([0,1]\), then the support of
\(\theta_f(h)\) is in \([0,1]\).  \end{lemma}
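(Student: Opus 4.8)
The plan is to reduce the statement to the piecewise definition of $\theta_f(h)$ together with the continuity already furnished by Lemma~\ref{ThetaToHomeo}. Since $x,y\in\Q_2$ and a break sequence for $h$ may always be enlarged, I would first fix a break sequence $(x_n)_{n\in\Z}$ for $h$ in which both $x$ and $y$ occur, say $x=x_j$ and $y=x_k$ with $j<k$. Because $h$ is the identity on $[x,y]$, each affine piece $\gamma_n$ with $j\le n<k$ is an element of $GA(\Q_2)$ that agrees with the identity map on an interval, and is therefore the identity element of $GA(\Q_2)$.

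Next I would invoke the defining formula $\theta_f(h)|_{[\overline{x}_n,\overline{x}_{n+1})}=\theta_f(\gamma_n)|_{[\overline{x}_n,\overline{x}_{n+1})}$. Since $\theta_f$ is a homomorphism of $GA(\Q_2)$ into $\text{Homeo}_+(\R)$, it sends the identity of $GA(\Q_2)$ to the identity of $\text{Homeo}_+(\R)$, so for each $n$ with $j\le n<k$ the map $\theta_f(h)$ is the identity on $[\overline{x}_n,\overline{x}_{n+1})$. These half-open intervals telescope to $[\overline{x}_j,\overline{x}_k)=[\overline{x},\overline{y})$, on which $\theta_f(h)$ is thus the identity. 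Finally, because $\theta_f(h)$ is continuous by Lemma~\ref{ThetaToHomeo}, its value at the right endpoint $\overline{y}$ equals the left-hand limit of its values, namely $\overline{y}$ itself; hence $\theta_f(h)$ is the identity on the full closed interval $[\overline{x},\overline{y}]$.

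For the ``in particular'' clause, support of $h$ contained in $[0,1]$ means $h$ is the identity on $(-\infty,0)\cup(1,\infty)$, and by continuity it fixes $0$ and $1$, so $h$ is the identity on each bounded interval $[-N,0]$ and $[1,N]$ for every integer $N$. Applying the first part to these intervals, and using that the map $r\mapsto\overline{r}$ fixes the integers (Remark~\ref{QTwoEmb}, so $\overline{-N}=-N$, $\overline{0}=0$, $\overline{1}=1$, $\overline{N}=N$), I conclude that $\theta_f(h)$ is the identity on $[-N,0]$ and on $[1,N]$ for all $N$. Taking the union over $N$ shows $\theta_f(h)$ is the identity on $(-\infty,0]\cup[1,\infty)$, i.e.\ its support lies in $[0,1]$.

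The argument is essentially bookkeeping, and the one point requiring care is the passage from the half-open pieces $[\overline{x}_n,\overline{x}_{n+1})$ to the closed interval $[\overline{x},\overline{y}]$, which is exactly why the continuity of Lemma~\ref{ThetaToHomeo} is needed to recover the endpoint $\overline{y}$; likewise the unbounded-support case must be exhausted by bounded integer intervals rather than handled in one stroke. I would explicitly avoid the tempting shortcut of applying Lemma~\ref{PreCont} at every dyadic $z\in[x,y]$ to get $\theta_f(h)(\overline{z})=\overline{z}$ and then appealing to density, since controlling $\theta_f(h)$ only on the set $\{\overline{z}\}$ would require knowing this set is dense in $[\overline{x},\overline{y}]$, which is not evident for an arbitrary $f$; the piecewise definition makes that issue moot.
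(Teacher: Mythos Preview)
Your proof is correct and follows essentially the same approach as the paper: both arguments rest on the observation that $\theta_f$ sends the identity of $GA(\Q_2)$ to the identity homeomorphism, so each affine piece of $h$ on $[x,y]$ is mapped to the identity, and the second claim then follows from $\overline{p}=p$ for integers. Your write-up is simply more explicit about the half-open/closed endpoint issue and the exhaustion of $(-\infty,0]\cup[1,\infty)$ by bounded integer intervals, points the paper leaves implicit.
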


\begin{proof} 
The first sentence follows from the fact that \(\theta_f\) takes
the identity in \(GA(\Q_2)\) which is denoted \((0,0)\) in our
notation to \(T_0f^0\) which is the identity.

The second sentence follows from the first and the fact that
\(\overline{p}=p\) for any \(p\in\Z\).  \end{proof}

We now add another assumption about \(f\).  In the following \(r\)
is an integer with \(1\le r\le \infty\).

(III\({}_r\)) \(f\) is of class \(C^r\), \(f'(0)=1\) and
\(f^{(k)}(0)=0\) for \(2\le k\le r\).

\begin{lemma}  If \(f\) also satisfies (III\({}_r\)), then the image
of \(\theta_f\) consists of diffeomorphisms of class \(C^r\).
\end{lemma}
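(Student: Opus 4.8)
The plan is to exploit the piecewise description of \(\theta_f(h)\). Each piece \(\theta_f(\gamma_n)\) is a finite composition of copies of \(f\), of \(f^{-1}\), and of integer translations, so it is of class \(C^r\) on its interval provided \(f^{-1}\) is also \(C^r\); I will assume throughout (as the eventual choice of \(f\) will guarantee) that \(f\) is a \(C^r\)-diffeomorphism, i.e.\ that \(f'>0\), so that \(f^{-1}\in C^r\). Consequently the only places where \(\theta_f(h)\) can fail to be \(C^r\) are the isolated junction points \(\overline{x}_n\), and it suffices to check \(C^r\)-smoothness there.

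At a junction \(\overline{x}_n\), with \(x_n\in\Q_2\), the two adjacent affine maps \(\gamma_{n-1},\gamma_n\in GA(\Q_2)\) agree at \(x_n\) with common value \(c:=h(x_n)\); write \(2^a,2^b\) for their slopes. I would factor these in \(GA(\Q_2)\) as \(\gamma_{n-1}=T_cD^aT_{-x_n}\) and \(\gamma_n=T_cD^bT_{-x_n}\) (both \(T_{-x_n}\) and \(T_c\) lie in \(GA(\Q_2)\) since \(x_n,c\in\Q_2\)). Applying the homomorphism \(\theta_f\) gives
\[\theta_f(\gamma_{n-1})=\theta_f(T_c)\,f^{a}\,\theta_f(T_{-x_n}),\qquad \theta_f(\gamma_n)=\theta_f(T_c)\,f^{b}\,\theta_f(T_{-x_n}),\]
so on a neighborhood of \(\overline{x}_n\) we have \(\theta_f(h)=\theta_f(T_c)\circ G\circ\theta_f(T_{-x_n})\), where \(G\) equals \(f^{a}\) to the left of \(0\) and \(f^{b}\) to the right of \(0\). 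Here I used Lemma \ref{PreCont}, which sends \(\overline{x}_n\) to \(\overline{0}=0\) under \(\theta_f(T_{-x_n})\), so the break of \(G\) sits exactly at \(0\). Since \(\theta_f(T_c)\) and \(\theta_f(T_{-x_n})\) are \(C^r\)-diffeomorphisms, \(\theta_f(h)\) is \(C^r\) near \(\overline{x}_n\) if and only if \(G\) is \(C^r\) at \(0\).

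The crux is therefore to show that \(f^{a}\) and \(f^{b}\) have matching one-sided derivatives at \(0\) through order \(r\). This is where (III\({}_r\)) enters: the hypotheses \(f(0)=0\), \(f'(0)=1\), \(f^{(k)}(0)=0\) for \(2\le k\le r\) say precisely that the \(r\)-jet of \(f\) at \(0\) equals the \(r\)-jet of the identity. Because the \(r\)-jet of a composition depends only on the \(r\)-jets of its factors (Fa\`a di Bruno), and the \(r\)-jet of the identity is a neutral element for jet composition, the \(r\)-jet of \(f^{m}\) at \(0\) equals that of the identity for every \(m\in\Z\) (for \(m<0\) one uses that the jet of \(f^{-1}\) is the compositional inverse of the jet of \(f\), again the identity jet, which is where \(f'(0)=1\neq 0\) is needed). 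In particular \(f^{a}\) and \(f^{b}\) share the \(r\)-jet of the identity at \(0\), so all their derivatives through order \(r\) agree there; hence \(G\) is \(C^r\) at \(0\).

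Assembling: \(\theta_f(h)\) is \(C^r\) on each open piece and at each isolated junction, hence \(C^r\) on all of \(\R\). It is a diffeomorphism because it is already a homeomorphism (Lemma \ref{ThetaToHomeo}) whose inverse is \(\theta_f(h^{-1})\), which is \(C^r\) by the same argument applied to \(h^{-1}\in PL_2(\R)\). The \emph{main obstacle} is the jet-composition step: one must verify that agreement of \(r\)-jets is genuinely preserved under composition and inversion, so that no slope information (\(2^a\) versus \(2^b\)) survives in the first \(r\) derivatives at the junction; and one must confirm the standing requirement that \(f\) be a \(C^r\)-diffeomorphism, so that the negative powers \(f^{a}\) with \(a<0\) are themselves of class \(C^r\).
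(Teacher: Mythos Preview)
Your argument is correct and is essentially the paper's second (alternative) proof: localize at a junction \(\overline{x}_n\), conjugate by \(\theta_f(T_{-x_n})\) and \(\theta_f(T_c)\) to move the break to \(0\), and then use that (III\(_r\)) forces the \(r\)-jet of every power \(f^m\) at \(0\) to equal the identity jet, so the two sides match through order \(r\). The paper also gives a first proof with a different flavor: it introduces the single piecewise map \(D_0\) (identity off \([0,1]\), doubling on \([0,1]\)), observes that \(\theta_f(D_0)=f_0\) is \(C^r\) by (III\(_r\)) together with (I), and then uses that \(T_1\) and \(D_0\) generate the real-line model of \(F\) so that any \(h\in PL_2(\R)\) agrees on each compact set with a word in these two \(C^r\) maps; this trades your local jet analysis at every junction for a single global generation fact about \(F\).
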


\begin{proof} This short proof uses more background facts about
Thosmpons's group \(F\) than the even shorter proof in
\cite{ghys+serg}.  However, I do not understand the terminology in
the proof of \cite{ghys+serg}.

We introduce the function \(D_0\) defined by \[D_0(x) =
\begin{cases} x, &x<0, \\
2x, &0\le x \le 1, \\
x+1, &1\le x, \end{cases}\] 
and the corresponding function \(f_0\)
defined by 
\[f_0(x) =
\begin{cases} x, &x<0, \\
f, &0\le x \le 1, \\
x+1, &1\le x. \end{cases}\] 
Because of our hypotheses, \(f_0\) is of class \(C^r\) on all of
\(\R\).

We know that \(\theta_f(T_1)=T_1\) from Remark \ref{QTwoEmb}.  It
follows from this, the definition of \(\theta_f\) and the facts
\(\overline{0}=0\) and \(\overline{1}=1\) that \(\theta_f(D_0)=f_0\).

It is well known that \(T_1\) and \(D_0\) generate the model of
\(F\) that is defined on all of \(\R\).  It is also well known that
every function in \(PL_2(\R)\) can be matched on any compact subset
of \(R\) by a function from this model of \(F\).

Let \(h\) be from \(PL_2(\R)\).  Let \(A\) be a compact interval in
\(\R\) with endpoints in \(\Q_2\).  There is a word \(W\) in
\(\{T_1, D_0\}\) and their inverses so that \(W\) and \(h\) agree on
\(A\).  It follows that \(\theta_f(h)\) and \(\theta_f(W)\) agree on
\(A\).  But \(\theta_f(W)\) is a composition of functions of class
\(C^r\) so \(\theta_f(h)|_A\) is of class \(C^r\).  Since \(A\) can
be taken to be arbitrarily large, we have the desired result.
\end{proof}

The following alternative proof sketch is probably closer to the
meaning of the proof in \cite{ghys+serg}.

\begin{proof} Let \((h_i), i\in\Z\) be a family of affine functions
in \(GA(\Q_2)\) all of which share the point \((p,q)\) in their
graphs with \(p\) and \(q\) in \(\Q_2\) so that the slope at \(p\)
of \(h_i\) is \(2^i\).  The behavior of all the \(h_0^{-1}h_i\) near
\(p\) is the behavior of \(T_pD^iT_{-p}\).

It is then desired to show that under the assumption (III\({}_r\))
we have that the first \(r\) derivatives of all the \(h_i\) agree at
\(p\).  That is, we want to calculate the derivatives of \[h_i =
h_0T_pD^iT_{-p}\] at \(p\).  When the point \(p\) is passed from
right to left through the composition on the right, it is seen that
it is treated by the factor \(D^i\) as its fixed point 0.  When
\(\theta_f\) is applied the composition on the right becomes
\(\theta_f(h_0)\theta_f(T_p)f^i\theta_f(T_{-p})\) and it is
evaluated at \(\overline{p}\).  Again the factor \(f^i\) is to be
evaluated at its fixed point 0.

One can then calculate the first \(r\) derivatives of this
composition taking into account that 0 is a fixed point of \(f\) and
that the first \(r\) derivatives of \(f\) at 0 are as dicated by
(III\({}_r\)).  It is not too hard to get an expression inductively
on the depth of the derivation that carries all the needed
information.  Alternatively, one writes out the terms of the Taylor
expansion up to the term involving the \(r\)-th derivative.  Either
technique will show that the first \(r\) derivatives of all the
\(\theta_f(h_i)\) at \(\overline{p}\) will agree.  In
\cite{ghys+serg} this discussion is covered by mention of the jet at
0 of \(f\).
\end{proof}

We now turn to condition (b).  We assume that \(f\) satisfies
(III\({}_\infty\)) and has a graph as shown below.

\[
\xy
(0,0); (20,0)**@{-}; (20,40)**@{-}; (0,40)**@{-}; (0,0)**@{-};
(0,0); (20,20)**@{.};
(0,20); (20,40)**@{.};
(0,0); (10,20)**\crv~Lc{~**\dir{}~*{}(5,5)&(9,0)};
(10,20); (20,40)**\crv~Lc{~**\dir{}~*{}(10.5,30)&(15,35)};
(-4,-3)*{(0,0)};
(-4,43)*{(0,2)};
(24,-3)*{(1,0)};
(24,43)*{(1,2)};
(8.65,8.65); (8.65,0)**@{.};
(8.65,-2)*{z};
(8,20)*{f};
\endxy
\]

The important points about this \(f\) are that \(f(z)=z\), that
\(z\in (0,1)\) is the largest value in \([0,1]\) for which
\(f(z)=z\), and that \(f'(z)>1\).  We let \(C= \log(f'(z))\).

We recall condition (b).

\begin{equation}\tag{b}
\forall h\ne 1\in F,\,\, \exists t\in [0,1]\,\, \bigg( |\log((\theta
h)'(t))| \ge C \bigg).
\end{equation}

In the following, we regard \(F\) as a subgroup of \(PL_2(\R)\) by
declaring that every element of \(F\) act as the identity outside of
\([0,1]\).  The theorem implies Theorem 1.

\begin{thm}\mylabel{FSatB} If \(f\) and \(C\) are as given above,
then the restriction of \(\theta_f(F)\) to \([0,1]\) has its image
in \(\text{Diff}^\infty([0,1])\), satisfies (b), and for every \(g\)
in the image \(g'(0)=g'(1)=1\) holds.  \end{thm}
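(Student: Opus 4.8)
The plan is to dispatch the first and third assertions quickly from the lemmas already in hand and to spend the real effort on condition (b). Since \(f\) satisfies (III\({}_\infty\)), the lemma showing that the image of \(\theta_f\) consists of \(C^r\) diffeomorphisms gives that each \(\theta_f(h)\) is a \(C^\infty\) diffeomorphism of \(\R\); and since every \(h\in F\) has support in \([0,1]\), the lemma on supports gives that \(\theta_f(h)\) has support in \([0,1]\). Hence the restriction of \(\theta_f(h)\) to \([0,1]\) is a \(C^\infty\) diffeomorphism of \([0,1]\) fixing the endpoints, which is the first assertion. For the third, \(\theta_f(h)\) agrees with the identity on \((-\infty,0]\) and on \([1,\infty)\); being \(C^1\), its derivative is continuous, so \(\theta_f(h)'(0)=\theta_f(h)'(1)=1\).

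For (b), fix \(h\ne 1\) in \(F\) and let \(a_0\in\Q_2\cap[0,1)\) be the infimum of the support of \(h\), necessarily a dyadic break point. Immediately to the right of \(a_0\) the map \(h\) coincides, on an interval \([a_0,c_0]\) with \(c_0\in\Q_2\), with an affine \(\gamma=(2^n,\cdot)\in GA(\Q_2)\); since \(h=\mathrm{id}\) on \([0,a_0]\) and \(h(a_0)=a_0\), the slope must be \(2^n\) with \(n\ne 0\) and \(\gamma\) must fix \(a_0\), so \(\gamma=T_{a_0}D^nT_{-a_0}\). Applying \(\theta_f\) and writing \(\tau=\theta_f(T_{a_0})\) (a \(C^\infty\) diffeomorphism), we get \(\theta_f(\gamma)=\tau f^n\tau^{-1}\), which fixes \(\tau(z)\) with multiplier \((f^n)'(z)=e^{Cn}\), using \(f(z)=z\), \(f'(z)=e^C\), and the cancellation of \(\tau'(z)\). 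Thus \(|\log\theta_f(\gamma)'(\tau(z))|=|n|\,C\ge C\), so it remains only to check that the good point \(\tau(z)\) lies in the interval \([\overline{a_0},\overline{c_0})\) on which \(\theta_f(h)\) agrees with \(\theta_f(\gamma)\), by the piecewise definition of \(\theta_f\) together with Lemma~\ref{PreCont}.

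I expect this last containment to be the main obstacle. The lower bound is automatic: \(\tau(z)>\tau(0)=\overline{a_0}\) because \(z>0\) and \(\tau\) is increasing (here \(\overline{a_0}=\theta_f(T_{a_0})(0)\) by Remark~\ref{QTwoEmb}). For the upper bound, since \(\theta_f(T_{c_0})=\theta_f(T_{a_0})\theta_f(T_{c_0-a_0})\) we have \(\overline{c_0}=\tau(\overline{\ell})\) with \(\ell=c_0-a_0>0\) dyadic, so \(\tau(z)<\overline{c_0}\) is equivalent to \(z<\overline{\ell}\). This is exactly where the hypothesis that \(z\) is the \emph{largest} fixed point of \(f\) is used: writing \(\ell=p/2^q\) with \(p\ge 1\) gives \(\overline{\ell}=f^{-q}(p)\), and because \(z\) is the largest fixed point one has \(f(x)>x\) for all \(x>z\) (extending from \((z,1]\) by \(f(x+1)=f(x)+2\)), whence \(f^{-1}\) carries \((z,\infty)\) into itself and \(\overline{\ell}=f^{-q}(p)>z\) since \(p\ge 1>z\). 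With \(z<\overline{\ell}\) in hand, \(\tau(z)\in(\overline{a_0},\overline{c_0})\subseteq[0,1)\), and evaluating at \(t=\tau(z)\) yields \(|\log\theta_f(h)'(t)|\ge C\), establishing (b). Since (b) in particular forbids \(\theta_f(h)\) from being the identity when \(h\ne 1\), the restriction \(\theta_f|_F\) is injective, and \(\theta_f(F)|_{[0,1]}\) is the desired copy of \(F\) in \(\text{Diff}^\infty([0,1])\).
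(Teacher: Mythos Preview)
Your proof is correct and follows essentially the same approach as the paper: locate the leftmost point \(a_0\) of the support, identify the affine germ \(T_{a_0}D^nT_{-a_0}\) there, pass through \(\theta_f\) to the conjugate \(\tau f^n\tau^{-1}\), and evaluate the derivative at the image of the fixed point \(z\) of \(f\). The paper inverts \(h\) to arrange \(n>0\) and writes the right endpoint specifically as \(x+2^{-k}\), whereas you keep \(n\) of either sign (using \(|n|C\ge C\)) and allow a general dyadic length \(\ell=p/2^q\); the containment argument \(z<\overline{\ell}\) via \(f^{-1}\) preserving \((z,\infty)\) is the same as the paper's observation that \(f\) preserves \([0,z]\), just stated on the complementary side.
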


\begin{proof}  All but condition (b) are covered by previous lemmas.

Let \(h\ne1\) be in \(F\).  Let \(x\) be the largest value in
\([0,1]\) for which \(h\) is the identity on \([0,x]\).  We know
that \(x\in \Q_2\) and \(x<1\).

For some \(k>0\) we know that \(h\) is affine and not the identity
on \(J=[x,\,x+2^{-k}]\).  By inverting if necessary, we can assume
that the slope of \(h\) on \(J\) is some \(2^n\) for \(n>0\).  Since
\(x\) is a fixed point of \(h\), we know that \(h\) on \(J\) is just
the conjugate \(T_xD^nT_{-x}\) of \(D^n\) on \([0,\,2^{-k}]\).

Therefore \(\theta_f(h)\) on \(\overline{J}=[\overline{x},
\overline{x+2^{-k}}]\)  is the conjugate \[
\theta_f(T_x) \theta_f(D^n) \theta_f(T_{-x})=
\theta_f(T_x) f^n \theta_f(T_{-x})
\]
of \(\theta_f(D^n)=f^n\) on \([0,\overline{2^{-k}}]\).  Thus we
should understand \(\overline{2^{-k}}\) and the behavior of \(f^n\)
on \([0,\overline{2^{-k}}]\).

We have that \(\overline{2^{-k}} = f^{-k}(1)\).  Since \([0,z]\) is
taken by \(f\) to itself, we know inductively that for all \(k>0\)
we have \(f^{-k}(1) \notin[0,z]\) or \(f^{-k}(1) > z\).  Thus for
all \(k>0\) we have \([0,z]\subseteq [0,\overline{2^{-k}}]\).  In
particular the behavior of \(f^n\) on \([0,\overline{2^{-k}}]\)
includes the behavior of \(f^n\) on its fixed point \(z\).

The derivative of \(f^n\) at \(z\) is \(C^n\).  It follows from the
chain rule that if all the ingredients of \(\psi\phi\psi^{-1}\) are
differentiable and if \(\zeta\) is a fixed point of \(\phi\), then
\((\psi\phi\psi^{-1})'(\psi(\zeta)) = \phi'(\zeta)\) and
\((\phi^n)'(\zeta)=(\phi'(\zeta))^n\).  Thus the function
\(\theta_f(h)\) as a conjugate of \(f^n\) has a point in
\(\overline{J}\) on which the derivative is \((f'z)^n\).
\end{proof}

\section{Statements of the main results in \cite{shavgulidze}}

In what follows, a theorem number followed by (S-n) will refer to
Theorem ``n'' in \cite{shavgulidze}.

Let \(\text{Diff}^3_0([0,1])\) be the set of all thrice continuously
differentiable self homeomorphisms \(f\) of \([0,1]\) that preserve
the endpoints and that additionally satisfy \(f'(0)=f'(1)=1\).  We
will be interested in subgroups \(G\) of \(\text{Diff}^3_0([0,1])\)
that satisfy the following.
\mymargin{TheHypC}\begin{equation}\tag{a}\label{TheHypC}
\exists C>0,\,\, \forall f\ne g\in G, {\underset{t\in[0,1]}{\sup}}
( |\log(f'(t))-\log(g'(t))|)\ge C.
\end{equation}

The main result in \cite{shavgulidze} is the following.

\begin{thm}[S-2]\mylabel{ShavThm} If a discrete subgroup \(G\) of
\(\text{Diff}^3_0([0,1])\) satisfies condition \tref{TheHypC}, then
the subgroup \(G\) is amenable.  \end{thm}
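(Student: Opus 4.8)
The plan is to build a translation-invariant mean on $G$ by transporting a quasi-invariant probability measure from the ambient group $\text{Diff}^3_0([0,1])$ down onto the discrete subgroup $G$, with condition \tref{TheHypC} supplying the quantitative separation that makes the transport succeed. Concretely, I will produce a family of probability vectors $\phi_t\in\ell^1(G)$, with $\phi_t\ge0$ and $\sum_{g\in G}\phi_t(g)=1$, that is asymptotically invariant, meaning $\sum_{g\in G}|\phi_t(h^{-1}g)-\phi_t(g)|\to0$ for each fixed $h\in G$. Each $\phi_t$ defines a functional $M_t(\xi)=\sum_{g}\phi_t(g)\,\xi(g)$ on the bounded functions $B(G)$ of Section~1, and $M_t$ automatically meets axioms~(1) and~(2) of that characterization (a convex average lands between $\inf$ and $\sup$, and the assignment is linear). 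The set of means is weak-$*$ compact, so the $M_t$ have a cluster point $M$, and the asymptotic invariance of the $\phi_t$ forces $M$ to satisfy the translation-invariance axiom~(3). Thus $M$ is a mean and $G$ is amenable.

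The geometric input is the Shavgulidze measure. The logarithmic-derivative map $f\mapsto\varphi_f:=\log f'$ sends the normalization $f'(0)=f'(1)=1$ to $\varphi_f(0)=\varphi_f(1)=0$ and identifies our diffeomorphisms with continuous paths pinned at both ends; transporting a pinned Wiener measure of variance parameter $t$ through this identification yields a probability measure $\mu_t$ on the group, for each $t>0$. Under left translation $f\mapsto h\circ f$ one computes $\varphi_{h\circ f}=\varphi_f+\log h'\circ f$, so in path coordinates the action is the shift $\varphi\mapsto\varphi+\log h'\circ f_\varphi$, where $f_\varphi$ is recovered from $\varphi$. Because $h\in\text{Diff}^3_0$, this shift is valued in the Cameron--Martin space $H^1$, and because $\log h'\bigl(f_\varphi(x)\bigr)$ depends on $\varphi$ only through $\varphi|_{[0,x]}$ the transformation is \emph{causal} (adapted); hence $\mu_t$ is genuinely quasi-invariant and the Girsanov/Cameron--Martin formula gives a cocycle with trivial Fredholm determinant,
\[
\rho_t(h,f)=\frac{d(h_*\mu_t)}{d\mu_t}(f)
=\exp\!\Bigl(\tfrac1t\,\ell_h(\varphi_f)-\tfrac1{2t}\,\|\log h'\circ f\|_{H^1}^2\Bigr),
\]
where $\ell_h$ is the relevant stochastic-integral functional. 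The $C^3$ hypothesis is exactly what forces the shift to have $H^1$-regularity, explaining why the merely piecewise-linear model of $F$ in Section~2 had to be replaced.

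I then discretize using the pseudometric $d(f,g)=\sup_{s\in[0,1]}|\log f'(s)-\log g'(s)|$. This $d$ is right-invariant, $d(fk,gk)=d(f,g)$, and condition \tref{TheHypC} applied with $g=\mathrm{id}$ gives $d(\gamma,\mathrm{id})=\sup_s|\log\gamma'(s)|\ge C$ for every $\gamma\ne\mathrm{id}$ in $G$ (so $G$ is uniformly $d$-separated, hence countable). Combining these, if $B$ is a $\mu_t$-nontrivial set of $d$-diameter less than $C$ then the left translates $\{gB:g\in G\}$ are pairwise disjoint. Setting $Z_t=\mu_t\bigl(\bigcup_{g}gB\bigr)=\sum_g\mu_t(gB)\le1$ and $\phi_t(g)=\mu_t(gB)/Z_t$ makes $\phi_t$ a genuine probability vector, and quasi-invariance turns the invariance defect into an integral of the cocycle:
\[
\sum_{g\in G}\bigl|\phi_t(h^{-1}g)-\phi_t(g)\bigr|
\le\frac1{Z_t}\int_{\bigcup_g gB}\bigl|\rho_t(h,f)-1\bigr|\,d\mu_t(f).
\]
As $t\to\infty$ a fixed-size Cameron--Martin shift becomes relatively negligible, so one expects $\rho_t(h,\cdot)\to1$ and the right-hand side to vanish.

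The main obstacle is precisely this last limit. The region $\bigcup_{g\in G}gB$ is an unbounded ``shell'' in the Gaussian geometry, the integrand $\rho_t(h,\cdot)-1$ is built from unbounded stochastic integrals, and the normalizing mass $Z_t$ must be kept from collapsing as $t\to\infty$, even though $\mu_t$ is spreading out over infinitely many cells. Making $Z_t^{-1}\int_{\bigcup_g gB}|\rho_t(h,\cdot)-1|\,d\mu_t\to0$ therefore demands genuinely quantitative use of the separation constant $C$ from \tref{TheHypC}: one must show that the uniform $d$-separation of $G$ prevents the cells $gB$ from piling up in the tail region where the cocycle is far from $1$, while simultaneously retaining enough mass in $\bigcup_g gB$ to bound $Z_t$ below. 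This coupling of the uniform discreteness of $G$ to the tail behavior of the Girsanov cocycle is the delicate heart of the argument, and the step on which essentially all of the real work — and the scrutiny of the proof — must concentrate.
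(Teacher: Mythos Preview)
Your approach is genuinely different from the paper's. The paper does not attempt to build a Reiter-type sequence on $G$; instead it first constructs (Theorem~\ref{ShavMain}) a positive normalized linear functional $L_\delta$ on $C_b(\text{Diff}_+^{1,\delta}(I))$ that is \emph{exactly} invariant under all of $\text{Diff}_0^3(I)$, and only afterwards brings in condition~\tref{TheHypC}. That condition enters through an Arzel\`a--Ascoli argument (Corollary~\ref{IsFinite}): for each $f\in\text{Diff}_+^{1,\delta}(I)$ the set $\{h\in G: p_\delta(h^{-1}\circ f)\le C\}$ is finite, so the nearest-neighbor averaging map $\pi_\delta:B(G)\to C_b(\text{Diff}_+^{1,\delta}(I))$ of~\tref{PiDef} is a well-defined finite sum at every point. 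The mean on $G$ is then $l(F)=L_\delta(\pi_\delta F)$, and its invariance follows in one line from the identity $\pi_\delta(F_g)=(\pi_\delta F)_g$ together with property~(iv) of $L_\delta$. All of the hard analysis---the Wiener measure, the quasi-invariance, the Schwarzian derivative---is pushed into the proof of Theorem~\ref{ShavMain}, where the limiting parameter is not a variance $t\to\infty$ but the number $n\to\infty$ of pieces in a gluing construction $Q_n:D_n\times E_\delta^n\to E_\delta$ (Theorem~\ref{ShavThree} and Lemmas (S-L1)--(S-L9)).

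Your proposal has a genuine gap at exactly the point you flag: the limit $Z_t^{-1}\int_{\bigcup_g gB}|\rho_t(h,\cdot)-1|\,d\mu_t\to0$ is asserted but not established, and there is reason to doubt it as stated. Condition~\tref{TheHypC} buys you \emph{separation} of the cells $gB$ (hence disjointness), but it gives no \emph{coverage}: nothing prevents $G$ from being extremely sparse in $\text{Diff}_0^3(I)$, so $Z_t=\mu_t\bigl(\bigcup_g gB\bigr)$ may collapse to $0$ faster than $\|h_*\mu_t-\mu_t\|$ decays, and the ratio can blow up rather than vanish. Your remark that the constant $C$ must be used ``quantitatively'' to control $Z_t$ names the missing ingredient without supplying it---separation and coverage are opposite kinds of information. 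The paper avoids this difficulty entirely by never restricting the measure to a $G$-tube: it builds an exactly invariant functional on the full space, so no lower bound on the mass near $G$ is ever needed. Note also that your Girsanov formula is only schematic: the actual Radon--Nikodym cocycle for $f\mapsto g\circ f$ under the Shavgulidze measure involves the Schwarzian derivative $S_g$ (this is why it surfaces in Lemma~(S-L8)), not merely the $H^1$ norm of $\log g'$, and the cancellations needed to tame it are what drive the paper's $n\to\infty$ construction rather than a simple variance limit.
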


The bulk of the work will be to prove a theorem about the existence
of certain functionals on certain function spaces.  We will make the
appropriate defintions to give the statement.

We will work with several spaces of functions of which
\(\text{Diff}^3_0([0,1])\) will be among the smallest.  We give a
diagram of inclusions to help keep the definitions straight.  The
unit interval \([0,1]\) will be denoted \(I\).

\[
\xy
(0,0); (100,0)**@{-}; (100,80)**@{-}; (0,80)**@{-}; (0,0)**@{-};
(6,77)*{C^1(I)};
(5,5); (5,65)**@{-}; (80,65)**@{-}; (80,5)**@{-}; (5,5)**@{-};
(13,62)*{\text{Diff}_+^1(I)};
(25,10); (25,75)**@{-}; (95,75)**@{-}; (95,10)**@{-}; (25,10)**@{-};
(34,62)*{\text{Diff}_+^{1,\delta}(I)};
(88,72)*{C_0^{1,\delta}(I)};
(30,15); (30,55)**@{-}; (75,55)**@{-}; (75,15)**@{-}; (30,15)**@{-};
(39,52)*{\text{Diff}_+^3(I)};
(35,20); (35,45)**@{-}; (70,45)**@{-}; (70,20)**@{-}; (35,20)**@{-};
(44,42)*{\text{Diff}_0^3(I)};
(40,25); (40,35)**@{-}; (65,35)**@{-}; (65,25)**@{-}; (40,25)**@{-};
(44,32)*{G};
\endxy
\]

We define the objects above.  One has already been defined, but we
will repeat the definition.

\begin{enumerate}

\item \(C^1(I)\) is the space of all continuously differentiable,
real valued functions on \(I\) with topology given by the norm
\[\|f\|_{C^1} = \max\left\{ {\underset{t\in[0,1]}{\sup}}|f(t)|,
{\underset{t\in[0,1]}{\sup}}|f'(t)|\right\} \]

\item \(\text{Diff}_+^1(I)\) is the group of all diffeomorphisms of
class \(C^1\) of \(I\) that are fixed on the endpoints.  The
topology on \(\text{Diff}_+^1(I)\) is the one inherited from \(C^1(I)\).

\item For \(0< \delta<1\), \(C_0^{1, \delta}(I)\) is the set of all
functions \(f\in C^1(I)\) so that \(f(0)=0\) and so that there is
\(C>0\) so that for all \(t_1, t_2\in I\), we have \[|f'(t_2) -
f'(t_1)| < C|t_2-t_1|^\delta.\] The constant \(C\) will be called a
H\"older constant for \(f'\) and we will say that \(f'\) is H\"older
with constant \(C\) and exponent \(\delta\).  The topology is given
by the following.  \[\|f\|_{1, \delta} = |f'(0)| +
{\underset{t_1,t_2 \in[0,1]}{\sup}} \frac{|f'(t_2) - f'(t_1)|}
{|t_1-t_2|^\delta}.\]

\item \(\text{Diff}_+^{1, \delta}(I) = \text{Diff}_+^1(I) \cap
C_0^{1, \delta}(I)\).  There are two topologies to choose from given
that there are two topological spaces that are being intersected,
and the choice is that the topology is inherited from that of
\(C_0^{1, \delta}(I)\).

\item \(\text{Diff}_+^3(I)\) is the subgroup of
\(\text{Diff}_+^1(I)\) that are of class \(C^3\).

\item \(\text{Diff}_0^3(I)\) is the set of elements \(f\) from
\(\text{Diff}_+^3(I)\) for which \(f'(0)=f'(1)=1\).

\end{enumerate}

In the following \(\|f\|_\infty\) denotes the sup norm of \(f\) over
the interval \([0,1]\).

\begin{lemma}\mylabel{PreCompCont} If \(f\) is in
\(\text{Diff}_+^{1, \delta}(I)\), then \(\|f\|_\infty \le \|f\|_{1,
\delta}\) and \(\|f'\|_\infty\le \|f\|_{1, \delta}\).  \end{lemma}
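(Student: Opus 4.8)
The plan is to prove the two inequalities in sequence, deriving the bound on $\|f'\|_\infty$ first and then feeding it into the bound on $\|f\|_\infty$. Throughout I would write the H\"older seminorm as $[f']_\delta = \underset{t_1,t_2\in[0,1]}{\sup}|f'(t_2)-f'(t_1)|/|t_1-t_2|^\delta$, so that by definition $\|f\|_{1,\delta} = |f'(0)| + [f']_\delta$. The only two structural facts I will use are that $f$ fixes the endpoints (in particular $f(0)=0$), which holds both because $f\in\text{Diff}_+^1(I)$ and because membership in $C_0^{1,\delta}(I)$ already forces $f(0)=0$, and that the underlying interval is $[0,1]$, so that $t^\delta\le 1$ and $t\le 1$ for every $t\in I$.

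For the bound $\|f'\|_\infty \le \|f\|_{1,\delta}$, I would fix $t\in[0,1]$ and split $|f'(t)| \le |f'(0)| + |f'(t)-f'(0)|$ by the triangle inequality. The second term is at most $[f']_\delta\,|t|^\delta$ directly from the definition of the H\"older seminorm, and since $t\in[0,1]$ we have $|t|^\delta\le 1$, giving $|f'(t)| \le |f'(0)| + [f']_\delta = \|f\|_{1,\delta}$. Taking the supremum over $t$ then yields the claim.

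For $\|f\|_\infty \le \|f\|_{1,\delta}$, I would use that $f$ is $C^1$ with $f(0)=0$ and invoke the fundamental theorem of calculus: for each $t\in[0,1]$ we have $f(t) = \int_0^t f'(s)\,ds$, so $|f(t)| \le \int_0^t |f'(s)|\,ds \le t\,\|f'\|_\infty \le \|f'\|_\infty$. Combining this with the first inequality gives $|f(t)|\le\|f\|_{1,\delta}$, and a final supremum over $t$ completes the argument.

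There is no genuine obstacle here; the lemma is a routine repackaging of the norm, and both estimates are one-line consequences of the definitions. The one point worth not overlooking is the role of the normalization $f(0)=0$: without an anchoring value for $f$, the quantity $\|f\|_{1,\delta}$ (a seminorm of $f'$ plus the single value $|f'(0)|$) could not possibly control $\|f\|_\infty$, and it is exactly the hypothesis $f\in C_0^{1,\delta}(I)$, equivalently the endpoint-fixing built into $\text{Diff}_+^1(I)$, that supplies the needed anchor.
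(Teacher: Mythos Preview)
Your proof is correct and follows essentially the same approach as the paper: the bound on $\|f'\|_\infty$ is obtained identically via the triangle inequality at $0$ together with $t^\delta\le 1$, and the bound on $\|f\|_\infty$ then follows from $f(0)=0$ and the derivative bound (the paper cites the mean value theorem where you use the fundamental theorem of calculus, but that is the same idea).
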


\begin{proof} We have for \(t\in [0,1]\), 
\[
\begin{split}
|f'(t)| 
&\le |f'(0)| + |f'(t)-f'(0)| \\
&\le |f'(0)| + \frac{|f'(t)-f'(0)|}{t^\delta}t^\delta \\
&\le |f'(0)| + \frac{|f'(t)-f'(0)|}{t^\delta} \\
&\le \|f\|_{1, \delta}.
\end{split}
\]

Now the mean value theorem and the fact that \(f(0)=0\) says that
\(\|f\|_\infty\le \|f'\|_\infty\le \|f\|_{1, \infty}\).
\end{proof}

It is easy to show that \(\|f\|_{1, \delta}\) is a norm.  If it is
zero on \(f\), then \(f'(0)=0\) and the second part forces \(f'\) to
be constant and thus zero.  But \(f(0)=0\) in \(C_0^{1, \delta}(I)\)
so \(f\) is idencially zero.  The linearity with respect to
multiplication by constants is immediate and the triangle inequality
is very straightforward.

The location of \(\text{Diff}_+^3(I)\) in \(\text{Diff}_+^{1,
\delta}(I)\) comes because the existence of a second derivative
implies a H\"older constant for the first derivative, and the other
parts of the definition of \(C_0^{1, \delta}(I)\) are met.

\begin{lemma} If \(f\) is in \(\text{Diff}_+^{1, \delta}(I)\) than
so is \(f^{-1}\).  Further, if \(C\) is the H\"older constant for
\(f'\) and \(m\) is the minimum of \(f'\) on \(I\), then the
H\"older constant for \((f^{-1})'\) is \(C/m^{2+\delta}\).
\end{lemma}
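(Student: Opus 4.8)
The plan is to start from the standard inverse-function formula $(f^{-1})'(s) = 1/f'(f^{-1}(s))$, which is available because $f$ is an increasing $C^1$ diffeomorphism whose derivative is bounded below by $m>0$, and then to control the H\"older seminorm of this expression by pulling every difference back to the domain of $f$.

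First I would record the easy structural facts. Since $f(0)=0$ and $f$ is an increasing homeomorphism of $I$ fixing the endpoints, $f^{-1}(0)=0$ as well, so $f^{-1}$ satisfies the normalization built into $C_0^{1,\delta}(I)$. Because $f$ is $C^1$ with $f'>0$ everywhere, $f^{-1}$ is again $C^1$, and differentiating $f(f^{-1}(s))=s$ yields the inverse derivative formula above. Thus it only remains to verify the H\"older estimate for $(f^{-1})'$, and that same estimate will produce the asserted constant.

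The central computation bounds $|(f^{-1})'(s_2)-(f^{-1})'(s_1)|$. Writing $t_i=f^{-1}(s_i)$, I would combine the two fractions to get
\[(f^{-1})'(s_2)-(f^{-1})'(s_1)=\frac{f'(t_1)-f'(t_2)}{f'(t_1)\,f'(t_2)},\]
so the numerator is controlled by the H\"older constant of $f'$ while the denominator is bounded below by $m^2$, giving $|(f^{-1})'(s_2)-(f^{-1})'(s_1)|\le (C/m^2)\,|t_1-t_2|^\delta$.

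The one step needing care --- and the only real obstacle --- is converting $|t_1-t_2|^\delta$ back into $|s_1-s_2|^\delta$. Here I would apply the mean value theorem to $f$: since $s_i=f(t_i)$, we have $|s_2-s_1|=f'(\xi)\,|t_2-t_1|\ge m\,|t_2-t_1|$, whence $|t_2-t_1|\le |s_2-s_1|/m$. Raising to the power $\delta$ and substituting yields
\[|(f^{-1})'(s_2)-(f^{-1})'(s_1)|\le \frac{C}{m^2}\cdot\frac{|s_1-s_2|^\delta}{m^\delta}=\frac{C}{m^{2+\delta}}\,|s_1-s_2|^\delta.\]
This simultaneously shows $f^{-1}\in C_0^{1,\delta}(I)$ --- hence, with the structural facts, that $f^{-1}\in\text{Diff}_+^{1,\delta}(I)$ --- and exhibits $C/m^{2+\delta}$ as a H\"older constant for $(f^{-1})'$, as claimed.
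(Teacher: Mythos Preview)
Your proof is correct and follows essentially the same approach as the paper: both start from $(f^{-1})'(s)=1/f'(f^{-1}(s))$, combine the two fractions, bound the denominator below by $m^2$ and the numerator via the H\"older condition on $f'$, and then convert $|f^{-1}(s_1)-f^{-1}(s_2)|$ back to $|s_1-s_2|/m$. The only cosmetic difference is that the paper phrases the last step as ``$1/m$ is the maximum of $(f^{-1})'$'' rather than invoking the mean value theorem on $f$, but these are equivalent.
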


\begin{proof} We have \(f^{-1}\) in \(\text{Diff}_+^1(I)\) by
definition, and \(f(0)=0\) implies \(f^{-1}(0)=0\), so we must show
that there is a H\"older constant for \((f^{-1})'\).  We know that
the minimum for \(f'\) exists and is strictly greater than zero
because of the continuity of \(f'\), because \(f^{-1}\) is
differentiable by definition of \(\text{Diff}_+^1(I)\), and because
\(f\) must be increasing on \(I\) to be in \(\text{Diff}_+^1(I)\).
From the chain rule we know that \(1/m\) is the maximum of
\((f^{-1})'\) on \(I\).

We have 
\[
\begin{split}
|(f^{-1})'(t_2) - (f^{-1})'(t_1)|
&= \left|\frac{1}{f'(f^{-1}(t_2))} - \frac{1}{f'(f^{-1}(t_1))} \right|
\\
&=
\left| \frac{f'(f^{-1}(t_1)) - f'(f^{-1}(t_2))}
{f'(f^{-1}(t_2)) f'(f^{-1}(t_1))}\right|
\\
&\le
\frac{1}{m^2}C |f^{-1}t_1)-f^{-1}(t_2)|^\delta
\\
&\le
\left(\frac{1}{m^2} C \frac{1}{m^\delta} \right) |t_2-t_1|^\delta
\\
&= \frac{C}{m^{2+\delta}} |t_2-t_1|^\delta.
\end{split}
\]

\end{proof}

\begin{lemma} If \(f\) and \(g\) are in \(\text{Diff}_+^{1,
\delta}(I)\) than so is \(f\circ g\).  Further, if \(C_f\) is the
H\"older constant for \(f'\), \(C_g\) is the H\"older constant for
\(g'\), \(M_f\) is the maximum of \(f'\) on \(I\), and \(M_g\) is
the maximum of \(g'\) on \(I\), then the H\"older constant for
\((f\circ g)'\) is \(C_gM_f + C_fM_g^{1+\delta}\).  \end{lemma}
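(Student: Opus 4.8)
The plan is to reduce everything to the chain rule, which on the set where both derivatives exist gives $(f\circ g)'(t) = f'(g(t))\,g'(t)$. The membership claim is then almost free: $f$ and $g$ are $C^1$ diffeomorphisms of $I$ fixing the endpoints, so $f\circ g$ is as well and hence lies in $\text{Diff}_+^1(I)$; moreover $g(0)=0$ and $f(0)=0$ force $(f\circ g)(0)=f(g(0))=f(0)=0$, so that the defining normalization for $C_0^{1,\delta}(I)$ holds. Thus the only substantive thing to establish is that $(f\circ g)'$ is H\"older of exponent $\delta$, with the asserted constant; once that is done, $f\circ g\in C_0^{1,\delta}(I)\cap\text{Diff}_+^1(I)=\text{Diff}_+^{1,\delta}(I)$.

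For the H\"older estimate I would split the increment of the product by inserting a single cross term, freezing the outer factor at $t_2$ and the inner factor at $t_1$:
\[
f'(g(t_2))\,g'(t_2)-f'(g(t_1))\,g'(t_1)
= f'(g(t_2))\bigl[g'(t_2)-g'(t_1)\bigr]
+ g'(t_1)\bigl[f'(g(t_2))-f'(g(t_1))\bigr].
\]
The first bracket is controlled directly by the H\"older bound on $g'$ and the bound $|f'(g(t_2))|\le M_f$, giving $M_fC_g|t_2-t_1|^\delta$. For the second bracket I would apply the H\"older bound on $f'$ to the points $g(t_1),g(t_2)$, obtaining $|f'(g(t_2))-f'(g(t_1))|\le C_f|g(t_2)-g(t_1)|^\delta$, and then use $|g(t_1)|$-type control, namely $|g'(t_1)|\le M_g$, on the prefactor.

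The one place that requires genuine attention, and which I expect to be the main obstacle in getting the constant exactly right, is converting the inner increment $|g(t_2)-g(t_1)|$ into a power of $|t_2-t_1|$: here one invokes the mean value theorem together with the bound $g'\le M_g$ to get $|g(t_2)-g(t_1)|\le M_g|t_2-t_1|$, which after raising to the $\delta$ power contributes a factor $M_g^\delta$. Combined with the outer factor $M_g$ from $|g'(t_1)|$, this produces $C_fM_g^{1+\delta}|t_2-t_1|^\delta$, which is exactly the asymmetric exponent appearing in the statement. Adding the two estimates yields the H\"older constant $C_gM_f+C_fM_g^{1+\delta}$, as claimed; the remaining steps are routine bookkeeping and carry no further difficulty.
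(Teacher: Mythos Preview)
Your proposal is correct and follows essentially the same approach as the paper: the paper also inserts the cross term \(f'(g(t_2))g'(t_1)\), bounds the first piece by \(M_fC_g|t_2-t_1|^\delta\), and handles the second piece via the H\"older bound on \(f'\) together with \(|g(t_2)-g(t_1)|\le M_g|t_2-t_1|\) to pick up the extra factor \(M_g^\delta\). Your emphasis on the mean value step as the source of the exponent \(1+\delta\) is exactly the point, and the rest is indeed routine.
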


\begin{proof} As before, we need only compute the H\"older
constant.  
\[
\begin{split}
|(fg)'(t_2) - (fg)'(t_1)|
&=
|f'(g(t_2))g'(t_2) - f'(g(t_1))g'(t_1)| 
\\
&\le |f'(g(t_2))g'(t_2) - f'(g(t_2))g'(t_1)| 
+
\\
& \qquad\qquad
|f'(g(t_2))g'(t_1) - f'(g(t_1))g'(t_1)|
\\
&\le M_f|g'(t_2)-g'(t_1)|
+
M_g|f'(g(t_2)) - f'(g(t_1))|
\\
& \le
M_f C_g|t_2-t_1|^\delta + M_gC_f|g(t_2)-g(t_1)|^\delta
\\
&\le M_fC_g|t_2-t_1|^\delta + M_gC_fM_g^\delta|t_1-t_1|^\delta
\\
&=
(C_gM_f + C_fM_g^{1+\delta})|t_1-t_1|^\delta.
\end{split}
\]

\end{proof}

\begin{cor} \(\text{Diff}_+^{1, \delta}(I)\) is a group.  \end{cor}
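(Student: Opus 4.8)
The plan is to recognize \(\text{Diff}_+^{1, \delta}(I)\) as a subset of the group \(\text{Diff}_+^1(I)\), whose operation is composition of diffeomorphisms fixing the endpoints, and then to apply the standard subgroup criterion: a nonempty subset of a group that is closed under the operation and under taking inverses is itself a group. Associativity comes for free, since composition of functions is always associative, so I would not address it separately. The remaining content is therefore just nonemptiness, closure under composition, and closure under inversion.

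First I would verify nonemptiness by checking that the identity map belongs to \(\text{Diff}_+^{1, \delta}(I)\). The identity \(\mathrm{id}_I\) lies in \(\text{Diff}_+^1(I)\) and satisfies \(\mathrm{id}_I(0)=0\); moreover \(\mathrm{id}_I'\equiv 1\) is constant, so \(|\mathrm{id}_I'(t_2)-\mathrm{id}_I'(t_1)|=0\le C|t_2-t_1|^\delta\) for any \(C>0\), whence \(\mathrm{id}_I\in C_0^{1, \delta}(I)\). Thus \(\mathrm{id}_I\) lies in \(\text{Diff}_+^1(I)\cap C_0^{1, \delta}(I)=\text{Diff}_+^{1, \delta}(I)\).

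Closure under the two remaining operations is exactly what the two preceding lemmas supply. The lemma on inverses shows that if \(f\in\text{Diff}_+^{1, \delta}(I)\) then \(f^{-1}\in\text{Diff}_+^{1, \delta}(I)\) (it even exhibits the explicit H\"older constant \(C/m^{2+\delta}\) for \((f^{-1})'\)), and the lemma on composition shows that if \(f,g\in\text{Diff}_+^{1, \delta}(I)\) then \(f\circ g\in\text{Diff}_+^{1, \delta}(I)\) (again with an explicit H\"older constant \(C_gM_f+C_fM_g^{1+\delta}\)). Combining nonemptiness with these two closure statements, the subgroup criterion yields the conclusion.

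There is no genuine obstacle here: the corollary is a bookkeeping assembly of the previous two lemmas, and the only point requiring any check at all—the presence of the identity—is immediate from the observation that a constant derivative is H\"older with constant \(0\). The substantive work, namely showing that membership in \(C_0^{1, \delta}(I)\) is preserved under composition and inversion, has already been discharged.
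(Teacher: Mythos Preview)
Your proof is correct and matches the paper's approach exactly: the paper states this as a corollary with no proof, relying implicitly on the two preceding lemmas (closure under inversion and composition) together with the trivial observation that the identity belongs to the set. You have simply made explicit what the paper leaves to the reader.
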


In spite of the corollary, \(\text{Diff}_+^{1, \delta}(I)\) is not a
topological group with its given topology.

\begin{lemma} There is a \(g\in \text{Diff}_+^{1, \delta}(I)\) so
that the map \(f\mapsto g \circ f\) is not continuous on
\(\text{Diff}_+^{1, \delta}(I)\).  \end{lemma}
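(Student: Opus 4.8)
The plan is to exhibit the discontinuity at the identity: I will produce a fixed $g\in \text{Diff}_+^{1,\delta}(I)$ and a sequence $f_n\to \mathrm{id}$ in $\|\cdot\|_{1,\delta}$ for which $g\circ f_n\not\to g\circ\mathrm{id}=g$. Write $[u]_\delta=\sup_{s\ne t}|u(t)-u(s)|/|t-s|^\delta$ for the H\"older seminorm, so that $\|f\|_{1,\delta}=|f'(0)|+[f']_\delta$. The mechanism I would exploit is that \emph{translation is not continuous in the H\"older seminorm}: for the cusp function $u(x)=|x|^\delta$ one has, for every $c>0$, $[u(\cdot-c)-u]_\delta\ge 2$, as is seen by evaluating $u(\cdot-c)-u$ at the two points $0$ and $c$, where it takes the values $c^\delta$ and $-c^\delta$. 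Thus the roughness of a cusp in $g'$ cannot be smoothed away by a vanishingly small shift, and a perturbation $f_n$ that acts near the cusp as a shrinking translation will be detected by left multiplication even though it tends to the identity.

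Concretely, I would place the cusp at $1/2$. For the given $\delta\in(0,1)$, set $m_0=\int_0^1|x-\tfrac12|^\delta\,dx$, fix $0<\kappa<1/m_0$, and define $g$ by $g(0)=0$ and $g'(x)=1+\kappa(|x-\tfrac12|^\delta-m_0)$. Then $g'>0$ and $\int_0^1 g'=1$, so $g$ is an increasing $C^1$ diffeomorphism of $I$ fixing both endpoints with $g'\in C^\delta$; hence $g\in\text{Diff}_+^{1,\delta}(I)$. For the perturbations, fix one $C^\infty$ function $\phi$ on $I$ with $\phi\equiv 0$ on $[\tfrac14,\tfrac34]$, with $\int_0^1\phi=0$, and with $\int_0^{1/4}\phi=-1$; then define $f_n$ by $f_n(0)=0$ and $f_n'=1+c_n\phi$ for a sequence $c_n\downarrow 0$. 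For small $c_n$ we have $f_n'>0$, so $f_n\in\text{Diff}_+^{1,\delta}(I)$ (it is even $C^\infty$), $f_n(0)=0$, $f_n(1)=1$, and on $[\tfrac14,\tfrac34]$ one computes $f_n(t)=t-c_n$. Since $\|f_n-\mathrm{id}\|_{1,\delta}=c_n(|\phi(0)|+[\phi]_\delta)\to 0$, we get $f_n\to\mathrm{id}$.

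The heart of the argument is to show $[(g\circ f_n)'-g']_\delta$ stays bounded below. I would split
\[
(g\circ f_n)'-g' = \underbrace{(g'\circ f_n)\,(f_n'-1)}_{T_1}+\underbrace{(g'\circ f_n-g')}_{T_2},
\]
and bound the two pieces oppositely. For $T_1$ the product estimate $[uv]_\delta\le\|u\|_\infty[v]_\delta+\|v\|_\infty[u]_\delta$, together with $[g'\circ f_n]_\delta\le[g']_\delta\,\|f_n'\|_\infty^\delta$ and $\|f_n'-1\|_\infty=O(c_n)$, $[f_n']_\delta=O(c_n)$, gives $[T_1]_\delta\to 0$. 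For $T_2$, on $[\tfrac14,\tfrac34]$ we have $f_n(t)=t-c_n$, so $T_2(t)=\kappa(|t-c_n-\tfrac12|^\delta-|t-\tfrac12|^\delta)$; evaluating at $t=\tfrac12$ and $t=\tfrac12+c_n$ gives the values $\kappa c_n^\delta$ and $-\kappa c_n^\delta$, whence $[T_2]_\delta\ge 2\kappa c_n^\delta/c_n^\delta=2\kappa$.

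By the reverse triangle inequality $[(g\circ f_n)'-g']_\delta\ge 2\kappa-[T_1]_\delta\ge\kappa$ for all large $n$, so $\|g\circ f_n-g\|_{1,\delta}\not\to 0$ although $f_n\to\mathrm{id}$; thus $f\mapsto g\circ f$ is discontinuous at the identity, and a fortiori on $\text{Diff}_+^{1,\delta}(I)$. I expect the only delicate point to be the bookkeeping that keeps $f_n$ inside the group while forcing it to be an honest translation near $\tfrac12$ --- which is exactly what the scaling $f_n'=1+c_n\phi$ with $\phi$ vanishing on the middle third accomplishes --- and the verification that $T_1$'s contribution is genuinely of lower order than the fixed lower bound on $T_2$.
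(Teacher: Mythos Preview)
Your argument is correct and rests on exactly the same mechanism as the paper's: choose $g'$ with a genuine $\delta$-cusp, perturb by maps $f_n\to\mathrm{id}$ that act near the cusp as a vanishingly small translation, and observe that the H\"older seminorm of the difference stays bounded below because translation of a $\delta$-cusp is not continuous in $[\,\cdot\,]_\delta$. The paper executes this by brute computation for the single exponent $\delta=2/3$ on $[-1,1]$, taking $g(x)=(x+x^{5/3})/2$ and $f_\epsilon(x)=x-\epsilon+\epsilon x^2$, and then evaluating $\phi=(gf_\epsilon)'-(gf)'$ directly at the two points $0$ and $\epsilon$ to extract a difference quotient with limit $5/3$. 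Your version is cleaner and more general: it works for every $\delta\in(0,1)$; your $f_n$ is engineered to be an \emph{exact} translation $t\mapsto t-c_n$ on the middle third, so the cusp contribution is isolated precisely; and the decomposition $(g\circ f_n)'-g'=T_1+T_2$ separates the lower-order product term (which tends to zero by routine product and composition estimates) from the pure translation term (which carries the fixed lower bound $2\kappa$), making the conclusion transparent rather than emerging from a raw calculation. The only cost is the mild bookkeeping around the bump function $\phi$, whereas the paper's explicit polynomials need none; but that existence is routine, and your approach has the advantage of actually covering the $\delta$ used later in the paper.
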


For convenience, the calculations in the proof will use \([-1,1]\)
as the interval \(I\).

\begin{proof} Let \(g(x) = (x+x^{5/3})/2\), let \(f(x) = x\) and let
\(f_\epsilon(x) = x-\epsilon+\epsilon x^2\) for some \(\epsilon\)
with \(0 < \epsilon < 1/2\).  Now all functions fix both \(-1\) and
\(1\).  All have derivatives that are continuous and positive on
\(I\).  The functions \(f\) and \(f_\epsilon\) have second
derivatives and so their derivatives satisfy the H\"older condition
with exponent \(\delta = 2/3\).

We consider \(g\).  We have 
\[ \
\frac{|g'(t_2)-g'(t_1)|} {|t_1-t_1|^{2/3}} 
= 
\frac56 \frac{|t_2^{2/3} - t_1^{2/3}|} {|t_2-t_1|^{2/3}}.
\] 
Since we can assume \(t_2\ne t_1\), we can also
assume that \(t_1\ne0\).  Let \(m=t_2/t_1\).  The fraction above
becomes 
\[
\frac56 \frac{|m^{2/3}-1|} {|m-1|^{2/3}}.
\]
This is continuous away from 1 and has limit \(5/6\) as
\(m\rightarrow \pm\infty\) and limit 0 as \(m\rightarrow 1\).  Thus
it is bounded and \(g'\) is H\"older with exponent \(2/3\).

In the following \(\delta=2/3\).

Using \[\|h\|_{1, \delta} = |h'(-1)| + \underset{t_1, t_2\in
[-1,1]}{\sup} \frac{|h'(t_2)-h'(t_1)|} {|t_2-t_1|^\delta}\] we have
\[\|f_\epsilon -f\|_{1, \delta} = 2\epsilon + \underset{t_1, t_2\in
[-1,1]}{\sup} \frac{2\epsilon|t_2-t_1|} {|t_2-t_1|^{2/3}} =
(2+2^{4/3})\epsilon.\]  This implies that \(f_\epsilon\rightarrow
f\) as \(\epsilon\rightarrow 0\) in \(\text{Diff}_+^{1, \delta}(I)\).

We now work on \(\|gf_\epsilon-gf\|_{1, \delta}\).

We have 
\[
\begin{split}
(gf)'(x) &= \frac12 + \frac56 x^{2/3}, \\
(gf_\epsilon)'(x) &= \left( \frac12 + \frac56 
[x-\epsilon + \epsilon x^2]^{2/3} \right) (1+2\epsilon x).
\end{split}\]
Now if we set \(\phi = (gf_\epsilon)'-(gf)'\), then we have
\[
\begin{split}
\phi(0) &= \frac12 + \frac56 [-\epsilon]^{2/3} - \frac12 \\
&= \frac56 \epsilon^{2/3}, \\
\phi(\epsilon) &= 
\left(\frac12 + \frac56 [\epsilon \cdot \epsilon^2]^{2/3}\right)
(1+2\epsilon\cdot \epsilon) -
\left(\frac12 + \frac56 \epsilon^{2/3}\right) \\
&= 
\frac12 + \epsilon^2 + \frac56 \epsilon^2 + \frac53 \epsilon^4 
- \frac12 - \frac56 \epsilon^{2/3} \\
&=
-\frac56 \epsilon^{2/3} + \frac{11}6 \epsilon^2 + \frac56
\epsilon^4, \\
\phi(\epsilon) - \phi(0) 
&=
-\frac56 \epsilon^{2/3} + \frac{11}6 \epsilon^2 + \frac56
\epsilon^4 - \frac56 \epsilon^{2/3} \\
&=
-\frac53 \epsilon^{2/3} + \frac{11}6 \epsilon^2 + \frac56
\epsilon^4 \\
&=
\left(-\frac53 + \frac{11}6 \epsilon^{4/3} + \frac56
\epsilon^{10/3}\right)\epsilon^{2/3}.
\end{split}
\]
Hence 
\[
\frac{|\phi(\epsilon)-\phi(0)|} {|\epsilon-0|^{2/3}}
=
\left|-\frac53 + \frac{11}6 \epsilon^{4/3} + \frac56
\epsilon^{10/3}\right|
\]
which has limit \(5/3\) as \(\epsilon\rightarrow 0\).  Since
\[
\|gf_\epsilon - gf\|_{1, \delta} \ge 
\frac{|\phi(\epsilon)-\phi(0)|} {|\epsilon-0|^{2/3}}
\]
we have that \(\|gf_\epsilon-gf\|_{1, \delta}\) does not converge to
0 as \(\epsilon\rightarrow 0\).
\end{proof}

In spite of this example, we do get continuity if there are
restrictions on \(g\).  The next lemma gives this.

\begin{lemma}\mylabel{CompCont} If \(g\in \text{Diff}^2(I)\), then
\(f\mapsto g\circ f\) is continuous on \(\text{Diff}_+^{1,
\delta}(I)\).  \end{lemma}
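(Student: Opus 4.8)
The plan is to show that if $f_n \to f$ in $\text{Diff}_+^{1,\delta}(I)$ then $g\circ f_n \to g\circ f$ in the same norm; since $\|\cdot\|_{1,\delta}$ is a norm, this sequential statement is continuity. (That $g\circ f$ again lies in $\text{Diff}_+^{1,\delta}(I)$ is already known: $g$, being $C^2$, lies in $\text{Diff}_+^{1,\delta}(I)$, and that space is a group.) Writing $M = \|g'\|_\infty$ and $L = \|g''\|_\infty$, both finite since $g\in\text{Diff}^2(I)$, I would use throughout the elementary product estimate for the seminorm $[u]_\delta := \sup_{t_1\ne t_2}\frac{|u(t_2)-u(t_1)|}{|t_2-t_1|^\delta}$ appearing in $\|\cdot\|_{1,\delta}$, namely $[uv]_\delta \le \|u\|_\infty [v]_\delta + \|v\|_\infty[u]_\delta$, together with the bounds $\|h'\|_\infty\le\|h\|_{1,\delta}$ and $\|h\|_\infty\le\|h\|_{1,\delta}$ from Lemma \ref{PreCompCont}.

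First I would dispose of the point-evaluation term in the norm: since $f_n(0)=f(0)=0$, the chain rule gives $(g\circ f_n)'(0)-(g\circ f)'(0) = g'(0)\,(f_n'(0)-f'(0))$, of modulus at most $M\|f_n-f\|_{1,\delta}\to 0$. Then I would split the derivative difference as $\Phi := (g\circ f_n)' - (g\circ f)' = A + B$, where $A = (g'\circ f_n - g'\circ f)\cdot f_n'$ and $B = (g'\circ f)\cdot(f_n'-f')$. The term $B$ is routine: $[B]_\delta \le M[f_n'-f']_\delta + \|f_n'-f'\|_\infty[g'\circ f]_\delta$, and since $[g'\circ f]_\delta\le L\|f'\|_\infty$ (because $g'$ is $L$-Lipschitz and $|t_2-t_1|\le|t_2-t_1|^\delta$ on $I$), both summands tend to $0$. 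For $A$, write $w := g'\circ f_n - g'\circ f$; the product estimate gives $[A]_\delta\le\|w\|_\infty[f_n']_\delta + \|f_n'\|_\infty[w]_\delta$. Now $\|f_n'\|_\infty$ and $[f_n']_\delta$ are bounded (both by $\|f_n\|_{1,\delta}$, hence by $\|f\|_{1,\delta}+1$ for large $n$), and $\|w\|_\infty\le L\|f_n-f\|_\infty\to 0$ by the mean value theorem, so the whole problem reduces to showing $[w]_\delta\to 0$.

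This last point is the crux, and it is exactly where the hypothesis $g\in C^2$ (rather than merely $C^1$, as in the preceding counterexample) enters. The function $w = g'\circ f_n - g'\circ f$ is of class $C^1$ — note that this uses only that $g'$ is $C^1$ and $f_n,f$ are $C^1$, so no second derivative of $f_n$ is invoked — with $w'(t) = g''(f_n(t))f_n'(t) - g''(f(t))f'(t)$; and because $g''$ is bounded, $\|w'\|_\infty\le L(\|f_n'\|_\infty+\|f'\|_\infty)$ is bounded uniformly in $n$ (for $n$ large), say by $W$. I would then apply the interpolation inequality: since $|w(t_2)-w(t_1)|\le\min(2\|w\|_\infty,\, W|t_2-t_1|)$ for all $t_1,t_2$, balancing the two bounds at $|t_2-t_1| = 2\|w\|_\infty/W$ gives $\frac{|w(t_2)-w(t_1)|}{|t_2-t_1|^\delta}\le W^\delta(2\|w\|_\infty)^{1-\delta}$, and as $0<\delta<1$ and $\|w\|_\infty\to 0$ this forces $[w]_\delta\to 0$. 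The essential obstacle is precisely that $g''$ is only continuous, not H\"older, so one cannot bound $[w]_\delta$ by differentiating under an integral sign; the interpolation between the $C^0$-bound (which vanishes) and the uniform $C^1$-bound (which is available exactly because $g''$ is bounded) is what circumvents this. Assembling the three estimates yields $\|g\circ f_n - g\circ f\|_{1,\delta}\to 0$.
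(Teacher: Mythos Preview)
Your proof is correct, and your two-term decomposition $A+B$ with the product estimate for the Hölder seminorm is cleaner than the paper's four-summand split. The genuine point of divergence is how you handle $[w]_\delta$ for $w=g'\circ f_n-g'\circ f$. The paper also differentiates $w$, but then shows $\|w'\|_\infty\to 0$ directly, using the \emph{uniform continuity} of $g''$ on the compact interval $I$: one chooses $\epsilon_1$ so that $|g''(f_n(x))-g''(f(x))|<\epsilon$ whenever $\|f_n-f\|_\infty<\epsilon_1$, which makes both summands in $|g''(f_n)f_n'-g''(f)f'|$ small. Since $|t_2-t_1|\le|t_2-t_1|^\delta$ on $I$, this already gives $[w]_\delta\le\|w'\|_\infty\to 0$. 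You instead settle for a uniform bound $\|w'\|_\infty\le W$ (needing only boundedness of $g''$) and then invoke the interpolation inequality $[w]_\delta\le W^\delta(2\|w\|_\infty)^{1-\delta}$ to conclude. Your route avoids any appeal to uniform continuity and packages the argument neatly; the paper's route is a touch more direct once one thinks of using that $g''$ is uniformly continuous, and it yields an explicit Lipschitz-type bound $\|gf-gf_0\|_{1,\delta}\le K\epsilon$ with $K$ depending only on $g$ and $f_0$. Your remark that ``one cannot bound $[w]_\delta$ by differentiating'' is thus slightly off---the paper does exactly that---but your alternative via interpolation is a perfectly good and arguably more robust substitute.
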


\begin{proof} Given \(g\in \text{Diff}^2(I)\), given \(f_0\in
\text{Diff}_+^{1, \delta}(I)\), and given \(\epsilon>0\), we will
find a \(K>0\) that depends only on \(g\) and \(f_0\), and
we will find an \(\epsilon_1\) that depends only on \(\epsilon\) and
\(g\) so that \(\|gf-gf_0\|_{1, \delta} < K\epsilon\) when
\(\|f-f_0\|_{1, \delta}< \epsilon_1\).

We start with \(\epsilon_1\).  With \(g\in \text{Diff}^2(I)\), we
know that \(g''\) is continous on the compact interval \(I\) and is
thus uinformly continuous.  Choose \(\epsilon_1\) so that whenever
\(|x-y|<\epsilon_1\) we have \(|g''(x)-g''(y)|<\epsilon\).  We also
require that \(\epsilon_1\le \epsilon\).  This does not
overdetermine \(\epsilon_1\).  In what follows, we will be assuming
\[\|f-f_0\|_{1, \delta} < \epsilon_1 \le \epsilon,\] so we can
safely use \(\epsilon\) in many places where we would have been
allowed to use \(\epsilon_1\).

Recall that \(f(0)=f_0(0)=0\).

What follows is a minor calculational stew.

The first part of \(\|gf-gf_0\|_{1, \delta}\) involves
\[
\begin{split}
|(gf)'(0)- (gf_0)'(0)|
&=
|g'(f(0))f'(0)- g'(f_0(0))f'_0(0)| \\
&=
|g'(0)| \, |f'(0)- f'_0(0)| \\
&\le
\|g'\|_\infty \, \|f- f_0\|_{1, \delta} \\
&\le
\epsilon \|g'\|_\infty  .
\end{split}
\]

The second part of \(\|gf-gf_0\|_{1, \delta}\) involves
\[
\begin{split}
&|((gf)'(t_2)-(gf)'(t_1)) - ((gf_0)'(t_2) - (gf_0)'(t_1))| \\
= &|g'(f(t_2))f'(t_2)
    - g'(f(t_1))f'(t_1)
    - g'(f_0(t_2))f'_0(t_2)
    + g'(f_0(t_1))f'_0(t_1)| \\
\le &|f'_0(t_1)|\,
     |g'(f(t_2))
    - g'(f(t_1))
    - g'(f_0(t_2))
    + g'(f_0(t_1))| \\
&+ |f'_0(t_1) - f'(t_1)|\,
  | g'(f(t_1)) -
    g'(f(t_2))| \\
&+ |f'_0(t_1) - f'_0(t_2)|\,
  | g'(f_0(t_2)) -
    g'(f(t_2))| \\
&+ |f'_0(t_1) - f'_0(t_2) - f'(t_1) + f'(t_2)|\,
  | g'(f(t_2))|
\end{split}
\]
What is needed now is an analysis of the four summands in the
expression that follows the inequality.

A factor of the first summand is 
\(|g'(f(t_2))
    - g'(f(t_1))
    - g'(f_0(t_2))
    + g'(f_0(t_1))|\) which is 
\(|(g'\circ f
    - g'\circ f_0)(t_2)
    - (g'\circ f
    - g'\circ f_0)(t_1)|\).  
This is the difference of the function \(g'f-g'f_0\) evaluated at
two places.  We will estimate the difference by estimating the
derivative \((g'f-g'f_0)'\).  Its absolute value is
bounded by
\[
\begin{split}
&|g''(f(x))f'(x)-g''(f(x))f'_0(x)| + 
|g''(f(x))f'_0(x)- g''(f_0(x))f'_0(x)| \\
=&|g''(f(x))|\,|f'(x)-f'_0(x)| + 
|g''(f(x))- g''(f_0(x))|\,|f'_0(x)|
\end{split}
\]
which can be made smaller than \(\|g''\|_\infty \epsilon +
\epsilon\|f'_0\|_\infty\) where the second \(\epsilon\) is derived from
our choice of \(\epsilon_1\) based on the uniform continuity of
\(g''\).  Now the first summand is bounded by 
\[
\begin{split}
&\|f'_0\|_\infty \Big(\epsilon\|g''\|_\infty +
\epsilon\|f'_0\|_\infty \Big) |t_2-t_1| \\
\le &\|f'_0\|_\infty \Big(\epsilon\|g''\|_\infty +
\epsilon\|f'_0\|_\infty \Big) |t_2-t_1|^\delta
\end{split}\]
since \(\delta<1\).

Using Lemma \ref{PreCompCont}, the second summand is bounded by
\[2\|g'\|_\infty \|f-f_0\|_{1, \delta}|t_2-t_1|^\delta
\le
2\|g'\|_\infty \epsilon|t_2-t_1|^\delta
.\]

Using the mean value thoerem and Lemma \ref{PreCompCont}, the third
summand is bounded by 
\[
\|f_0\|_{1, \delta}|t_2-t_1|^\delta \|g''\|_\infty \|f-f_0\|_{1,
\delta}
\le
\|f_0\|_{1, \delta} \|g''\|_\infty \epsilon
|t_2-t_1|^\delta
.\] 

The fourth summand equals 
\[|(f-f_0)'(t_2) - (f-f_0)'(t_1)|\,|g'(f(t_2))|\] and so is bounded
by 
\[
\|g'\|_\infty \|f-f_0\|_{1, \delta}|t_2-t_1|^\delta
\le
\|g'\|_\infty \epsilon|t_2-t_1|^\delta
.\]

Dividing the bounds on the four summands by \(|t_2-t_1|^\delta\) and
summing shows that the second part of \(\|gf-gf_0\|_{1, \delta}\)
is no larger than
\[
\Big(
\|f'_0\|_\infty(\|g''\|_\infty + \|f'_0\|_\infty) + 2\|g'\|_\infty + 
\|f_0\|_{1, \delta} \|g''\|_\infty + \|g'\|_\infty
\Big)\epsilon.
\]

Combining this with our estimate of the first part of
\(\|gf-gf_0\|_{1, \delta}\) and using Lemma \ref{PreCompCont} to
replace both \(\|f_0\|_\infty\) and \(\|f'_0\|_\infty\) by
\(\|f_0\|_{1, \delta}\) we have the following.
\[
\|gf-gf_0\|_{1, \delta} 
\le 
\Big(
4\|g'\|_\infty +
( \|f_0\|_{1, \delta})^2 + 
2\|f_0\|_{1, \delta} \|g''\|_\infty 
\Big) \epsilon
\]
Thus defining \(K\) to be equal to the expression in the large parentheses
gives a constant that depends only on \(g\) and \(f_0\).  This
proves the claimed continuity.
\end{proof}

To state the main theorem on which Theorem (S-2) is based, we need a
few more definitions.

For a space \(X\), let \(C_b(X)\) be the linear space of all
bounded, continuous, real valued functions on \(X\).  Now for \(F\in
C_b(\text{Diff}_+^{1, \delta}(I))\), for \(f\in \text{Diff}_+^{1,
\delta}(I)\),  and for \(g\in \text{Diff}_0^3(I)\), we
define \(F_g(f) = F(g^{-1}\circ f)\).

\begin{lemma} With \(F\) and \(g\) as above, \(F_g\) is in
\(C_b(\text{Diff}_+^{1, \delta}(I))\).  \end{lemma}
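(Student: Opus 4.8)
The plan is to treat the two defining properties of $C_b(\text{Diff}_+^{1,\delta}(I))$, boundedness and continuity, separately, with the first being immediate and the second carrying all the content. Before either, I first check that $F_g$ is even well defined as a function on $\text{Diff}_+^{1,\delta}(I)$, i.e. that $g^{-1}\circ f$ lies back in $\text{Diff}_+^{1,\delta}(I)$ for every $f$ there, so that $F$ may legitimately be evaluated at it.

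For well-definedness of the range, I would argue as follows. Since $g\in \text{Diff}_0^3(I)\subseteq \text{Diff}_+^3(I)$, the map $g$ is an increasing $C^3$ diffeomorphism of $I$ fixing the endpoints, so by the inverse function theorem $g^{-1}$ is again $C^3$ and fixes the endpoints; thus $g^{-1}\in \text{Diff}_+^3(I)$, and because (as already remarked in the excerpt) the existence of a second derivative supplies a H\"older constant for the first derivative, we get $g^{-1}\in \text{Diff}_+^{1,\delta}(I)$. Since $\text{Diff}_+^{1,\delta}(I)$ is closed under composition (it is a group), $g^{-1}\circ f\in \text{Diff}_+^{1,\delta}(I)$ for every $f\in \text{Diff}_+^{1,\delta}(I)$. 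Boundedness of $F_g$ is then instant: $F$ is bounded, say $|F|\le M$, so $|F_g(f)|=|F(g^{-1}\circ f)|\le M$ for all $f$.

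For continuity I would factor $F_g = F\circ L$, where $L\colon \text{Diff}_+^{1,\delta}(I)\to \text{Diff}_+^{1,\delta}(I)$ is the left-composition map $L(f)=g^{-1}\circ f$ (which lands in $\text{Diff}_+^{1,\delta}(I)$ by the previous paragraph). Since $g$ is $C^3$, its inverse is in particular $C^2$, so $g^{-1}\in \text{Diff}^2(I)$, and Lemma \ref{CompCont}, applied with its hypothesis function taken to be our $g^{-1}$, shows that $L$ is continuous on $\text{Diff}_+^{1,\delta}(I)$. As $F$ is continuous by hypothesis and a composite of continuous maps is continuous, $F_g=F\circ L$ is continuous.

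The only genuine obstacle is bookkeeping: one must correctly import Lemma \ref{CompCont}, which requires verifying that $g^{-1}$ is $C^2$ (so that left composition by it is continuous) and that $L$ maps back into $\text{Diff}_+^{1,\delta}(I)$ (so that $F$ can be evaluated). Both of these reduce to facts already in hand, namely the inverse function theorem applied to the $C^3$ map $g$ and the closure of $\text{Diff}_+^{1,\delta}(I)$ under composition. In particular, none of the delicate H\"older estimates of Lemma \ref{CompCont} need to be redone here; they are invoked wholesale.
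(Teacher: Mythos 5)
Your proof is correct and is essentially the paper's own argument: the paper disposes of this lemma in one line by citing Lemma \ref{CompCont}, applied (implicitly) to the $C^2$ map $g^{-1}$, exactly as you do. The extra bookkeeping you supply (well-definedness via the group structure of $\text{Diff}_+^{1,\delta}(I)$ and the inclusion $\text{Diff}_+^3(I)\subseteq \text{Diff}_+^{1,\delta}(I)$, plus the trivial boundedness) is what the paper leaves tacit.
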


\begin{proof}  This follows from Lemma \ref{CompCont}
\end{proof}

We can now state the following.

\begin{thm}[S-1]\mylabel{ShavMain} For any positive
\(\delta<\frac12\), there exists a linear functional \[L_\delta:
C_b(\text{Diff}_+^{1, \delta}(I)) \rightarrow \R\] so that
\begin{enumerate} \item \(L_\delta(F) = 1\) if \(F\) is the constant
function to 1, \item \(|L_\delta(F)| \le {\underset{f\in
\text{Diff}_+^{1, \delta}(I)}{\sup}} |F(f)|\), \item
\(L_\delta(F)\ge0\) for any non-negative \(F\in
C_b(\text{Diff}_+^{1, \delta}(I))\), and \item \(L_\delta(F_g) =
L_\delta(F)\) for any \(g\in \text{Diff}_0^3(I)\) and \(F\in
C_b(\text{Diff}_+^{1, \delta}(I))\).  \end{enumerate} \end{thm}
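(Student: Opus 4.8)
The plan is to obtain $L_\delta$ as an invariant mean assembled from a distinguished one–parameter family of probability measures on $\text{Diff}_+^{1,\delta}(I)$. Following Shavgulidze, the strategy is to coordinatize $\text{Diff}_+^{1,\delta}(I)$ by a space of continuous paths, transport a scaled family of Wiener measures onto it, integrate $F$ against these measures to get functionals $L_\lambda$ for which properties (1)--(3) are automatic, and then extract $L_\delta$ as a limit as the scale $\lambda\to\infty$, at which the measures become invariant under left translation by $\text{Diff}_0^3(I)$, giving (4). The earlier lemma that $F_g\in C_b(\text{Diff}_+^{1,\delta}(I))$ (via Lemma \ref{CompCont}) is what makes every quantity below well defined.

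First I would set up the parametrization. To $f\in\text{Diff}_+^{1,\delta}(I)$ associate $\phi=\log f'$; conversely any continuous $\phi$ with $\int_0^1 e^{\phi}=1$ recovers $f$ by $f(t)=\int_0^t e^{\phi(s)}\,ds$, which fixes the endpoints. Thus $\Phi\colon\phi\mapsto f$ is a bijection between a constrained path space and $\text{Diff}_+^{1,\delta}(I)$, and $f$ is of class $C^{1,\delta}$ exactly when $\phi$ is $\delta$-H\"older. This is where $\delta<\tfrac12$ enters: Brownian sample paths are almost surely $\delta$-H\"older precisely for $\delta<\tfrac12$, so the pushforward of Wiener measure under $\Phi$ is supported on $\text{Diff}_+^{1,\delta}(I)$ in exactly this range. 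I would therefore fix, for each $\lambda>0$, the Wiener measure $W_\lambda$ of variance parameter $\lambda$ and set $\mu_\lambda=\Phi_*W_\lambda$. Defining $L_\lambda(F)=\int F\,d\mu_\lambda$ then yields (1), (2), and (3) at once, since each $\mu_\lambda$ is a Borel probability measure and $F$ is bounded and continuous.

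The heart of the argument is asymptotic invariance. Left translation $T_g\colon f\mapsto g\circ f$ corresponds under $\Phi$ to the shift $\phi\mapsto\phi+\eta$ with $\eta(t)=\log g'(f(t))$; since $g\in\text{Diff}_0^3(I)$ fixes the endpoints of $I$, this shift preserves the constraint $\int_0^1 e^{\phi}=1$, and the conditions $g'(0)=g'(1)=1$ force $\eta(0)=\eta(1)=0$. Because $g$ is of class $C^3$, the vector $\eta$ lies in the Cameron--Martin space: $\eta'=(g''/g')\!\circ\! f\cdot f'$ is bounded, hence square integrable. The Cameron--Martin/Girsanov change of variables then produces an explicit Radon--Nikodym derivative
\[
\frac{d\bigl((T_g)_*\mu_\lambda\bigr)}{d\mu_\lambda}(f)=\exp\!\Bigl(\tfrac{1}{\lambda}\langle \eta,\phi\rangle-\tfrac{1}{2\lambda}\|\eta\|^2\Bigr)\cdot J_\lambda(f),
\]
where $J_\lambda$ collects the nonlinear Jacobian corrections. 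Under $\mu_\lambda$ the pairing $\langle\eta,\phi\rangle$ is Gaussian of variance $\lambda\|\eta\|^2$, so $\tfrac{1}{\lambda}\langle\eta,\phi\rangle$ has variance $\tfrac{1}{\lambda}\|\eta\|^2\to0$ while $\tfrac{1}{2\lambda}\|\eta\|^2\to0$; hence the derivative tends to $1$ in $L^1(\mu_\lambda)$, and $|L_\lambda(F_g)-L_\lambda(F)|\to0$ for each fixed $F$ and $g$.

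Finally I would pass to the limit. Applying a Banach limit $\text{LIM}$ to the bounded net $\bigl(L_\lambda(F)\bigr)_\lambda$ defines $L_\delta(F)=\text{LIM}_\lambda\,L_\lambda(F)$. A Banach limit is linear, positive, and normalized and respects the bound $|{\cdot}|\le\sup_f|F(f)|$, so (1)--(3) descend from the $L_\lambda$; and since $L_\lambda(F_g)-L_\lambda(F)\to0$, the Banach limit annihilates this difference, giving the invariance (4). I expect the main obstacle to be the change-of-variables step: the shift $\eta=\log g'\circ f$ depends on the path $\phi$ through $f=\Phi(\phi)$, so the induced transformation of path space is \emph{nonlinear}, and one needs a Ramer--Kusuoka-type Jacobian formula rather than the plain translation form of the Cameron--Martin theorem. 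Showing that $J_\lambda$ is well defined and that all correction terms are uniformly controlled and vanish as $\lambda\to\infty$ is the delicate analytic core of the proof.
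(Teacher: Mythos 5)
Your overall architecture (transport Wiener-type measures onto \(\text{Diff}_+^{1,\delta}(I)\) so that properties (i)--(iii) come for free from a probability measure, prove asymptotic invariance under \(\text{Diff}_0^3(I)\), then extract \(L_\delta\) with a Banach limit) matches the skeleton of the paper's argument, but the mechanism you propose for asymptotic invariance --- scaling the variance \(\lambda\to\infty\) and claiming the Girsanov/Ramer density of the translated measure tends to \(1\) in \(L^1(\mu_\lambda)\) --- has a genuine gap, precisely at the step you yourself flag as delicate. Your variance computation treats the shift \(\eta=\log g'\circ f\) as if its Cameron--Martin norm were fixed while \(\lambda\) grows, but \(\eta\) depends on the sample path: \(\|\eta\|^2=\int_0^1\big((g''/g')\circ f\big)^2(f'(t))^2\,dt\), and under \(\mu_\lambda\) this grows linearly in \(\lambda\). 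Indeed, changing variables \(u=f(t)\) gives \(\|\eta\|^2=\int_0^1 (g''/g')^2(u)\,f'(f^{-1}(u))\,du\), and for \(f'=e^{\phi}/\int_0^1 e^{\phi}\) with \(\phi\) of variance parameter \(\lambda\) the normalizing integral concentrates (Laplace asymptotics) on a set of length of order \(1/\lambda\) around the maximum of \(\phi\); consequently \(f'\circ f^{-1}\) is of order \(\lambda\) on most of \([0,1]\), so \(\|\eta\|^2\) is of order \(\lambda\int_0^1(g''/g')^2\). Hence \(\tfrac{1}{2\lambda}\|\eta\|^2\) and the variance of \(\tfrac1\lambda\langle\eta,\phi\rangle\) are \(O(1)\), not \(o(1)\): the \(1/\lambda\) gain is exactly cancelled, the Radon--Nikodym derivative does not tend to \(1\), and no refinement of the Ramer--Kusuoka Jacobian \(J_\lambda\) can rescue the claim as stated. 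This cancellation is exactly why quasi-invariance of these measures (known from \cite{MR1832477}) does not by itself yield an invariant mean.

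The paper's route is genuinely different at this point. Instead of one global path with large variance, it glues \(n\) independent \(\nu\)-distributed diffeomorphisms \(\varphi_1,\dots,\varphi_n\) along a random partition \(\mathbf x\in D_n\) drawn from the density \(u_n\) built out of the Bessel-type kernel \(v\), and the invariance statement is Theorem \ref{ShavThree}: \(|L_{\delta,n}(F_g)-L_{\delta,n}(F)|\to0\) as \(n\to\infty\). There the smallness of the quasi-invariance cocycle is not bought with a \(1/\lambda\) prefactor but with the mesh of the partition: in Lemma S-L8 the boundary terms \(X_k\) and the Schwarzian terms \(Y_k\) carry factors \((x_k-x_{k-1})\) and \((x_k-x_{k-1})^2\), giving a probability bound of order \(\epsilon^{1/3}\) when \(\|\mathbf x\|<\epsilon\); and Lemma S-L9 controls how \(g\) distorts the partition density \(u_n\) itself --- a term with no counterpart in your setup, which the specific choice of \(v\) (hence of \(u_n\), via Lemmas S-L3 through S-L6) is engineered to handle. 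So your proposal is not a rephrasing of the paper's proof but a different and, as written, broken one; repairing it would require replacing the \(\lambda\to\infty\) limit by something playing the role of the partition refinement. (For calibration: even the paper's own notes stop short of a complete verification --- the quasi-invariance section is empty, the proof of Lemma S-L5 is flagged as not matching \cite{shavgulidze}, and Theorem \ref{ShavThree} is stated without proof --- so the asymptotic-invariance step is exactly where all the difficulty is concentrated.)
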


The proof of Theorem (S-1) occupies the bulk of \cite{shavgulidze}.

\noindent {\bfseries Remarks.}We note that the linearity of
\(L_\delta\) and (i) implies that that \(L_\delta(C_K)=K\) where
\(C_K\) represents the constant function to \(K\).  Since
\(F-\inf(F)\) is non-negative, we get \(L_\delta(F-\inf(F))\ge0\)
from (iii), and then linearity implies that
\(L_\delta(F)\ge\inf(F)\).  Similarly, \(L_\delta(\sup(F)-F)\ge0\)
implies \(L_\delta(F)\le \sup(F)\).

\section{Reducing Theorem (S-2) to Theorem (S-1)}

Theorem (S-1) says that a certain space of functions is ``amenable
with respect to the action of a certain subgroup.''  In this case
the space of functions is \(\text{Diff}_+^{1, \delta}(I)\) and the
subgroup is \(\text{Diff}_0^3(I)\).  To apply this to a group that
is contained in \(\text{Diff}_0^3(I)\), such as a \(G\) that
satisfies (a), one is presented with the problem of saying something
about \(C_b(G)\) based on knowledge of \(C_b(\text{Diff}_+^{1,
\delta}(I))\).

This is done by finding a way to extend an arbitrary element
\(F:G\rightarrow \R\) of \(C_b(G)\) to all of \(\text{Diff}_+^{1,
\delta}(I)\) in such a way that various properties of \(F\) are
preserved.  

We introduce some necessary tools.

Pick a positive \(\delta< \frac12\).  For an \(f\in
\text{Diff}_+^{1, \delta}(I)\), define \[p_\delta(f) = |\log(f'(0))|
+ \underset{t_1, t_2 \in I}{\sup} \frac{|\log(f'(t_2)) -
\log(f'(t_1))|} {|t_2-t_1|^\delta}.\]  

\begin{lemma}  If \(f\) is in \(\text{Diff}_+^{1, \delta}(I)\) with
\(0<\delta<1\), then \(p_\delta(f)\) is finite.  \end{lemma}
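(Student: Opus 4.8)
The plan is to show that $\log f'$ inherits a H\"older condition from $f'$; this bounds the supremum term, while the first term $|\log(f'(0))|$ is manifestly finite. First I would observe that since $f \in \text{Diff}_+^{1, \delta}(I)$, its derivative $f'$ is continuous on the compact interval $I$, and because $f$ is a genuine $C^1$ diffeomorphism fixing the endpoints, $f$ is increasing and $f'$ is nowhere zero. Hence $f'$ attains a minimum $m>0$ and a maximum $M<\infty$ on $I$, so the range of $f'$ is contained in $[m, M]\subseteq(0,\infty)$. In particular $f'(0)\in[m,M]$, so $|\log(f'(0))|$ is a finite real number, dispatching the first summand.

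Next comes the main observation: the logarithm is Lipschitz on $[m, M]$. Since $(\log)'(x)=1/x\le 1/m$ throughout $[m, M]$, the mean value theorem gives $|\log a - \log b|\le \frac{1}{m}|a-b|$ for all $a,b\in[m,M]$. Applying this with $a=f'(t_2)$ and $b=f'(t_1)$, and then invoking the assumed H\"older condition on $f'$ (with some constant $C$ and exponent $\delta$, which exists because $f\in C_0^{1,\delta}(I)$), I obtain
\[
|\log(f'(t_2)) - \log(f'(t_1))| \le \frac{1}{m}\,|f'(t_2)-f'(t_1)| \le \frac{C}{m}\,|t_2-t_1|^\delta
\]
for all $t_1,t_2\in I$. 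Dividing by $|t_2-t_1|^\delta$ shows that the supremum defining the second summand of $p_\delta(f)$ is bounded above by $C/m$, which is finite.

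Combining the two bounds yields $p_\delta(f)\le |\log(f'(0))| + C/m < \infty$, as claimed. There is no serious obstacle here: the only step that must not be glossed over is the verification that $m=\min_I f'>0$, which rests entirely on $f$ being a diffeomorphism (so $f'$ never vanishes) together with the compactness of $I$ and continuity of $f'$. Everything else is a routine composition of the Lipschitz estimate for $\log$ on the positive interval $[m,M]$ with the H\"older estimate for $f'$ that is built into membership in $\text{Diff}_+^{1,\delta}(I)$. Note that $\delta<1$ plays no role in this particular argument; the conclusion holds for the full stated range $0<\delta<1$.
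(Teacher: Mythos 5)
Your proof is correct and follows essentially the same route as the paper's: both bound the second summand by composing the Lipschitz estimate for $\log$ on $[m,M]$ (with constant $1/m$, the maximum of $\log'$ there) with the H\"older estimate for $f'$ built into membership in $\text{Diff}_+^{1,\delta}(I)$. Your treatment is slightly more explicit about why $m>0$ and why the first summand is finite, but the argument is the same.
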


\begin{proof}  We only have to worry about the second summand.  We
need to control \[|\log(f'(t_2))-\log(f'(t_1))|\] in comparison with
\(|t_2-t_1|^\delta\).  If \(m\) is the minimum of \(f'\) and \(M\)
is the maximum of \(f'\) on \(I\), we have \(0<m\le M\) because of
the restrictions on \(\text{Diff}_+^{1, \delta}(I)\).  On \([m,M]\)
the log function is differentiable with maximum derivative \(L\).
Thus we have 
\[
\begin{split}
|\log(f'(t_2))-\log(f'(t_1))| 
&\le 
L|f'(t_2)-f'(t_1| \\
&\le
L\|f\|_{1, \delta}|t_2-t_1|^\delta.
\end{split}
\]
This is all that is needed to show the finiteness of
\(p_\delta(f)\).
\end{proof}

In the following, note that if \(m\) is the minimum of \(f'\) over
\(I\) for an \(f\in \text{Diff}_+^1(I)\), then \(1/m\) is the maximum
of \((f^{-1})'\) over \(I\).  The lemma is stated with too strong a
hypothesis on \(g\), but it is what gets used later.

\begin{lemma}\mylabel{PrePreCompact} Let \(g\in \text{Diff}_0^3(I)\)
and \(f\in \text{Diff}_+^{1, \delta}(I)\) be such that
\(p_\delta(g\circ f) \le C\) for some \(C>0\).  Let \(m\) be the
minimum of \(f'\) on \(I\).  Then \(\psi=\log(g')\) is H\"older with
exponent \(\delta\) and H\"older constant \(C_g = (C +
p_\delta(f))/m^\delta\).  \end{lemma}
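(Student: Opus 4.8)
The plan is to bound the Hölder seminorm of $\psi = \log(g')$ by transporting the Hölder estimates we already have for $\log((g\circ f)')$ and for $\log(f')$ through the change of variable furnished by $f$. The starting point is the chain rule: since $(g\circ f)'(t) = g'(f(t))\,f'(t)$, taking logarithms gives
\[
\log((g\circ f)'(t)) = \psi(f(t)) + \log(f'(t)),
\]
so that $\psi(f(t)) = \log((g\circ f)'(t)) - \log(f'(t))$.

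Next, given arbitrary $s_1, s_2 \in I$ at which we wish to estimate $|\psi(s_2)-\psi(s_1)|$, I would use that $f$ is an increasing homeomorphism of $I$ onto $I$ to write $s_i = f(t_i)$ for unique $t_i \in I$. Substituting the displayed identity yields
\[
\psi(s_2)-\psi(s_1) = \big(\log((g\circ f)'(t_2)) - \log((g\circ f)'(t_1))\big) - \big(\log(f'(t_2)) - \log(f'(t_1))\big),
\]
and the triangle inequality bounds the right-hand side by the sum of the two Hölder differences. The first is at most $C\,|t_2-t_1|^\delta$, because $p_\delta(g\circ f) \le C$ forces the Hölder seminorm of $\log((g\circ f)')$ to be at most $C$; the second is at most $p_\delta(f)\,|t_2-t_1|^\delta$ by the same reasoning applied to $f$. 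Hence $|\psi(s_2)-\psi(s_1)| \le (C + p_\delta(f))\,|t_2-t_1|^\delta$.

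The final step converts $|t_2-t_1|$ back into $|s_2-s_1|$. Since $m$ is the minimum of $f'$ on $I$ and $m>0$ (as $f\in\text{Diff}_+^1(I)$ is an increasing diffeomorphism with continuous derivative on the compact interval $I$), the mean value theorem gives $|s_2-s_1| = |f(t_2)-f(t_1)| \ge m\,|t_2-t_1|$, so $|t_2-t_1|^\delta \le |s_2-s_1|^\delta / m^\delta$. Combining the two estimates yields
\[
|\psi(s_2)-\psi(s_1)| \le \frac{C+p_\delta(f)}{m^\delta}\,|s_2-s_1|^\delta,
\]
which is exactly the claimed Hölder bound with constant $C_g = (C+p_\delta(f))/m^\delta$.

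The only real subtlety is to remember to measure the Hölder difference of $\psi$ at the image points $s_i = f(t_i)$ rather than at $t_i$ themselves, that is, to pull the estimate back through $f$. Once that change of variables is in place, the factor $m^{-\delta}$ arises naturally from the lower bound on $f'$, and I expect no further obstacle; the remaining manipulations are entirely routine applications of the triangle inequality and the definitions of $p_\delta$ and of the Hölder constant.
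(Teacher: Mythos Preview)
Your proof is correct and follows essentially the same approach as the paper's own proof: both write $\psi(f(t)) = \log((g\circ f)'(t)) - \log(f'(t))$ via the chain rule, apply the triangle inequality to bound the difference by $(C+p_\delta(f))|t_2-t_1|^\delta$, and then convert $|t_2-t_1|$ to $|s_2-s_1|$ using the lower bound $m$ on $f'$. The only cosmetic difference is notation for the preimage points.
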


\begin{proof} Fix \(s,t\) with \(s<t\) in \(I\) and set
\(y=f^{-1}(t)\) and \(x=f^{-1}(s)\).  Then
\[
\begin{split}
\psi(t) - \psi(s) 
&=
\log(g'(f(y))) - \log(g'(f(x))) \\
&=
\log(g'(f(y))f'(y)) - \log(f'(y)) - [\log(g'(f(x))f'(x)) -
\log(f'(x))]
\\
&=
\log((g\circ f)'(y)) - \log((g\circ f)'(x)) - [\log(f'(y)) -
\log(f'(x))].
\end{split}
\]
This shows that 
\[
\begin{split}
|\psi(t) - \psi(s)| 
&\le p_\delta(g\circ f)|y-x|^\delta + p_\delta(f) |y-x|^\delta
\\
&\le (C+p_\delta(f))|f^{-}(t)- f^{-1}(s)|^\delta
\\
&\le (C+p_\delta(f))\frac{1}{m^\delta}|t-s|^\delta.
\end{split}\]
This verifies the claimed constant.
\end{proof}

\subsection{The Arzela-Ascoli Theorem}  A collection of theorems
about the compactness of certain spaces of functions is known by
various names.  We will make no attempt to be accurate about the
names.  We take our information from Munkres \cite{MR0464128},
Section 7-3.  A generalization that we do not need is in
\cite{MR0464128} Section 7-6.

Let \((Y,d)\) be a metric space, \(X\) a topological space and
\(C(X,Y)\) the set of continuous functions from \(X\) to \(Y\).  A
set \(S\) of functions in \(C(X,Y)\) is {\itshape equicontinuous} at
\(x_0\) if for ever \(\epsilon>0\) there is an open \(U\) containing
\(x_0\) so that for all \(f\in S\) and \(x\in U\) we have \(d(f(x),
f(x_0))< \epsilon\).  If \(S\) is equicontinuous at all \(x_0\in
X\), then \(S\) is {\itshape equicontinuous}.

The following is Theorem 3.3 of Chapter 7 of \cite{MR0464128}.

\begin{thm} Let \(X\) be a compact topological space and consider
\(C(X, \R^n)\) with the sup (uniform) metric.  A subset of \(C(X,
\R^n)\) is compact if and only if it is closed, bounded, and
equicontinuous.  \end{thm}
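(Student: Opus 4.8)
The plan is to deduce the theorem from the standard metric-space fact that, in a \emph{complete} metric space, a subset is compact if and only if it is closed and totally bounded. Since $X$ is compact, a uniform limit of continuous $\R^n$-valued functions is again continuous, so $C(X,\R^n)$ with the sup metric is complete. Thus both implications of the theorem come down to translating ``totally bounded'' into ``bounded and equicontinuous.''

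For the forward direction, suppose $S\subseteq C(X,\R^n)$ is compact. Then $S$ is automatically closed and bounded, since these hold for any compact subset of a metric space. For equicontinuity at a point $x_0$, I would invoke that $S$ is totally bounded: given $\epsilon>0$, cover $S$ by finitely many balls of radius $\epsilon$ centered at functions $f_1,\dots,f_k$. Each $f_i$ is continuous at $x_0$, so there is an open $U_i\ni x_0$ on which $|f_i(x)-f_i(x_0)|<\epsilon$; put $U=\bigcap_i U_i$. For an arbitrary $f\in S$, choose $f_i$ with sup distance less than $\epsilon$; the triangle inequality then gives $|f(x)-f(x_0)|<3\epsilon$ for all $x\in U$, which is exactly equicontinuity.

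For the reverse direction, suppose $S$ is closed, bounded, and equicontinuous. As a closed subset of a complete space, $S$ is complete, so it suffices to show $S$ is totally bounded. Given $\epsilon>0$, equicontinuity supplies for each $x\in X$ an open $U_x\ni x$ on which every $f\in S$ satisfies $|f(y)-f(x)|<\epsilon$; compactness of $X$ extracts a finite subcover $U_{x_1},\dots,U_{x_m}$. Boundedness of $S$ confines all the values $f(x_j)$ (for $f\in S$ and $1\le j\le m$) to a bounded, hence totally bounded, subset of $\R^n$, so these finitely many coordinates take values in finitely many $\epsilon$-cells. Grouping the elements of $S$ according to which cell each $f(x_j)$ lands in partitions $S$ into finitely many classes, and equicontinuity forces any two functions in the same class to agree on every $x\in X$ up to a fixed multiple of $\epsilon$ (estimate $|f(x)-g(x)|$ by inserting $f(x_j)$ and $g(x_j)$ for the $x_j$ with $x\in U_{x_j}$). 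Choosing one representative per nonempty class yields a finite net.

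The main obstacle is this last assembly step in the reverse direction. One must respect that $X$ is merely a compact topological space, with no assumed metrizability or separability, so no sequential selection or diagonal argument is available; total boundedness built from the finite cover of $X$ together with the totally bounded range is the correct mechanism. The only genuine care needed is the bookkeeping that tracks how the cell-width $\epsilon$ and the radius of the resulting net are related, which should be carried out once and then quantified over all $\epsilon>0$.
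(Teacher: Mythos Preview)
Your argument is correct and is one of the standard proofs of the Arzel\`a--Ascoli theorem. There is nothing to compare against, however: the paper does not prove this statement at all. It is quoted verbatim as Theorem~3.3 of Chapter~7 of Munkres' \emph{Topology} \cite{MR0464128} and used as a black box, so the paper's ``proof'' is simply a citation.

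One very minor quibble: in your opening sentence you attribute the completeness of $C(X,\R^n)$ to compactness of $X$ via preservation of continuity under uniform limits. Uniform limits of continuous functions are continuous on any domain; compactness of $X$ is what makes the sup metric finite in the first place (and hence a genuine metric on all of $C(X,\R^n)$). Completeness then follows from completeness of $\R^n$. This does not affect the validity of your proof.
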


It is an elementary exercise to show that the theorem can be
restated to read that a subset \(S\) of \(C(X, \R^n)\) with \(X\)
compact has compact closure if it is equicontinuous and there is one
point \(x\in X\) (equivalently, for every point \(x\in X\)) so that
the set \(\{f(x)\mid f\in X\}\) is bounded.

The point of all this is the following.

\begin{lemma}\mylabel{PreFinite} Let \(f\) be in \(\text{Diff}_+^{1,
\delta}(I)\) and let \(G\subseteq \text{Diff}_0^3(I)\).  Then for
\(C>0\) the set \[A_C = \{\psi=\log(g') \mid g\in
G,\,p_\delta(g\circ f)\le C\}\] has compact closure in \(C(I)\) with
the sup metric.  \end{lemma}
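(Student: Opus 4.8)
The plan is to apply the Arzela-Ascoli theorem (in its ``compact closure'' form stated just above) to the set $A_C \subseteq C(I)$. Since $I=[0,1]$ is compact, it suffices to establish two things: first, that $A_C$ is equicontinuous, and second, that there is a single point $t\in I$ at which $\{\psi(t)\mid \psi\in A_C\}$ is bounded. I would take $t=0$ for the pointwise bound and derive both properties from the H\"older estimate supplied by Lemma \ref{PrePreCompact}.

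First I would observe that every $\psi\in A_C$ has the form $\psi=\log(g')$ for some $g\in G$ with $p_\delta(g\circ f)\le C$, and that Lemma \ref{PrePreCompact} applies verbatim to each such $g$. That lemma tells us $\psi$ is H\"older with exponent $\delta$ and H\"older constant $C_g=(C+p_\delta(f))/m^\delta$, where $m$ is the minimum of $f'$ on $I$. The crucial point is that this constant does \emph{not} depend on $g$: the quantities $C$, $p_\delta(f)$, and $m$ are all fixed once $f$ and $C$ are fixed. So there is a \emph{uniform} H\"older constant $C_0=(C+p_\delta(f))/m^\delta$ valid for every $\psi\in A_C$. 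Equicontinuity at any $t_0$ is then immediate: given $\epsilon>0$, choosing $|t-t_0|<(\epsilon/C_0)^{1/\delta}$ forces $|\psi(t)-\psi(t_0)|\le C_0|t-t_0|^\delta<\epsilon$ simultaneously for all $\psi\in A_C$.

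For the pointwise bound I would evaluate at $t=0$. Here I would use that $g\in \text{Diff}_0^3(I)$ means $g'(0)=1$, so $\psi(0)=\log(g'(0))=\log 1=0$ for every $\psi\in A_C$. Thus $\{\psi(0)\mid\psi\in A_C\}=\{0\}$ is trivially bounded. Combining the uniform H\"older bound with this anchoring at $0$, every $\psi\in A_C$ satisfies $|\psi(t)|=|\psi(t)-\psi(0)|\le C_0|t|^\delta\le C_0$ for all $t\in I$, so $A_C$ is in fact uniformly bounded as well. Having verified equicontinuity and pointwise boundedness at a single point, the Arzela-Ascoli theorem yields that $A_C$ has compact closure in $C(I)$ with the sup metric, which is the conclusion.

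The step I expect to matter most is recognizing that the H\"older constant in Lemma \ref{PrePreCompact} is uniform over $g\in G$: this is exactly what converts a pointwise H\"older estimate into the equicontinuity hypothesis of Arzela-Ascoli. The remaining ingredients---the vanishing of $\psi$ at $0$ coming from the defining condition $g'(0)=1$ of $\text{Diff}_0^3(I)$, and the passage from a uniform modulus of continuity to equicontinuity---are routine, so there is no real obstacle once the uniformity is isolated.
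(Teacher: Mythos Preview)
Your proof is correct and follows essentially the same approach as the paper: apply Arzela--Ascoli, using Lemma~\ref{PrePreCompact} to get a uniform H\"older constant for equicontinuity, and check boundedness at the single point $t=0$. The one small difference is in the pointwise bound: you exploit the hypothesis $G\subseteq\text{Diff}_0^3(I)$ to conclude $\psi(0)=\log(g'(0))=0$ directly, whereas the paper instead extracts $|\log(g'(0))|\le C+|\log(f'(0))|$ from the first summand of $p_\delta(g\circ f)\le C$ (an argument that would work under the weaker hypothesis $G\subseteq\text{Diff}_+^3(I)$); your shortcut is valid and slightly cleaner given the stated hypothesis.
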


\begin{proof} A summand of \(p_\delta(g\circ f)\) is \(|\log((g\circ
f)'(0))|\).  For \(f\) to be in \(\text{Diff}_+^{1, \delta}(I)\), we
must have \(f(0)=0\).  So \(p_\delta(g\circ f)\le C\) implies that
\[|\log(g'(f(0))) + \log(f'(0))| = |\log(g'(0)) + \log(f'(0))| \le C\]
giving that \(|\log(g'(0))|\le C+|\log(f'(0))|\) and 
\(\{\psi(0)\mid \psi \in A_C\}\) is bounded.

By Lemma \ref{PrePreCompact} any \(\psi\in A_C\) realized as
\(\psi=\log(g')\) satisfies \[|\psi(t_2)-\psi(t_1)| \le
C_g|t_2-t_1|^\delta\] where \(C_g = (C + p_\delta(f))/m^\delta\)
with \(m\) the minimum of \(f'\) on \(I\).  Thus \(C_g\) depends
only on \(C\), \(f\) and \(\delta\) and not on \(g\).  Thus \(A_C\)
is equicontinuous.  \end{proof}

\begin{cor}\mylabel{IsFinite} Let \(f\) be in \(\text{Diff}_+^{1,
\delta}(I)\) and let \(G\subseteq \text{Diff}_0^3(I)\) satsify
condition (a).  Then for
\(C>0\) the set \[A_C = \{g\in
G\mid \,p_\delta(g\circ f)\le C\}\] is finite.  \end{cor}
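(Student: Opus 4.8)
The plan is to combine the compactness established in Lemma \ref{PreFinite} with the uniform separation of log-derivatives guaranteed by condition (a). Write $C_0$ for the positive constant furnished by (a), so that any two distinct elements $g_1, g_2 \in G$ satisfy $\sup_{t \in I} |\log(g_1'(t)) - \log(g_2'(t))| \ge C_0$. (I use $C_0$ to avoid collision with the bound $C$ appearing in the definition of $A_C$.) Consider the map $\Phi \colon G \to C(I)$ given by $\Phi(g) = \log(g')$, where $C(I)$ carries the sup metric.

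First I would record that $\Phi$ is injective on $G$ and, more strongly, that its image is $C_0$-separated: condition (a) says precisely that distinct $g_1, g_2 \in G$ satisfy $\|\Phi(g_1) - \Phi(g_2)\|_\infty \ge C_0 > 0$, so the $\Phi$-images of distinct group elements lie at sup-distance at least $C_0$. Next, by Lemma \ref{PreFinite} the set $\Phi(A_C) = \{\log(g') \mid g \in G,\, p_\delta(g \circ f) \le C\}$ has compact closure, call it $K$, in $C(I)$ with the sup metric. Thus $\Phi(A_C)$ is a $C_0$-separated subset of the compact metric space $K$.

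The concluding step is the elementary fact that a compact metric space contains no infinite $C_0$-separated subset: compactness gives total boundedness, so $K$ is covered by finitely many open balls of radius $C_0/2$, and each such ball can contain at most one point of a $C_0$-separated set, since two points in a common ball would be at distance less than $C_0$. Hence $\Phi(A_C)$ is finite, and because $\Phi$ is injective on $G$, the set $A_C$ is finite as well.

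I do not expect a genuine obstacle here: the substantive inputs are Lemma \ref{PreFinite} (itself an Arzela-Ascoli argument) together with condition (a), and the total-boundedness/pigeonhole argument tying them together is routine. The one point worth checking explicitly is that condition (a) provides separation in exactly the sup metric used in Lemma \ref{PreFinite}; this holds because both $p_\delta$ and the set $A_C$ are built from the same log-derivative data $\log(g')$, so the hypotheses align without any adjustment.
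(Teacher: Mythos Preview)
Your proposal is correct and follows essentially the same approach as the paper: both invoke Lemma \ref{PreFinite} to place $\{\log(g')\mid g\in A_C\}$ in a compact subset of $C(I)$, then use condition (a) to see that these log-derivatives are uniformly separated in the sup metric, forcing finiteness. Your write-up is a bit more explicit about the total-boundedness/pigeonhole step and the injectivity of $g\mapsto\log(g')$, but the argument is the same.
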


\begin{proof}  The set \(\{\log(g')\mid g\in A_C\}\) lies in a
compact subset of \(C(I)\) under the sup metric.  However condition
(a) is exactly the statement that there is a \(C>0\) so that the
balls of radius \(C/2\) in the sup metric on \(C(I)\) centered at
the elements of \(LG=\{\log(g')\mid g\in G\}\) are pairwise disjoint.
Thus only finitely many elements of \(LG\) can lie in a compact set.
\end{proof}

We now apply the corollary to proof Theorem \ref{ShavThm} from
Theorem \ref{ShavMain}.  To do this we need to establish the
continuity of the function \(p_\delta\) and we need to define
another function and establish its continuity.

Recall that we work with a positive \(\delta<1/2\) and recall the
definition
\[p_\delta(f) = |\log(f'(0))|
+ \underset{t_1, t_2 \in I}{\sup} \frac{|\log(f'(t_2)) -
\log(f'(t_1))|} {|t_2-t_1|^\delta}.\]  
Define \[r_\delta(f) = \underset{h\in G}{\inf}\left(p_\delta(h^{-1}
\circ f)\right)\] where \(G\) is a subgroup of
\(\text{Diff}_0^3(I)\) that satisfies condition (a).

\begin{lemma} The functions \(p_\delta\) and \(r_\delta\) are
continuous from \(\text{Diff}_+^{1, \delta}(I)\) to \(\R\).
\end{lemma}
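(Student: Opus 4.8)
The plan is to prove continuity of $p_\delta$ and $r_\delta$ separately, establishing $p_\delta$ first and then leveraging it to handle $r_\delta$. The target topology on $\text{Diff}_+^{1,\delta}(I)$ is that of $\|\cdot\|_{1,\delta}$, so throughout I am measuring closeness of inputs in the H\"older-$C^1$ norm and I want the outputs in $\R$ to vary continuously.

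**For $p_\delta$**, I would fix $f_0$ and a nearby $f$ with $\|f-f_0\|_{1,\delta}$ small, and estimate $|p_\delta(f)-p_\delta(f_0)|$ directly. The function $p_\delta$ has two summands: the point-value term $|\log(f'(0))|$ and the H\"older seminorm of $\log(f')$. For the first term, since $f\mapsto f'(0)$ is continuous (it is dominated by $\|f-f_0\|_{1,\delta}$ by Lemma \ref{PreCompCont}) and $f_0'(0)>0$, composing with $\log$ and $|\cdot|$ is continuous. The real work is the seminorm term. The key idea is that the map $f\mapsto \log(f')$ sends a neighborhood of $f_0$ in $\text{Diff}_+^{1,\delta}(I)$ into a bounded, uniformly lower-bounded region where $\log$ is Lipschitz, so I can transfer H\"older estimates from $f'$ to $\log(f')$. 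Concretely, if $f$ is close to $f_0$ in the $1,\delta$-norm then $\|f'-f_0'\|_\infty$ is small (by Lemma \ref{PreCompCont}), so $f'$ stays in a fixed compact interval $[m,M]\subset(0,\infty)$ on which $\log$ has a bounded derivative $L$. Writing $\log(f')-\log(f_0')$ and estimating its H\"older seminorm, I expect a bound of the form $\text{const}\cdot\|f-f_0\|_{1,\delta}$, where the constant depends only on $f_0$ (through $m$, $M$, and the second derivative of $\log$ on $[m,M]$). This gives continuity of $p_\delta$ at $f_0$.

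**For $r_\delta$**, the definition is an infimum over the group $G$, $r_\delta(f)=\inf_{h\in G} p_\delta(h^{-1}\circ f)$. The strategy is to show $r_\delta$ is continuous by exploiting two facts. First, an infimum of a uniformly equicontinuous family of functions is continuous, so I would try to show that the family $\{f\mapsto p_\delta(h^{-1}\circ f)\}_{h\in G}$ is equicontinuous. Second, and this is where Corollary \ref{IsFinite} enters, condition (a) guarantees that for any fixed $f$ only finitely many $h\in G$ can make $p_\delta(h^{-1}\circ f)$ small, so near any point the infimum is effectively realized over a finite set. I would fix $f_0$, let $C_0=r_\delta(f_0)+1$, and use Corollary \ref{IsFinite} (applied to the set $\{h\in G : p_\delta(h^{-1}\circ f_0)\le C_0\}$) to reduce the infimum in a neighborhood to a finite collection of the functions $f\mapsto p_\delta(h^{-1}\circ f)$. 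Each such function is continuous because $f\mapsto h^{-1}\circ f$ is continuous (Lemma \ref{CompCont}, since $h^{-1}\in\text{Diff}_0^3(I)\subseteq\text{Diff}^2(I)$) and $p_\delta$ is continuous by the first part. A minimum of finitely many continuous functions is continuous, so $r_\delta$ is continuous at $f_0$.

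**The main obstacle** will be making the reduction in the $r_\delta$ argument rigorous, namely controlling the infimum uniformly on a neighborhood so that the far-away group elements $h$ (those with large $p_\delta(h^{-1}\circ f_0)$) cannot suddenly produce a smaller value of $p_\delta(h^{-1}\circ f)$ for $f$ near $f_0$. Handling this requires showing that $p_\delta(h^{-1}\circ f)$ does not drop too much as $f$ moves a little, uniformly in $h$ \emph{outside} the finite set; one way is to note that perturbing $f$ by a small $1,\delta$-amount changes $p_\delta(h^{-1}\circ f)$ by an amount bounded independently of $h$ (an equicontinuity estimate surviving precomposition), so any $h$ achieving a value below $r_\delta(f_0)-1$ for nearby $f$ would already achieve a value below $C_0$ at $f_0$ and hence lie in the finite set. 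Establishing this uniform-in-$h$ stability is the delicate step, and it is exactly what ties together the continuity of $p_\delta$, the composition continuity of Lemma \ref{CompCont}, and the finiteness of Corollary \ref{IsFinite}.
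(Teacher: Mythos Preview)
Your treatment of \(p_\delta\) is essentially the paper's: bound \(\log\) by its Lipschitz constant on a compact subinterval of \((0,\infty)\) determined by \(f_0\), and estimate the H\"older seminorm of \(\log(f')-\log(f_0')\) in terms of \(\|f-f_0\|_{1,\delta}\).

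For \(r_\delta\) your overall architecture is right and matches the paper: reduce the infimum, on a neighborhood of \(f_0\), to a finite subset of \(G\), and then take a minimum of finitely many continuous functions \(f\mapsto p_\delta(h^{-1}\circ f)\). The gap is in your mechanism for the reduction. You assert that perturbing \(f\) changes \(p_\delta(h^{-1}\circ f)\) by an amount bounded \emph{independently of \(h\)}. Writing \(\log\big((h^{-1}\circ f)'\big)=\psi\circ f+\log(f')\) with \(\psi=\log\big((h^{-1})'\big)\), the \(h\)-dependent part of the change is \([\psi\circ f-\psi\circ f_0]_\delta\), and bounding this requires control on the modulus of continuity of \(\psi\). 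Since \(\psi'=(h^{-1})''/(h^{-1})'\) carries no uniform bound over \(G\), neither Lemma~\ref{CompCont} (which fixes \(h\)) nor the continuity of \(p_\delta\) (whose constants depend on the basepoint, here \(h^{-1}\circ f_0\)) yields the uniform-in-\(h\) equicontinuity you invoke.

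The paper avoids this by reversing the logic: rather than bounding the change in \(p_\delta(h^{-1}\circ f)\), it bounds the relevant \(h\)'s themselves. Lemma~\ref{PrePreCompact} says that whenever \(p_\delta(g^{-1}\circ\tilde f)\le C\), the H\"older constant of \(\log((g^{-1})')\) is at most \(K_C(\tilde f)=(C+p_\delta(\tilde f))\,\|1/\tilde f'\|_\infty^\delta\), and this bound is continuous in \(\tilde f\). Fix \(C>r_\delta(f_0)\); the finitely many \(g\) with \(p_\delta(g^{-1}\circ f_0)<C\) and the continuity of each \(f\mapsto p_\delta(g^{-1}\circ f)\) give a neighborhood \(U\) on which \(r_\delta<C\). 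Shrink \(U\) so that \(K_C(\tilde f)<D\) for a fixed \(D\) and all \(\tilde f\in U\). Then every \(g\) relevant to \emph{any} \(\tilde f\in U\) has \(\log((g^{-1})')\) H\"older with constant at most \(D\), so by the Arzel\`a--Ascoli argument of Lemma~\ref{PreFinite} together with condition~(a) the set \(N_G(U)=\bigcup_{\tilde f\in U}\{g:p_\delta(g^{-1}\circ\tilde f)<C\}\) is finite. On \(U\), \(r_\delta\) is then the minimum over this single finite set. The missing ingredient in your outline is precisely Lemma~\ref{PrePreCompact}, which turns the inequality \(p_\delta(g^{-1}\circ\tilde f)\le C\) into a bound on \(g\) that depends on \(\tilde f\) but not on \(g\).
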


\begin{proof}  For \(p_\delta\) we must show that we can control
\(|p_\delta(f)-p_\delta(f_0)|\) by keeping \(\|f-f_0\|_{1, \delta}\)
small.  If \(\|f-f_0\|_{1, \delta}< \epsilon\) then from Lemma
\ref{PreCompCont} we have \(\|f-f_0\|_\infty< \epsilon\) and 
\(\|f'-f'_0\|_\infty< \epsilon\).  We will also use the fact that
elements of \(\text{Diff}_+^{1, \delta}(I)\) have continuous
positive first derivatives that are bounded away from 0.  For the
following, we will let \(m_f\) and \(M_f\) be the min and max of
\(f'\) on \(I\) and similarly for \(m_{f_0}\) and \(M_{f_0}\).

For the first part of \(p_\delta\) we have 
\[
\begin{split}
|\log(f'(0))|-|\log(f'_0(0))|
&\le
|\log(f'(0))-\log(f'_0(0))| \\
&\le 
L_1|f'(0)-f'_0(0)| \\
&\le
L_1\epsilon
\end{split}
\]
where \(L_1\) is the maximum of \(\log'\) on the union of
\([m_f,M_f]\) and \([m_{f_0}, M_{f_0}]\).  Since \(log'\) is
decreasing, we know that \(L_1\) is the value of \(\log'\) at the
smaller of \(m_f\) and \(m_{f_0}\).

Since we also have \(\|f'-f'_0\|_\infty< \epsilon\), we can insist
that \(\epsilon< m_{f_0}/2\) from which we will get \(0 < m_{f_0}/2
< m_f\) and we can simply take \(L_1 = \log'(m_{f_0}/2) = 2/m_{f_0}\).

For the second part, we have to study
\mymargin{LogSup}\begin{equation}\label{LogSup}
\underset{t_1, t_2 \in I}{\sup} \frac{|\log(f'(t_2)) -
\log(f'(t_1))|} {|t_2-t_1|^\delta}
\end{equation}
and how it changes when \(f\) changes.  In the expression
\tref{LogSup}, we can assume \(t_1<t_2\).  The expression
\mymargin{LogQuot}\begin{equation}\label{LogQuot}
Q_f^\delta(t_1, t_2) =\frac{|\log(f'(t_2)) -
\log(f'(t_1))|} {|t_2-t_1|^\delta}
\end{equation}
defines a function \(Q_f^\delta\) that is defined
on the partly open triangle \(\Delta\) defined by \(0\le t_1<t_2\le
1\) in the unit 
square.  We thus want to compare \(\sup(Q_f^\delta)\) with 
\(\sup(Q_{f_0}^\delta)\).

If we show that for every \((t_2, t_2)\) in \(\Delta\), that
\[|Q_f^\delta(t_1, t_2) - Q_{f_0}^\delta(t_1, t_2)|\le \eta\] for some
\(\eta>0\), then we will have
\[|\sup(Q_f^\delta(t_1, t_2)) - \sup(Q_{f_0}^\delta(t_1, t_2))|\le
\eta.\] 
Thus we study \(|Q_f^\delta(t_1, t_2) - Q_{f_0}^\delta(t_1, t_2)|\).

We look at 
\[
\begin{split}
|Q_f^\delta(t_1, t_2) - Q_{f_0}^\delta(t_1, t_2)|
&=
\frac
{|
[\log(f'(t_2)) - \log(f'(t_1))] -
[\log(f'_0(t_2)) - \log(f'_0(t_1))] 
|}
{|t_2-t_1|^\delta} \\
&=
\frac
{|
[\log(f'(t_2)) - \log(f'_0(t_2))] -
[\log(f'(t_1)) - \log(f'_0(t_1))] 
|}
{|t_2-t_1|^\delta} \\
&=\frac
{\left|\log\left(\frac{f'(t_2)}{f'_0(t_2)}\right) 
  - \log\left(\frac{f'(t_1)}{f'_0(t_1)}\right)\right|}
{|t_2-t_1|^\delta} \\
&\le
L_2\frac
{\left|\frac{f'(t_2)}{f'_0(t_2)}
  - \frac{f'(t_1)}{f'_0(t_1)}\right|}
{|t_2-t_1|^\delta}
\end{split}
\]
where \(L_2\) is the maximum of \(\log'\) on the values achievable
by \(f'/f'_0\) on \(I\).  This is achieved on the smallest possible
value of \(f'/f'_0\) on \(I\) which is at least \(m_f/M_{f_0}\).
Since we are assuming \(\epsilon< m_{f_0}/2\), we can declare 
\mymargin{LogControl}\begin{equation}\label{LogControl}
L_2 = \log'\left(\frac{m_{f_0}}{2M_{f_0}}\right)
=\frac{2M_{f_0}}{m_{f_0}}.
\end{equation}

Now 
\[
\begin{split}
\left|\frac{f'(t_2)}{f'_0(t_2)}
  - \frac{f'(t_1)}{f'_0(t_1)}\right|
=&
\left|\frac{f'(t_2)}{f'_0(t_2)} -1
  - \frac{f'(t_1)}{f'_0(t_1)}+1\right| \\
=&
\left|\frac{f'(t_2)-f'_0(t_2)}{f'_0(t_2)} 
  - \frac{f'(t_1)-f'_0(t_1)}{f'_0(t_1)}\right| \\
\le&
\left|\frac{f'(t_2)-f'_0(t_2)}{f'_0(t_2)} -
      \frac{f'(t_1)-f'_0(t_1)}{f'_0(t_2)} 
            \right| \\
  &+ \left|\frac{f'(t_1)-f'_0(t_1)}{f'_0(t_2)} -
           \frac{f'(t_1)-f'_0(t_1)}{f'_0(t_1)}
            \right| \\
\le&
\left|\frac{1}{f'_0(t_2)}\right|\,
  \left|({f'(t_2)-f'_0(t_2)}) -
      (f'(t_1)-f'_0(t_1))
            \right| \\
  &+ |f'(t_1) - f'_0(t_1)|\,
    \left|\frac{1}{f'_0(t_2)} -
           \frac{1}{f'_0(t_1)}
            \right| \\
\le&
\frac{1}{m_{f_0}}\|f-f_0\|_{1, \delta}|t_2-t_1|^\delta \\
   &+\|f'-f'_0\|_\infty 
   \left|\frac{f'_0(t_1)-f'_0(t_2)}{f'_0(t_2)f'_0(t_1)}
    \right| \\
\le&
\frac{1}{m_{f_0}}\epsilon |t_2-t_1|^\delta \\
   &+\epsilon \frac{1}{(m_{f_0})^2}
    \|f_0\|_{1, \delta}|t_2-t_1|^\delta \\
=&
\frac
{m_{f_0}
   +
    \|f_0\|_{1, \delta}}
{(m_{f_0})^2}
\epsilon
|t_2-t_1|^\delta.
\end{split}
\]
Thus 
\[
|Q_f^\delta(t_1, t_2) - Q_{f_0}^\delta(t_1, t_2)|
\le L_2 \frac
{m_{f_0}
   +
    \|f_0\|_{1, \delta}}
{(m_{f_0})^2}
\epsilon
\]
with \(L_2\) as defined in
\tref{LogControl}.

Combining all this gives
\[
|p_\delta(f) - p_\delta(f_0)|
\le
\left(
\frac{2}{m_{f_0}} +
\frac
{2 M_{f_0} (m_{f_0} + \|f_0\|_{1, \delta})}
{(m_{f_0})^3}
\right) \epsilon
\]
when \(\|f-f_0\|_{1, \delta} < \epsilon < m_{f_0}/2\).  This proves
the continuity of \(p_\delta\).

We now turn to \(r_\delta\).  The proof of continuity will use all
available facts, including the fact that \(G\) satisfies condition
(a).

Pick \(f\) in \(\text{Diff}_+^{1, \delta}(I)\).  We will show that
\(r_\delta\) is continuous at \(f\) by showing that it is continuous
on some open set about \(f\).

Pick some \(C>r_\delta(f)\).

From Lemma \ref{PrePreCompact}, we know that for any \(g\in G\) with
\(p_\delta(g^{-1}\circ f)<C\), we have that \(g^{-1}\) is H\"older
with exponent \(\delta\) and H\"older constant no more than
\mymargin{UniformHolder}\begin{equation}\label{UniformHolder}
K_C(f) = (C+p_\delta(f)) \left\|\frac{1}{f'}\right\|_\infty.
\end{equation}
From Corollary \ref{IsFinite}, the set \[G(f, C) = \{g\in G\mid
p_\delta(g^{-1}\circ f) < C\}\] is finite.  Since for each \(g\in
G(f,C)\) the function \(f\mapsto g^{-1}\circ f\) is continuous,
there is an open \(U\) about \(f\) so that for every \(\tilde{f}\in
U\) we have \(p_\delta(g^{-1}\tilde{f})<C\).  In particular, we have
for every \(\tilde{f}\in U\) that \(r_\delta(\tilde{f})<C\).

The expression \(K_C(f)\) defined in \tref{UniformHolder} is
continuous in \(f\).

Pick a real \(D\) that is greater than \(K_C(f)\) for our chosen
\(f\) and \(C\).

Make the open \(U\) about \(f\) that was chosen above smaller so
that for all \(\tilde f\) in \(U\), we now also have \(K_C(\tilde{f})<
D\).

For this \(U\), define 
\[
\begin{split}
N_G(U) 
&= 
\{g\in G\mid \exists \tilde{f} \in U
\,\,\mathrm{with}\,\,\, p_\delta(g^{-1}\tilde{f}) < C\} \\
&=
\bigcup_{\tilde{f}\in U} \{g\in G\mid p_\delta(g^{-1}\tilde{f})< C\}.
\end{split}
\]
Thus for every \(\tilde{f}\in U\), the elements of \(G\) relevant to
the computation of \(r_\delta(\tilde{f})\) must be in \(N_G(U)\).

However for every \(g\in N_G(U)\), the H\"older constant is no more
than \(D\).  Thus as argued in Lemma \ref{PreFinite} and its
corollary, the set \(N_G(U)\) is finite.

Thus the function \(r_\delta\) restricted to \(U\) is the minimum of
a finite set of continuous functions (the functions
\(\tilde{f}\mapsto g^{-1}\circ \tilde{f}\) for \(g\in N_G(U)\)) and
is thus continuous on \(U\).  \end{proof}

We repeat the statement of Theorem \ref{ShavThm}.

\vspace{5pt}

\noindent {\bfseries Theorem \ref{ShavThm}} (S-2){\bfseries .}
{\itshape If a discrete subgroup \(G\) of \(\text{Diff}^3_0([0,1])\)
satisfies condition \tref{TheHypC}, then the subgroup \(G\) is
amenable.  }

\vspace{5pt}

\begin{proof}[Proof assuming Theorem \ref{ShavMain}]  We need one
more function which is obviously continuous.

Define \[\theta(t) = \begin{cases} 1-t, & 0\le
t\le 1, \\ 0, &t>1. \end{cases}\]

We now define a mapping 
\[
\pi_\delta:B(G) \rightarrow C_b(\text{Diff}_+^{1, \delta}(I))\] by
setting 
\mymargin{PiDef}\begin{equation}\label{PiDef}
\pi_\delta F(f) = \frac
{\sum_{h\in G}\theta(p_\delta(h^{-1}\circ f) - r_\delta(f)))F(h)}
{\sum_{h\in G}\theta(p_\delta(h^{-1}\circ f) - r_\delta(f)))}
.
\end{equation}
Note that \(\theta(p_\delta(h^{-1}\circ f) - r_\delta(f))\) is non-zero
only when \[r_\delta(f) \le p_\delta(h^{-1}\circ f) \le r_\delta(f) + 1.\]  By
Corollary \ref{IsFinite}, this only occurs for finitely many \(h\in
G\).  Thus the sums in \tref{PiDef} are finite sums and \(\pi_\delta
F\) is defined on all \(f\in \text{Diff}_+^{1, \delta}(I)\).

We now let \(L_\delta\) be as given by Theorem \ref{ShavMain} and
define a linear functional \[l:B(G) \rightarrow \R\] by setting
\(l(F) = L_\delta(\pi_\delta F)\).

The function \(\pi_\delta F\) on a given \(f\) is a weighted average
of values of \(F\) on \(G\) where the sum of the weights is 1 and
where the weights do not depend on \(F\).  From this and the remarks
after the statement of Theorem \ref{ShavMain}, we know \[\inf(F)\le
\inf(\pi_\delta F)\le l(F) \le \sup(\pi_\delta F) \le \sup(F).\]

For \(F\in B(G)\), let \(F_g\in B(G)\) be defined by \(F_g(h) =
F(g^{-1}h)\).  Letting \(j=g^{-1}h\) gives \(h=gj\) and we can write
\[
\begin{split}
\pi_\delta F_g(f) 
&=
\frac
{\sum_{h\in G}\theta(p_\delta(h^{-1}\circ f) - r_\delta(f))F(g^{-1}h)}
{\sum_{h\in G}\theta(p_\delta(h^{-1}\circ f) - r_\delta(f))} \\
&=
\frac
{\sum_{j\in G}\theta(p_\delta(j^{-1} \circ g^{-1} \circ f) - r_\delta(f))F(j)}
{\sum_{j\in G}\theta(p_\delta(j^{-1} \circ g^{-1} \circ f) - r_\delta(f))}
\\
&=
\pi_\delta F(g^{-1}\circ f) \\
&=
(\pi_\delta F)_g(f).
\end{split}
\]
Now from Theorem \ref{ShavMain}(iv) we have 
\[l(F_g) = L_\delta(\pi_\delta F_g) = L_\delta((\pi_\delta F)_g)
= L_\delta(\pi_\delta F) = l(F).\]

Thus \(l:B(G)\rightarrow \R\) satisfies all the requirements of a mean.
\end{proof}

\section{Six lemmas}  This section covers Lemmas 1--6 in
\cite{shavgulidze}.  The notation [S-Ln] refers to Lemma n in
\cite{shavgulidze}. It is hoped that motivation for these lemmas
will appear here in the fullness of time.

\subsection{Fourier transforms on \protect\(L^1(\R)\protect\) and 
\protect\(L^2(\R)\protect\)}

The proof of the first lemma will use Fourier transforms
extensively.  We will refer to \cite{MR0210528} and \cite{MR1657104}
where the definitions differ trivially.  (Compare \cite[\S
9.1]{MR0210528} with \cite[\S 17.1.1]{MR1657104}.)  We need the
following facts.

\newcommand{\IN}{\int\limits_{-\infty}^{\infty}}

For any element $f\in {L}^1(\mathbb R)$ and any $x\in R$ the
integral \[ \hat{f}(x) =\frac{1}{\sqrt{2\pi}}\IN f(t)e^{-ixt}dt \]
is well defined and defines a function $\hat{f}$ which is continuous
and vanishes at $\pm \infty$ \cite[Theorem 9.6]{MR0210528}.

The next three paragraphs summarize pieces of \cite[Theorem
9.13]{MR0210528} and the discussion preceding it, as well as \cite[\S
22.1]{MR1657104}.

If $f$ belongs to both ${L}^2(\mathbb R)$ and
${L}^1(\mathbb R)$ then $\hat{f}$ belongs to
${L}^2(\mathbb R)$ and $\| f \|_2=\|\hat{f}\|_2$.

For any function $f\in {L}^2(\mathbb R)$ and any $A>0$ let
$f_A$ be the product of $f$ and the characteristic function of the
interval $[-A,A]$. Each $f_A$ is a function in ${L}^2(\mathbb
R)\cap {L}^1(\mathbb R)$ and $\lim_{A\to \infty}f_A=f$ in the
${L}^2$ topology.  It follows that the family $\hat{f_A}$ is
Cauchy (i.e. for any $\epsilon>0$ there is $t$ such that if $A,B >t$
then $\|\hat{f_A}-\hat{f_B}\|_2<\epsilon$). Since
${L}^2(\mathbb R)$ is complete, there is
$\hat{f}\in{L}^2(\mathbb R)$ such that $\lim_{A\to
\infty}\hat{f_A}=\hat{f}$.  This defines the Fourier transform on
${L}^2(\mathbb R)$. The Fourier transform is an isometry of
${L}^2(\mathbb R)$ onto itself. In particular, it preserves the
inner product on ${L}^2(\mathbb R)$ given by \[
<f,g>=\frac{1}{\sqrt{2\pi}}\IN f(x)\overline{g}(x)dx.\] Furthermore,
the Fourier transform of $\hat{f}$ coincides with (the class of) the
function $x\mapsto f(-x)$

For any $f\in {L}^2(\mathbb R)$ there exists a sequence $A_n$
of real numbers approaching $\infty$ such that \[
\hat{f}(x)=\lim_{n\to\infty}\hat{f_{A_n}}(x)=
\lim_{n\to\infty}\frac{1}{\sqrt{2\pi}}\int_{-A_n}^{A_n}f(x)e^{-ixt}dt
\] for almost all $x$. In particular, if the limit
\[\lim_{A\to\infty}\frac{1}{\sqrt{2\pi}}\int_{-A}^{A}f(x)e^{-ixt}dt
\] exists for almost all $x$, then it computes the Fourier transform
of $f$.

For two functions $f$, $g$ and $x\in \mathbb R$ one defines \[
(f*g)(x)=\IN f(t)g(x-t)dt .\] If the integral exists for (almost)
all $x$ then $f*g$ is a new function, called the {\itshape
convolution} of $f$ and $g$.  

Assume that $f$ and $g$ are in ${L}^1(\mathbb R)$.  Then the
convolution $f*g$ is again in ${L}^1(\mathbb R)$ \cite[Theorem
7.14]{MR0210528}.  The convolution is a commutative, associative
operation on ${L}^1(\mathbb R)$ \cite[\S9.19(d)]{MR0210528}.
Moreover, $\widehat{f*g}=\hat{f}\hat{g}$ \cite[Theorem
9.2(c)]{MR0210528}, \cite[Proposition 23.1.2]{MR1657104}.

Now assume that $f,g$ are in ${L}^2(\mathbb R)$.  Then
$(f*g)(x)$ is well defined for any $x$ \cite[Proposition
23.2.1]{MR1657104}.  The function $f*g$ is continuous
\cite[Proposition 20.3.1]{MR1657104} and vanishes at infinity
\cite[Exercise 23.6]{MR1657104} but it is not necessarily in
${L}^2(\mathbb R)$ or in ${L}^1(\mathbb R)$.  However,
$\hat{f}\hat{g}\in {L}^1(\mathbb R)$ (proof of
\cite[Proposition 23.2.1(i)]{MR1657104}), so one can apply the
Fourier transform (or the inverse Fourier transform) to
$\hat{f}\hat{g}$.  It turns out that \[
\widehat{\hat{f}\hat{g}}(x)=(f*g)(-x)\] for any $x\in \mathbb R$
(ibid).  In particular, if $f*g\in {L}^1(\mathbb R)$
then \[\widehat{f*g}=\hat{f}\hat{g}.\]  It follows that if
$\hat{f}\hat{g}\in {L}^2(\mathbb R)$ then $f*g\in
{L}^2(\mathbb R)$ and $\displaystyle
\widehat{f*g}=\hat{f}\hat{g}$.

We now give a preliminary lemma.

\begin{lemma}\mylabel{PrelimSLOne}
The integral $H(y)=\int_{0}^{\infty}\frac{\cos(xy)dx}{\sqrt{1+x^2}}$ converges for any $y\neq 0$.
It defines a continuous function on $(0,\infty)$ with the following properties:
\begin{enumerate}
\item there is $\epsilon >0$ such that $\displaystyle -\log\frac{y}{\epsilon}\leq H(y)\leq -\log\frac{y}{4}$
for all $y\in~(0,\epsilon)$;

\item $|H(y)|\leq \frac{A}{y}$ for some $A>0$.
\end{enumerate}

\end{lemma}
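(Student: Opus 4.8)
The plan is to funnel all four claims through the single representation
\[
H(y)=\int_0^\infty\frac{\cos u}{\sqrt{y^2+u^2}}\,du,
\]
valid for $y>0$ after the substitution $u=xy$ (the case $y<0$ reduces to this since $H$ is even). To justify this, and to get convergence for $y\neq 0$, I would first integrate the truncated integral $\int_0^A\frac{\cos(xy)}{\sqrt{1+x^2}}\,dx$ by parts, using $v=y^{-1}\sin(xy)$ together with $\frac{d}{dx}(1+x^2)^{-1/2}=-x(1+x^2)^{-3/2}$; the boundary terms tend to $0$ as $A\to\infty$, leaving $y^{-1}\int_0^\infty\frac{x\sin(xy)}{(1+x^2)^{3/2}}\,dx$, which converges absolutely because its integrand is $O(x^{-2})$. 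This legitimizes the substitution and yields the displayed form. A second integration by parts, now in $u$ with $v=\sin u$, has vanishing boundary terms (from $\sin 0=0$ and decay at $\infty$) and gives
\[
H(y)=\int_0^\infty\frac{u\sin u}{(y^2+u^2)^{3/2}}\,du.
\]

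From this last form property (2) is immediate:
\[
|H(y)|\le\int_0^\infty\frac{u}{(y^2+u^2)^{3/2}}\,du=\left[-(y^2+u^2)^{-1/2}\right]_0^\infty=\frac1y,
\]
so one may take $A=1$. Continuity on $(0,\infty)$ then follows by dominated convergence, since near any $y_\ast>0$ the integrand $\frac{u\sin u}{(y^2+u^2)^{3/2}}$ is bounded by an integrable majorant independent of $y$ (by $u^{-2}$ for large $u$, and by a constant depending on $y_\ast$ for small $u$).

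For the delicate logarithmic estimate (1) I would study $G(y)=H(y)+\log y$ through the decomposition
\[
H(y)=\int_0^1\frac{du}{\sqrt{y^2+u^2}}+\int_0^1\frac{\cos u-1}{\sqrt{y^2+u^2}}\,du+\int_1^\infty\frac{\cos u}{\sqrt{y^2+u^2}}\,du.
\]
The first term equals $\log\frac{1+\sqrt{1+y^2}}{y}=\log(1+\sqrt{1+y^2})-\log y$, producing the $-\log y$ singularity. The second has integrand bounded by $u/2$ (from $|\cos u-1|\le u^2/2$ and $\sqrt{y^2+u^2}\ge u$), hence extends continuously to $y=0$ with limit $a_1=\int_0^1\frac{\cos u-1}{u}\,du\in(-\frac{1}{4},0)$. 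For the oscillatory tail I would integrate by parts once more, obtaining $-\frac{\sin 1}{\sqrt{y^2+1}}+\int_1^\infty\frac{u\sin u}{(y^2+u^2)^{3/2}}\,du$, whose integrand is dominated by $u^{-2}$; this extends continuously to $y=0$ with limit $a_2=\int_1^\infty\frac{\cos u}{u}\,du$. Thus $G$ extends continuously to $0$ with $G(0)=\log 2+a_1+a_2=:c_0$.

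The crux is to place $c_0$ strictly below $\log 4$. Since $a_1<0$ and, by the same integration by parts, $a_2=-\sin 1+\int_1^\infty\frac{\sin u}{u^2}\,du\le 1-\sin 1<\frac{1}{5}$, I get $c_0<\log 2+\frac{1}{5}<\log 4$. Then, choosing $\epsilon_1$ so that $G<\log 4$ on $[0,\epsilon_1]$ (possible by continuity at $0$), letting $m=\min_{[0,\epsilon_1]}G$, and setting $\epsilon=\min(\epsilon_1,e^m)$, every $y\in(0,\epsilon)$ satisfies $\log\epsilon\le m\le G(y)\le\log 4$, which is exactly $-\log\frac{y}{\epsilon}\le H(y)\le-\log\frac{y}{4}$. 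The main obstacle throughout is the conditionally (not absolutely) convergent tail $\int_1^\infty\frac{\cos u}{u}\,du$: one must control its contribution $a_2$ finely enough to keep $c_0$ under $\log 4$, and it is precisely this that forces the repeated integration-by-parts bookkeeping above.
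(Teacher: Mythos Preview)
Your argument is correct and genuinely different from the paper's. The paper also passes to the form $\int_0^\infty\frac{\cos u}{\sqrt{y^2+u^2}}\,du$, but then slices it into blocks $H_n(y)=\int_{-\pi/2}^{\pi/2}\frac{\cos x}{\sqrt{y^2+(x+n\pi)^2}}\,dx$ and runs an alternating--series argument: convergence and the bounds $\tfrac12 H_0-H_1\le H\le \tfrac12 H_0$ come from $H_1>H_2>\cdots\to 0$, the logarithmic behaviour is read off $\tfrac12 H_0(y)=\int_0^{\pi/2}\frac{\cos x}{\sqrt{y^2+x^2}}\,dx$, the decay at infinity yields $|H(y)|\le \sqrt{2}\pi/y$, and continuity is obtained by estimating $|H_n(a)-H_n(b)|\le C|a^2-b^2|n^{-3}$ term by term. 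Your route---one integration by parts to reach the absolutely convergent form $\int_0^\infty\frac{u\sin u}{(y^2+u^2)^{3/2}}\,du$, then dominated convergence for continuity and the direct bound $|H(y)|\le 1/y$---is cleaner for items (ii) and the continuity claim, and even gives a sharper constant. For item (i) you instead study $G(y)=H(y)+\log y$ via a split at $u=1$ and show $G$ extends continuously to $0$ with value $c_0=\log 2+a_1+a_2<\log 4$; the paper achieves the same two-sided logarithmic estimate by explicit upper and lower bounds on $\tfrac12 H_0$ together with boundedness of $H_1$. The trade-off is that the paper's block decomposition is entirely elementary (no integration by parts, no dominated convergence), while your approach requires slightly more machinery but packages the tail control once and for all and avoids the somewhat delicate $H_n$ bookkeeping.
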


\begin{proof}
We may assume that $y> 0$.
For any integer $n$ we define
\[ 
\begin{split}
H_n(y)
&=
\int_{-\pi/2}^{\pi/2}\frac{\cos(x)dx}{\sqrt{y^2+(x+n\pi)^2}}=
\int_{n\pi-\pi/2}^{n\pi+\pi/2}\frac{\cos(x-n\pi)dx}{\sqrt{y^2+x^2}} \\
&=
(-1)^n\int_{n\pi-\pi/2}^{n\pi+\pi/2}\frac{\cos(x)dx}{\sqrt{y^2+x^2}}=
(-1)^n\int_{(n\pi-\pi/2)/y}^{(n\pi+\pi/2)/y}\frac{\cos(xy)dx}{\sqrt{1+x^2}}
\end{split}
\]

Clearly $H_1(y)>H_2(y)>...>0$. Furthermore, since $\displaystyle
a^2+b^2\geq \frac{(a+b)^2}{2}$, we have for $n>0$ 
\mymargin{HnUpBnd}\begin{equation}\label{HnUpBnd}
\begin{split}
H_n(y) 
&=
\int_{-\pi/2}^{\pi/2}\frac{\cos(x)dx}{\sqrt{y^2+(x+n\pi)^2}} \\
&\leq
\int_{-\pi/2}^{\pi/2}\frac{\sqrt{2}dx}{y+x+n\pi} \\
&=
\sqrt{2}\log\left(1+\frac{\pi}{y+n\pi-\pi/2}\right),
\end{split}
\end{equation}
so $\lim_{n\to\infty}H_n(y)=0$.
It follows by the alternating series test that
\[\frac{1}{2}H_0(y)+\sum_{n=1}^{\infty}(-1)^nH_n(y)\]
converges. Furthermore, if $A>0$ and $k$ is an integer such that
$k\pi-\pi/2\leq Ay< (k+1)\pi-\pi/2$ then
\[
\int_{0}^{A}\frac{\cos(xy)dx}{\sqrt{1+x^2}}
=
\frac{1}{2}H_0(y)+\sum_{n=1}^{k-1}(-1)^nH_n(y)+(-1)^{k}s(A)
\]
for some $s(A)$ which satisfies $0\leq s(A)\leq H_k(y)$. It follows
that the integral defining $H(y)$ converges and 
\[
H(y)=\frac{1}{2}H_0(y)+\sum_{n=1}^{\infty}(-1)^nH_n(y). 
\]

In particular,
\mymargin{HnULBnds}\begin{equation}\label{HnULBnds}
\frac{1}{2}H_0(y)-H_1(y)\leq H(y)\leq \frac{1}{2}H_0(y).
\end{equation}
Note that
\[\frac{1}{2}H_0(y)
=
\int_{0}^{\pi/2}\frac{\cos(x)dx}{\sqrt{y^2+x^2}}
\leq
\int_{0}^{\pi/2}\frac{dx}{\sqrt{y^2+x^2}}
=
\log\left(\frac{\pi}{2}+\sqrt{\left(\frac{\pi}{2}\right)^2+y^2
}\right)-\log y.
\] Since $a^2+b^2\leq (a+b)^2$ for non-negative \(a\)
and \(b\), and $\log x$ is increasing, we conclude that
\mymargin{HNullUpBnd}\begin{equation}\label{HNullUpBnd}
\frac{1}{2}H_0(y)\leq \log(\pi+y)-\log(y)
= -\log\left(\frac{y}{\pi+y}\right)
\leq 
-\log\frac{y}{4}
\end{equation}
for all $y\in(0,4-\pi)$. On the other hand, using the inequality
$\cos x\geq 1-x^2/2$ we get
\[\frac{1}{2}H_0(y)
=
\int_{0}^{\pi/2}\frac{\cos(x)dx}{\sqrt{y^2+x^2}}
\geq
\int_{0}^{\pi/2}\frac{(1-\frac{x^2}{2})dx}{y+x}
\]
Now 
\[(y+x)\left(\frac{y-x}{2}\right) + 1-\frac{y^2}{2} =
1-\frac{x^2}{2}
\]
so 
\[
\frac{(1-\frac{x^2}{2})}{y+x}
=
\frac{(1-\frac{y^2}{2})}{y+x}
+
P(x,y)
\]
where \(P(x,y)\) is a polynomial in \(x\) and \(y\).  So there is a
constant \(D_1\) so that for all \(y\in (0,4-\pi)\subseteq(0,1)\), we have
\[
\begin{split}
\frac{1}{2}H_0(y)
&\ge
\left(1-\frac{y^2}{2}
\right)
\int_{0}^{\pi/2}\frac{dx}{y+x}
+
D_1 \\
&\ge
\int_{0}^{\pi/2}\frac{dx}{y+x}
+
D_1 \\
&=
\int_{y}^{y+\pi/2}\frac{du}{u}
+
D_1 \\
&=
\log\left(\frac{y+\pi/2}{y}\right)
+
D_1 \\
&=
-\log\left(\frac{y}{y+\pi/2}\right)
+
D_1 \\
&\geq
 -\log y +D_1
\end{split}
\]
Since $H_1(y)$ is
a bounded function of $y$, we have $H_1(y)\leq D_2$ for some
constant $D_2$. If $\epsilon \in (0,4-\pi)$ is such that $\log
\epsilon \leq D_1-D_2$, then we get the estimate
\[ -\log\frac{y}{\epsilon}
\leq 
-\log y +D_1-D_2\leq \frac{1}{2}H_0(y)-H_1(y)\]
for all $y\in (0,4-\pi)$. It follows that
\[ -\log\frac{y}{\epsilon}\leq H(y)\leq -\log\frac{y}{4}\]
for all $y\in(0,\epsilon)$.

For the estimate at infinity, note that by \tref{HNullUpBnd} we have
\[ 0\leq \frac{1}{2}H_0(y)\leq \log(\pi+y)-\log(y)=\log(1+\frac{\pi}{y})\leq \frac{\pi}{y},\]
and \tref{HnUpBnd} implies that
\[0\leq H_1(y)\leq \frac{\sqrt{2}\pi}{y+\pi/2}\leq \frac{\sqrt{2}\pi}{y}. \]
By \tref{HnULBnds} we get that
\[|H(y)|\leq \frac{\sqrt{2}\pi}{y}. \]
Finally, to see that $H$ is continuous note that
\[ 
\begin{split}
H_n(a)-H_n(b)
&=
\int_{-\pi/2}^{\pi/2}
\left(\frac{1}{\sqrt{a^2+(x+n\pi)^2}}-\frac{1}{\sqrt{b^2+(x+n\pi)^2}}
\right)
\cos(x)dx \\
&=
\int_{-\pi/2}^{\pi/2}\frac{\cos(x)(b^2-a^2)dx}
{\sqrt{Q(a,x,n)}\sqrt{Q(b,x,n)}
(\sqrt{Q(a,x,n)}+\sqrt{Q(b,x,n)})}.
\end{split}
\]
where \(Q(z,x,n) = z^2+(x+n\pi)^2\).
It follows that
\[ |H_n(a)-H_n(b)|\leq
|a^2-b^2|\int_{-\pi/2}^{\pi/2}\frac{dx}{(x+n\pi)^3}\]
A calculation shows that there is a \(C>0\) independent of \(n\) so
that 
\[
 |H_n(a)-H_n(b)|
\leq
C|a^2-b^2|n^{-3}\] 
for any $n\geq 1$. Thus
\[|H(a)-H(b)|\leq \frac{1}{2}|H_0(a)-H_0(b)|+C|a^2-b^2|\sum_{n=1}^{\infty}\frac{1}{n^3},\]
which immediately implies continuity of $H$.
\end{proof}

\subsection{Remark}

The function $H(y)$ has been studied extensively in the theory of
Bessel functions, where it is denoted by $K_0(y)$. It is a solution
to the differential equation \[ xf''(x)+f'(x)-xf(x)=0.\] Using
techniques from complex analysis one proves the following equalities
for $y>0$: \[ H(y) = \int_{1}^{\infty}\frac{e^{-yt}dt}
{\sqrt{t^2-1}}=\int_{0}^{\infty}e^{-y\cosh t}dt.  \] (See page 185
of \cite{MR1349110}.) The last integral easily shows that $H$
decreases exponentially at infinity. It also follows that $H$ is
nonnegative. The following expansion describes the asymptotic
behavior of $H$ around $0$ (combine (14) on \cite[Page
80]{MR1349110} with (2) of \cite[Page 77]{MR1349110} and
separate out the first term):
\mymargin{WatsonForm}\begin{equation}\label{WatsonForm}
H(y)=-\log\frac{y}{2}-\gamma+\sum_{m=1}^{\infty}
\frac{(\frac{y}{2})^{2m}}{(m!)^2}(\psi(m+1)-\log\frac{y}{2}),
\end{equation}
where $\gamma$ is the Euler constant and, from
\cite[Page 60]{MR1349110}, $\psi(m+1)=\sum_{k=1}^{m}\frac{1}{k}-\gamma$.

\subsection{Setting up the first lemma}

Let $f(x)=(1+x^2)^{-1/2}$. Clearly $f\in {L}^2(\mathbb
R)$. Note that \[\int_{-A}^{A}
e^{-ixy}f(x)dx=2\int_{0}^{A}\frac{\cos(xy)dx}{\sqrt{1+x^2}}.\] By
Lemma \ref{PrelimSLOne} and the main properties of the Fourier transform
discussed above we have $\hat{f}(y)=\displaystyle
\frac{2}{\sqrt{2\pi}}H(y)$ (i.e. the right hand side represents
$\hat{f}$).  Let \(I_1=f\) and for \(n\ge2\) define \(I_n\) by
\[I_n(x)=\frac{1}{(\sqrt{2\pi})^{n-1}}\IN\cdots\IN \frac{dx_1\ldots
dx_{n-1}}{\sqrt{(1+x_1^2)(1+(x_2-x_1)^2) \dots(1+(x-x_{n-1})^2)}}\]
By definition, we have $I_{n+1}(x)=I_n(x)*f$. We use
induction on $n$ to prove that $I_n\in {L}^2(\mathbb R)$ and
$\widehat{I_n}=(\hat{f})^n$. For $n=1$ this is clear. Assuming the
claim for $n$ we see that $\widehat{I_n}\hat{f}=(\hat{f})^{n+1}$. By
Lemma~\ref{PrelimSLOne}, we have $(\hat{f})^{n+1}\in {L}^2(\mathbb
R)$. It follows that $I_{n+1}=I_n*f \in {L}^2(\mathbb R)$ and
$\widehat{I_{n+1}}=\widehat{I_n}\hat{f}=(\hat{f})^{n+1}$.

Now we can prove the following

\begin{lemma}[S-L1] 
There exist positive constants $c_1,c_2$ such that the integrals
\[
T_n
=
\int\limits_{-\infty}^{+\infty} \cdots \int\limits_{-\infty}^{+\infty}
\frac{dx_1 \cdots dx_n}
{
\sqrt{
      (1+x_1^2)
      (1+(x_2-x_1)^2)
      \cdots
      (1+(x_n-x_{n-1})^2)
      (1+x_n^2)
     }
}
\] 
satisfy $c_12^{n+1}(n+1)!\leq T_n\leq c_22^{n+1}(n+1)!$ for every integer $n>0$.
\end{lemma}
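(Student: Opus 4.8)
The plan is to identify $T_n$ with a single value of the $(n+1)$-fold convolution $I_{n+1}$, pass through the Fourier transform to reduce everything to the one integral $\int_0^{\infty}H(y)^{n+1}\,dy$, and then read off the growth of that integral from the behaviour of $H$ near $0$ supplied by Lemma~\ref{PrelimSLOne}.

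First I would set $x=0$ in the defining integral for $I_{n+1}$. Its final factor becomes $(1+(0-x_n)^2)=(1+x_n^2)$, so the integrand is exactly the one appearing in $T_n$; hence
\[
T_n=(\sqrt{2\pi})^{\,n}\,I_{n+1}(0).
\]
Next, recall that $\widehat{I_{n+1}}=(\hat f)^{n+1}$ with $\hat f(y)=\tfrac{2}{\sqrt{2\pi}}H(y)$. I would verify that $(\hat f)^{n+1}\in {L}^1(\mathbb R)$ for every $n\ge 1$: near the origin it is comparable to $(-\log y)^{n+1}$, which is integrable, while the decay bound $|H(y)|\le A/y$ of Lemma~\ref{PrelimSLOne} dominates it at infinity by a constant times $y^{-(n+1)}$, which is integrable as soon as $n+1\ge2$. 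Membership in ${L}^1$ makes the Fourier transform of $(\hat f)^{n+1}$ continuous, and by the inversion statement recorded above it coincides, as continuous functions and hence pointwise, with $x\mapsto I_{n+1}(-x)$; evaluating at $0$ gives $I_{n+1}(0)=\tfrac{1}{\sqrt{2\pi}}\int_{-\infty}^{\infty}(\hat f(y))^{n+1}\,dy$. Since $H$ is even, combining this with the previous display collapses all the powers of $2\pi$ and yields
\[
T_n=\frac{2^{\,n+1}}{\pi}\int_0^{\infty}H(y)^{n+1}\,dy .
\]
It therefore suffices to prove $\int_0^{\infty}H(y)^{n+1}\,dy\asymp (n+1)!$.

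For the lower bound I would retain only the interval $(0,\epsilon)$, where property (i) of Lemma~\ref{PrelimSLOne} gives $H(y)\ge\log(\epsilon/y)>0$; the substitution $u=\log(\epsilon/y)$ evaluates $\int_0^{\epsilon}\bigl(\log(\epsilon/y)\bigr)^{n+1}\,dy=\epsilon\int_0^{\infty}u^{n+1}e^{-u}\,du=\epsilon\,(n+1)!$. For the upper bound I would split at $\epsilon$. On $(0,\epsilon)$ property (i) gives $0<H(y)\le\log(4/y)$, and the same substitution bounds $\int_0^{\epsilon}H^{n+1}$ by $4\int_{\log(4/\epsilon)}^{\infty}u^{n+1}e^{-u}\,du\le 4\,(n+1)!$. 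On $(\epsilon,\infty)$ the decay bound gives $\int_{\epsilon}^{\infty}|H|^{n+1}\,dy\le\int_{\epsilon}^{\infty}(A/y)^{n+1}\,dy=\tfrac{A^{n+1}}{n\,\epsilon^{n}}$, which grows only geometrically in $n$ and is thus $o((n+1)!)$. This lower-order tail is nonnegative by the Remark following Lemma~\ref{PrelimSLOne} (or it may simply be subtracted in the lower bound), so it perturbs neither estimate beyond a constant factor.

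Combining the two bounds gives $c_1(n+1)!\le \pi\, 2^{-(n+1)}T_n\le c_2(n+1)!$, with the finitely many small values of $n$ absorbed into the constants. I expect the Fourier-analytic reduction of the second paragraph to be the main obstacle: one must justify that $T_n$ is a genuine pointwise value of $I_{n+1}$ and that pointwise inversion at $0$ is legitimate, which is precisely where the ${L}^1$-membership of $(\hat f)^{n+1}$ together with the continuity of both $I_{n+1}$ and the transform of an ${L}^1$ function are needed. Once the closed formula for $T_n$ is secured, estimating $\int_0^{\infty}H^{n+1}$ is a routine Laplace-type computation driven entirely by the logarithmic singularity of $H$ at the origin.
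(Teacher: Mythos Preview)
Your proof is correct and reaches the same key identity $T_n=\frac{2^{n+1}}{\pi}\int_0^\infty H(y)^{n+1}\,dy$ as the paper, after which your estimation of the integral is essentially identical to the paper's. The only difference is in the Fourier-analytic step: the paper writes $T_n/(\sqrt{2\pi})^n$ as the $L^2$ inner product $\langle I_n,f\rangle$ and applies Plancherel to obtain $\langle(\hat f)^n,\hat f\rangle$, staying entirely in $L^2$; you instead recognize $T_n=(\sqrt{2\pi})^n I_{n+1}(0)$ and invoke pointwise Fourier inversion, which requires the extra (but easy) check that $(\hat f)^{n+1}\in L^1$. Both routes are short and standard; the paper's avoids the $L^1$ verification, while yours makes the appearance of the pointwise value $I_{n+1}(0)$ explicit and perhaps more transparent.
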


\begin{proof}

Clearly \[\frac{T_n}{(\sqrt{2\pi})^{n}}=\frac{1}{\sqrt{2\pi}}\IN
I_n(x_n)f(x_n)dx_n=\frac{1}{\sqrt{2\pi}} \IN
I_n(x_n)\overline{f}(x_n)dx_n=<I_n,f>\] (the inner product in
$\text{L}^2(\mathbb R)$). Since the Fourier transform is an
isometry, we have
$<I_n,f>=<\widehat{I_n},\hat{f}>=<(\hat{f})^n,\hat{f}>$. It follows
that 
\[
\begin{split}
T_n
&=
(\sqrt{2\pi})^{n-1}\IN(\hat{f}(x))^n\overline{\hat{f}}(x)dx \\
&=
(\sqrt{2\pi})^{n-1}\IN(\hat{f}(x))^{n+1}dx \\
&=
\frac{2^n}{\pi}\IN
H(x)^{n+1}dx
=
\frac{2^{n+1}}{\pi}\int\limits_{0}^{\infty} H(x)^{n+1}dx.
\end{split}
\]
Recall now that $\int_{0}^{1}(-\log x)^ndx=n!$. It follows from Lemma
\ref{PrelimSLOne} that 
\[ 
\begin{split}
\int\limits_{0}^{\infty} H(x)^{n+1}dx 
&\leq
\int\limits_0^{\epsilon}(-\log\frac{y}{4})^{n+1}dy
+
A^{n+1} \int\limits_{\epsilon}^{\infty}\frac{dy}{y^{n+1}} \\
&\leq
\int\limits_0^{4}(-\log\frac{y}{4})^{n+1}dy
+
A^{n+1}\frac{1}{n\epsilon^n} \\
&=
4(n+1)!+A^{n+1}\frac{1}{n\epsilon^n}
\end{split}
\]
and 
\[
\begin{split}
\int\limits_{0}^{\infty} H(x)^{n+1}dx
&\geq
\int\limits_0^{\epsilon}(-\log\frac{y}{\epsilon})^{n+1}dy
-
A^{n+1} \int\limits_{\epsilon}^{\infty}\frac{dy}{y^{n+1}} \\
&=
\epsilon(n+1)!
-
A^{n+1}\frac{1}{n\epsilon^n}
\end{split}
\]
The results follows now easily by the fact that $\displaystyle
\lim_{n\to\infty}\frac{A^{n+1}}{n\epsilon^n(n+1)!}  =0$.
\end{proof}

\subsection{Exercise}

Using \tref{WatsonForm}, show that
\[\lim_{n\to \infty}\frac{T_n}{2^{n+1}(n+1)!}=G/\pi,\]
where $\log G=\log 2 -\gamma$

\subsection{A definition}

Let 
\[
v_1(\tau) = \int\limits_{-\infty}^{+\infty} 
\frac{d\tau_1}
{\sqrt{(1+\tau_1^2)(1+(\tau-\tau_1)^2)}}
\]
for any \(\tau\in \R\).  The function \(v_1\) is the convolution of
two functions in \(L^2(\R)\) and so by remarks above, it vanishes at
\(\pm\infty\).  We have \(v_1(0)=\pi\) and we show below that this
is the maximum value of \(v_1\) on \(\R\).

Note that 
\[
\begin{split}
v_1(-\tau) 
=& 
\int\limits_{-\infty}^{+\infty} 
\frac{d\tau_1}
{\sqrt{(1+\tau_1^2)(1+(\tau+\tau_1)^2)}} \\
=& 
\int\limits_{-\infty}^{+\infty} 
\frac{d\tau_2}
{\sqrt{(1+(\tau_2-\tau)^2)(1+\tau_2^2)}} 
\qquad \mathrm{letting}\,\,\tau+\tau_1=\tau_2,
\\
=& 
\int\limits_{-\infty}^{+\infty} 
\frac{d\tau_2}
{\sqrt{(1+\tau_2^2)(1+(\tau-\tau_2)^2)}} = v_1(\tau).
\end{split}
\]

Since \(v_1(\tau)=v_1(-\tau)\), we have 
\(v'_1(\tau) = (v_1(-\tau))' = -v'_1(-\tau)\), or \[v'_1(-\tau) =
-v'_1(\tau).\] 

\begin{lemma}[S-L2] The derivative \(v'_1(t)\) is negative for any
\(t>0\), and \(|v'_1(t)| \le \frac{4}{|t|}v_1(t)\) for any \(t\ne
0\).  \end{lemma}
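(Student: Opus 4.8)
The plan is to reduce everything to the case $t>0$ and then differentiate under the integral sign, after which a single symmetrization turns both claims into elementary pointwise estimates.

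Since $v_1(\tau)=v_1(-\tau)$, the derivative $v_1'$ is odd while $\frac{4}{|t|}v_1(t)$ is even, so it suffices to treat $t>0$. First I would justify differentiation under the integral sign (the $\tau$-derivative of the integrand is dominated, locally uniformly in $\tau$, by $\frac{1}{\sqrt{1+\tau_1^2}\,(1+(\tau-\tau_1)^2)}$, which is integrable), obtaining after the substitution $s=t-\tau_1$ the representation
\[
v_1'(t)=-\int_{-\infty}^{\infty}\frac{s}{(1+s^2)^{3/2}\sqrt{1+(t-s)^2}}\,ds.
\]
The key device is then to fold this integral about $s=0$. Writing $P=1+(t-s)^2$ and $Q=1+(t+s)^2$ and combining the contributions of $s$ and $-s$ gives
\[
-v_1'(t)=\int_{0}^{\infty}\frac{s}{(1+s^2)^{3/2}}\left(\frac{1}{\sqrt P}-\frac{1}{\sqrt Q}\right)ds,\qquad \frac{1}{\sqrt P}-\frac{1}{\sqrt Q}=\frac{4ts}{\sqrt{PQ}\,(\sqrt P+\sqrt Q)},
\]
where the second equality comes from rationalizing together with $Q-P=4ts$. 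For $t>0$ every factor in the integrand is positive, so $v_1'(t)<0$, settling the first assertion.

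The same folding applied to $v_1$ itself yields the companion formula $v_1(t)=\int_{0}^{\infty}\frac{\sqrt P+\sqrt Q}{\sqrt{1+s^2}\,\sqrt{PQ}}\,ds$, and substituting the expression for $\frac1{\sqrt P}-\frac1{\sqrt Q}$ shows
\[
|v_1'(t)|=4t\int_{0}^{\infty}\frac{s^2}{(1+s^2)^{3/2}\sqrt{PQ}\,(\sqrt P+\sqrt Q)}\,ds.
\]
Thus $|v_1'(t)|\le\frac{4}{t}v_1(t)$ follows from these two formulas once I verify the pointwise inequality between the integrands, namely $4t\,\frac{s^2}{(1+s^2)^{3/2}\sqrt{PQ}\,(\sqrt P+\sqrt Q)}\le\frac{4}{t}\,\frac{\sqrt P+\sqrt Q}{\sqrt{1+s^2}\,\sqrt{PQ}}$. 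Cancelling $4/\sqrt{PQ}$ and clearing denominators, this is exactly $t^2\,\frac{s^2}{1+s^2}\le(\sqrt P+\sqrt Q)^2$, which is immediate: the left side is at most $t^2$ because $\frac{s^2}{1+s^2}<1$, while $(\sqrt P+\sqrt Q)^2\ge Q=1+(t+s)^2\ge t^2$ for $s\ge 0$. Integrating the pointwise bound gives the claim, with the constant $4$ emerging exactly.

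I expect the main obstacle to be recognizing that the naive estimates fail: passing the absolute value inside the integral (for instance through $\left|\frac{s}{1+s^2}+\frac{t-s}{1+(t-s)^2}\right|\le 1$) only yields $|v_1'(t)|\le\frac12 v_1(t)$, which is useless for large $t$, where one needs an extra factor of $t^{-1}$ of decay; that bound destroys the cancellation among the positive and negative parts of the integrand. Folding the integral about $s=0$ is precisely what preserves this cancellation, manufacturing the factor $4ts$ that supplies the missing power of $t$ and simultaneously collapses the whole estimate into a one-line pointwise comparison.
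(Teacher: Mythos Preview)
Your proof is correct and follows essentially the same route as the paper: differentiate under the integral, substitute $s=t-\tau_1$, fold about $s=0$, and rationalize to produce the factor $4ts$, which gives both the sign of $v_1'(t)$ and the estimate. The only cosmetic difference is in the last step: the paper bounds $\sqrt{Q}(\sqrt{P}+\sqrt{Q})\ge t^2$ and then dominates the remaining half-line integral by the full-line integral defining $v_1(t)$, whereas you fold $v_1$ as well and compare the two folded integrands pointwise via the equivalent inequality $(\sqrt{P}+\sqrt{Q})^2\ge Q\ge t^2$; the content is the same.
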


\begin{proof}  Replacing some variables so that a substitution works
out nicely lets us write
\[
v_1(t) = \int\limits_{-\infty}^{+\infty} 
\frac{d\tau}
{\sqrt{(1+\tau^2)(1+(t-\tau)^2)}}.
\]
Differentiating inside the integral gives
\[
v'_1(t) = \int\limits_{-\infty}^{+\infty} 
\frac{-(t-\tau)d\tau}
{\sqrt{(1+\tau^2)(1+(t-\tau)^2)^3}}.
\]
Setting \(\tau_1=t-\tau\) gives \(\tau=t-\tau_1\) and 
\[
\begin{split}
v'_1(t) &= \int\limits_{-\infty}^{+\infty} 
\frac{-\tau_1d\tau_1}
{\sqrt{(1+\tau_1^2)^3(1+(\tau_1-t)^2)}} \\
&=
-\int\limits_0^{+\infty} \left(
\frac1 {\sqrt{1+(\tau_1-t)^2}}
-
\frac1 {\sqrt{1+(\tau_1+t)^2}}
\right)
\frac{\tau_1d\tau_1}
{\sqrt{(1+\tau_1^2)^3}}
\end{split}
\]
by replacing \(\tau_1\) by \(-\tau_1\) on \((-\infty, 0]\).

Combining fractions and rationalizing the numerator gives
\[
\begin{split}
v'_1(t)
&=
-\int\limits_0^{+\infty} \left(
\frac{4t\tau_1}
{\sqrt{PQ}
(\sqrt{P}+\sqrt{Q})}
\right)
\frac{\tau_1d\tau_1}
{\sqrt{(1+\tau_1^2)^3}}
\\
&=
-\int\limits_0^{+\infty}
\left(
\frac{4t}             
{\sqrt{Q}
(\sqrt{P}+\sqrt{Q})}
\right)
\left(
\frac{\tau_1^2}
{1+\tau_1^2}
\right)
\frac{ d\tau_1}
{\sqrt P\sqrt{1+\tau_1^2}}
\end{split}
\]
where 
\(P={1+(\tau_1-t)^2}\) and 
\(Q={1+(\tau_1+t)^2}\).  This shows \(v'(t)<0\) when \(t>0\).

Note that 
\[
\begin{split}
{\sqrt{Q}
(\sqrt{P}+\sqrt{Q})}
=&
{\sqrt{1+(\tau_1+t)^2}
(\sqrt{1+(\tau_1-t)^2}+\sqrt{1+(\tau_1+t)^2})} \\
=&
\sqrt{1+(\tau_1^2-t^2)^2}+({1+(\tau_1+t)^2}) \\
\ge&
2+(\tau_1+t)^2
\end{split}
\]
where we know that \(\tau_1\ge0\).  If \(t>0\), then 
\[
{\sqrt{Q}
(\sqrt{P}+\sqrt{Q})} \ge t^2.
\]
Thus for \(t>0\), we have 
\[
\begin{split}
|v'_1(t)| 
&\le
\frac 4 t 
\int\limits_0^{+\infty}
\frac{ d\tau_1}
{\sqrt P\sqrt{1+\tau_1^2}} \\
&=
\frac 4 t 
\int\limits_0^{+\infty}
\frac{ d\tau_1}
{\sqrt{1+(\tau_1-t)^2}\sqrt{1+\tau_1^2}} \\
&\le
\frac 4 t 
\int\limits_{-\infty}^{+\infty}
\frac{ d\tau_1}
{\sqrt{1+(\tau_1-t)^2}\sqrt{1+\tau_1^2}} = 
\frac 4 t v_1(t).
\end{split}
\]
Since \(v'_1(-t) = -v'_1(t)\), we have 
\[
|v'_1(t)| \le \frac{4}{|t|}v_1(t)
\]
for all \(t\ne0\).
\end{proof}

\begin{cor}\mylabel{LimVOne} For any \(r\in \R\) we have
\[\lim_{t\rightarrow +\infty} \frac{v_1(t-r)}{v_1(t)} = 1.\]
\end{cor}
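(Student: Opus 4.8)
The plan is to pass to logarithms and exploit the bound on the logarithmic derivative of $v_1$ supplied by Lemma [S-L2]. Since the integrand defining $v_1$ is everywhere positive, $v_1(t)>0$ for all $t\in\R$, and Lemma [S-L2] (together with the differentiation under the integral sign carried out there) shows that $v_1$ is $C^1$. Hence $\log v_1$ is a well-defined differentiable function, and proving the stated limit is equivalent to showing that $\log v_1(t-r)-\log v_1(t)\to 0$ as $t\to+\infty$.

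First I would write this difference as an integral of the logarithmic derivative. By the fundamental theorem of calculus,
\[
\log v_1(t)-\log v_1(t-r)=\int_{t-r}^{t}\frac{v_1'(s)}{v_1(s)}\,ds,
\]
with the convention that the integral is taken over the interval with the indicated endpoints regardless of their order. The key input is that Lemma [S-L2] gives exactly $\left|\dfrac{v_1'(s)}{v_1(s)}\right|\le \dfrac{4}{|s|}$ for every $s\ne 0$.

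Next I would bound the integral for large $t$. Fix $r$ and restrict to $t>|r|+1$, so that every $s$ lying between $t-r$ and $t$ satisfies $s\ge t-|r|>0$; in particular the bound $4/|s|\le 4/(t-|r|)$ applies throughout, and the interval of integration has length $|r|$. Therefore
\[
\bigl|\log v_1(t)-\log v_1(t-r)\bigr|\le \int \left|\frac{v_1'(s)}{v_1(s)}\right|\,ds\le \frac{4|r|}{t-|r|}.
\]
Letting $t\to+\infty$ forces the right-hand side to $0$, so $\log v_1(t-r)-\log v_1(t)\to 0$ and hence, by continuity of the exponential, $v_1(t-r)/v_1(t)\to 1$.

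The argument is essentially immediate once the logarithmic derivative bound is in hand; the only point requiring a little care is the bookkeeping around the sign of $r$ and ensuring the path of integration stays in the region $s>0$ where Lemma [S-L2] is being applied, which is why I restrict to $t>|r|+1$. No genuine obstacle remains beyond this routine check.
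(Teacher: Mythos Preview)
Your proof is correct and uses the same key input as the paper (the bound $|v_1'(t)|\le \frac{4}{|t|}v_1(t)$ from Lemma~[S-L2]), but your execution is different and somewhat cleaner. The paper works with the difference $v_1(t-r)-v_1(t)$ directly, applies the mean value theorem to get a point $c$ between $t-r$ and $t$, and then must control the ratio $v_1(c)/v_1(t)$; this forces a case split on the sign of $r$ and an appeal to the monotonicity of $v_1$ on $(0,\infty)$. By passing to $\log v_1$ and integrating the logarithmic derivative, you make Lemma~[S-L2] apply pointwise along the path with no extraneous ratio to estimate, so the argument goes through uniformly in the sign of $r$ and never needs monotonicity. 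The trade-off is that you invoke the $C^1$ regularity of $v_1$ (to justify the fundamental theorem of calculus for $\log v_1$), which is implicit in the differentiation under the integral sign carried out in the proof of Lemma~[S-L2]; the paper's mean value argument needs only differentiability, which it also takes from there.
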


\begin{proof}
Since \[\frac{v_1(t-r)}{v_1(t)} =
\frac{v_1(t-r)-v_1(t)}{v_1(t)} +1,\] we need only show that 
\[\lim_{t\rightarrow +\infty} 
\frac{v_1(t-r)-v_1(t)}{v_1(t)} = 0.\]

 For \(t>0\), \(v_1(t)\) is positive and decreasing and by taking
 \(t\) large enough, we can assume both \(t\) and \(t-r\) are
 positive. 

We start with negative \(r\) so that \(t<t-r\) and \(v_1(c) <
v_1(t)\) for \(c\in (t,t-r)\).

Now
\[
\left|
\frac{v_1(t-r)-v_1(t)}{v_1(t)}
\right| = |r|\frac{|v'_1(c)|}{v_1(t)} 
\le
|r|\frac{4}{|c|}\frac{v_1(c)}{v_1(t)} 
\le
|r|\frac{4}{|c|}
\]
for some \(c\) between \(t-r\) and \(t\). 
But this goes to zero as \(t\rightarrow +\infty\).

If \(r>0\), then \(t-r<t\) and 
\[
\left|\frac{v_1(t-r)-v_1(t)}{v_1(t)}\right|
\le 
\left|\frac{v_1(t-r)-v_1(t)}{v_1(t-r)}\right|
\]
which can be made arbitrarily small by the first calculation.
\end{proof}

\subsection{Definitions}

Let 
\[v(t) = v_1(\log(t+\sqrt{t^2-1}))\qquad\mathrm{for}\,\,\,t\ge1.\]

Let
\[D_n = \{(x_1, \ldots, x_{n-1})\mid 0<x_1< \cdots < x_{n-1}<1\}
\]
and define \(x_{-1} = x_{n-1}-1\), \(x_0=0\), and \(x_n=1\).

Let 
\[
u_{1,n}(x_1, \ldots, x_{n-1}) = 
\prod_{k=1}^n 
\frac
{1}
{x_k-x_{k-1}}
v\left(
\frac
{x_k-x_{k-2}}
{2\sqrt{(x_k-x_{k-1}) (x_{k-1}-x_{k-2})}}
\right),
\]
\[J_n = 
\int\limits_0^1
\int\limits_{x_1}^1
\cdots
\int\limits_{x_{n-2}}^1
u_{1,n}(x_1, \ldots, x_{n-1})
dx_1\cdots dx_{n-1},
\]
\[
u_n(x_1, \ldots, x_{n-1}) = 
\frac{u_{1,n}(x_1, \ldots, x_{n-1})}{J_n}.
\]

Define transformations 
\[
\begin{split}
A(x_1, \ldots, x_{n-1}) &= (l_1, \ldots, l_{n-1}),
\\
B(l_1, \ldots, l_{n-1}) &= (y_1, \ldots, y_{n-1}),
\qquad\mathrm{and} \\
C(y_1, \ldots, y_{n-1}) &= (z_1, \ldots, z_{n-1})
\end{split}
\]
using
\[
\begin{split}
l_k &= {x_k-x_{k-1}}, \\
y_k &= \frac{l_k}{1-x_{n-1}} = \frac{l_k}{l_n}, \\
z_k &= \frac12\log(y_k),
\end{split}
\]
for \(0\le k\le n\).

Note that with the conventions about \(x_{-1}\), \(x_0\) and
\(x_n\), we have \[
\begin{split}
l_0 &=x_0-x_{-1} = 0-(x_{n-1}-1) = l_n, \\
y_n &= \frac{l_n}{l_n} = 1, 
\qquad\mathrm{and} \\
y_0 &= \frac{l_0}{l_n} =
\frac{l_n}{l_n} 
 = 1,
\end{split}
\]
and we get \(z_0=z_n=0\).

\subsection{Jacobians}

Let \(U\) be the interior of the
region of integration in the definition of \(J_n\).  Then we have
the following.

\begin{lemma}\mylabel{JacCalc} The following hold.
\begin{enumerate}
\item The transformations \(A\), \(B\) and \(C\) are all
invertible.
\item  The composition \(BA\) is a bijection from \(U\) to 
\((0,\infty)^{n-1}\).
\item The transformation \(C\) is a bijection from
\((0,\infty)^{n-1}\) to \(\R^{n-1}\).
\item The Jacobians of \(A\), \(B\) and \(C^{-1}\) are, respectively,
\(1\), \((1-x_{n-1})^{-n}\) and \[2^{n-1}\prod_{k=1}^{n-1}y_k.\]
\end{enumerate}
\end{lemma}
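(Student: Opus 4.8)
The plan is to treat $A$, $B$, and $C$ separately, since each Jacobian reduces to a determinant computable by inspection and each bijection claim follows from an explicit inverse. First I would analyze $A$. With $x_0 = 0$, the relations $l_k = x_k - x_{k-1}$ exhibit $A$ as a linear map whose matrix is lower bidiagonal, with $1$'s on the diagonal and $-1$'s immediately below. It is therefore lower triangular with unit diagonal, hence invertible with determinant $1$, which is the first Jacobian; its inverse is the partial-sum map $x_k = l_1 + \cdots + l_k$. Tracking the inequalities $0 < x_1 < \cdots < x_{n-1} < 1$ that cut out $U$ shows that $A(U)$ is exactly the open simplex $\Sigma = \{(l_1, \ldots, l_{n-1}) : l_k > 0,\ \sum_{k=1}^{n-1} l_k < 1\}$, since $l_k > 0$ encodes $x_{k-1} < x_k$ and $\sum_{k=1}^{n-1} l_k < 1$ encodes $l_n = 1 - x_{n-1} > 0$.

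Next I would handle $B$, whose Jacobian is the only one requiring work. Put $s = l_1 + \cdots + l_{n-1} = x_{n-1}$, so that $l_n = 1 - s$ and $y_k = l_k/(1-s)$. Differentiating gives $\partial y_k/\partial l_j = \frac{1}{1-s}\delta_{kj} + \frac{l_k}{(1-s)^2}$, so the Jacobian matrix is the rank-one update $\frac{1}{1-s} I + \frac{1}{(1-s)^2}\,\mathbf{l}\,\mathbf{1}^{T}$, where $\mathbf{l} = (l_1, \ldots, l_{n-1})^{T}$ and $\mathbf{1}$ is the all-ones vector. The matrix-determinant lemma then yields determinant $(1-s)^{-(n-1)}\bigl(1 + \tfrac{s}{1-s}\bigr) = (1-s)^{-n} = (1-x_{n-1})^{-n}$, as claimed. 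For the bijection, summing $y_k = l_k/l_n$ over $k = 1, \ldots, n$ and using $y_n = 1$ together with $\sum_{k=1}^{n} l_k = 1$ gives $l_n = \bigl(1 + \sum_{k=1}^{n-1} y_k\bigr)^{-1}$ and then $l_k = y_k l_n$; this sends any point of $(0,\infty)^{n-1}$ back into $\Sigma$ and inverts $B$, so $B$ maps $\Sigma$ bijectively onto $(0,\infty)^{n-1}$. Composing with $A$ proves (b).

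Finally $C$ acts coordinatewise by $z_k = \tfrac12 \log y_k$, a bijection $(0,\infty) \to \R$ in each coordinate, so $C$ is a bijection $(0,\infty)^{n-1} \to \R^{n-1}$, giving (c), with inverse $y_k = e^{2 z_k}$. The Jacobian matrix of $C^{-1}$ is diagonal with entries $\partial y_k/\partial z_k = 2 e^{2 z_k} = 2 y_k$, whose determinant is $2^{n-1}\prod_{k=1}^{n-1} y_k$, completing (d). The one genuine obstacle is the determinant of $B$; everything else is triangular or diagonal, and the key is to recognize the rank-one structure so that the matrix-determinant lemma applies cleanly.
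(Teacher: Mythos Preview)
Your proof is correct and follows the same overall structure as the paper's: handle $A$, $B$, $C$ separately, exhibit explicit inverses for the bijection claims, and compute each Jacobian directly. The only real difference is in the Jacobian of $B$. The paper writes out the $(n-1)\times(n-1)$ matrix with entries $(l_n+l_k)/l_n^2$ on the diagonal and $l_k/l_n^2$ off it, then performs explicit column operations (subtracting adjacent columns) followed by row operations (cumulative sums) to reduce to an upper triangular matrix with determinant $l_n^{-n}$. You instead recognize the same matrix as $\frac{1}{1-s}I + \frac{1}{(1-s)^2}\mathbf{l}\,\mathbf{1}^T$ and invoke the matrix-determinant lemma. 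Your route is shorter and avoids the bookkeeping of the row/column manipulations; the paper's route is more hands-on but requires no outside identity. Either way the answer $(1-x_{n-1})^{-n}$ drops out, and the treatments of $A$ and $C$ are essentially identical in both.
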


\begin{proof}
The trasformation \(A\) is invertible since \(x_k=\sum_{j=1}^k
l_j\).
  The Jacobian of \(A\) is 1 since the matrix \(\partial
l/\partial x\) is triangular with ones on the diagonal.

The transformation \(B\) is invertible since we first recover
\(l_n\) from \[S=\sum_{k=1}^{n-1}y_k = \frac{1}{l_n}
\sum_{k=1}^{n-1}l_k = \frac{x_n}{l_n} = \frac{1-l_n}{l_n}\] as \[l_n
= \frac{1}{S+1}.\] Then \(l_k=y_k l_n\).  
The composition \(BA\) takes \(U\) into \((0, \infty)^{n-1}\) and
the inverse computes as 
\[x_k = \sum_{j=1}^k l_j = \frac{\sum_{j=1}^k y_j}
{1 + \sum_{j=1}^{n-1} y_j}\] which takes any tuple \((y_1, \ldots,
y_{n-1})\) in \((0, \infty)^{n-1}\) to a tuple  \((x_1, \ldots,
x_{n-1})\) in \(U\).

To compute the Jacobian of
\(B\), we note that \(l_n=1-\sum_{k-1}^{n-1} l_k\) giving \(\partial
l_n/\partial l_k=-1\) for \(1\le k\le n-1\).  So 
\[ 
\frac{\partial y_k}{\partial l_j} = 
\begin{cases} 
\displaystyle{\frac{l_n+l_k}{l_n^2}}, &j=k, \\
\displaystyle{\frac{l_k}{l_n^2}}, &j\ne k. \end{cases}
\]
Thus the Jacobian of \(B\) is 
\[
l_n^{-2(n-1)}
\pmatc{|}{.}{|}
l_n+l_1 & l_1 & l_1 & \cdots & l_1 \cr
l_2 & l_n+l_2 & l_2 & \cdots & l_2 \cr
l_3 & l_3 & l_n+l_3 & \cdots & l_3 \cr
\vdots& \vdots& \vdots& \ddots& \vdots \cr
l_{n-1} & l_{n-1} & l_{n-1} & \cdots & l_n+l_{n-1} \cr
\endpmat
\]
If \(\mathbf c_j\) is the \(j\)-th column, then for
\(1\le j\le n-2\) we replace simultaneously \(\mathbf c_j\) by 
\(\mathbf c_j - \mathbf c_{j+1}\) and get that the Jacobian of
\(B\) is  
\[
l_n^{-2(n-1)}
\pmatc{|}{.}{|}
l_n & 0 & 0 & \cdots & 0 & l_1 \cr
-l_n & l_n & 0 & \cdots & 0 & l_2 \cr
0 & -l_n & l_n & \cdots & 0 & l_3 \cr
\vdots& \vdots& \vdots& \ddots& \vdots & \vdots \cr
0 & 0 & 0 & \cdots & -l_n & l_n+l_{n-1} \cr
\endpmat
\]
If \(\mathbf r_j\) is the \(j\)-th row, then for
\(2\le j\le n-1\) we replace, in succession from \(j=2\), \(\mathbf r_j\) by 
\(\mathbf r_j + \mathbf r_{j-1}\) and get that the Jacobian of
\(B\) is
\[
l_n^{-2(n-1)}
\pmatc{|}{.}{|}
l_n & 0 & 0 & \cdots & 0 & x_1 \cr
0 & l_n & 0 & \cdots & 0 & x_2 \cr
0 & 0 & l_n & \cdots & 0 & x_3 \cr
\vdots& \vdots& \vdots& \ddots& \vdots & \vdots \cr
0 & 0 & 0 & \cdots & 0 & 1 \cr
\endpmat
= l_n^{-2(n-1)+(n-2)} = l_n^{-n} = (1-x_{n-1})^{-n}
\]
since \(x_k=\sum_{j=1}^k
l_j\) and \(\sum_{j=1}^n l_j=1\).

The claims about the transformation \(C\) are straightforward.
\end{proof}

\subsection{Calculations}\mylabel{CalcSubSec}

These are here mostly to help me keep my sanity.

An element of \(D_n\) is basically a coordinate in the interior of
an \((n-1)\)-simplex.  The element \((x_1, \ldots, x_{n-1})\) in
\(D_n\) gives \(n\) lengths \((x_1-x_0, \ldots, x_{n}-x_{n-1})\)
following the convention that \(x_0=0\) and \(x_n=1\).  These are
all strictly positive and sum to 1, so the \(n\) lengths give a
point in the \((n-1)\)-simplex.

We can refer to the lengths as \(l_k=x_k-x_{k-1}\).  The lengths
are not independent since they must sum to 1.  The \(y_k\) dilate
the \(l_k\) by \(1/l_n\) and rescale the coordinates so that they
occupy all of \((0,\infty)\).  Moving from the \(l_k\) to \(y_k\)
preserves ratios of the lengths for \(1\le k\le n-1\) and commutes
with summing.  Specifically \(l_k+l_{k-1}\) is taken to
\(y_k+y_{k-1}\) by the dilation \(1/l_n\).

We have the equalities of ratios
\mymargin{SameRatios}\begin{equation}\label{SameRatios}
\left(
\frac
{x_k-x_{k-2}}
{2\sqrt{(x_k-x_{k-1}) (x_{k-1}-x_{k-2})}}
\right)
=
\left(
\frac
{l_k+l_{k-1}}
{2\sqrt{l_k l_{k-1}}}
\right)
=
\left(
\frac
{y_k+y_{k-1}}
{2\sqrt{y_k y_{k-1}}}
\right).
\end{equation}
Now
\[
\left(
\frac
{a+b}
{2\sqrt{a b}}
\right)
=
\frac12
\left(
\sqrt{\displaystyle{\frac{a}{b}}}
+
\sqrt{\displaystyle{\frac{b}{a}}}
\right)
\]
which has the form 
\[
\frac12
\left(
z+\frac 1 z
\right).
\]
Now 
\[
\frac12(p+q+|p-q|) = \begin{cases}
p, &p\ge q, \\
q, &p<q,
\end{cases}
\]
so we will get the larger of \(z\) or \(1/z\) if we can form 
\[
\frac12
\left(
z+\frac 1 z
+\left|
z-\frac 1 z\right|
\right).
\]
We take advantage of the fact that 
\[
\frac14\left(
z+\frac 1 z
\right)^2 
-
\frac14\left(
z-\frac 1 z
\right)^2
=1
\]
to get
\[
\sqrt{
\displaystyle
{
\left(\frac12
\left(z+\frac 1 z\right)
\right)^2 - 1
}
}
=
\sqrt{
\displaystyle
{
\left(\frac12
\left(z-\frac 1 z\right)
\right)^2
}
}
=
\frac12\left|z-\frac 1 z\right|.
\]
Combining all this we get
\[
\left(
\frac
{a+b}
{2\sqrt{a b}}
\right)
+
\sqrt{
\displaystyle{
\left(
\frac
{a+b}
{2\sqrt{a b}}
\right)
^2
}-1
}
=
\begin{cases}
\sqrt{\displaystyle{\frac a b}}, &a\ge b, \\
\sqrt{\displaystyle{\frac b a}}, &a < b.
\end{cases}
\]
Recalling
\[v(t) = v_1(\log(t+\sqrt{t^2-1}))\qquad\mathrm{for}\,\,\,t\ge1.\]
and letting \(t\) be any of the  ratios in \tref{SameRatios}, we get
\mymargin{WhatVIs}\begin{equation}\label{WhatVIs}
\begin{split}
v
\left(
\frac
{x_k-x_{k-2}}
{2\sqrt{(x_k-x_{k-1}) (x_{k-1}-x_{k-2})}}
\right)
&=
v
\left(
\frac
{l_k+l_{k-1}}
{2\sqrt{l_k l_{k-1}}}
\right) \\
&=
v_1(|\log(\sqrt{l_k}) - \log(\sqrt{l_{k-1}})|) \\
&=
v_1(\frac12|\log({l_k}) - \log({l_{k-1}})|) \\
=
v
\left(
\frac
{y_k+y_{k-1}}
{2\sqrt{y_k y_{k-1}}}
\right)
&=
v_1(\frac12|\log({y_k}) - \log({y_{k-1}})|) \\
&=
v_1(|z_k-z_{k-1}|).
\end{split}
\end{equation}

The function \(v_1\) has a maximum at \(0\) with value \(\pi\) and
decreases to \(0\) as its argument goes to \(\pm\infty\).  Thus the
values in \tref{WhatVIs} measure the equality of of two
consecutive intervals.  We call the value in \tref{WhatVIs} the
{\itshape equality} of the lengths of the intervals.  The
equality is \(\pi\) if the lengths are the same, and the
equality decreases to 0 as the ratio of the lengths gets farther
from 1.

\begin{lemma}[S-L3]
The following holds
\[
J_n
=
\int\limits_{-\infty}^{+\infty} \cdots \int\limits_{-\infty}^{+\infty}
\frac
{dt_1 \cdots dt_{2n-1}}
{\sqrt
{
(1+t_1^2)
(1+(t_2-t_1)^2)
\cdots
(1+(t_{2n-1}-t_{2n-2})^2)
(1+t_{2n-1}^2)
}}
\]
for any natural \(n\) and 
\[c_1 2^{3n-1}(2n)! \le J_n \le c_2 2^{3n-1}(2n)!.
\]
\end{lemma}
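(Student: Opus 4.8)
The plan is to transport the integral defining \(J_n\) through the changes of variables \(A\), \(B\), \(C\) of Lemma~\ref{JacCalc} until it becomes a constant multiple of the integral \(T_{2n-1}\) of Lemma (S-L1), and then to read off the asserted bound from Lemma (S-L1). First I would rewrite the integrand: by the identity \tref{WhatVIs}, each factor \(v(\cdots)\) appearing in \(u_{1,n}\) equals \(v_1(|z_k-z_{k-1}|)\), so that
\[
u_{1,n} = \Big(\prod_{k=1}^n \frac{1}{l_k}\Big)\prod_{k=1}^n v_1(|z_k-z_{k-1}|),
\]
with \(l_k=x_k-x_{k-1}\) and \(z_k=\frac12\log y_k\). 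Since \(l_k=y_kl_n\) for \(1\le k\le n-1\) and \(y_0=y_n=1\), one has \(\prod_{k=1}^n l_k = l_n^n\prod_{k=1}^{n-1}y_k\).

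Next I would carry out the two substitutions. Applying \(y=(BA)(x)\), a bijection \(U\to(0,\infty)^{n-1}\) of Jacobian \(l_n^{-n}\), introduces a factor \(l_n^n\) under the integral that cancels the \(l_n^{-n}\) hidden in \(\prod 1/l_k\), giving
\[
J_n=\int_{(0,\infty)^{n-1}}\frac{1}{\prod_{k=1}^{n-1}y_k}\prod_{k=1}^n v_1(|z_k-z_{k-1}|)\,dy.
\]
Then applying \(z=C(y)\), whose inverse has Jacobian \(2^{n-1}\prod_{k=1}^{n-1}y_k\), cancels the \(\prod 1/y_k\) and produces an overall constant \(2^{n-1}\), so that with the conventions \(z_0=z_n=0\),
\[
J_n=2^{n-1}\int_{\R^{n-1}}\prod_{k=1}^n v_1(|z_k-z_{k-1}|)\,dz_1\cdots dz_{n-1}.
\]

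Third, I would identify this last integral with \(T_{2n-1}\). As \(v_1\) is even, \(v_1(|z_k-z_{k-1}|)=v_1(z_k-z_{k-1})\), and as \(v_1=f*f\) with \(f(x)=(1+x^2)^{-1/2}\), each factor can be expanded by inserting a midpoint variable \(m_k\) through \(v_1(z_k-z_{k-1})=\int_{\R} f(m_k-z_{k-1})f(z_k-m_k)\,dm_k\). Every integrand here is nonnegative, so Tonelli lets me assemble the \(n\) midpoints and the \(n-1\) interior points into a single chain \(0=z_0,m_1,z_1,m_2,\dots,m_n,z_n=0\) of \(2n-1\) free variables carrying \(2n\) consecutive-difference factors of \(f\); relabelling them \(t_1,\dots,t_{2n-1}\) turns the integral into exactly the displayed \(T_{2n-1}\). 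Hence \(J_n=2^{n-1}T_{2n-1}\). (The displayed equality in the statement therefore holds only up to this factor \(2^{n-1}\), which is precisely the factor that turns the estimate of Lemma (S-L1) into the asserted growth rate.)

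Finally, Lemma (S-L1) with \(n\) replaced by \(2n-1\) gives \(c_1 2^{2n}(2n)!\le T_{2n-1}\le c_2 2^{2n}(2n)!\), and multiplying by \(2^{n-1}\) yields \(c_1 2^{3n-1}(2n)!\le J_n\le c_2 2^{3n-1}(2n)!\), as required. The only delicate point is the bookkeeping across the two substitutions --- keeping track of the boundary conventions \(y_0=y_n=1\) and \(z_0=z_n=0\) and of the exact cancellation of the Jacobian factors against \(\prod 1/l_k\) and \(\prod 1/y_k\) --- together with the justification, via Tonelli and the \(L^2\) convolution facts recorded earlier, that the \(n\)-fold self-convolution of \(v_1\) evaluated at \(0\) coincides with \((f^{*2n})(0)=T_{2n-1}\).
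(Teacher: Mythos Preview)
Your argument is correct and follows essentially the same route as the paper: both push \(J_n\) through the substitutions \(A\), \(B\), \(C\) of Lemma~\ref{JacCalc} to reach \(J_n=2^{n-1}\int_{\R^{n-1}}\prod_{k=1}^n v_1(|z_k-z_{k-1}|)\,dz\), then expand each \(v_1\) as the convolution \(f*f\) to insert the odd-indexed variables \(t_{2k-1}\) and arrive at \(J_n=2^{n-1}T_{2n-1}\), from which Lemma~(S-L1) gives the bounds. You are also right that the displayed identity in the lemma's statement is off by exactly this factor \(2^{n-1}\); the paper's own derivation produces the same \(2^{n-1}\) and simply asserts agreement with the statement, so the discrepancy lies in the statement, not in your proof.
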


\begin{proof}

Remembering that \(x_n=1\) and using Lemma \ref{JacCalc}, we have
\[
\begin{split}
&\left(
\prod_{k=1}^n 
\frac
{1}
{x_k-x_{k-1}}
\right)
dx_1\cdots dx_{n-1} \\
=&
\left(
\prod_{k=1}^{n-1} 
\frac
{1}
{x_k-x_{k-1}}
\right)
\frac{1}{1-x_{n-1}}
dx_1\cdots dx_{n-1} \\
=&
\left(
\prod_{k=1}^{n-1} 
\frac
{1}
{x_k-x_{k-1}}
\right)
\frac{(1-x_{n-1})^n}{1-x_{n-1}}
dy_1\cdots dy_{n-1} \\
=&
\left(
\prod_{k=1}^{n-1} 
\frac
{1-x_{n-1}}
{x_k-x_{k-1}}
\right)
\frac{1-x_{n-1}}{1-x_{n-1}}
dy_1\cdots dy_{n-1} \\
=&
\frac
{dy_1\cdots dy_{n-1}}
{y_1 \cdots y_{n-1}}.
\end{split}
\]

Recall that
\[
\frac{y_k+y_{k-1}}{2\sqrt{y_ky_{k-1}}}
=
\frac{x_k-x_{k-2}}
{2\sqrt{(x_k-x_{k-1}) (x_{k-1} - x_{k-2})}}.
\]

We now have
\mymargin{FirstJn}\begin{equation}\label{FirstJn}
J_n
=
\int\limits_0^{+\infty} \cdots \int\limits_0^{+\infty} 
\prod_{k=1}^n v
\left(
\frac{y_k+y_{k-1}}{2\sqrt{y_ky_{k-1}}}
\right)
\frac
{dy_1\cdots dy_{n-1}}
{y_1 \cdots y_{n-1}}.
\end{equation}
Taking into account \(y_0=y_n=1\), we get
\[
J_n
=
\int\limits_0^{+\infty} \cdots \int\limits_0^{+\infty} 
v
\left(
\frac{y_1+1}{2\sqrt{y_1}}
\right)
v
\left(
\frac{1+y_{n-1}}{2\sqrt{y_{n-1}}}
\right)
\prod_{k=2}^{n-1} v
\left(
\frac{y_k+y_{k-1}}{2\sqrt{y_ky_{k-1}}}
\right)
\frac
{dy_1\cdots dy_{n-1}}
{y_1 \cdots y_{n-1}}.
\]
This verifies the first line of the proof of Lemma 3 in
\cite{shavgulidze}. 

We have \[
\begin{split}
v_1(a-b)
=&
\int\limits_{-\infty}^{+\infty}
\frac{dz}
{\sqrt{
(1+z^2)(1+(a-b-z)^2)
}
} \\
=&
\int\limits_{-\infty}^{+\infty}
\frac{dw}
{\sqrt{
(1+(w-b)^2)(1+(a-w)^2)
}
}\qquad\mathrm{letting}\quad w=z+b. \\
\end{split}
\]
Since we know \(v_1(-\tau) = v_1(\tau)\), the above is also the
formula for \(v_1(b-a)\).  

We now define \(t_{2k} = \frac{1}{2}\log(y_k) = z_k\).  We pick up
the odd subscripts by 
letting our variable of integration for
\(v_1(|t_{2k} - t_{2k-2}|)\) be \(t_{2k-1}\), so that we get 
\mymargin{TheOddSub}\begin{equation}\label{TheOddSub}
v_1(|t_{2k}-t_{2k-2}|) =
\int\limits_{-\infty}^{+\infty}
\frac{dt_{2k-1}}
{\sqrt
{
(1+(t_{2k-1} - t_{2k-2})^2)
(1+(t_{2k} - t_{2k-1})^2)
}
}.
\end{equation}
This disagrees with the content of the proof of Lemma 3 in
\cite{shavgulidze}, but that seems to be a misprint.  The above
agrees with the top of Page 8 of \cite{shavgulidze}.

Using \tref{TheOddSub} and
Lemma \ref{JacCalc}, we can replace
\tref{FirstJn}  by
\[
J_n =
2^{n-1}
\int\limits_{-\infty}^{+\infty} \cdots \int\limits_{-\infty}^{+\infty}
\frac{dt_1\,dt_2\, \cdots dt_{2n-1}}
{
\sqrt{
\textstyle{
\prod_{k=1}^{2n} (1+(t_k-t_{k-1})^2)
}
}
}.
\]
With \(t_0=t_{2n}=0\), this agrees with the statement of the lemma
we are proving.

The last provision of the lemma follows directly from Lemma (S-L1)..
\end{proof}

\begin{lemma}[S-L4] For each \(\epsilon>0\) with \(\epsilon<1\),
there exists \(c_3>0\) 
so that 
\[
v
\left( \frac{y_1+a}{2\sqrt{y_1a}}
\right)
v
\left( \frac{a+y_2}{2\sqrt{ay_2}}
\right)
\le
c_3
v
\left( \frac{y_1+y_2}{2\sqrt{y_1 y_2}}
\right)
\]
for all \(a\), \(y_1\), \(y_2\) satisfying \(\epsilon \le a<1\),
\(y_1>0\), \(y_2>0\), \(y_1+y_2\le 1\).  \end{lemma}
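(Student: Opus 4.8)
The plan is to first pass from $v$ to $v_1$ using the identity \tref{WhatVIs}, which reduces the claim to an inequality purely about $v_1$. Writing $\eta_1 = \frac12\log y_1$, $\eta_2 = \frac12\log y_2$, and $\alpha = \frac12\log a$, the three factors become $v_1(|\eta_1-\alpha|)$, $v_1(|\alpha-\eta_2|)$, and $v_1(|\eta_1-\eta_2|)$, so the target becomes
\[
v_1(|\eta_1-\alpha|)\,v_1(|\alpha-\eta_2|)\le c_3\,v_1(|\eta_1-\eta_2|).
\]
The hypotheses translate cleanly: from $0<y_1,y_2<1$ we get $\eta_1,\eta_2<0$, and from $\epsilon\le a<1$ we get $|\alpha|\le M$ with $M=\frac12\log(1/\epsilon)$. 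Thus the problem becomes one about three points $\eta_1,\alpha,\eta_2$ on the line with $\alpha$ confined to the bounded interval $[-M,0)$, and I will use only the recorded facts that $v_1$ is even, continuous, strictly positive, bounded above by $v_1(0)=\pi$, strictly decreasing on $[0,\infty)$ (Lemma (S-L2)), and satisfies the translation asymptotics of Corollary \ref{LimVOne}.

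The key technical step is a uniform ``slowly varying'' estimate: for each $M$ there is $K_M\ge1$ with $v_1(s)\le K_M\,v_1(t)$ whenever $s,t\ge0$ and $|s-t|\le M$. I would first strengthen Corollary \ref{LimVOne} so that $v_1(t-r)/v_1(t)\to1$ as $t\to\infty$ uniformly for $|r|\le M$; this uniformity comes for free from the proof of that corollary, since there $|r|$ enters only as a bounded multiplicative factor in front of the bound $|v_1'(c)|\le\frac{4}{|c|}v_1(c)$. Granting this, the estimate follows by contradiction: if no $K_M$ worked there would be $s_n,t_n\ge0$ with $|s_n-t_n|\le M$ and $v_1(s_n)/v_1(t_n)\to\infty$; as $v_1\le\pi$ and $v_1$ is bounded below by a positive constant on each compact interval, this forces $t_n\to\infty$ and hence $s_n\to\infty$, and then the uniform form of Corollary \ref{LimVOne} forces the ratio to $1$, a contradiction. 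I expect this to be the main obstacle, since it is where the genuine analytic content of $v_1$ is spent; the remainder is bookkeeping.

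With $K_M$ available I would finish by a case analysis on the position of $\alpha$ relative to $\eta_1,\eta_2$, setting $d_1=|\eta_1-\alpha|$, $d_2=|\alpha-\eta_2|$, $D=|\eta_1-\eta_2|$, and using symmetry in $y_1\leftrightarrow y_2$ to fix a convenient ordering. If $\alpha$ lies to the right of both $\eta_i$ (possible since $\alpha<0$ while the $\eta_i$ may be more negative), then $D=|d_1-d_2|$, so the larger of $d_1,d_2$ equals $D$ plus the smaller; monotonicity gives $v_1(\max(d_1,d_2))\le v_1(D)$ and $v_1(\min(d_1,d_2))\le\pi$, so $c_3=\pi$ suffices. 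If $\alpha$ lies to the left of both $\eta_i$, then both $\eta_i\in[\alpha,0)\subseteq[-M,0)$, so $d_1,d_2,D$ all lie in $[0,M)$ and the ratio is at most $\pi^2/m_M$ with $m_M=\min_{s\in[0,M]}v_1(s)>0$. Finally, if $\alpha$ lies between $\eta_1$ and $\eta_2$, then $D=d_1+d_2$; the factor attached to whichever $\eta_i$ lies between $\alpha$ and $0$ has argument $d_s\le|\alpha|\le M$, while the other argument is $d_o=D-d_s$ with $|D-d_o|=d_s\le M$, so $v_1(d_s)\le\pi$ and the slowly varying estimate gives $v_1(d_o)\le K_M v_1(D)$, whence the product is at most $\pi K_M v_1(D)$. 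Taking $c_3=\max(\pi,\pi^2/m_M,\pi K_M)$ settles all cases.
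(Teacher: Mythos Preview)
Your proposal is correct and follows essentially the same approach as the paper: reduce to $v_1$ via \tref{WhatVIs}, establish a slowly-varying bound $v_1(s)\le K\,v_1(t)$ whenever $|s-t|$ is bounded (the paper's constant is called $c^*$, built exactly as you suggest from Corollary~\ref{LimVOne} plus a compact-interval minimum), and finish using monotonicity of $v_1$ together with $v_1\le\pi$. The paper avoids your three-case split by first shifting $\alpha$ to $0$ via $v_1(|t_i-\alpha|)\le c^*\,v_1(t_i)$ and then observing, since both $t_i=-\eta_i>0$, that $v_1(\max(t_1,t_2))\le v_1(|t_1-t_2|)$ directly, yielding the one-line chain $v_1(|t_1-\alpha|)v_1(|t_2-\alpha|)\le (c^*)^2 v_1(t_1)v_1(t_2)\le \pi(c^*)^2 v_1(|t_1-t_2|)$.
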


The lemma is to be interpreted while remembering that \(v\) measures
the equality of the lengths two intervals where the value
decreases as the ratio of the lengths varies farther from 1.  The
lemma relates the equalities of the three pairs in a triple of
intervals if the length of the middle interval is at least
\(\epsilon\).

\begin{proof} Let \(r=-\frac{1}{2}\log(\epsilon)\).

We know that \(v_1\) is positive, even, continuous and is decreasing
on \([0,\infty)\).  Further, its maximum is at 0 where it has the
value \(\pi\).

From Corollary \ref{LimVOne}, there is an \(R>0\) so that
\(v_1(t-r)\le 2v_1(t)\) on all of \([R, \infty)\).  We can choose
\(R>r\).  Since \(v_1\) is decreasing on \([0,\infty)\) and
increasing on \((-\infty, 0]\), we have \(v_1(t-\tau)\le 2v_1(t)\)
for all \(t\) with \(|t|\ge R\) and all \(\tau\in [0,r]\).

Since \(v_1(R)\) is the minimum of \(v_1\) on \([-R,R]\), we can set
\(c^*\) to be the larger of \(2\) and \(\pi/v_1(R)\) and will have
that \(v_1(t-\tau)\le c^*v_1(t)\) for all \(t\in \R\) and \(\tau\in
[0,r]\).  Since \(v_1\) is even, we have \(v_1(|t-\tau|) \le
c^*v_1(t)\) for all \(t\in \R\) and \(\tau\in [0,r]\).

We let \(c_3=\pi(c^*)^2\).

Let \(t_i=-\frac{1}{2}\log(y_i)\), \(i=1,2\),  and
\(\alpha=-\frac{1}{2} \log(a)\).  Since \(\epsilon\le a\le 1\), we
have \(\alpha\in [0,r]\).

From \tref{WhatVIs}, we are asked to show 
\[
v_1(|t_1-\alpha|) v_1(|t_2-\alpha|) \le c_3 v_1(|t_2-t_1|).
\]

Let \(w=\min\{t_1, t_2\}\) and \(z=\max\{t_1, t_2\}\).  We have 
\(z-w\ge0\) and
\[
v_1(|w-\alpha|)v_2(|z-\alpha|) \le (c^*)^2v_1(w)v_1(z) \le
\pi(c^*)^2v_1(z) \le c_3 v_1(z-w)
\]
which is what we need to show.
\end{proof}

\subsection{A definition}  Let \(\vartheta\) be the characteristic
function on \([0,1]\).  That is, it takes the value \(1\) on
\([0,1]\) and 0 otherwise.

\begin{lemma}[S-L5] The following holds
\[
\begin{split}
&\lim_{n\to\infty}
\int\limits_0^1 \int\limits_{x_1}^1\cdots\int\limits_{x_{n-2}}^1
(1-\vartheta(\frac 1 \epsilon \underset{1\le k\le
n}{\max}(x_k-x_{k-1})))
u_n(x_1, \ldots, x_{n-1})
dx_1dx_2\ldots dx_{n-1} \\  &= 0
\end{split}
\]
for any positive \(\epsilon < 1\).
\end{lemma}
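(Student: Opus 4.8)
The plan is to pass to the interval-length coordinates $l_k = x_k - x_{k-1}$ ($k=1,\dots,n$, with $x_0=0$, $x_n=1$), under which, by the Jacobian computation in Lemma~\ref{JacCalc}, the region $D_n$ becomes the open simplex $\{l_k>0,\ \sum_k l_k=1\}$ with the flat measure $dl_1\cdots dl_{n-1}$, and $\max_k(x_k-x_{k-1})=\max_k l_k$. In these coordinates the density is $u_{1,n}=\bigl(\prod_{k=1}^n l_k^{-1}\bigr)\prod_{k=1}^n v\!\left(\frac{l_k+l_{k-1}}{2\sqrt{l_k l_{k-1}}}\right)$ with the cyclic convention $l_0=l_n$, so both $u_{1,n}$ and the simplex (with its flat measure) are invariant under cyclic permutation of $(l_1,\dots,l_n)$. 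The quantity in the lemma is $I_n=J_n^{-1}\int_{\{\max_k l_k>\epsilon\}}u_{1,n}$. Since $\{\max_k l_k>\epsilon\}=\bigcup_j\{l_j>\epsilon\}$, a union bound together with the cyclic symmetry gives $I_n\le (n/J_n)\int_{\{l_1>\epsilon\}}u_{1,n}$. By the two-sided estimate of Lemma~(S-L3) one has $J_{n-1}/J_n\asymp 1/n^2$, so it will suffice to prove $\int_{\{l_1>\epsilon\}}u_{1,n}=O(J_{n-1})$ with a constant depending only on $\epsilon$; this yields $I_n=O(1/n)\to 0$.

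To estimate $\int_{\{l_1>\epsilon\}}u_{1,n}$ I would split according to the size of $l_1$, writing $\{l_1>\epsilon\}=\{\epsilon<l_1\le\tfrac12\}\cup\{l_1>\tfrac12\}$. On the moderate part, $l_1$ plays the role of the middle interval $a$ of Lemma~(S-L4): applying that lemma with $a=l_1$, $y_1=l_n$, $y_2=l_2$ (the hypotheses $\epsilon\le a<1$ and $y_1+y_2\le1$ hold) replaces the two factors $v(\mathrm{eq}(l_n,l_1))v(\mathrm{eq}(l_1,l_2))$ by $c_3\,v(\mathrm{eq}(l_n,l_2))$, collapsing the cyclic chain on $(l_1,\dots,l_n)$ to the cyclic chain on $(l_2,\dots,l_n)$. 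After rescaling the remaining lengths by $(1-l_1)^{-1}$ (the $v$-factors are scale invariant and $\prod_{k\ge2}l_k^{-1}$ is homogeneous of degree $-(n-1)$), the inner integral becomes $J_{n-1}$, and a short Jacobian bookkeeping gives $\int_{\{\epsilon<l_1\le1/2\}}u_{1,n}\le c_3 J_{n-1}\int_\epsilon^{1/2}\frac{ds}{s(1-s)}$. The last integral is finite and depends only on $\epsilon$, so the moderate part contributes $O(J_{n-1})$ as required.

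The main obstacle is the large part $\{l_1>\tfrac12\}$, where exactly one interval exceeds $\tfrac12$. Here Lemma~(S-L4) is too wasteful: as $l_1\to1$ its left side $v(\mathrm{eq}(l_n,l_1))v(\mathrm{eq}(l_1,l_2))$ is small (both ratios are far from $1$) while its right side $c_3 v(\mathrm{eq}(l_n,l_2))$ is of order $\pi$, and the resulting bound produces the divergent integral $\int^1 \frac{ds}{s(1-s)}$. Instead I would retain the smallness of those two factors directly. Using $\frac{l_1+l_2}{2\sqrt{l_1 l_2}}\ge\frac12\sqrt{l_1/l_2}\ge c\,l_2^{-1/2}$ and the fact that $v$ is decreasing, one has $v(\mathrm{eq}(l_1,l_2))\le \psi(l_2)$ and $v(\mathrm{eq}(l_1,l_n))\le\psi(l_n)$, where $\psi(s):=v(c\,s^{-1/2})\to0$ as $s\to0$. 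Bounding $l_1^{-1}\le2$ and integrating over $\{l_2,\dots,l_n>0,\ \sum_{k\ge2}l_k<\tfrac12\}$ reduces the problem to controlling the ``open-chain'' integral $\int \psi(l_2)\psi(l_n)\bigl(\prod_{k\ge2}l_k^{-1}\bigr)\prod_{k=3}^n v(\mathrm{eq}(l_k,l_{k-1}))$. The delicate point is that the endpoint singularities $l_2^{-1}$ and $l_n^{-1}$ are no longer tamed on both sides by $v$-factors (as they are in the closed chain defining $J_{n-1}$), so convergence, and the comparison with $J_{n-1}$, hinges on the precise rate at which $\psi$, hence $v$, decays. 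The input I would use is $v_1(\tau)=O(\tau^{-1}\log\tau)$ as $\tau\to+\infty$, which follows from $v_1=f*f$ with $f(x)=(1+x^2)^{-1/2}$ (or from the Bessel asymptotics recorded in \tref{WatsonForm}); through \tref{WhatVIs} this gives $\psi(s)=O(|\log s|^{-1}\log|\log s|)$, making $\int_0 \psi(s)^2\,\frac{ds}{s}$ and the endpoint integrals $\int_0 \psi(l_2)\,l_2^{-1}v(\mathrm{eq}(l_3,l_2))\,dl_2$ convergent and allowing the open-chain integral to be bounded by $O(J_{n-1})$.

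Combining the two regions gives $\int_{\{l_1>\epsilon\}}u_{1,n}=O_\epsilon(J_{n-1})$, whence $I_n=O_\epsilon(n\,J_{n-1}/J_n)=O_\epsilon(1/n)\to0$, which is the assertion. I expect essentially all the difficulty to be concentrated in making the open-chain estimate of the third paragraph quantitative, that is, in showing that weighting the two ends of the chain by the decaying factor $\psi$ (in place of the wrap-around factor $v(\mathrm{eq}(l_2,l_n))$) changes the value of the integral by at most a bounded factor. This is precisely where the asymptotics of $v_1$ at infinity, rather than the soft bounded-shift estimate of Corollary~\ref{LimVOne} that underlies Lemma~(S-L4), become indispensable.
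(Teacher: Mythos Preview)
Your architecture matches the paper's: a union bound over the index of the large gap, Lemma~(S-L4) to collapse that gap and reduce to a $J_{n-1}$-type integral, and then $J_{n-1}/J_n\asymp n^{-2}$ from Lemma~(S-L3). Your use of the cyclic symmetry of $u_{1,n}$ in the $l_k$-coordinates to reduce to the single case $l_1>\epsilon$ is a clean simplification of the paper's explicit shift-and-rescale change of variables for general~$k$, but both routes land on the same estimate
\[
I_k \ \lesssim\ \frac{c_3\,J_{n-1}}{J_n}\int_\epsilon^1\frac{dr}{r(1-r)},
\]
which diverges at $r=1$.

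The paper's own proof is incomplete at exactly this point: it records the bound above, notes that it diverges and differs from what appears in Shavgulidze's original, and explicitly asks for help. So you have correctly located the real obstruction and, unlike the paper, you propose a repair on $\{l_1>\tfrac12\}$: retain the two boundary $v$-factors rather than discard them via Lemma~(S-L4), and exploit $v_1(\tau)=O(\tau^{-1}\log\tau)$. This is the right direction, but the open-chain comparison you flag as the crux is only sketched and is not yet a proof. Pointwise convergence of $\int_0\psi(l_2)\,l_2^{-1}\,v(\mathrm{eq}(l_3,l_2))\,dl_2$ for fixed $l_3$ is not what is needed; you must show, uniformly in $n$, that the open chain with $\psi$-weighted ends is bounded by a constant times the closed chain $J_{n-1}$, i.e.\ that replacing the single wraparound factor $v(\mathrm{eq}(l_2,l_n))$ by the product $\psi(l_2)\psi(l_n)$ costs at most a fixed multiplicative constant. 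No mechanism for this is given. A natural attack is to pass to the $t$-variables of Lemma~(S-L3), where both the open and closed chains become iterated convolutions of $(1+t^2)^{-1/2}$ and the Fourier/Bessel machinery of Lemma~(S-L1) is available; until such a comparison is carried out, your argument and the paper's share the same gap.
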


If \(r\) is \(\underset{1\le k\le n}{\max}(x_k-x_{k-1})\), then
\(\vartheta(r/ \epsilon)\) is 0 if and only if \(r>\epsilon\) and
thus 1 if and only if \(r>\epsilon\).  Thus the integral in the
statement is the restriction of the integral of \(u_n\) to the
partitions of \([0,1]\) in \(D_n\) that have at least one of the
lengths greater than \(\epsilon\).

\begin{proof}  Let 
\[I=
\int\limits_0^1 \int\limits_{x_1}^1\cdots\int\limits_{x_{n-2}}^1
(1-\vartheta(\frac 1 \epsilon \underset{1\le k\le
n}{\max}(x_k-x_{k-1})))
u_n(x_1, x_2, \ldots, x_{n-1})
dx_1\,dx_2\ldots dx_{n-1}
\]
and
\[
I_k=
\int\limits_0^1 \int\limits_{x_1}^1\cdots\int\limits_{x_{n-2}}^1
(1-\vartheta(\frac 1 \epsilon (x_k-x_{k-1})))
u_n(x_1, x_2, \ldots, x_{n-1})
dx_1\,dx_2\ldots dx_{n-1}.
\]
Each \(I_k\) integrates \(u_n\)
over the partitions in which the length of the \(k\)-th interval
excedes \(\epsilon\).  We have \(I\le \sum_{k=1}^n I_k\).  We work
to estimate \(I_k\).

Let \(D_{k,\epsilon}\) be the subset of \(D_n\) for which
\(x_k-x_{k-1}> \epsilon\).  We calculate \(I_k\) by integrating
\(u_n\) over \(D_{k,\epsilon}\).  For \((x_1, \ldots, x_{n-1})\in
D_{k,\epsilon}\) we set 
\[
\begin{split}
r&=x_k-x_{k-1}, \\
y'_{-1} &=x_{-1} = x_{n-1}-1, \\
y'_0 &= x_0 = 0, \\
y'_1 &= x_1, \\
\vdots& \\
y'_{k-1} &= x_{k-1}, \\
y'_k &= x_{k+1}-r, \\
\vdots& \\
y'_{n-2} &= x_{n-1}-r, \\
y'_{n-1} &= 1-r. 
\end{split}
\]
Note that for \(j\ge k\), we have \(y'_j=x_{j+1}-x_k + x_{k-1}\).
The transformation \[(x_1,\dots, x_{n-1})\mapsto (y'_1, \ldots,
y'_{k-1}, r, y'_k, \ldots, y'_{n-2})\] is linear with triangular
matrix with ones on the diagonal.  Thus the transformation has
Jacobian one.

Now we let \(y_j = y'_j/(1-r)\) for \(j\in \{-1, 0, 1, \ldots,
n-1\}\).  The transformation
\[(r, y'_1, \ldots, y'_{n-2}) \mapsto (r, y_1, \ldots, y_{n-2})
\]
has Jacobian \((1-r)^{n-2}\).

The \(y'_j\) divide the interval \([0,1-r]\) into segments that
correspond to the segments that the \(x_j\) divide \([0,l]\) into,
but with the segment \([x_{k-1}, x_k]\) removed.  Thus the
differences 
\[
\begin{split}
y'_j-y'_{j-1} 
&=
\begin{cases} 
x_j-x_{j-1}, & j<k \\
x_{j+1}-x_j, & j\ge k,
\end{cases} \\
y'_j-y'_{j-2} 
&= 
\begin{cases} 
x_j-x_{j-2}, & j<k \\
(x_{k+1}-x_k)+(x_{k-1}-x_{k-2}), &j=k, \\
x_{j+1}-x_{j-1}, & j\ge k+1.
\end{cases}
\end{split}
\]

\[
\begin{split}
u_{1,n}(x_1, \ldots, x_{n-1}) 
=
\prod_{j=1}^n 
\frac
{1}
{x_j-x_{j-1}}
&v\left(
\frac
{x_j-x_{j-2}}
{2\sqrt{(x_j-x_{j-1}) (x_{j-1}-x_{j-2})}}
\right) \\
=
\prod_{j=1}^{k-1}
\frac
{1}
{x_j-x_{j-1}}
&v\left(
\frac
{x_j-x_{j-2}}
{2\sqrt{(x_j-x_{j-1}) (x_{j-1}-x_{j-2})}}
\right) \\
\cdot
\frac
{1}
{x_k-x_{k-1}}
&v\left(
\frac
{x_k-x_{k-2}}
{2\sqrt{(x_k-x_{k-1}) (x_{k-1}-x_{k-2})}} 
\right) \\
\cdot
\frac
{1}
{x_{k+1}-x_{k}}
&v\left(
\frac
{x_{k+1}-x_{k-1}}
{2\sqrt{(x_{k+1}-x_{k}) (x_{k}-x_{k-1})}}
\right) \\
\prod_{j=k+2}^{n}
\frac
{1}
{x_j-x_{j-1}}
&v\left(
\frac
{x_j-x_{j-2}}
{2\sqrt{(x_j-x_{j-1}) (x_{j-1}-x_{j-2})}}
\right)
\end{split}
\]

From Lemma (S-L4), we know 
\[
\begin{split}
v\left(
\frac
{x_k-x_{k-2}}
{2\sqrt{(x_k-x_{k-1}) (x_{k-1}-x_{k-2})}} 
\right)
&v\left(
\frac
{x_{k+1}-x_{k-1}}
{2\sqrt{(x_{k+1}-x_{k}) (x_{k}-x_{k-1})}}
\right) \\
\le c_3
&
v\left(
\frac
{(x_{k+1}-x_k)+(x_{k-1}-x_{k-2})}
{2\sqrt{(x_{k+1}-x_{k}) (x_{k-1}-x_{k-2})}}
\right) \\
= c_3
&v\left(
\frac
{y'_k-y'_{k-2}}
{2\sqrt{(y'_{k}-y'_{k-1}) (y'_{k-1}-y'_{k-2})}}
\right).
\end{split}
\]
Making the other substitutions we list above and being careful with
our running index \(j\), we get
\[
\begin{split}
u_{1,n}(x_1, \ldots, x_{n-1}) 
\le
\prod_{j=1}^{k-1}
\frac
{1}
{y'_j-y'_{j-1}}
v
&\left(
\frac
{y'_j-y'_{j-2}}
{2\sqrt{(y'_j-y'_{j-1}) (y'_{j-1}-y'_{j-2})}}
\right) \\
\cdot
\frac
{1}
{r}
\frac
{1}
{y'_{k}-y'_{k-1}}  c_3
v
&\left(
\frac
{y'_{k}-y'_{k-2}}
{2\sqrt{(y'_{k}-y'_{k-1}) (y'_{k-1}-y'_{k-2})}}
\right) \\
\prod_{j=k+1}^{n-1}
\frac
{1}
{y'_j-y'_{j-1}}
v
&\left(
\frac
{y'_j-y'_{j-2}}
{2\sqrt{(y'_j-y'_{j-1}) (y'_{j-1}-y'_{j-2})}}
\right) \\
=\frac{c_3}{r} 
u_{1,n-1}
&(y'_1, \ldots, y'_{n-2}).
\end{split}
\]

We have \(y'_j=(1-r)y_j\) for \(-1\le j\le n-1\) and we have
\[
\begin{split}
dy'_j &= (1-r)dy_j, \\
\frac
{1}
{y'_j-y'_{j-1}}
&=
\frac
{1}
{(y_j-y_{j-1})(1-r)}, \qquad\mathrm{and} \\
v
\left(
\frac
{y'_j-y'_{j-2}}
{2\sqrt{(y'_j-y'_{j-1}) (y'_{j-1}-y'_{j-2})}}
\right)
&=
v
\left(
\frac
{y_j-y_{j-2}}
{2\sqrt{(y_j-y_{j-1}) (y_{j-1}-y_{j-2})}}
\right)
\end{split}
\]
for every \(j\) with \(1\le j\le n-1\).

Now we note that 
\[
\begin{split}
u_{1,n-1}(y'_1, \ldots, y'_{n-2})
=
&\prod_{j=1}^{n-1}
\frac
{1}
{y'_j-y'_{j-1}}
v
\left(
\frac
{y'_j-y'_{j-2}}
{2\sqrt{(y'_j-y'_{j-1}) (y'_{j-1}-y'_{j-2})}}
\right) \\
=
\frac{1}{(1-r)^{n-1}}
&\prod_{j=1}^{n-1}
\frac
{1}
{y_j-y_{j-1}}
v
\left(
\frac
{y_j-y_{j-2}}
{2\sqrt{(y_j-y_{j-1}) (y_{j-1}-y_{j-2})}}
\right) \\
=
\frac{1}{(1-r)^{n-1}}
&u_{1,n-1}(y_1, \ldots, y_{n-2}).
\end{split}
\]

Since 
\[(x_1,\dots, x_{n-1})\mapsto (y'_1, \ldots,
y'_{k-1}, r, y'_k, \ldots, y'_{n-2})\]
has Jacobian one, we get
\[
I_k
\le
\frac{c_3}{J_n}
\int\limits_\epsilon^1 \frac 1 r 
\left[
\int\limits _0^{1-r} \int\limits_{y'_1}^{1-r} \cdots
\int\limits_{y'_{n-3}}^{1-r}
u_{1,n-1}(y'_1, \ldots, y'_{n-2})
dy'_1\,dy'_2\,\cdots dy'_{n-2}
\right]
dr.
\]

Since 
\[(r, y'_1, \ldots, y'_{n-2}) \mapsto (r, y_1, \ldots, y_{n-2})
\]
is diagonal, we can just make direct substitutions to get
get
\[
\begin{split}
I_k
&\le
\frac{c_3}{J_n}
\int\limits_\epsilon^1 \frac 1 r 
\frac{(1-r)^{n-2}}{(1-r)^{n-1}}dr
\left[
\int\limits _0^{1} \int\limits_{y_1}^{1} \cdots
\int\limits_{y_{n-3}}^{1}
u_{1,n-1}(y_1, \ldots, y_{n-2})
dy_1\,dy_2\,\cdots dy_{n-2}
\right] \\
&=
\frac{c_3}{J_n}
\int\limits_\epsilon^1 \frac 1 {r(1-r)}
\left[
\int\limits _0^{1} \int\limits_{y_1}^{1} \cdots
\int\limits_{y_{n-3}}^{1}
u_{1,n-1}(y_1, \ldots, y_{n-2})
dy_1\,dy_2\,\cdots dy_{n-2}
\right].
\end{split}
\]

Unfortunately, this differs significantly from what appears at this
point in the proof of Lemma 5 of \cite{shavgulidze}.  Any help at
this point would be appreciated.  \end{proof}

\begin{lemma}[S-L6]
For any \(r>1\), the following holds
\[
\begin{split}
&\lim_{n\to\infty}
\int\limits_0^1 \int\limits_{x_1}^1\cdots\int\limits_{x_{n-2}}^1
\vartheta\left[
\frac 1 r \underset{1\le k\le
n}{\min}
\left(
\frac{l_k+l_{k-1}}{2\sqrt{l_kl_{k-1}}}
\right)
\right]
u_n(x_1, \ldots, x_{n-1})
dx_1dx_2\ldots dx_{n-1} \\  &= 0
\end{split}
\]
where \(l_k=x_k-x_{k-1}\) for \(1\le k \le n\).
\end{lemma}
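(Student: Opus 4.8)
The plan is to linearize the problem by passing to the coordinates $z_k=\tfrac12\log y_k$ that are implicit in Lemma~\ref{JacCalc} and Lemma (S-L3). Composing the change of variables $x\mapsto y\mapsto z$ of Lemma~\ref{JacCalc} with the identity \tref{WhatVIs}, the measure $u_n\,dx_1\cdots dx_{n-1}$ becomes
\[
u_n\,dx_1\cdots dx_{n-1}=\frac{2^{n-1}}{J_n}\prod_{k=1}^{n}v_1\bigl(|z_k-z_{k-1}|\bigr)\,dz_1\cdots dz_{n-1},\qquad z_0=z_n=0 .
\]
Since $\frac{l_k+l_{k-1}}{2\sqrt{l_kl_{k-1}}}=\cosh|z_k-z_{k-1}|$, the region cut out by $\vartheta$, namely $\tfrac1r\min_k\frac{l_k+l_{k-1}}{2\sqrt{l_kl_{k-1}}}\le 1$, is exactly the set where $|z_k-z_{k-1}|\le r'$ for some $k$, with $r'=\log\!\bigl(r+\sqrt{r^2-1}\bigr)>0$. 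Thus the integral $I$ in the statement is the probability, under the pinned density $\propto\prod_k v_1(|z_k-z_{k-1}|)$, that at least one ``link'' of the chain is short. Writing $I_k$ for the probability that the $k$-th link is short, I would use the union bound $I\le\sum_{k=1}^n I_k$ and estimate each $I_k$ separately.

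For fixed $k$ I would integrate out the single variable $z_k$ first. Only the two factors $v_1(|z_k-z_{k-1}|)$ and $v_1(|z_{k+1}-z_k|)$ depend on $z_k$, so the inner integral is $\int_{|z_k-z_{k-1}|\le r'}v_1(|z_k-z_{k-1}|)\,v_1(|z_{k+1}-z_k|)\,dz_k$. The heart of the argument is a ``short-link contraction'' estimate: there is a constant $C'=C'(r')$ with
\[
\int_{|t-a|\le r'}v_1(|t-a|)\,v_1(|b-t|)\,dt\le C'\,v_1(|b-a|),
\]
valid for all $a,b\in\R$. Granting this (with $a=z_{k-1}$, $b=z_{k+1}$), removing $z_k$ replaces the two links $z_{k-1}\!\to\! z_k\!\to\! z_{k+1}$ by the single link $z_{k-1}\!\to\! z_{k+1}$ at the cost of the factor $C'$, so the integral that remains is precisely the $(n-1)$-chain on $0=z_0,z_1,\dots,z_{k-1},z_{k+1},\dots,z_{n-1},z_n=0$, whose value is $J_{n-1}/2^{\,n-2}$. (The boundary cases $k=1$ and $k=n$ are identical, using $z_0=0$ or $z_n=0$.) Hence $I_k\le \frac{2^{n-1}}{J_n}\,C'\,\frac{J_{n-1}}{2^{\,n-2}}=2C'\,\frac{J_{n-1}}{J_n}$, uniformly in $k$.

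Proving the contraction estimate is the one place where genuine analysis enters, and it is the main obstacle; its proof parallels that of Lemma (S-L4). After substituting $u=t-a$ and setting $w=b-a$ it reads $\int_{|u|\le r'}v_1(|u|)\,v_1(|w-u|)\,du\le C'v_1(|w|)$. For $|w|$ large one exploits the slow variation of $v_1$: either from Corollary~\ref{LimVOne}, or directly from the bound $|v_1'|\le\frac{4}{|t|}v_1$ of Lemma (S-L2), one gets $v_1(|w-u|)\le 2v_1(|w|)$ uniformly for $|u|\le r'$ once $|w|\ge R$, whence the left side is at most $2\bigl(\int_{|u|\le r'}v_1(|u|)\,du\bigr)v_1(|w|)$. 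For $|w|\le R$ one bounds the left side by $\pi\int_{|u|\le r'}v_1(|u|)\,du$ and bounds $v_1(|w|)$ below by the positive constant $v_1(R)$, using that $v_1$ is positive, even and decreasing on $[0,\infty)$ with maximum $\pi$; taking $C'$ to be the larger of the two resulting constants finishes the estimate.

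Finally, assembling the pieces and invoking the two-sided bound $c_1 2^{3n-1}(2n)!\le J_n\le c_2 2^{3n-1}(2n)!$ of Lemma (S-L3), one has $J_{n-1}/J_n\le \frac{c_2}{c_1}\,\frac{1}{8\,(2n)(2n-1)}$, so that
\[
I\le\sum_{k=1}^n I_k\le n\cdot 2C'\,\frac{J_{n-1}}{J_n}\le \frac{C''}{n}\longrightarrow 0\qquad(n\to\infty),
\]
which is the assertion of the lemma. The mechanism is the expected one: each short link forces the pinned chain to ``waste'' a step, costing a factor $J_{n-1}/J_n=O(n^{-2})$, and this decay beats the $n$ possible choices of which link is short.
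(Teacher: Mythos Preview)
Your argument is correct and follows the same overall strategy as the paper: bound the integral by the union $\sum_k I_k$, show each $I_k$ is $O(J_{n-1}/J_n)$ by removing one link from the chain, and then use the factorial bounds of Lemma (S-L3) to get $J_{n-1}/J_n=O(n^{-2})$.

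The technical execution differs in an interesting way. The paper passes all the way to the $t$-chain of length $2n-1$ (as in the proof of Lemma (S-L3)), then removes \emph{two} of the $t_j$: first $t_{2k-1}$ is integrated out using $v_1\le\pi$, and then $t_{2k-2}$ is removed via the elementary inequality $1+(t_{2k}-t_{2k-3})^2\le 4(1+a)^2\bigl(1+(t_{2k-2}-t_{2k-3})^2\bigr)$ together with the length-$2a$ restriction on $t_{2k-2}$. You instead stay with the shorter $z$-chain and remove a single $z_k$ by the contraction estimate $\int_{|u|\le r'}v_1(|u|)v_1(|w-u|)\,du\le C'v_1(|w|)$, which you prove from the slow variation of $v_1$ (Corollary~\ref{LimVOne}) in the spirit of Lemma (S-L4). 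Your route is a bit more conceptual and avoids the ad hoc polynomial inequality; the paper's route is more explicit and gives a concrete constant $8\pi a(1+a)$ in place of your $2C'$. Both land at the same $nI_k=O(1/n)$.
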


\begin{proof}
As in Lemma (S-L5), we let \(I\) be the integral in the statement of
the lemma, and let 
\[
I_k
=
\lim_{n\to\infty}
\int\limits_0^1 \int\limits_{x_1}^1\cdots\int\limits_{x_{n-2}}^1
\vartheta\left[
\frac 1 r 
\left(
\frac{l_k+l_{k-1}}{2\sqrt{l_kl_{k-1}}}
\right)
\right]
u_n(x_1, \ldots, x_{n-1})
dx_1dx_2\ldots dx_{n-1}
\]  Now \(I\le \sum_{k=1}^n I_k\).  We fix \(k\) and work on \(I_k\).

For \(1\le j \le n\), we make the same substitutions \(y_j =
l_j/(1-x_{n-1})\) and \(t_{2j} = \frac1 2 \log(y_j)\) as in Lemma
(S-L3).  First, this makes
\[
\left(
\frac{l_k+l_{k-1}}{2\sqrt{l_kl_{k-1}}}
\right)
=
\left(
\frac{y_k+y_{k-1}}{2\sqrt{y_k y_{k-1}}}
\right).
\]
But from Section \ref{CalcSubSec} we know that 
\[
\left(
\frac
{y_k+y_{k-1}}
{2\sqrt{y_k y_{k-1}}}
\right)
+
\sqrt{
\displaystyle{
\left(
\frac
{y_k+y_{k-1}}
{2\sqrt{y_k y_{k-1}}}
\right)
^2
}-1
}
=
\max\left\{
\sqrt{\displaystyle{\frac {y_k} {y_{k-1}}}}, 
\sqrt{\displaystyle{\frac {y_{k-1}} {y_k}}}
\right\}
\]
and 
\[
\log
\left[
\left(
\frac
{y_k+y_{k-1}}
{2\sqrt{y_k y_{k-1}}}
\right)
+
\sqrt{
\displaystyle{
\left(
\frac
{y_k+y_{k-1}}
{2\sqrt{y_k y_{k-1}}}
\right)
^2
}-1
}
\right]
=
\frac 1 2 |y_k - y_{k-1}| = |t_{2k}-t_{2k-2}|.
\]
Since \(\log(t+\sqrt{t^2-1})\) is increasing for \(t\ge1\), it follows that 
\[
\left(
\frac{l_k+l_{k-1}}{2\sqrt{l_kl_{k-1}}}
\right) \le r
\]
if and only if \(|t_{2k}-t_{2k-2}|\le a = \log(r+\sqrt{r^2-1})\).

Thus from the transformations used in the proof of Lemma (S-L3)
\[
I_k
=
\frac{2^{n-1}}{J_n}
\int\limits_{-\infty}^{+\infty} \cdots \int\limits_{-\infty}^{+\infty}
\frac{\vartheta
(\frac 1 r
|t_{2k}-t_{2k-2}|)
dt_1\,dt_2\, \cdots dt_{2n-1}}
{
\sqrt{
\textstyle{
\prod_{j=1}^{2n} (1+(t_j-t_{j-1})^2)
}
}
}
\]
where we remember that \(t_0=t_{2n}=0\).

Now \(|t_{2k}-t_{2k-3}| \le |t_{2k}-t_{2k-2}|+|t_{2k-2}-t_{2k-3}|
\le a+|t_{2k-2}-t_{2k-3}|\).  If \(p\), \(q\) and \(a\) are non-negative,
and \(p\le q+a\), then 
\mymargin{OddIneq}\begin{equation}\label{OddIneq}
1+p^2 \le (1+q^2)4(1+a)^2
\end{equation}
which can be
verified by noting that \(1+p^2\le 1+(q+a)^2\), replacing the left
side of \tref{OddIneq} with \(1+(q+a)^2\), multiplying out, bringing
all terms to the right and noting that the \(-2qa\) that shows up
can be combined with one \(q^2\) and one \(a^2\) that are  on the
right to give a term of the form \((a-q)^2\).  The remaining
terms are positive.  In fact \(1+p^2 \le (1+q^2)2(1+a)^2\) holds,
but we will be satisfied with \tref{OddIneq} since \(4\) has a nicer
square root than \(2\).  This gives us
\[
1+(t_{2k}-t_{2k-3})^2 \le (1+(t_{2k-2}-t_{2k-3})^2)4(1+a)^2
\]
which then gives
\[
\frac{1}
{\sqrt{1+(t_{2k-2}-t_{2k-3})^2}}
\le
\frac{2(1+a)}
{\sqrt{1+(t_{2k}-t_{2k-3})^2}}.
\]

Letting \(P_{i,j}=\sqrt{1+(t_i-t_j)^2}\), we have
\[
\int\limits_{-\infty}^{+\infty}
\frac{dt_{2k-1}}{P_{2k,2k-1}P_{2k-1,2k-2}}
=
v_1(|t_{2k}-t_{2k-1}|) \le \pi
\]
so we can get the following estimates:
\[
\begin{split}
I_k
&=
\frac{2^{n-1}}{J_n}
\int\limits_{-\infty}^{+\infty} \cdots \int\limits_{-\infty}^{+\infty}
\frac{\vartheta
(\frac 1 a
|t_{2k}-t_{2k-2}|)
dt_1\,dt_2\, \cdots dt_{2n-1}}
{
\textstyle{
\prod_{j=1}^{2n}}
P_{j,j-1}
} \\
&=
\frac{2^{n-1}}{J_n}
\int\limits_{-\infty}^{+\infty} \cdots \int\limits_{-\infty}^{+\infty}
\frac{\vartheta
(\frac 1 a
|t_{2k}-t_{2k-2}|)
dt_{2k-1}dt_1 \cdots dt_{2k-2}\,dt_{2k}\cdots dt_{2n-1}}
{
\textstyle{
P_{2k,2k-1}
P_{2k-1,2k-2}
(\prod_{j=1}^{2k-2}
P_{j,j-1})
(\prod_{j=2k+1}^{2n}}
P_{j,j-1})
} \\
&=
\frac{2^{n-1}}{J_n}
\int\limits_{-\infty}^{+\infty} \cdots \int\limits_{-\infty}^{+\infty}
\frac{\vartheta
(\frac 1 a
|t_{2k}-t_{2k-2}|)
v_1(|t_{tk}-t_{2k-2}|)
dt_1 \cdots dt_{2k-2}\,dt_{2k}\cdots dt_{2n-1}}
{
\textstyle{
(\prod_{j=1}^{2k-2}
P_{j,j-1})
(\prod_{j=2k+1}^{2n}}
P_{j,j-1})
} \\
&\le
\frac{2^{n-1}\pi}{J_n}
\int\limits_{-\infty}^{+\infty} \cdots \int\limits_{-\infty}^{+\infty}
\frac{\vartheta
(\frac 1 a
|t_{2k}-t_{2k-2}|)
dt_1 \cdots dt_{2k-2}\,dt_{2k}\cdots dt_{2n-1}}
{
\textstyle{
(\prod_{j=1}^{2k-2}
P_{j,j-1})
(\prod_{j=2k+1}^{2n}}
P_{j,j-1})
} \\
&=
\frac{2^{n-1}\pi}{J_n}
\int\limits_{-\infty}^{+\infty} \cdots \int\limits_{-\infty}^{+\infty}
\frac{\vartheta
(\frac 1 a
|t_{2k}-t_{2k-2}|)
dt_1 \cdots dt_{2k-2}\,dt_{2k}\cdots dt_{2n-1}}
{
\textstyle{
(\prod_{j=1}^{2k-3}
P_{j,j-1})
P_{2k-2,2k-3}
(\prod_{j=2k+1}^{2n}}
P_{j,j-1})
} \\
&\le
\frac{2^{n}(1+a)\pi}{J_n}
\int\limits_{-\infty}^{+\infty} \cdots \int\limits_{-\infty}^{+\infty}
\frac{\vartheta
(\frac 1 a
|t_{2k}-t_{2k-2}|)
dt_1 \cdots dt_{2k-2}\,dt_{2k}\cdots dt_{2n-1}}
{
\textstyle{
(\prod_{j=1}^{2k-3}
P_{j,j-1})
P_{2k,2k-3}
(\prod_{j=2k+1}^{2n}}
P_{j,j-1})
} \\
&=
\frac{2^{n}(1+a)\pi}{J_n}
\int\limits_{-\infty}^{+\infty} \cdots \int\limits_{-\infty}^{+\infty}
\frac{\vartheta
(\frac 1 a
|t_{2k}-t_{2k-2}|)
dt_{2k-2}dt_1 \cdots dt_{2k-3}\,dt_{2k}\cdots dt_{2n-1}}
{
\textstyle{
(\prod_{j=1}^{2k-3}
P_{j,j-1})
P_{2k,2k-3}
(\prod_{j=2k+1}^{2n}}
P_{j,j-1})
} \\
&=
\frac{2^{n}(1+a)\pi}{J_n}
\int\limits_{-\infty}^{+\infty} \cdots \int\limits_{-\infty}^{+\infty}
\int\limits_{t_{2k}-a}^{t_{2k}+a}
\frac{
dt_{2k-2}dt_1 \cdots dt_{2k-3}\,dt_{2k}\cdots dt_{2n-1}}
{
\textstyle{
(\prod_{j=1}^{2k-3}
P_{j,j-1})
P_{2k,2k-3}
(\prod_{j=2k+1}^{2n}}
P_{j,j-1})
} \\
&=
\frac{2^{n+1}a(1+a)\pi}{J_n}
\int\limits_{-\infty}^{+\infty} \cdots \int\limits_{-\infty}^{+\infty}
\frac{
dt_1 \cdots dt_{2k-3}\,dt_{2k}\cdots dt_{2n-1}}
{
\textstyle{
(\prod_{j=1}^{2k-3}
P_{j,j-1})
P_{2k,2k-3}
(\prod_{j=2k+1}^{2n}}
P_{j,j-1})
} \\
&=
\frac{2^{n+1}a(1+a)\pi}{J_n}
\frac{J_{n-1}}{2^{n-2}}
=8\pi a(1+a) \frac{J_{n-1}}{J_n}
 \\
&\le8\pi a(1+a) \frac{c_2 2^{3n-4}(2n-2)!}
{c_1 2^{3n-1}(2n)!}
=\frac{c_2 \pi a(1+a)}{c_1(2n-1)(2n)}.
\end{split}
\]
This makes 
\[
I\le nI_k \le n \frac{c_2 \pi a(1+a)}{c_1(2n-1)(2n)}
=\frac{c_2 \pi a(1+a)}{c_1(2n-1)(2)}
\]
which goes to 0 as \(n\) goes to infinity.
\end{proof}

\section{The Wiener measure}

In this section a measure will be put on the space of functions
\(\mathrm{Diff}_+^1(I)\) and therefore on \(\mathrm{Diff}_{1,
\delta}^1(I)\).  Work will then have to be done to derive something
that is invariant under the action of \(\mathrm{Diff}_+^3(I)\).

The measure that we start with is the Wiener measure.  Our main
reference for the Wiener measure will be \S 3 of Chapter I of
\cite{MR0461643}.  This seems to be self contained in spite of it
being the third section.  My impression is that the two sections in
\cite{MR0461643} that come before are used later in \cite{MR0461643}
to define a Wiener-like measure in more general settings.

\subsection{Maps betwteen function spaces}

Wiener measure is defined on a linear space of functions rather than
a space of homeomorphisms.  So we need to move from one to the
other.

\subsubsection{The mappings}

Let \(C_0(I)\) be the linear space of continuous functions from
\([0,1]\) into \(\R\) that take \(0\) to \(0\).  Let
\(A:\mathrm{Diff}_+^1(I) \rightarrow C_0(I)\) be defined by
\mymargin{Forward}\begin{equation}\label{Forward}
A(f)(t) = \log(f'(t)) - \log(f'(0)), \qquad \forall t \in [0,1].
\end{equation}

We will typically use a letter like \(x\) for an element of
\(C_0(I)\) (to agree with our main reference \cite{MR0461643}) and
the argument for such an \(x\) will typically be \(s\), \(t\) or
\(\tau\). 

Let \(B:C_0(I)\rightarrow \mathrm{Diff}_+^1(I)\) be defined by 
\mymargin{Backward}\begin{equation}\label{Backward}
B(x )(t) = 
\frac{\int_0^t e^{x (\tau)}d\tau} {\int_0^1 e^{x (\tau)}d\tau}.
\end{equation}
We look at \(g=BA(f)\) for an \(f\in
\mathrm{Diff}_+^1(I)\).  Let \(L=\int_0^1 e^{x (\tau)}d\tau\) for
\(x (\tau)=A(f)(\tau) = \log(f'(\tau)) - \log(f'(0))\).  Now 
\[
g'(t) = \frac 1 L \frac{f'(t)}{f'(0)}.
\]
Thus \(g'\) and \(f'\) differ by a
multiplicative constant, and \(g\) and \(f\) both take \(0\) to
\(0\). By integrating, 
\(f\) and \(g\) differ by a multiplicative constant.  But
\(f(1)=g(1)=1\) and so the multiplicative constant is one.  It is
even easier to show that \(AB(x )=x \) for any \(x \in C_0(I)\).
Thus \(B\) and \(A\) are mutual inverses.

\subsubsection{Topologies}

We put the uniform norm on \(C_0(I)\) and the associated uniform
topology.  The topology on \(\mathrm{Diff}_+^1(I)\) is the \(C^1\)
tolopolgy given by the norm 
\mymargin{COneNorm}\begin{equation}\label{COneNorm}
\|f\|_1 = \|f\|_\infty + \|f'\|_\infty.
\end{equation}
Note that on \(\mathrm{Diff}_+^1(I)\) this is topologically
equivalent to the usual \(\mathrm{Diff}^1\) norm 
\mymargin{DiffOneNorm}\begin{equation}\label{DiffOneNorm}
\|f\|_\infty+\|f'\|_\infty+\|(f^{-1})'\|_\infty
\end{equation}
since the derivative of any function in \(\mathrm{Diff}_+^1(I)\) is
bounded away from zero and thus, using the chain rule, two functions
in \(\mathrm{Diff}_+^1(I)\) can be kept close in the norm
\tref{DiffOneNorm} by keeping them sufficiently close in the norm
\tref{COneNorm}.  Metric properties of the two norms (completeness,
say) are not the same.

Since derivatives in \(\mathrm{Diff}_+^1(I)\) are bounded away from
zero, it is clear that \(A\) is continuous.  That \(B\) is
continuous follows from 
\[
(B(x ))'(t) = 
\frac{e^{x (t)}} {\int_0^1 e^{x (\tau)}d\tau}.
\] 
(showing that close functions in \(C_0(I)\) go to functions in
\(\mathrm{Diff}_+^1(I)\) that have close derivatives),
and the fact that all functions in \(\mathrm{Diff}_+^1(I)\) take 0
to 0.

This shows that \(A\) and \(B\) are mutually inverse homeomorphisms.

\subsection{Defining the Wiener measure}\mylabel{WienerDef}

We now put the Wiener measure \(w\) on \(C_0(I)\).  This is defined
in \cite{MR0461643} by first defining \(w\) on a restricted class of
sets in \(C_0(I)\) and then extending to the Borel sets in
\(C_0(I)\).  (Note that what we call \(C_0(I)\) is denoted by
\(C[0,1]\) in \cite{MR0461643}.)

Let \(\mathbf t=(t_1, t_2, \ldots, t_n)\) with
\(0<t_1<t_2<\cdots <t_n \le 1\) be given along with a Borel
subset \(E\) of \(\R^n\).  The set 
\[
\mathrm{Cyl}(\mathbf t, E) = 
\{x\in C_0(I) \mid (x(t_1), x(t_2), \ldots, x(t_n)) \in E\}
\]
will be called a {\itshape cylinder set}.  Note that this is an
inverse image.  If \(\pi_{\mathbf t}:C_0(I)\rightarrow \R^n\) is
the evaluation map at \(\mathbf t\) in that 
\[
\pi_{\mathbf t}(x) = (x(t_1), x(t_2), \ldots, x(t_n)),
\]
then \(\mathrm{Cyl}(\mathbf t, E) = \pi^{-1}_{\mathbf t}(E)\).  Since
\(C_0(I)\) has the uniform metric, \(\pi_{\mathbf t}\) is continuous.
Thus \(\mathrm{Cyl}(\mathbf t, E)\) is a Borel set in \(C_0(I)\).

The collection of all cylinder sets is not a \(\sigma\)-algebra, but
is a algebra that we will denote by \(\scr{R}\).  The \(\sigma\)-algebra
that \(\scr{R}\) generates is the \(\sigma\)-algebra \(\scr{B}\) of
Borel sets in \(C_0(I)\).  See Theorem 3.3 and the exercises 12 and
13 that immediately preceed it in \cite{MR0461643}.

With \(\mathbf t\) and \(E\) as above, we set \(t_0=0\) and we define
\[
\begin{split}
&w(\mathrm{Cyl}(\mathbf t, E)) = \\
&\frac{1}
{\sqrt{\prod_{k=1}^n 2\pi(t_k-t_{k-1})}}
\int_E 
\exp\left({-\frac 1 2\sum_{k=1}^n
\frac{(u_k-u_{k-1})^2}{t_k-t_{k-1}}}
\right)
du_1\cdots du_n,
\end{split}
\]
where we take \(u_0=0\) in the integrand.

It is shown in Theorem 3.1 of \cite{MR0461643} that \(w\) extends to
a \(\sigma\)-additive measure on the \(\sigma\)-algebra generated by
\(\scr{R}\) which as mentioned above is \(\scr{B}\), the Borel sets
in \(C_0(I)\).

\subsection{Basic facts about Wiener measure}

\subsubsection{}\mylabel{FirstNormalFact}

Directly from the definition, for fixed \(0<t\le 1\) and \(a\le
b\), we get 
\mymargin{FirstNormForm}\begin{equation}\label{ForstNormForm}
w(\{x\in C_0(I)\mid a\le x(t) \le b\}) = 
\frac{1}{\sqrt{2\pi t}}
\int_a^b \exp\left(-\frac{u^2}{2t}\right) du.
\end{equation}
This is stated by saying that the functional \(f\) defined by
\(f(x)=x(t)\) is normally distributed with mean 0 and variance
\(t\).  If the elements of \(C_0(I)\) represent random walks that
start at 0, then \tref{ForstNormForm} gives the probability that
such a walk has reached a given interval \([a,b]\) at time \(t\).

It follows from \tref{ForstNormForm} that 
\mymargin{SecondNormForm}\begin{equation}\label{SecondNormForm}
w(C_0(I)) = 1.
\end{equation}

\subsubsection{}\mylabel{NormalInterval}

In \ref{FirstNormalFact}, the starting point of the walk is at
\(0\).  If we start at some other time \(s\), then we can ask about
the net change in the walk that starts there.  The distribution is
the same.  Specifically, for \(0\le s<t\le 1\) and \(a\le b\) in
\(\R\), let
\[
E(s,t;a,b) = \{x\in C_0(I)\mid a\le x(t)-x(s)\le b\}.
\]
We now have
\[
w(E(s,t;a,b))
=
\frac{1}{\sqrt{2\pi(t-s)}}
\int_a^b \exp\left(- \frac{u^2}{2(t-s)}\right) du.
\]
This straightforward calculation is carried out on Pages 37-38 of
\cite{MR0461643}.  We can say that the random variable \(x\mapsto
x(t)-x(s)\) is normally distributed with mean 0 and variance
\(t-s\).

\subsubsection{}\mylabel{IndepIntervals}

For fixed \(0\le s< t\le u< v\le 1\), the random variables \(x\mapsto
x(t)-x(s)\) and \(x\mapsto x(v)-x(u)\) are independent.  This is
given as Exercise 10 on Page 38 of \cite{MR0461643}, and is an
imitation of the calculations that go into the fact in \S
\ref{NormalInterval}.  It says that for intervals \([a,b]\) and
\([c,d]\) in \(\R\), we have \[w(E(s,t;a,b)\cap E(u,v;c,d)) =
w(E(s,t;a,b))\cdot w(E(u,v;c,d)).\]

\subsubsection{}\mylabel{PiSystems}

We will need a basic fact about measures that can be found as
Exercise 3.1.8 on Page 37 of \cite{MR1658777}.  This says that says
that if two measures are defined on a \(\sigma\)-algebra \(\scr{B}\)
(on a set \(E\)) that is generated (as a \(\sigma\)-algebra) by a
subset \(\scr{C}\) of \(\scr{B}\), if \(\scr{C}\) is closed under
pairwise intersection, and if the two measures agree on \(\scr{C}\)
and on the set \(E\) and are finite on \(E\), then the two measures
agree on all of \(\scr{B}\).  (This is also Theorem 3.5 on Page 13
of \cite{MR2125120}.)

\subsubsection{}\mylabel{DetermineWiener}

Note that the cylinder sets \(\mathrm{Cyl}(\mathbf t, E)\) of
\S\ref{WienerDef} are closed under pairwise intersection.  However,
we can do with fewer sets.  

The sets
\mymargin{NiceCorners}\begin{equation}\label{NiceCorners}
(-\infty, a_1] \times 
(-\infty, a_2] \times 
\cdots \times
(-\infty, a_n]
\end{equation}
generate the Borel sets in \(\R^n\).  Further the cylinder sets are
just inverse images (under the evaluation maps \(\pi_{\mathbf t}\))
of the Borel sets in \(\R^n\).  So cylinder sets
\(\mathrm{Cyl}(\mathbf t, E)\) with \(E\) in the form of
\tref{NiceCorners} generate the same \(\sigma\)-algebra as the full
collection of cylinder sets.  Let the cylinder sets
\(\mathrm{Cyl}(\mathbf t, E)\) with \(E\) in the form of
\tref{NiceCorners} be called the restricted cylinder sets.

The restricted cylinder sets are also closed under pairwise
intersection since the sets of the form in \tref{NiceCorners} are
closed under pairwise intersection.  Thus by \ref{PiSystems}, Wiener
measure is completely determined by its values on the restricted
cylinder sets and by the fact \tref{SecondNormForm} that the measure
of all of \(C_0(I)\) is 1.

If, for a given \(t\in [0,1]\), we let \(X_t\) be the random
variable on \(C_0(I)\) defined by \(X_t(x) = x(t)\), then the
previous paragraph can be restated by saying that Wiener measure is
determined by the probabilities of 
\mymargin{WienerTarget}\begin{equation}\label{WienerTarget}
(X_{t_1}\le a_1,\, X_{t_2}\le a_2,\, \ldots,\, X_{t_n}\le a_n).
\end{equation}

We let \(Y_1=X_{t_1}\), and \(Y_i = X_{t_i}-X_{t_{i-1}}\) for \(2\le
i\le n\).  The transformation from \((X_1, X_2, \ldots, X_n)\) to
\((Y_1, Y_2, \ldots, Y_n)\) has a simple invertible matrix with
determinant one, so probabilities for the \(X_i\) are determined by
those for the \(Y_i\) and vice versa.

Now we know that the \(Y_i\) are independent from
\ref{IndepIntervals} and that their distributions are given in
\ref{NormalInterval}.  Thus the probabilities of 
\[
(Y_{t_1}\le b_1,\, Y_{t_2}\le b_2,\, \ldots,\, Y_{t_n}\le b_n)
\]
are completely determined by \ref{NormalInterval} and
\ref{IndepIntervals}.  But by \tref{SecondNormForm} and
\ref{PiSystems}, this determines the probability of
\[
(Y_1,\, Y_2,\, \ldots, \, Y_n)\in E
\]
for any Borel set \(E\) in \(\R^n\).  In particular it determines
the probability of the image of \tref{WienerTarget}
under the transformation taking \((X_1, X_2, \ldots, X_n)\) to
\((Y_1, Y_2, \ldots, Y_n)\).  Thus the probability of
\tref{WienerTarget} is completely determined by \ref{NormalInterval},
\ref{IndepIntervals} and \tref{SecondNormForm}.  From this we can say
that the Wiener measure on the Borel sets in \(C_0(I)\) is
completely determined by \ref{NormalInterval},
\ref{IndepIntervals} and \tref{SecondNormForm}. 

\subsubsection{}\mylabel{TimeReverse}

For \(x\in C_0(I)\), let \(T(x)(t) = x(1-t)-x(1)\).  The
transformation \(T\) takes \(C_0(I)\) to itself, is self inverse, is
linear and no more than doubles distance and so is continuous.

Now \(0\le (1-t) < (1-s) \le 1\) and \(-b< -a\) and \[T(E(s,t;a,b))
= E(1-t, 1-s; -b, -a),\] so \[w(T(E(s,t;a,b))) = w(E(1-t, 1-s; -b,
-a)) = w(E(s,t;a,b)),\] because \((1-s)-(1-t)=t-s\) and
\(e^{-x^2/k}\) is even.  So \(T\) preserves \(w\) on the sets
\(E(s,t;a,b)\) and the measure defined by \(w'(S)=w(T(S))\)
satisfies \ref{NormalInterval}.  In addition
\(w'(C_0(I))=1\).  

Further, if \(0\le s< t\le u< v\le 1\), then \(0\le (1-v)< (1-u)\le
(1-t)< (1-s)\le 1\) and \(T\) carries \(E(s,t;a,b) \cap E(u,v;c,d)\)
to \(E(1-t, 1-s; -b, -a)\cap E(1-v, 1-u; -d, -c)\) at which point
applications of \ref{IndepIntervals} and the previous paragraph show
that \(w'\) also satisfies \ref{IndepIntervals}.  So by
\ref{DetermineWiener}, we have \(w'=w\).  This is often stated by
saying that the Wiener measure is preserved by time reversal.

\subsubsection{}\mylabel{HolderSupport}

Let \[C_\delta = \{x\in C_0(I)\mid \exists K \,\,\mathrm{s.t.}
\,\, \forall t,s, \,\, |x(t)-s(s)| \le K|t-s|^\delta \}.
\]

It follows from Lemma 3.1 in \cite{MR0461643} that \(C_\delta\) is a
Borel set in \(C_0(I)\).  This requires that one see that the sets
\(H_\alpha[a]\) in that lemma are unions of countably many
cylinder sets.
Theorem 3.2 in \cite{MR0461643} then states
\[
w(C_\delta) = 
\begin{cases}
1, &0 < \delta < \frac 1 2, \\
0, &\frac 1 2 < \delta.
\end{cases}
\]
Evidently nothing is known when \(\delta=\frac12\).

\subsection{The completion of Wiener measure}\mylabel{WienerComplete}

Recall that \(\scr{B}\) is the \(\sigma\)-algebra of Borel sets in
\(C_0(I)\) and is the \(\sigma\)-algebra on which the Wiener measure
is defined.  We let \(\overline{\scr{B}}\) denote the
\(w\)-completion of \(\scr{B}\).  That is \(S\) is in
\(\overline{\scr{B}}\) if and only if there are \(P\) and \(Q\) in
\(\scr{B}\) with \(P\subseteq S\subseteq Q\) and \(w(P)=w(Q)\).  We
extend \(w\) to such an \(S\) by setting \(w(S)=W(P)=w(Q)\) and say
that \(w\) is now defined on \(\overline{\scr{B}}\).

\subsection{Wiener measure on H\"older spaces}

We discuss the measurability of sets in H\"older spaces under \(w\).
We will make use of facts about H\"older spaces from Section 23 of
\cite{MR0257686}.

\subsubsection{Results from \cite{MR0257686}}

The discussion so far has been in \(C_0(I)\) and any \(x\in C_0(I)\)
has \(x(0)=0\).  In \cite{MR0257686}, the discussion is in \(C(I)\),
the space of all continuous real valued functions on \(I\) with no
restriction on the value at 0.  This will cause no great difficulty.

To parallel the definition of \(C_\delta\) in \ref{HolderSupport},
define 
\[C^\delta = \{x\in C(I)\mid \exists K \,\,\mathrm{s.t.}
\,\, \forall t,s, \,\, |x(t)-s(s)| \le K|t-s|^\delta \}.
\]
Note that the only difference is the shift from \(C_0(I)\) to
\(C(I)\).  

For \(x\in C^\delta\), define
\mymargin{HolderCDef}\begin{equation}\label{HolderCDef}
n_\delta = \sup_{0\le s<t\le1} \frac
{|x(t)-x(s)|}
{|t-s|^\delta},
\end{equation}
define
\[
\|x\|_\delta = \max(\|x\|_\infty, n_\delta(x)),
\]
and define
\[
\|x\|'_\delta = \max(|x(0)|, n_\delta(x)).
\]

Let
\[
\begin{split}
\Lambda^\delta
=
\{x\in C^\delta \mid
&\forall \epsilon>0, \, 
\exists \delta> 0, \,
 \,\,\mathrm{s.t.}
\\
&\forall t,s, (|t-s|< \delta 
\implies
|x(t)-x(s)| \le \epsilon|t-s|^\delta)
\}.
\end{split}
\]

For \(0< \delta < \gamma < 1\) the containments \[C^\gamma \subseteq
\Lambda^\delta \subseteq C^\delta \subseteq C(I)\] are
straightforward to verify.  It is stated in 23B of \cite{MR0257686}
that the metrics obtained from \(\|x\|_\delta\) and
\(\|x\|'_\delta\) are topologically equivalent,
that the metric defined by \(\|x\|_\delta\) is complete (a fact that
we probably do not need), and that \(\Lambda^\delta\) is a closed
subspace of \(C^\delta\).  That \(\Lambda^\delta\) is closed in
\(C^\delta\) is left in \cite{MR0257686} as an exercise for the
reader.

A map from \(C^\delta\) to \(l^\infty(\R)\) is defined in 23F of
\cite{MR0257686} as follows.  Let \({S_1}\) be the collection of
intervals in \([0,1]\) of the form 
\[
\left[
\frac j{2^n}, \frac {j+1}{2^n}
\right]
\]
where \(j\) and \(n\) are integers.  The set \({S_1}\) is
countable.  For an interval \(J\in S_1\) let \(aJ\) denote its left
endpoint, \(bJ\) denote its right endpoint and \(mJ\) denote its
midpoint.

We let \(S=\{0,1\} \cup S_1\), and for an \(x\in C^\delta\), we define
\(T_\delta x:S\rightarrow \R\) by 
\[
\begin{split}
(T_\delta x)(0) &= x(0), \\
(T_\delta x)(1) &= x(1), \\
(T_\delta x)(J) &= 
\frac
{2x(mJ) - x(aJ) - x(bJ)}
{2|mJ-aJ|^\delta}, 
\qquad\mathrm{for}\quad J\in S_1.
\end{split}
\]

The theorem in 23F of \cite{MR0257686} states that \(T_\delta\) is a
linear homeomorphism from \(C^\delta\) to \(l^\infty(\R)\) and its
restriction to \(\Lambda^\delta\) is a linear homeomorphism to the
subspace \(c_0(\R)\) consisting of the null sequences in
\(l^\infty(\R)\).

\subsubsection{Applying the results from
\cite{MR0257686}}\mylabel{HFactsApplied}

We note that \(C_\delta = C_0(I)\cap C^\delta\) and we define 
\[\Lambda_\delta = C_0(I)\cap \Lambda^\delta.\]
The elements of \(C_\delta\) are exactly those \(x\in C^\delta\) for
which \((T_\delta x)(0)=0\) and the elements of \(\Lambda_\delta\)
are exactly those \(x\in \Lambda^\delta\) for which \((T_\delta
x)(0)=0\).  From this we see that \(C_\delta\) is homeomorphic to
\(l^\infty(\R)\) and that \(\Lambda_\delta\) is homeomorphic to
\(c_0(\R)\).

For \(0< \delta < \gamma < 1\) we have the containments \[C_\gamma
\subseteq \Lambda_\delta \subseteq C_\delta \subseteq C_0(I).\]

For \(0< \delta < \gamma < \frac12\) we know that \(w(C_\gamma) =
w(C_\delta) = 1\), so \(w(\Lambda_\delta) = 1\).

The space \(c_0(\R)\) is separable, so \(\Lambda_\delta\) is
separable.  

\subsubsection{Balls in H\"older space}\mylabel{BallMeas}

We use the norm \(\|x\|'_\delta\) as it is more convenient for
this discussion.

The closed ball \(B_\delta(x;r) = \{y\in C_\delta\mid \|y-x\|'_\delta
\le r\}\) in \(C_\delta\) is simply the set of \(y\in C_\delta\)
with \(n_\delta(y-x)\le r\) since \(x(0)=y(0)=0\).  Note that from
the continuity of the elements of \(C_\delta\), the values of
\(n_\delta\) remain the same if the supremum in \tref{HolderCDef} is
taken over pairs rationals in \([0,1]\).  From this it follows that
the closed ball \(B_\delta(0;r)\) of radius \(r\) about 0 is the
intersection of sets of the form
\[
C(s,t;r) = \{y\in C_\delta \mid |y(t)-y(s)| \le r|t-s|^\delta\}
\]
where \(s\) and \(t\) are in \(\Q\cap [0,1]\).  For each
\(C(s,t;r)\), the \(s\) and \(t\) are fixed and \(C(s,t;r)\) is just
\(\mathrm{Cyl}((s,t),E)\cap C_\delta\) where \(E\) is the Borel
set \(\{(p,q)\in \R^2\mid |p-q| \le r|t-s|^\delta\}\).  From
\ref{HolderSupport}, we know that \(C_\delta\) is Borel in
\(C_0(I)\), so we have that \(B_\delta(0;r)\) and thus all closed
balls in \(C_\delta\) are Borel in \(C_0(I)\).

\subsubsection{Borel sets in H\"older space}

We will have that all Borel sets in \(C_\delta\) are
\(w\)-measurable if the open sets in \(C_\delta\) are.  That is, we
must show that any open \(U\) in \(C_\delta\) is in
\(\overline{\scr{B}}\) with \(\overline{\scr{B}}\) as defined in
\ref{WienerComplete}.

From \ref{HFactsApplied}, we know that \(\Lambda_\delta\) is in
\(\overline{\scr{B}}\), and so from \ref{BallMeas} the closed balls
in \(\Lambda_\delta\) are in \(\overline{\scr{B}}\).  However,
\(\Lambda_\delta\) is separable and so every open set in
\(\Lambda_\delta\) is a countable union of closed balls in
\(\Lambda_\delta\).  Thus every open set in \(\Lambda_\delta\) is in
\(\overline{\scr{B}}\).

Now if \(U\) is open in \(C_\delta\), then \(U\) is the disjoint
union of \(U\cap \Lambda_\delta\) and \(U-\Lambda_\delta\).  The
first is in \(\overline{\scr{B}}\) by the previous paragraph and the
second is in \(\overline{\scr{B}}\) since it is contained in the set
\(C_\delta - \Lambda_\delta\) which has measure zero.

\subsection{The measure on
\protect\(\mathrm{Diff}_+^1(I)\protect\)}\mylabel{MeasureOnDiff} 

We now use the homeomorphism \(A:\mathrm{Diff}_+^1(I) \rightarrow
C_0(I)\) from \tref{Forward} to define the measure \(\nu\) on
\(\mathrm{Diff}_+^1(I)\) by setting 
\[
\nu(X) = w(AX).
\]

We let \(E_\delta\) denote \(\mathrm{Diff}_+^{1, \delta}(I)\).  This
is to agree with the notation introduced in Page 8 of
\cite{shavgulidze}.  We next work to show that the restriction of
\(A\) to \(E_\delta\) is a homeomorphism onto \(C_\delta\).

\subsubsection{}\mylabel{DiffHldrSupp}

Recall that \(E_\delta=\mathrm{Diff}_+^1(I) \cap C_0^{1,
\delta}(I)\) where \(\mathrm{Diff}_+^1(I)\) consists of all
diffeomorphisms of class \(C^1\) of \(I=[0,1]\) that fix \(\{0,1\}\),
and \(C_0^{1, \delta}(I)\) consists of all continuously
differentiable, real valued functions on \(I\) that fix 0 and for
which \(\|f\|_{1, \delta} = |f'(0)| + n_\delta(f')\) is finite where
\(n_\delta\) is defined in \tref{HolderCDef}.

We use \(\|f\|_{1,\delta}\) as a norm on \(E_\delta\) to give the
topology we use on \(E_\delta\).

Now \((Af)(t) = \log(f'(t)) - \log(f'(0))\).  We know that \(f'\) is
continuous and never 0 since \(f\) is a diffeomorphism, so
\[
m=\min(f') = \min\left(\frac{1}{(f^{-1})'}\right)
\]
is strictly positive.  Thus 
\[
\max(\log(f'))=1/m = \|(f^{-1})'\|_\infty.
\]

Using the norm \(\|x\|'_\delta\) in the image of \(A\) gives
\(\|Af\|_\delta = n_\delta(Af)\).  We now have
\[
\begin{split}
|(Af)(t)-(Af)(s)| 
&=
|\log(f'(t)) - \log(f'(s))| \\
&\le
\|(f^{-1})'\|_\infty n_\delta(f') |t-s|^\delta \\
&\le
\|(f^{-1})'\|_\infty \|f\|_{1, \delta} |t-s|^\delta,
\end{split}
\]
which shows that \(\|Af\|_\delta \le \|(f^{-1})'\|_\infty \|f\|_{1,
\delta}\) and the restriction of \(A\) to \(E_\delta\) carries
\(E_\delta\) into \(C_\delta\) in a neighborhood of 0 and is
continuous at 0.
The map \(A\) is not linear, but \(f\mapsto f'\) is, and the above is
sufficient to show that \(A\) carries all of \(E_\delta\) into
\(C_\delta\) continuously.

To see that \(B\) carries \(C_\delta\) continuously to \(E_\delta\),
we first note that in an argument identical to that of Lemma
\ref{PreCompCont}, we have \(\|x\|_\infty\le n_\delta(x)\) for any
\(x\in C_\delta\).  Now writing \(Ee^x\) for \(\int_0^1e^{x(s)}ds\),
we have
\[
(Bx)(t) = \frac{1}{Ee^x}\int_0^te^{x(s)}ds,
\]
so 
\[(Bx)'(t) = \frac{1}{Ee^x} e^{x(t)}.
\]
Now the fact that \(\|x\|_\infty \le n_\delta(x)\) and an argument
similar to that for \(A\) shows that \(B\) carries \(C_\delta\) into
\(E_\delta\) continously.

\subsubsection{The measure on
\protect\(E_\delta\protect\)}\mylabel{MeasureOnEDelta} 

From now on we work with a \(\delta\) for which \(0 < \delta < \frac
1 2\).  This lets us conclude that 
\mymargin{EDeltaSup}\begin{equation}\label{EDeltaSup}
\nu(E_\delta)=1
\end{equation}
and that the
Borel sets in \(E_\delta\) are measurable with respect to \(\nu\).

\section{Quasi-invariance}

[XXXXXXXXXXXXXXXXXXXXXXXXXXXXx this space reserved for material from
\cite{MR1832477}.]

\section{Three lemmas}

This section covers Lemmas 7, 8 and 9 from \cite{shavgulidze}.

Recall that \(0 < \delta < \frac 1 2\).

\subsection{Definition}

For any natural \(l\) set
\[
M_l = \int\limits_{E_\delta}(q'(1))^l \nu(dq).
\]

\begin{lemma}[S-L7] For any natural \(l\), we have
\[
M_l = \int\limits_{E_\delta}(q'(0))^l \nu(dq).
\]
\end{lemma}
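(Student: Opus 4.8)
The plan is to pull both integrals back to Wiener measure on $C_0(I)$ through the homeomorphism $A$ of \ref{MeasureOnDiff}, rewrite the two integrands in terms of $x=A(q)$, and then exploit the time-reversal invariance of $w$ proved in \ref{TimeReverse}. Since $\nu(X)=w(AX)$, the change of variables $q=B(x)$ turns $\int_{E_\delta}\phi(q)\,\nu(dq)$ into $\int_{C_\delta}\phi(B(x))\,w(dx)$ for any measurable $\phi$, so it suffices to understand $q'(0)$ and $q'(1)$ as functions of $x$.

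First I would record the two integrands in the $x$-variable. Differentiating \tref{Backward} gives $(Bx)'(t)=e^{x(t)}/\int_0^1 e^{x(\tau)}\,d\tau$, and since every $x\in C_\delta$ satisfies $x(0)=0$ we obtain
\[
q'(0)=\frac{1}{\int_0^1 e^{x(\tau)}\,d\tau},\qquad q'(1)=\frac{e^{x(1)}}{\int_0^1 e^{x(\tau)}\,d\tau}.
\]
Thus the two quantities in the lemma equal $\int_{C_\delta}F\,dw$ and $\int_{C_\delta}G\,dw$, where $F(x)=\bigl(\int_0^1 e^{x(\tau)}\,d\tau\bigr)^{-l}$ and $G(x)=e^{lx(1)}\bigl(\int_0^1 e^{x(\tau)}\,d\tau\bigr)^{-l}$. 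Both integrands are positive and measurable, so these integrals are meaningful in $[0,+\infty]$ whether or not $M_l$ is finite.

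The heart of the argument is the identity $G=F\circ T$ for the time reversal $T(x)(t)=x(1-t)-x(1)$. From the definition, $\int_0^1 e^{(Tx)(\tau)}\,d\tau=e^{-x(1)}\int_0^1 e^{x(1-\tau)}\,d\tau$, and the substitution $\sigma=1-\tau$ turns the last integral into $\int_0^1 e^{x(\sigma)}\,d\sigma$. Raising to the power $-l$ yields $F(Tx)=e^{lx(1)}\bigl(\int_0^1 e^{x(\sigma)}\,d\sigma\bigr)^{-l}=G(x)$, so $T$ interchanges the roles of $q'(0)$ and $q'(1)$ exactly.

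Finally I would invoke measure preservation. Because $T(x)(t)-T(x)(s)=x(1-t)-x(1-s)$, the map $T$ preserves the H\"older seminorm and, being self-inverse, carries $C_\delta$ bijectively onto itself; and by \ref{TimeReverse} it pushes $w$ forward to $w$. Hence $\int_{C_\delta}G\,dw=\int_{C_\delta}F\circ T\,dw=\int_{C_\delta}F\,dw$, which is precisely the asserted equality $M_l=\int_{E_\delta}(q'(0))^l\,\nu(dq)$. The only genuine content is recognizing that time reversal is the correct symmetry; the main thing to be careful about is the bookkeeping confirming that $T$ preserves the full-measure H\"older space $C_\delta$ (so that restricting to $C_\delta$ does not disturb the invariance of $w$) and that $F\circ T$ is exactly $G$ rather than some twisted variant.
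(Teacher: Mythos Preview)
Your proof is correct and follows essentially the same route as the paper: pull back to Wiener measure via $A$, express $q'(0)$ and $q'(1)$ in terms of $x$, and apply the time-reversal invariance of $w$ from \ref{TimeReverse}. The only cosmetic difference is that the paper passes from $E_\delta$ to all of $C_0(I)$ using \tref{EDeltaSup} before invoking invariance, whereas you stay on $C_\delta$ and note that $T$ preserves it; both are fine.
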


\begin{proof}  Let \(x=A(q)\) with \(A:\mathrm{Diff}_+^1(I)
\rightarrow C_0(I)\) as in \tref{Forward} so that \(q=B(x)\) as in
\tref{Backward}.  This gives
\[
q'(0)= \frac{1}{\int_0^1 e^{x(t)}\, dt}, \quad
q'(1)= \frac{e^{x(1)}}{\int_0^1 e^{x(t)}\, dt}.
\]
Now
\mymargin{LemSvnI-IV}\begin{align}
M_l 
&=
\int\limits_{E_\delta}(q'(1))^l \nu(dq)\notag \\
&=
\int\limits_{\mathrm{Diff}_+^1(I)}(q'(1))^l \nu(dq)\label{LemSvnI} \\
&=
\int\limits_{C_0(I)}\left(
\frac{e^{x(1)}}{\int_0^1 e^{x(t)}\, dt}
\right)^l w(dx) \label{LemSvnII}\\
&=
\int\limits_{C_0(I)}\left(
\frac{1}{\int_0^1 e^{x(t)-x(1)}\, dt}
\right)^l w(dx) \notag\\
&=
\int\limits_{C_0(I)}\left(
\frac{1}{\int_0^1 e^{x(1-t)-x(1)}\, dt}
\right)^l w(dx) \label{LemSvnIII}\\
&=
\int\limits_{C_0(I)}\left(
\frac{1}{\int_0^1 e^{x(t)}\, dt}
\right)^l w(dx) \label{LemSvnIV}\\
&=
\int\limits_{\mathrm{Diff}_+^1(I)}(q'(0))^l \nu(dq) \notag\\
&=
\int\limits_{E_\delta}(q'(0))^l \nu(dq),\notag
\end{align}
where \tref{LemSvnI} follows from \tref{EDeltaSup}, \tref{LemSvnII}
is the 
definition of \(\nu\), \tref{LemSvnIII} is ordinary change of
variables, and \tref{LemSvnIV} is \ref{TimeReverse}.  The integral
in  \tref{LemSvnII} exists because evaluation at 1 is continuous on
\(C_0(I)\) and is thus Borel measurable.
\end{proof}

\subsection{Some calculations}

The above shows equalities between values that might be infinite.
In fact, they are finite.  Since they are used later to define
constants, it is interesting to see how big they are.

\subsubsection{A preliminary calculation}\mylabel{MomentCalc}

Consider the random variable \(\pi_s:C_0(I)\rightarrow \R\) which is
evaluation at \(s\), together with the Wiener measure on \(C_0(I)\).
That is \(\pi_s(f)=f(s)\).  Since both \(\pi_s\) and \(-\pi_s\) have
normal distributions with mean 0 and variance \(s\), we can evaluate
\[
\begin{split}
\int\limits_{C_0(I)}
\exp(-l f(s))\,w(df)
&=
\frac{1}{\sqrt{2\pi s}}
\int\limits_{-\infty}^{\infty}
\exp(-lx)\exp(-x^2/(2s))\,dx \\
&=
\frac{1}{\sqrt{2\pi}}
\int\limits_{-\infty}^{\infty}
\exp(-lu\sqrt s)\exp(-u^2/(2))\,du \\
&=
\frac{1}{\sqrt{2\pi}}
\int\limits_{-\infty}^{\infty}
\exp(-\frac12(u^2 + 2lu\sqrt s))\,du \\
&=
\frac{1}{\sqrt{2\pi}}
\int\limits_{-\infty}^{\infty}
\exp(-\frac12(u + l\sqrt s)^2+\frac{l^2s}{2})\,du \\
&=
\exp(sl^2/2)
\frac{1}{\sqrt{2\pi}}
\int\limits_{-\infty}^{\infty}
\exp(-\frac12(u + l\sqrt s)^2)\,du \\
&=
\exp(sl^2/2)
\frac{1}{\sqrt{2\pi}}
\int\limits_{-\infty}^{\infty}
\exp(-\frac12(v)^2)\,dv \\
&=
\exp(sl^2/2)
\end{split}
\]

\subsubsection{Upper bounds}

Now
\[
\begin{split}
M_l 
&=
\int\limits_{E_\delta}(q'(0))^l \nu(dq)\\
&=
\int\limits_{\mathrm{Diff}_+^1(I)}(q'(0))^l \nu(dq) \\
&=
\int\limits_{C_0(I)}\left(
\frac{1}{\int_0^1 e^{x(t)}\, dt}
\right)^l w(dx) \\
&=
\int\limits_{C_0(I)}\left(
{\int_0^1 e^{x(t)}\, dt}
\right)^{-l} w(dx) \\
&\le
\int\limits_{C_0(I)}
{\int_0^1 e^{-lx(t)}\, dt}
\, w(dx)
\end{split}
\]
from Jensen's inequality and the convexity of \(x\mapsto x^{-l}\)
for positive \(x\).  Continuing, and making use of \ref{MomentCalc},
we get
\[
\begin{split}
M_l 
&\le 
\int_0^1 
\int\limits_{C_0(I)}
e^{-lx(t)}
\, w(dx) 
\, dt
\\
&=
\int_0^1 
e^{l^2t/2}
\, dt
=e^{l^2/2}-1 \\
&\le
e^{l^2/2}.
\end{split}
\]

A constant below also uses \(I=\int\limits_{E_\delta} \int_0^1
(q'(t))^2 \,dt\, \nu(dq)\).  With \(x=A(q)\) and
\[
q'(t)= \frac{e^{x(t)}}{\int_0^1 e^{x(s)}\, ds},
\]
we have
\[
\begin{split}
I 
&=
\int\limits_{C_0(I)} 
\int_0^1 
\left(
\frac{\exp(x(t))}{\int_0^1\exp(x(s))\,ds}
\right)^2
\,dt
\,w(dx) \\
&\le
\int\limits_{C_0(I)} 
\int\limits_0^1 
\int\limits_0^1
\exp(-2(x(s)-x(t)))\,ds
\,dt
\,w(dx) \\
&=
\int\limits_0^1 
\int\limits_0^1
\int\limits_{C_0(I)} 
\exp(-2(x(s)-x(t)))
\,w(dx)
\,ds
\,dt.
\end{split}
\]
Now we know that \(x(s)-x(t)\) is normally distributed with mean 0
and variance \(|t-s|\) so a calculation as in \ref{MomentCalc} tells
us that the inner integral equals \(\exp(2|t-s|)\).  Then
straightfoward integration gives
\[
\begin{split}
I
&\le
\int\limits_0^1
\int\limits_0^t
\exp(2(t-s))\,ds\,dt
+
\int\limits_0^1
\int\limits_t^1
\exp(2(s-t))\,ds\,dt \\
&= 
\frac{e^2-1}{2}.
\end{split}
\]

\subsubsection{Lower bounds}\mylabel{LowerMBound}

Now we can estimate from below.  We have
\[
\begin{split}
M_1 
&=
\int\limits_{E_\delta}
(q'(0))\,\nu(dq) \\
&=
\int\limits_{C_0(I)}
\frac{1}
{\int_0^1 \exp(f(t))\,dt}
\,w(df) \\
&\ge \frac{1}
{\int \int_0^1 \exp(f(t))\, dt\, w(df)}
\end{split}
\]
where the last inequality follows from Jensen's theorem on
probability spaces (Theorem 5.1 on Page 132 of \cite{MR2125120}) and
the convexity of \(x\mapsto x^{-1}\) when \(x>0\).  Now from
\ref{MomentCalc}, we have
\[
\begin{split}
M_1
&\ge
\frac{1}{\int_0^1\exp(t/2)\,dt} \\
&=
\frac{1}{2(\exp(1/2)-1)} \\
&\ge \frac23.
\end{split}
\]

Now from the Lyapounov inequality (see \ref{SomeMorePrep}), we have
\(M_2\ge M_1^2\) so \(M_2>1/3\).  Now \(I\ge0\), so the definition
below of \(c_4\) in \ref{LemEightDefs} has
\[
c_4 = 1 + M_1 + M_2 + I \ge 2.
\]

\subsection{Some preparation}\mylabel{SomeMorePrep}

The next lemma will use material that is found in standard texts
on probability.  We will use \cite{MR2125120}.

We give some basics.

A probability measure \(\mu\) on a set \(S\) is a countably
additive, positive measure on a \(\sigma\)-algebra on \(S\) so that
\(\mu(S)=1\).  A random variable \(X\) is a measurable function \(X
: S\rightarrow \R\).

Using notation
\[
P(X\in E) = \int\limits_{X^{-1}(E)} 1\,d\mu
\]
for a Borel set \(E\in \R\),
we will write \(P(X>K)\) for \(P(X\in (K,\infty))\).  Obviously
\(K\le L\) implies \(P(X>K) \ge P(X>L)\), and \(X\le Y\)
a.e. implies \(P(X>K) \le P(Y>K)\).

A random variable \(X\) has finite \(m\)-th moment if \(\int
|X|^m\,dP\) is finite and we call \(\int X^m\,dP\) the \(m\)-th
moment of \(X\).  The first moment is the {\itshape expected value}
of \(X\) and is denoted \(E[X]\) or \(EX\).

From Pages 128--129 of \cite{MR2125120}: For a random variable
\(X\), we set \(\|X\|_r = (E|X|^r)^{1/r}\).  The Lyapounov
inequality is Theorem 2.5 on Page 129 of \cite{MR2125120} which says
that for \(0<r\le p\), we have \(\|X\|_r\le \|X\|_p\).

Theorem 1.1 on Page 120 of \cite{MR2125120} is Markov's inequality
which says that if
\(X\) has finite \(r\)-th moment for some \(r>0\), then 
\[
P(|X|>x) \le \frac{E|X|^r}{x^r}.
\]

If \(X\) has finite second moment then 
\[
\int (X^2-E[X])^2\,dP
\]
is finite, is called the {\itshape variance} of \(X\), and is denoted
\(\mathrm{Var}(X)\).  One can calculate 
\[
\mathrm{Var}(X) = E[X^2] - (E[X])^2 \le E[X^2].
\]

Theorem 1.4(i) on Page 121 of \cite{MR2125120} is Chebyshev's
inequality which says that if \(X\) has finite second moment and
\(x>0\), then 
\[
P(|X-EX|>x) \le \frac{\mathrm{Var}(X)}{x^2}.
\]

Random variables \(X_1\), \(X_2\), \ldots, \(X_n\) are independent
if 
\[
P\left(\bigcap_{k=1}^n \{X_k\in A_k\}\right)
=
\prod_{k=1}^n P(X_k\in A_k)
\]
for arbitrary Borel sets \(A_1, A_2, \ldots, A_k\).

Theorem 10.3 on Page 70 of \cite{MR2125120} says that for
independent random variables \(X\), \(Y\) of finite second moment, we
have 
\[
\mathrm{Var}(X+Y) =  \mathrm{Var}(X) + \mathrm{Var}(Y).
\]

We also use the special case of the Cauchy-Schwarz inequaltiy
\((a+b)^2\le 2(a^2+b^2)\) for non-negative \(a\) and \(b\).  (We
have \(0\le (a-b)^2=a^2+b^2-2ab\) implies \(2ab\le a^2+b^2\) and we
plug this into \((a+b)^2=a^2+b^2+2ab\).)

\subsection{Some definitions}\mylabel{LemEightDefs}

We use the product measure \(\nu_n = \nu \otimes \cdots \otimes
\nu\) on \(E_\delta^n = E_\delta \times \cdots \times E_\delta\).

For \(g\in \mathrm{Diff}_+^3(I)\), we let \(S_g\) be the {\itshape
Schwarzian derivative} \cite{schwarz:deriv} of \(g\) defined by 
\[
S_g(t)
=
\left(
\frac{g''(t)}{g'(t)}
\right)'
-
\frac12 
\left(
\frac{g''(t)}{g'(t)}
\right)^2
=
\frac{g'''(t)}{g'(t)}
-
\frac32 
\left(
\frac{g''(t)}{g'(t)}
\right)^2.
\]

We let \(c_4= 1 + M_1 + M_2 + \int\limits_{E_\delta} \int_0^1
(q'(t))^2 \,dt\, \nu(dq)\).

For any \(r>0\), and any \(g\in \mathrm{Diff}_+^3(I)\),
we write
\[
C_g = 1+ \max_{0\le t\le 1} 
\left(
\left| \frac{g''(t)}{g''(t)} \right| 
+
\left| \left( \frac{g''(t)}{g'(t)} \right)^2 \right|
+ 
\left| \frac{g'''(t)}{g'(t)} \right|
\right).
\]
Note that \(|S_g(t)| < (3/2)C_g\) for all \(t\).

For any \(g\in \mathrm{Diff}_+^3(I)\), any
\(\mathbf x = (x_1, \ldots, x_{n-1}) \in D_n\) (with \(x_0=0\) and
\(x_n=1\) as defined previously), any
\(\mathbf q = (q_1, \ldots, q_n)\in E_\delta^n\), 
and integer \(k\) with \(1\le k\le n\), let
\[
\begin{split}
X_k(\mathbf q) 
&= 
(x_k-x_{k-1})
\left(
\frac{g''(x_{k-1})}{g'(x_{k-1})}q'_k(0)
-
\frac{g''(x_{k})}{g'(x_{k})}q'_k(1)
\right), \\
Y_k(\mathbf q) 
&= 
(x_k-x_{k-1})^2
\int_0^1
S_g(x_{k-1} + (x_k-x_{k-1})q_k(t))(q'_k(t))^2\,dt,
\end{split}
\]
let \(f_1 = \sum_1^nX_i\), and let \(f_2 = \sum_1^n Y_i\).

For any \(\mathbf x = (x_1, \ldots, x_{n-1}) \in D_n\) (with \(x_0=0\) and
\(x_n=1\)), let 
\[
\|\mathbf x\| = \max_{1\le k\le n}(x_k-x_{k-1}).
\]

\subsection{Statement}

In the following, probabilities \(P_n\) are with respect to the
measures \(\nu_n\).

\begin{lemma}[S-L8]  Given \(\epsilon\in(0,1)\), \(g\in
\mathrm{Diff}_+^3(I)\), and \(\mathbf x\in D_n\) with \(\|\mathbf
x\|<\epsilon\), we have 
\[
P_n\Big( |f_1+f_2| > 4 c_4 C_g \sqrt[3]{\epsilon} \Big) \le
2\sqrt[3]{\epsilon}.
\]  
\end{lemma}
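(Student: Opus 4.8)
The plan is to treat $f_1$ and $f_2$ separately, controlling $f_1$ through its mean and variance and $f_2$ through its expected absolute value, and then to combine the two estimates by splitting the threshold $4c_4C_g\sqrt[3]{\epsilon}$ into two halves. The essential structural fact is that under the product measure $\nu_n$ the coordinates $q_1,\dots,q_n$ are \emph{independent}, and each summand $X_k$ (resp.\ $Y_k$) depends only on $q_k$; hence $X_1,\dots,X_n$ are independent and so are $Y_1,\dots,Y_n$. Writing $l_k=x_k-x_{k-1}$ and $\phi=g''/g'$, I would repeatedly use that $\sum_k l_k=1$ and that $\|\mathbf x\|<\epsilon$ forces $\sum_k l_k^2\le\|\mathbf x\|<\epsilon$, together with the pointwise bounds $\phi^2<C_g$, $|\phi'|=|g'''/g'-\phi^2|<C_g$, and $|S_g|<\tfrac32 C_g$, all immediate from the definition of $C_g$.

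First I would compute the mean and variance of $f_1$. Since $\int_{E_\delta}q'(0)\,\nu(dq)=\int_{E_\delta}q'(1)\,\nu(dq)=M_1$ by Lemma (S-L7), one gets $E[X_k]=M_1 l_k(\phi(x_{k-1})-\phi(x_k))$, so $E[f_1]=M_1\sum_k l_k(\phi(x_{k-1})-\phi(x_k))$. The mean value theorem turns each difference into $-\phi'(\xi_k)l_k$, and the bound $|\phi'|<C_g$ gives $|E[f_1]|\le M_1 C_g\sum_k l_k^2<M_1 C_g\,\epsilon$. For the variance, independence and the fact from \S\ref{SomeMorePrep} that the variance of a sum of independent variables is additive give $\mathrm{Var}(f_1)=\sum_k\mathrm{Var}(X_k)\le\sum_k E[X_k^2]$; applying $(a-b)^2\le 2(a^2+b^2)$, using $E[q_k'(0)^2]=E[q_k'(1)^2]=M_2$ and $\phi^2<C_g$, yields $E[X_k^2]\le 4M_2C_g l_k^2$ and hence $\mathrm{Var}(f_1)\le 4M_2C_g\,\epsilon$.

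Next I would bound $f_2$ in expectation. Since $|S_g|<\tfrac32 C_g$, each term satisfies $|Y_k|\le\tfrac32 C_g l_k^2\int_0^1(q_k'(t))^2\,dt$, so by Tonelli $E[|f_2|]\le\tfrac32 C_g\left(\sum_k l_k^2\right)I<\tfrac32 C_g I\,\epsilon$, where $I=\int_{E_\delta}\int_0^1(q'(t))^2\,dt\,\nu(dq)$ is the last term appearing in the definition of $c_4$.

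Finally I would assemble the estimate. Split the threshold as $4c_4C_g\sqrt[3]{\epsilon}=a+b$ with $a=b=2c_4C_g\sqrt[3]{\epsilon}$, so that $\{|f_1+f_2|>a+b\}\subseteq\{|f_1|>a\}\cup\{|f_2|>b\}$. For $f_2$, Markov's inequality gives $P_n(|f_2|>b)\le E[|f_2|]/b\le\tfrac{3I}{4c_4}\epsilon^{2/3}\le\sqrt[3]{\epsilon}$, using $I<c_4$ and $\epsilon^{2/3}\le\epsilon^{1/3}$. For $f_1$, since $|E[f_1]|<M_1C_g\epsilon\le c_4C_g\sqrt[3]{\epsilon}$ (as $M_1<c_4$ and $\epsilon\le\epsilon^{1/3}$), we have $a-|E[f_1]|\ge c_4C_g\sqrt[3]{\epsilon}$, and Chebyshev's inequality gives $P_n(|f_1|>a)\le\mathrm{Var}(f_1)/(c_4C_g\sqrt[3]{\epsilon})^2\le\tfrac{4M_2}{c_4^2C_g}\sqrt[3]{\epsilon}$. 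Adding the two bounds yields the claim provided $\tfrac{4M_2}{c_4^2C_g}\le 1$; this is where the definition of $c_4$ does its work: since $C_g\ge 1$ and $c_4\ge 1+M_2$, one has $\tfrac{4M_2}{c_4^2C_g}\le\tfrac{4M_2}{(1+M_2)^2}\le 1$, the last inequality being equivalent to $(1-M_2)^2\ge 0$. I expect the main obstacle to be not any individual estimate but the bookkeeping that makes the constants line up: choosing the cube-root scaling of the threshold so that Chebyshev (which costs a square) and Markov balance against the factor $\epsilon$ coming from $\sum_k l_k^2$, and checking that the crude moments $M_1$, $M_2$, $I$ are all dominated by $c_4$ in exactly the manner required.
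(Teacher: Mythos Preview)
Your argument is correct and follows essentially the same approach as the paper: independence of the $X_k$, the mean--variance estimate for $f_1$ via Chebyshev, the $L^1$ estimate for $f_2$ via Markov, and a union bound after splitting the threshold. The only cosmetic differences are that the paper splits $4c_4C_g\sqrt[3]{\epsilon}$ as $3+1$ rather than your $2+2$, and closes the constant by invoking the separately established bound $c_4\ge 2$ rather than your self-contained observation $4M_2/(1+M_2)^2\le 1$.
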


\begin{proof} The random variables \(X_k\) are independent since,
for each \(k\), \(X_k\) depends only on \(q_k\) and the product
measure is being used on \(E_\delta^n\).

We have 
\[
E[X_k] 
= 
(x_k-x_{k-1})
\left(
\frac{g''(x_{k-1})}{g'(x_{k-1})}
-
\frac{g''(x_{k})}{g'(x_{k})}
\right)M_1.
\]
From
\[
\left(
\frac{g''(t)}{g'(t)}
\right)'
=
\frac
{g'(t)g'''(t) -(g''(t))^2}
{(g'(t))^2}
=
\left(
\frac{g'''(t)}{g'(t)}
\right)
-
\left(
\frac{g''(t)}{g'(t)}
\right)^2
\]
and the definition of \(C_g\), we have
\[
|E[X_k]| \le (x_k-x_{k-1})^2C_gM_1,
\]
so that
\[
|E[f_1]| 
\le
M_1C_g\sum_{k=1}^n(x_k-x_{k-1})^2
\le
M_1C_g\epsilon \sum_{k=1}^n(x_k-x_{k-1})
\le
M_1C_g\epsilon 
\le
c_4C_g\epsilon.
\]

We also have
\[
\begin{split}
E[X_k^2]
&\le
(x_k-x_{k-1})^2\,
2\left(
\left(\frac{g''(x_{k-1})}{g'(x_{k-1})}\right)^2M_2
+
\left(\frac{g''(x_{k})}{g'(x_{k})}\right)^2M_2
\right) \\
&\le
(x_k-x_{k-1})^2
4C_gM_2,
\end{split}
\]
where the first line uses the special case of the Cauchy-Schwarz
inequality.  This leads to
\[
\mathrm{Var}(f_1) 
\le 
\sum_{k=1}^n
(x_k-x_{k-1})^2
4C_gM_2
\le 
\epsilon
4C_gM_2.
\]

Since \(\epsilon\in(0,1)\), we have \(\sqrt[3]\epsilon
> \epsilon\), so that the Chebyshev inequality gives
\mymargin{FOneEst}\begin{equation}\label{FOneEst}
\begin{split}
P(|f_1| > 3c_4 C_g\epsilon^{1/3})
&\le
P(|f_1-E[f_1]| + |E[f_1]| > 3c_4 C_g \epsilon^{1/3}) \\
&\le
P(|f_1-E[f_1]| + c_4 C_g\epsilon^{1/3} > 3c_4 C_g \epsilon^{1/3}) \\
&\le
P(|f_1-E[f_1]| > 2c_4 C_g \epsilon^{1/3}) \\
&\le
\frac{4c_4C_g\epsilon}{4c_4^2C_g^2\epsilon^{2/3}} \\
&\le
\frac{\epsilon^{1/3}}{c_4C_g} \le\frac 1 2 \sqrt[3]\epsilon
\end{split}
\end{equation}
where we have used our estimate \(c_4\ge 2\) from \ref{LowerMBound}.

We next work with \(f_2\).  We have
\[
S_g(x_{k-1} + (x_k-x_{k-1})q_k(t)) = S_g(q_k(t))
\]
from direct checking or the fact that
the Schwarzian derivative  is invariant
under composition with linear fractional transformations (Property 1
on Page 35 of \cite{schwarz:deriv}).

This gives
\[
|f_2(\mathbf q)| \le (3/2)C_g\sum_{k=1}^n (x_k-x_{k-1}) \int
(q'_k(t))^2\, dt.
\]
From this
\[
\begin{split}
E[|f_2|]
&\le
(3/2)C_g\epsilon
\int
\int_0^1
(q'(t))^2 \, dt \,\nu(dq) \\
&\le
(3/2)c_4 C_g \epsilon.
\end{split}
\]
By the Markov inequality with \(r=1\), we have
\mymargin{FTwoEst}\begin{equation}\label{FTwoEst}
P(|f_2| > c_4C_g)
\le 
\frac
{(3/2)c_4 C_g \epsilon}
{c_f C_g\epsilon^{1/3}}
\le \frac 3 2 \epsilon^{2/3}
\le \frac 3 2 \epsilon^{1/3}.
\end{equation}

Now if \(|f_1+f_2|> 4t\), then at least one of \({|f_1|}>3t\) or
\(|f_2|>t\) holds, so 
\[
P(|f_1+f_2| > 4t) \le P(|f_1| > 3t) + P(|f_2|>t)
\]
for any \(t\).  Using \(t=c_4C_g \epsilon^{1/3}\) 
with
\tref{FOneEst} and \tref{FTwoEst} gives the conclusion of the
lemma.
\end{proof}

\subsection{Some notation}\mylabel{LemNinePrep}

The next lemma compares the ``equality'' of a partition \(\mathbf x\in
D_n\) to the ``equality'' of its image \(g(\mathbf x)\) under a
\(g\in \mathrm{Diff}_0^3(I)\).  The equality will be measured by
the function \(v\) that appears in \tref{WhatVIs}.  A typical
argument for \(v\) is of the form 
\[
R_{\mathbf x,k}
=
\frac
{x_k-x_{k-2}}
{2\sqrt{(x_k-x_{k-1}) (x_{k-1}-x_{k-2})}},
\]
and we will be comparing this to the expression 
\[
R_{\mathbf x,k}^g
=
\frac
{g(x_k)-g(x_{k-2})}
{2\sqrt{(g(x_k)-g(x_{k-1})) (g(x_{k-1})-g(x_{k-2}))}}.
\]
We will use the notation \(R_{\mathbf x, k}\) and \(R_{\mathbf x,
k}^g\) often without the subscripts since they will be clear from
the context.

The comparison will be by way of the quadratic Taylor polynomial.
To keep notation under control, we will write out what is needed and
introduce notation for various parts.

For \(\mathbf x\in D_n\), we have the usual conventions that
\(x_0=0\), \(x_n=1\) and \(x_{-1}=x_{n-1}-1\).  We let \(g(\mathbf
x)\) be the image of \(\mathbf x\) in \(D_n\) and write \(g(x_{-1})
= g(x_{n-1})-1\).  Of course \(g(x_0)=0\) and \(g(x_n)=1\).

For each \(k\) with \(2\le k\le n\), there are
\(x_{k-1}^*\in (x_{k-1}, x_k)\) and 
\(x_{k-1}^{**}\in (x_{k-2}, x_{k-1})\), and also 
\(x_0^*\in (0,x_1)\) and \(x_0^{**}\in (x_{n-1}, 1)\) so that
\[
\begin{split}
g(x_k)-g(x_{k-1}) 
&=
g'(x_{k-1})(x_k-x_{k-1}) 
+ 
\frac 1 2 g''(x_{k-1}^*)(x_k-x_{k-1})^2
\\
&=
g'(x_{k-1})(x_k-x_{k-1})
(1 + \frac{g''(x_{k-1}^*)}{2g'(x_{k-1})} (x_k-x_{k-1})), \\
g(x_{k-1})-g(x_{k-2}) 
&=
g'(x_{k-1})(x_{k-1}-x_{k-2}) 
+ 
\frac 1 2 g''(x_{k-1}^{**})(x_{k-1}-x_{k-2})^2
\\
&=
g'(x_{k-1})(x_{k-1}-x_{k-2})
(1 + \frac{g''(x_{k-1}^{**})}{2g'(x_{k-1})} (x_{k-1}-x_{k-2})).
\end{split}
\]
The different treatment of \(x_0^{**}\) is because \(g\) is not
defined below 0, and the interval \([x_{-1}, 0]\) really just
mirrors the interval \([x_{n-1}, 1]\).

For a given \(k\), we let
\[
\begin{split}
a_k
&=x_k-x_{k-1}, \\
b_k
&=x_{k-1}-x_{k-2}, \\
A_k
&=
\frac{g''(x_{k-1}^*)}{2g'(x_{k-1})}, \\
B_k
&=
\frac{g''(x_{k-1}^{**})}{2g'(x_{k-1})}.
\end{split}
\]
Note that we already dispense with the dependence on \(\mathbf x\).

We thus have
\[
\begin{split}
g(x_k)-g(x_{k-1})
&=
g'(x_{k-1})a_k(1+A_ka_k), \\
g(x_{k-1})-g(x_{k-2})
&=
g'(x_{k-1})b_k(1+B_kb_k).
\end{split}
\]

We have 
\[
R_{\mathbf x, k} 
=
\frac
{a_k + b_k}
{2\sqrt{a_kb_k}}
.
\]
Since everything has subscript \(k\), we supress that subscript.
Since a fixed \(\mathbf x\) is involved we supress that as well.
This gives
\[R=\frac{a+b}{2\sqrt{ab}}.\]

Now write \(g'\) for \(g'(x_{k-1})\), and we have
\[
\begin{split}
R^g
&=
\frac
{g' a(1+Aa) + g' b(1+Bb)}
{2\sqrt{g' a(1+Aa)}\sqrt{g' b(1+Bb)}}
\\
&=
\frac
{a+Aa^2 + b+Bb^2}
{2\sqrt{ab}\sqrt{(1+Aa)(1+Bb)}},
\\
\end{split}
\]
so that 
\mymargin{RRatio}\begin{equation}\label{RRatio}
\begin{split}
\frac{R^g}{R}
&=
\frac{1}{a+b} 
\left(
\frac
{a+ b+Aa^2 +Bb^2}
{\sqrt{(1+Aa)(1+Bb)}}
\right) \\
&=
\frac
{1 + \frac{Aa^2 +Bb^2}{a+b}}
{\sqrt{(1+Aa)(1+Bb)}}.
\end{split}
\end{equation}

\subsection{Log and exp estimates}\mylabel{LogEst}

The ratio \tref{RRatio} will be further modified to take the form
\(1+y\) and a product of such ratios will be studied by looking at
the sum of the logs.  Thus expressions of the form \(\log(1+y)\)
will occur.

The function \(\log(1+y)\) is concave down and has slope 1 at 0, so
for all \(y\), we have \(\log(1+y)<y\).  However, we need more to
estimate \(|\log(1+y)|\).  We have the chord from \((K,\log(1+K))\)
to \((0,0)\) has slope
\[
\frac{\log(1+K)}{K}
\]
whether \(K\) is negative or positive.  The graph of \(\log(1+y)\)
will lie above this chord on the interval with endpoints \(0\) and
\(K\).  For \(K=-\frac12\), this slope is \(1.3\dots\), and for
\(K=\frac12\), the slope is \(0.8\dots\).  Thus we are safe in
estimating 
\[
|\log(1+y)|\le 2|y|
\]
for \(|y|\le \frac12\).

Below, we will encounter \(\log(1+3y\).  The slopes of the chords
are \(3.5\dots\) for \(K=-\frac1{10}\) and \(2.6\dots\) for
\(K=\frac1{10}\).  Thus we can use 
\[
|\log(1+3y)| \le 4|y|
\]
for \(|y|\le 0.1\).

We will also want to known \(|e^x-1|\).  On \([-1,1]\), this is
bounded by \(K|x|\) with \(K=e^1-1\).  Thus we can say \(|e^x-1|\le
2|x|\) for \(x\in [-1,1]\).  

\subsection{The lemma}

To keep things simple, we use the notation from \ref{LemNinePrep} as
well as \(\|\mathbf x\|\) from \ref{LemEightDefs}.

\begin{lemma}[S-L9] For any \(g\in \mathrm{Diff}_0^3(I)\) and any
\(\epsilon>0\), there are \(r>4\) and \(\delta_1\in(0,1)\) so
that for any \(\mathbf x\in D_n\) satisfying \(\|\mathbf x\| <
\delta_1\) and \(\displaystyle{\min_{1\le k\le n}} (R_{\mathbf x,
k}) > r\), we have
\[
\left| 
\left(
\prod_{k=1}^n \frac{R_{\mathbf x,k}^g}{R_{\mathbf x,k}} 
\right)
-1
\right|
\le \epsilon.
\]
\end{lemma}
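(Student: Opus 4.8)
The plan is to pass to logarithms and to expose a telescoping that is hidden in the product. Write $\phi=\log g'$; then $\phi$ is $C^2$ on $I$, and crucially $\phi(0)=\phi(1)=0$ since $g\in\mathrm{Diff}_0^3(I)$ forces $g'(0)=g'(1)=1$ --- this is the only place the hypothesis $g\in\mathrm{Diff}_0^3(I)$ enters. Put $a_k=x_k-x_{k-1}$, so with the conventions $x_0=0$, $x_n=1$, $x_{-1}=x_{n-1}-1$ the index becomes cyclic with $a_0=a_n$ and $R_{\mathbf x,k}=\frac{a_k+a_{k-1}}{2\sqrt{a_ka_{k-1}}}$. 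Over the whole cycle $\prod_k\sqrt{a_ka_{k-1}}=\prod_k a_k$, so $\prod_{k=1}^n R_{\mathbf x,k}=\frac{\prod_k(a_k+a_{k-1})}{2^n\prod_k a_k}$, and writing $\alpha_k=g(x_k)-g(x_{k-1})$ the identical computation gives $\prod_k R^g_{\mathbf x,k}=\frac{\prod_k(\alpha_k+\alpha_{k-1})}{2^n\prod_k\alpha_k}$. Dividing and applying the mean value theorem to $g$ on $[x_{k-1},x_k]$ and on $[x_{k-2},x_k]$,
\[
\prod_{k=1}^n\frac{R^g_{\mathbf x,k}}{R_{\mathbf x,k}}
=\prod_{k=1}^n\frac{\alpha_k+\alpha_{k-1}}{a_k+a_{k-1}}\cdot\frac{a_k}{\alpha_k}
=\prod_{k=1}^n\frac{g'(\xi_k)}{g'(\zeta_k)},\qquad \xi_k\in(x_{k-2},x_k),\ \zeta_k\in(x_{k-1},x_k).
\]
The per-factor identity is false; only the full products agree, which is exactly where the cancellation lives. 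Taking logarithms, it suffices to make $S:=\sum_{k=1}^n\big(\phi(\xi_k)-\phi(\zeta_k)\big)$ small, since $\Big|\prod_{k=1}^n\tfrac{R^g_{\mathbf x,k}}{R_{\mathbf x,k}}-1\Big|=|e^S-1|\le 2|S|$ once $|S|\le 1$, by the estimate $|e^x-1|\le2|x|$ of \S\ref{LogEst}.

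For $2\le k\le n$ (so that $[x_{k-2},x_k]\subseteq I$ and the mean value theorem applies honestly) I would Taylor expand $g$ about $x_{k-1}$. From $g'(\zeta_k)=\frac{g(x_k)-g(x_{k-1})}{a_k}$ and $g'(\xi_k)=\frac{g(x_k)-g(x_{k-2})}{a_k+a_{k-1}}$, with Lagrange remainders controlled by $\|g'''\|_\infty$ and $\min_I g'>0$, one gets $\phi(\zeta_k)=\phi(x_{k-1})+\tfrac12\phi'(x_{k-1})a_k+O(a_k^2)$ and $\phi(\xi_k)=\phi(x_{k-1})+\tfrac12\phi'(x_{k-1})(a_k-a_{k-1})+O(a_k^2+a_{k-1}^2)$, whence
\[
\phi(\xi_k)-\phi(\zeta_k)=-\tfrac12\phi'(x_{k-1})\,a_{k-1}+O(a_k^2+a_{k-1}^2),
\]
the remainder constant depending only on $g$. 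The single wrap-around factor $k=1$ is bounded directly: $a_1,a_n\le\|\mathbf x\|$ and $g'$ lies within $O(\|\mathbf x\|)$ of $g'(0)=g'(1)=1$ there, so that term is $O(\|\mathbf x\|)$.

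Summing over $2\le k\le n$, the remainders contribute at most $C_g\sum_k(a_k^2+a_{k-1}^2)\le 2C_g\|\mathbf x\|$, using $\sum_k a_k^2\le(\max_k a_k)\sum_k a_k=\|\mathbf x\|$ (as $\sum_k a_k=1$). The main part $-\tfrac12\sum_{k=2}^n\phi'(x_{k-1})a_{k-1}$ is a right-endpoint Riemann sum for $-\tfrac12\int_0^{x_{n-1}}\phi'(t)\,dt=-\tfrac12\phi(x_{n-1})$, and since $\phi'=g''/g'$ is continuous this sum differs from that integral by $O(\|\mathbf x\|)$; finally $|\phi(x_{n-1})|=|\phi(x_{n-1})-\phi(1)|\le\|\phi'\|_\infty a_n=O(\|\mathbf x\|)$. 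Hence $|S|\le C'_g\|\mathbf x\|$ for a constant $C'_g$ depending only on $g$. Given $\epsilon$, take any $r>4$ and $\delta_1=\min\{1,\,1/C'_g,\,\epsilon/(2C'_g)\}$; then $\|\mathbf x\|<\delta_1$ gives $|S|\le1$, so the quantity in the lemma is at most $2|S|\le2C'_g\|\mathbf x\|<\epsilon$.

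The decisive, and only subtle, step is the telescoping: a naive term-by-term expansion of $R^g_{\mathbf x,k}/R_{\mathbf x,k}$ from \tref{RRatio} leaves first-order pieces of size $\sim\frac{g''}{g'}\frac{(a_k-a_{k-1})^2}{a_k+a_{k-1}}$ whose sum is of order one, so the smallness is a cancellation across neighbouring factors rather than a bound on each factor; the product identity above makes that cancellation explicit and reduces everything to the vanishing Riemann sum $\int_0^1(\log g')'=\log g'(1)-\log g'(0)=0$. Two minor points: the backward Taylor expansion in \S\ref{LemNinePrep} should carry a minus sign on its quadratic term, so the $B_k$-term of \tref{RRatio} must be read with that sign --- the route above avoids this by working with $g$ and the mean value theorem directly; and the hypothesis $\min_k R_{\mathbf x,k}>r$ is in fact never used, the estimate holding for every sufficiently fine $\mathbf x$, but it is harmless and matches the regime in which the lemma is applied after Lemma (S-L6).
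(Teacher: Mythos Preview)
Your argument for the statement \emph{as written} is correct, and it is genuinely different from the paper's. You observe the cyclic telescoping $\prod_k\sqrt{a_ka_{k-1}}=\prod_k a_k$ (and similarly for the $\alpha_k$), which collapses the product of ratios to $\prod_k \frac{\alpha_k+\alpha_{k-1}}{a_k+a_{k-1}}\cdot\frac{a_k}{\alpha_k}=\prod_k g'(\xi_k)/g'(\zeta_k)$; after a second-order Taylor expansion the logarithm becomes a Riemann sum for $\tfrac12\int_0^1(\log g')'=0$ plus $O(\|\mathbf x\|)$-errors. One small imprecision: to get the Riemann-sum error of size $O(\|\mathbf x\|)$ you need $\phi'=g''/g'$ Lipschitz, not merely continuous---but this holds since $g\in C^3$. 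And you are right that the sign on the backward Taylor remainder in \S\ref{LemNinePrep} is off; your route sidesteps that.

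The paper's proof, however, is actually establishing a different inequality: it bounds $\bigl|\prod_k v(R^g_{\mathbf x,k})/v(R_{\mathbf x,k})-1\bigr|$, not $\bigl|\prod_k R^g_{\mathbf x,k}/R_{\mathbf x,k}-1\bigr|$. The displayed line in the paper that equates $\log(R^g_{\mathbf x,k}/R_{\mathbf x,k})$ with $\log(v_1(\tau_k+\beta_k)/v_1(\tau_k))$ is a slip of notation; the whole argument---the appeal to Lemma~(S-L2), the factor $\tau_k$ in the denominator, and the appearance of $1/\log(t_k)$---is about ratios of $v$-values, which is what Theorem~(S-3) needs since $u_n$ is a product of $v$'s. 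For that $v$-version the hypothesis $\min_k R_{\mathbf x,k}>r$ is \emph{essential}: it supplies the small factor $1/\log(t_k)<1/\log r$ that makes $\sum_k(x_k-x_{k-2})/\log(t_k)$ small, whereas term-by-term one only has $v(R^g)/v(R)=1+O(a_k+a_{k-1})$ with no telescoping available. Your observation that $r$ is never used is therefore correct for the literal statement but explains why: the literal statement is not the one the paper proves or later uses. If you want your approach to feed into Theorem~(S-3), you would still need the paper's $v$-estimate on top of (or instead of) yours.
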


\begin{proof}  We continue to use notation from \ref{LemNinePrep}.

To make estimates at the end easier, we take \(\epsilon\in (0,10)\),
and we let  
\[
\begin{split}
C
&=
\max_{t_1, t_2\in [0,1]} \left|\frac{g''(t_1)}{g'(t_2)} \right|, \\
\delta_1
&=
\frac{1}{400(C+1)}, \qquad\mathrm{and} \\
r
&=
\exp(8000(C+1)/\epsilon).
\end{split}
\]

Take an \(\mathbf x\in D_n\) satisfying the hypotheses of the
lemma.  We temporarily fix a \(k\) with \(1\le k\le n\) and
temporarily drop subscripts.  Until we reintroduce \(k\), what
follows applies to any \(k\).

We have 
\[
\frac{R^g}{R}
=
\frac
{1 + \frac{Aa^2 +Bb^2}{a+b}}
{\sqrt{(1+Aa)(1+Bb)}}.
\]

We rewrite this as 
\[
R^g = R(1+\lambda(a+b))
\]
where
\[
\lambda=\frac{1}{a+b} \left(
\frac
{1 + \frac{Aa^2 +Bb^2}{a+b}}
{\sqrt{(1+Aa)(1+Bb)}} -1
\right)
\]

We have both \(|A|\) and \(|B|\) are bounded by \(C/2\) and both \(|a|\)
and \(|b|\) are bounded by \(\delta_1\).  We also have
\(C\delta_1<1/400\).  Let \(d=a+b\).

Now 
\[
\left| \frac{Aa^2 +Bb^2}{a+b}  \right|
< 
\left|A\frac{a^2}{a}\right| + 
\left|B\frac{b^2}{b}\right|
=
|Aa|+|Bb|
<
Cd.
\]
We have that both the numerator and denominator of 
\[
\frac
{1 + \frac{Aa^2 +Bb^2}{a+b}}
{\sqrt{(1+Aa)(1+Bb)}}
\]
lie between \(1-Cd\) and \(1+Cd\).  Thus 
\[
\frac{-2C}{1+Cd} =
\frac 1 d \left(
\frac{1-Cd}{1+Cd} -1\right) 
< 
\lambda
<
\frac 1 d \left(
\frac{1+Cd}{1-Cd} -1\right) 
=\frac{2C}{1-Cd}
\]
and since \(Cd< 1/200\) implies \(1-Cd > 4/5\), we get
\(|\lambda|<(5/2)C\). 

To agree with the notation of \cite{shavgulidze}, we let
\(t=R_{\mathbf x,k}\) and we let \(\alpha=\lambda (x_k-x_{k-2}) =
\lambda d\).  This gives us 
\mymargin{SetUpForLog}\begin{equation}\label{SetUpForLog}
R_{\mathbf x,k}(1+\alpha) = t(1+\alpha) = R^g_{\mathbf x,k}
\end{equation}
and
we get the estimate 
\mymargin{EstForLog}\begin{equation}\label{EstForLog}
|\alpha| < (5/2)Cd < 1/80.
\end{equation}
Note that \(t\ge
r>4\).

We let 
\(\tau = \log(t+\sqrt{t^2-1}) > \log(t) > 1\), and we let
\[
\beta = \log
\left(
1 + 
\alpha
\left(
\frac{t}{t+\sqrt{t^2-1}}
\right)
\left(
1 + 
\frac
{(2+\alpha)t}
{\sqrt{t^2-1} + \sqrt{t^2(1+\alpha)^2 -1}}
\right)
\right).
\]

Note that 
\[
\begin{split}
&(t+\sqrt{t^2-1})
\left(
1 + 
\alpha
\left(
\frac{t}{t+\sqrt{t^2-1}}
\right)
\left(
1 + 
\frac
{(2+\alpha)t}
{\sqrt{t^2-1} + \sqrt{t^2(1+\alpha)^2 -1}}
\right)
\right) \\
=&
t+\sqrt{t^2-1} + 
\alpha
{t}
\left(
1 + 
\frac
{(2+\alpha)t}
{\sqrt{t^2-1} + \sqrt{t^2(1+\alpha)^2 -1}}
\right) \\
=&
t+\alpha t + \sqrt{t^2-1} + 
\frac
{(2\alpha t^2+\alpha^2 t^2)
(\sqrt{t^2(1+\alpha)^2 -1} - \sqrt{t^2-1})}
{({t^2(1+\alpha)^2 -1}) - ({t^2-1})}
 \\
=&
t+\alpha t + \sqrt{t^2-1} + 
\frac
{(2\alpha t^2+\alpha^2 t^2)
(\sqrt{t^2(1+\alpha)^2 -1} - \sqrt{t^2-1})}
{(2\alpha t^2+\alpha^2 t^2)}
 \\
=&
t+\alpha t +
\sqrt{t^2(1+\alpha)^2 -1}.
\end{split}
\]

From this we see that 
\[
\tau+ \beta = \log(t(1+\alpha) + \sqrt{(t(1+\alpha))^2-1}),
\]
and that 
\[
v(t(1+\alpha)) = v_1(\tau+\beta).
\]

We can estimate \(\beta\).  Using \(t>4\), we get
\[
\frac 1 2 < \frac t {t+\sqrt{t^2-1}} < \frac 3 4.
\]
Taking into accont \(|\alpha|< \frac{1}{80}< 0.1\), we get 
\[
0.9 < \frac
{(2+\alpha)t}
{\sqrt{t^2-1} + \sqrt{t^2(1+\alpha)^2 -1}}
< 2.
\]
Thus \[
\beta=\log(1+K\alpha)
\]
for a \(K\) that lies between \(0.9\) and 3. From \ref{LogEst} and
the fact \(|\alpha|\le \frac1{80}\) from \tref{EstForLog}, we have
\[
\frac 1 4|\alpha| < |\beta| < 4|\alpha|.
\]
Thus \(|\beta|<1/20\).

We can say more about \(\beta\).  We have
\[
\frac 1 4 |\lambda(x_k-x_{k-2})| = \frac 1 4|\alpha|\le |\beta| \le
4|\alpha| = 4|\lambda(x_k-x_{k-2})|
\]
so
\[
\left|\frac{\beta}{x_k-x_{k-2}}\right| \le 4|\lambda|<16C.
\]

Since \(v(t(1+\alpha)) = v_1(\tau+\beta)\), there is a \(\theta\in
(0,1)\) so that 
\[
v(t(1+\alpha)) = v_1(\tau+\beta) = v_1(\tau) +
v'_1(\tau + \theta\beta) \beta.
\]

From Lemma S-L2, we have
\[
|v'_1(\tau + \theta\beta)| \le
\frac{4}{\tau+\theta\beta}v_1(\tau+\theta\beta).
\]
Since \(v_1\) takes only positive values, since \(\tau>1\), and since
\(|\theta\beta|+\frac{1}{20}\), we get from the mean value theorem
that for some \(\theta_1\in (0,\theta)\)
\[
\begin{split}
|v_1(\tau+\theta\beta)| 
&\le 
v_1(\tau) + |v'_1(\tau+\theta_1\beta) (\theta\beta)| \\
&\le
v_1(\tau) + 
\left| 
\frac {4(\theta\beta)}{\tau+\theta_1\beta}
\right|
v_1(\tau+\theta_1\beta) \\
&\le
v_1(\tau) + 
\left| 
\frac {4}{20(.95)}
\right|
v_1(\tau+\theta_1\beta) \\
&\le
v_1(\tau) + 
\left| 
\frac {1}{4}
\right|
v_1(\tau+\theta_1\beta).
\end{split}
\]
Applying this analysis to \(v_1(\tau+\theta_1\beta)\) and
continuing, we get
\[
|v_1(\tau+\theta\beta)|
\le 
v_1(\tau) \sum_{i=0}^\infty \frac{1}{4^i}
=
\frac 4 3 v_1(\tau).
\]
Now
\[
|v'_1(\tau+\theta\beta)|
\le
\frac{4}
{\tau(1+(\theta\beta)/\tau)}
\frac 4 3 v_1(\tau)
\le \frac{10}{\tau} v_1(\tau).
\]

From 
\[
v_1(\tau+\beta) = v_1(\tau) + v'_1(\tau+\theta\beta)\beta,
\]
we have
\[
\frac{v_1(\tau+\beta)}{v_1(\tau)}
=
1+\left(\frac{\tau v'_1(\tau+\theta\beta)}{v_1(\tau)}\right)
\left(\frac \beta \tau \right)
\]
where
\[
\left|
\frac{\tau v'_1(\tau+\theta\beta)}{v_1(\tau)}
\right|
\le 10.
\]

Let \(\omega(\alpha, t)\) be such that 
\[
\frac{v_1(\tau+\beta)}{v_1(\tau)}
=
1+\omega(\alpha, t) \frac \beta \tau.
\]

We reintroduce \(k\).  Thus everything gets a subscript of \(k\) to
remind us that the values depend on \(k\)  Recall that 
\[
v_1(\tau_k+\beta_k) = v(t_k(1+\alpha_k)) = v(R_{\mathbf x, k}^g)
\]
and
\[
v_1(\tau_k) = v(t_k) = v(R_{\mathbf x, k}).
\]
We wish to express 
\[
\frac{v_1(\tau_k+\beta_k)}{v_1(\tau_k)}
\]
as \(1+\omega_kE_k\) where \(E_k\) depends on \(k\) in a way we can
control and \(\omega_k\) has an estimate that does not depend on
\(k\).

We write
\[
\frac{v_1(\tau_k+\beta_k)}{v_1(\tau_k)}
=
1+\omega(\alpha_k,t_k)
\frac{\beta_k\log(t_k)}{\tau(x_k-x_{k-2})}
\frac{x_k-x_{k-2}}{\log(t_k)}
\]
and we define
\[
\omega_k = \omega(\alpha_k,t_k)
\frac{\beta_k\log(t_k)}{\tau(x_k-x_{k-2})}
\]
We know \(|\omega(\alpha_k, t_k)|<10\), \(\tau_k>\log(t_k)>1\), and
\(|\beta_k/(x_k-x_{k-2})|<16C\) from above.  So we have
\(|\omega_k|\le 200C\).

Recall also that \(t_k=R_{\mathbf x, k}>r\), so \(\log(t_k)>
\log(r)\).  We can set
\[
\begin{split}
\sigma=\log
\left(
\prod_{k=1}^n \frac{R_{\mathbf x,k}^g}{R_{\mathbf x,k}} 
\right)
&=
\sum_{k=1}^n
\log\left(
\frac{R_{\mathbf x,k}^g}{R_{\mathbf x,k}} 
\right) \\
&=
\sum_{k=1}^n
\log\left(
\frac{v_1(\tau_k+\beta_k}{v_1(\tau_k)} 
\right) \\
&=
\sum_{k=1}^n
\log\left(
1+\omega_k \frac{x_k-x_{k-2}}{\log(t_k)}
\right).
\end{split}
\]

Now we are using
\[
\frac{R_{\mathbf x,k}^g}{R_{\mathbf x,k}} 
=
1+\omega_k \frac{x_k-x_{k-2}}{\log(t_k)}.
\]
From \tref{SetUpForLog} and \tref{EstForLog}, we have
\[
\left|
\omega_k \frac{x_k-x_{k-2}}{\log(t_k)}
\right|
\le \frac1{80}.
\]
Thus by \ref{LogEst}, we have
\[
\left|
\log\left(
1+\omega_k \frac{x_k-x_{k-2}}{\log(t_k)}
\right)
\right|
\le
2
\left|
\omega_k \frac{x_k-x_{k-2}}{\log(t_k)}
\right|
.
\]

We now get
\[
\begin{split}
|\sigma|
&\le
2\sum_{k=1}^n |\omega_k|\frac{x_k-x_{k-2}}{\log(t_k)} \\
&\le
\frac{400C}{\log(r)}
\sum_{k=1}^n {x_k-x_{k-2}} \\
&\le
\frac{800C}{\log(r)} \\
&\le 
\frac{\epsilon}{10}
\end{split}
\]
where we know \(\epsilon/10\in (0,1)\).

Now 
\[
\exp(-\epsilon/10)-1 
\le 
\left(
\prod_{k=1}^n \frac{R_{\mathbf x,k}^g}{R_{\mathbf x,k}} 
\right)-1 
\le 
\exp(\epsilon/10)-1,
\]
so from \ref{LogEst} we get
\[
\left|
\left(
\prod_{k=1}^n \frac{R_{\mathbf x,k}^g}{R_{\mathbf x,k}} 
\right)
-1
\right|
\le
2\frac{\epsilon}{10}< \epsilon.
\]
This completes the proof.
\end{proof}

\section{The third theorem}

The last theorem combines all the lemmas that have come before.  We
need some preliminary discussion.

\subsection{Preliminary discussion}

Let \(\varphi\) be in \(\mathrm{Diff}_+^{1, \delta}\) and let \(J\)
and \(K\) be two closed intervals in \(\R\) that have non-empty
interior.  Then \((\varphi; J, K)\) will represent the
``affine distortion'' of \(\varphi\) that takes \(J\) to \(K\).
Specifically, if \(J\) has left endpoint \(x\) and length \(j\) and
\(K\) has left endpoint \(y\) and length \(k\), then
\[
(\varphi; J, K)(t) 
= 
y+k\left(\varphi\left(\frac{t-x}{j}\right)\right).
\]

We have
\[
\begin{split}
(\varphi;J,K)'_+(x) &= \frac k j \varphi'_+(0) \\
(\varphi;J,K)'_-(x+j) &= \frac k j \varphi'_-(1)
\end{split}
\]
where \(\varphi'_+\) and \(\varphi'_-\) are right and left hand
derivatives, respectively.

Given a pair \((x,y)\), a \(\varphi\), and an \(m>0\) which is a
desired slope at \(x\), then we can accomplish either of two tasks.
If we are also given the length \(j\) of \(J\), we can find a length
\(k\) for \(K\) so that \((\varphi; J,K)'_+(x)=m\), or if we are
given the length \(k\) of \(K\), we can find a length \(j\) for
\(J\) with the same result.  It is clear that each problem has a
unique solution for a given set of data.  We will be concerned with
the second problem (the length of the range interval is given and we
figure out the length of the domain interval).

Recall the convention that if \(\mathbf x= (x_1, \ldots, x_{n-1})\)
is in \(D_n\), then \(x_0=0\) and \(x_n=1\) are assumed.

Now let \(\mathbf y= (y_1, \ldots, y_{n-1})\) be from \(D_n\) and
let \(\mathbf \varphi = (\varphi_1, \ldots, \varphi_n)\) be from
\(E_\delta^n\).

\begin{lemma}\label{HolderStitch}
Given \(\mathbf y\) and \(\varphi\) as above, there is
a unique \(\mathbf x\in D_n\) so that there is a function
\(\overline{\varphi}\) in \(E_\delta\) so that for each \(i\) with
\(1\le i \le n\), the restriction of \(\overline{\varphi}\)  to
\([x_{i-1}, x_i]\) is 
\[
(\varphi_i; [x_{i-1}, x_i],\, [y_{i-1}, y_i]).
\]
\end{lemma}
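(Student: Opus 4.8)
The plan is to determine the partition $\mathbf x$ by the single requirement that the piecewise-defined $\overline{\varphi}$ be continuously differentiable; everything else in the definition of $E_\delta=\mathrm{Diff}_+^{1,\delta}(I)$ will then follow. Write $m_i=y_i-y_{i-1}$ for the (given, positive) lengths of the range intervals, and let $l_i=x_i-x_{i-1}$ denote the unknown lengths of the domain intervals, which must be positive and sum to $1$. On $[x_{i-1},x_i]$ the prescribed piece $(\varphi_i;[x_{i-1},x_i],[y_{i-1},y_i])$ automatically carries $x_{i-1}$ to $y_{i-1}$ and $x_i$ to $y_i$ (since each $\varphi_i$ fixes the endpoints of $I$), so the pieces already agree in value at every breakpoint and $\overline{\varphi}$ is a well-defined strictly increasing continuous bijection of $[0,1]$ for any choice of positive $l_i$ summing to $1$. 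The derivative of this piece is $(m_i/l_i)\,\varphi_i'((t-x_{i-1})/l_i)$, so continuity of $\overline{\varphi}'$ at the interior breakpoint $x_i$ is exactly the condition $(m_i/l_i)\varphi_i'(1)=(m_{i+1}/l_{i+1})\varphi_{i+1}'(0)$.

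First I would solve this recursion. Putting $\rho_i=m_i/l_i>0$, the matching conditions read $\rho_{i+1}=\rho_i\,\varphi_i'(1)/\varphi_{i+1}'(0)$, so all $\rho_i$ are determined by $\rho_1$ via $\rho_i=\rho_1\prod_{j=1}^{i-1}\varphi_j'(1)/\varphi_{j+1}'(0)$ (every factor positive since the $\varphi_j'$ are positive). Imposing $\sum_i l_i=\sum_i m_i/\rho_i=1$ forces $\rho_1=\sum_{i=1}^n m_i\prod_{j=1}^{i-1}\varphi_{j+1}'(0)/\varphi_j'(1)$, a positive number uniquely determined by the data. Hence the $\rho_i$, and then the lengths $l_i=m_i/\rho_i$, are uniquely determined, positive, and sum to $1$, which produces a unique $\mathbf x\in D_n$; conversely, any $\mathbf x$ whose associated $\overline{\varphi}$ lies in $E_\delta$ must satisfy these same equations (membership in $\mathrm{Diff}_+^1(I)$ already forces $\overline{\varphi}'$ continuous), so $\mathbf x$ is unique.

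Next I would check $\overline{\varphi}\in E_\delta$. By construction $\overline{\varphi}$ is a strictly increasing continuous bijection with $\overline{\varphi}(0)=0$, it is $C^1$ by the matching step, and its derivative is bounded away from $0$ because on each of the finitely many pieces $\overline{\varphi}'=\rho_i\varphi_i'(\cdot)$ with $\varphi_i'$ continuous and positive on the compact interval $[0,1]$. Thus $\overline{\varphi}\in\mathrm{Diff}_+^1(I)$, and the only remaining point is that $\overline{\varphi}'$ is H\"older with exponent $\delta$, which places it in $C_0^{1,\delta}(I)$ and hence in $E_\delta$. On a single piece this is immediate: if $C_i$ is a H\"older constant for $\varphi_i'$, then $\rho_i\varphi_i'((t-x_{i-1})/l_i)$ is H\"older with constant $\rho_i C_i/l_i^\delta$, so $H=\max_i \rho_i C_i/l_i^\delta$ works for points lying in the same piece.

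The main obstacle is the H\"older estimate for a pair $s<t$ lying in different pieces, and this is where I expect to spend the real effort. I would insert the breakpoints $x_p<\cdots<x_q$ lying in $(s,t)$ and telescope, bounding $|\overline{\varphi}'(t)-\overline{\varphi}'(s)|$ by $H\sum_k u_k^\delta$, where the $u_k$ are the lengths of the at most $n+1$ subintervals into which $[s,t]$ is cut and $\sum_k u_k=t-s$. Since $0<\delta<1$ and there are at most $n+1$ terms, concavity of $u\mapsto u^\delta$ (Jensen) gives $\sum_k u_k^\delta\le(n+1)^{1-\delta}(t-s)^\delta$, so $\overline{\varphi}'$ is globally H\"older with exponent $\delta$ and constant $H(n+1)^{1-\delta}$, completing the verification that $\overline{\varphi}\in E_\delta$. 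The subtlety to watch is that the number of pieces is a fixed finite $n$, so this gluing constant does not blow up, and the direction of the power-sum inequality is favorable precisely because $\delta<1$.
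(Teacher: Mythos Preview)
Your proof is correct and follows essentially the same approach as the paper: both determine \(\mathbf{x}\) by the requirement that the one-sided derivatives match at every interior breakpoint, which yields a linear recursion whose one-parameter family of solutions is pinned down by the normalization \(\sum l_i=1\). The paper parametrizes by the initial guess \(x_1\), runs the recursion to an \(x_n\ne 1\), and then rescales; you parametrize by \(\rho_1=m_1/l_1\) and solve the normalization directly---these are equivalent. Your treatment is in fact more complete than the paper's, which dismisses the verification that \(\overline{\varphi}\in E_\delta\) as ``clear,'' whereas you supply the H\"older estimate across breakpoints via telescoping and the concavity inequality \(\sum u_k^\delta\le k^{1-\delta}(\sum u_k)^\delta\). (A trivial remark: the number of subintervals is at most \(n\), not \(n+1\), but this does not affect your bound.)
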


\begin{proof} The important point is that the derivatives exist at
the \(x_i\).  We start with the wrong \(x_1\) and then fix it later.

Let \(x_1\) be arbitrary.  

Now there is a unique \(x_2\) so that
\((\varphi_2; [x_1, x_2],\,[y_1, y_2])\) has the same derivative at
\(x_1\) as \((\varphi_1; [x_0, x_1],\,[y_0, y_1])\).  We repeat and
continue in this way increasing the subscripts by one at each
repetition.  In the end, we get an \(x_n\) that is probably not
equal to 1.  However, it is clear that \(x_n\) is a linear function
of our initial choice of \(x_1\).  If we divide all the \(x_i\) with
\(i<n\) by \(x_n\), we get the desired \(\mathbf x\).  It is clear
that \(\overline{\varphi}\) will be in \(E_\delta\).
\end{proof}

\subsection{Definitions}

Let \(Q_n:D_n\times E_\delta^n\rightarrow E_\delta\) be the function
that takes \((\mathbf y, \varphi)\) to the function
\(\overline{\varphi}\) as given by Lemma \ref{HolderStitch}.

For \(F\in C_b(E_\delta) = C_b(\mathrm{Diff}_+^{1, \delta}(I))\) we
let 
\[
L_{\delta, n}(F)
=
\int\limits_{D_n} \int\limits_{E_\delta^n} F(Q_n(\mathbf x, \varphi)
u(\mathbf x)\, d\mathbf x\, \nu_n(d\varphi).
\]

\subsection{Statement}

\begin{thm}[S-3]\mylabel{ShavThree}  Let \(F\) be from
\(C_b(E_\delta)\), and let \(g\) be from \(\mathrm{Diff}_0^3(I)\).
Then
\[
\lim_{n\rightarrow \infty}
|L_{\delta, n}(F_g) - L_{\delta, n}(F)| = 0.
\]
\end{thm}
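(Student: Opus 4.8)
The plan is to reduce the statement to the two probabilistic estimates (S-L8) and (S-L9) by changing variables in the target partition and splitting the resulting difference into a ``density'' part and a ``shape'' part. Writing $F_g(Q_n(\mathbf x,\varphi)) = F(g^{-1}\circ Q_n(\mathbf x,\varphi))$ and substituting $\mathbf x = g(\mathbf w) = (g(w_1),\dots,g(w_{n-1}))$ (a diagonal change of variables on $D_n$ with Jacobian $\prod_{k=1}^{n-1} g'(w_k)$), one gets
\[
L_{\delta,n}(F_g) = \int_{D_n}\int_{E_\delta^n} F\big(g^{-1}\circ Q_n(g(\mathbf w),\varphi)\big)\,\rho_n(\mathbf w)\,u_n(\mathbf w)\,d\mathbf w\,\nu_n(d\varphi),
\]
where $\rho_n(\mathbf w) = u_n(g(\mathbf w))\big(\prod_{k=1}^{n-1}g'(w_k)\big)/u_n(\mathbf w)$. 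Subtracting $L_{\delta,n}(F)$ and inserting $\pm F(Q_n(\mathbf w,\varphi))\rho_n$, the difference splits as $\int\int (F(g^{-1}\circ Q_n(g(\mathbf w),\varphi)) - F(Q_n(\mathbf w,\varphi)))\rho_n\,u_n\,d\mathbf w\,\nu_n$ plus $\int\int F(Q_n(\mathbf w,\varphi))(\rho_n - 1)\,u_n\,d\mathbf w\,\nu_n$. I would bound the second (density) summand with (S-L9) and the first (shape) summand with (S-L8), treating $F$ as bounded and continuous throughout.

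For the density summand, recall $u_{1,n}(\mathbf x) = \prod_{k=1}^n \frac1{x_k - x_{k-1}} v(R_{\mathbf x,k})$, so writing $a_k = w_k - w_{k-1}$ and using the Taylor expansion $g(w_k)-g(w_{k-1}) = g'(w_{k-1})a_k(1 + A_k a_k)$ from \ref{LemNinePrep} together with $w_0 = 0$ and $g'(0)=1$, the length factors and the Jacobian combine telescopically into $\prod_{k=1}^n (1 + A_k a_k)^{-1}$, whose logarithm is $-\sum_k \log(1 + A_k a_k)$. Since $A_k \to \frac12 (g''/g')(w_{k-1})$ as the mesh shrinks and $\sum_k a_k = 1$, this Riemann-type sum tends to $-\frac12\int_0^1 (g''/g') = -\frac12(\log g'(1) - \log g'(0)) = 0$, which is exactly where the defining conditions $g'(0)=g'(1)=1$ of $\mathrm{Diff}_0^3(I)$ enter; the quadratic remainder is $O(\sum a_k^2) = O(\|\mathbf w\|)$. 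The remaining factor of $\rho_n$ is $\prod_{k=1}^n v(R^g_{\mathbf w,k})/v(R_{\mathbf w,k})$ with $R^g_{\mathbf w,k} = R_{g(\mathbf w),k}$, which is precisely the product that the proof of (S-L9) actually estimates. Hence on the region $\{\|\mathbf w\| < \delta_1,\ \min_k R_{\mathbf w,k} > r\}$ one has $|\rho_n - 1| \le \epsilon'$, and since (S-L5) and (S-L6) show the $u_n$-measure of the complementary region tends to $0$, the density summand is bounded by $\|F\|_\infty(\epsilon' + o(1))$.

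For the shape summand the task is to show that, for $\nu_n$-most $\varphi$, the two homeomorphisms $g^{-1}\circ Q_n(g(\mathbf w),\varphi)$ and $Q_n(\mathbf w,\varphi)$ are close in $E_\delta$. Both send their (distinct) domain breakpoints to the common target partition $\mathbf w$ and are assembled from the same pieces $\varphi_i$, the former from $g^{-1}\circ(\varphi_i;\cdot,[g(w_{i-1}),g(w_i)])$ and the latter from $(\varphi_i;\cdot,[w_{i-1},w_i])$. Passing through the homeomorphism $A$ of \ref{Forward}, closeness in $E_\delta$ amounts to Hölder-closeness of $\log(\cdot)'$; expanding $\log(g^{-1})'$ to second order along each piece and comparing the breakpoint-matching conditions that fix the two domain partitions, the accumulated discrepancy is, up to a remainder that is uniformly $o(1)$ once $\|\mathbf w\|$ is small, governed by the first-order sum $f_1$ (carrying the $g''/g'$ endpoint terms) and the second-order sum $f_2$ (carrying the Schwarzian $S_g$, the exact obstruction to $g^{-1}$ being affine on a piece). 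Lemma (S-L8) gives $P_n(|f_1 + f_2| > 4c_4 C_g\sqrt[3]\epsilon) \le 2\sqrt[3]\epsilon$, so off an exceptional set of $\nu_n$-measure $\le 2\sqrt[3]\epsilon$ the two functions are within $O(C_g\sqrt[3]\epsilon)$ in $E_\delta$; on the good set continuity of $F$ makes the integrand difference small, and on the exceptional set it is bounded by $2\|F\|_\infty$. Combined with the boundedness of $\rho_n$ from the previous step, this controls the shape summand, and choosing $\epsilon$ small and then $n$ large drives both summands to $0$.

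The main obstacle is this shape summand: one must show rigorously that the full deviation of $\log(g^{-1}\circ Q_n(g(\mathbf w),\varphi))'$ from $\log Q_n(\mathbf w,\varphi)'$, measured in the Hölder seminorm $n_\delta$ underlying $\|\cdot\|_{1,\delta}$ (not merely in sup norm), is controlled by $f_1 + f_2$ plus a deterministic $o(1)$ error, and in particular that the mismatch between the two domain partitions contributes only to these same $f_1,f_2$ terms. Handling the Hölder seminorm rather than a pointwise or $L^2$ quantity is the delicate point, since $F$ is continuous only for the $C^{1,\delta}$ topology; this is presumably why the precise forms of $X_k$ and $Y_k$ and the chord estimates of \ref{LogEst} are tailored as they are. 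Once this identification is established, the limit follows.
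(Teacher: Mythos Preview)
The paper contains no proof of this theorem: the notes end immediately after the statement of (S-3) with a page break, so there is nothing to compare your proposal against.

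Evaluated on its own, your outline correctly identifies the intended architecture. The change of variables $\mathbf x = g(\mathbf w)$, the splitting into a density summand and a shape summand, and the assignment of lemmas---(S-L5) and (S-L6) to localize to partitions with small mesh and large $\min_k R_{\mathbf w,k}$, the machinery behind (S-L9) to control $\prod_k v(R^g_{\mathbf w,k})/v(R_{\mathbf w,k})$ in the density factor, and (S-L8) for the shape summand---are all in the right places. Your telescoping computation of the length factors against the Jacobian, and the observation that $g'(0)=g'(1)=1$ is exactly what makes the Riemann-type sum vanish, are also correct. You are right, incidentally, that the statement of (S-L9) as written concerns $\prod R^g/R$ while its proof (and your application) actually needs and establishes the bound on $\prod v(R^g)/v(R)$.

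That said, what you have is explicitly a plan and not a proof, and the gap you flag is genuine. Lemma (S-L8) bounds the scalar $|f_1+f_2|$ in $\nu_n$-probability, but the quantity you must control is $\|g^{-1}\circ Q_n(g(\mathbf w),\varphi) - Q_n(\mathbf w,\varphi)\|_{1,\delta}$, a H\"older seminorm of a difference of derivatives over all pairs of points in $[0,1]$. You have asserted, but not shown, that this seminorm is dominated by $|f_1+f_2|$ plus a deterministic $o(1)$. This is not routine: the two functions are built on \emph{different} domain partitions (the one determined by $\mathbf w$ via Lemma~\ref{HolderStitch} versus the one determined by $g(\mathbf w)$), so even identifying the pointwise difference of $\log(\cdot)'$ with the summands $X_k,Y_k$ requires tracking how the domain breakpoints shift; and the H\"older seminorm does not localize to single intervals, so a sup over pairs $(t_1,t_2)$ straddling many breakpoints must be handled. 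Until this identification is carried out---with the $n_\delta$-seminorm, not merely the sup norm---the shape summand is uncontrolled and the argument is incomplete. Given that the notes already record an unresolved discrepancy in the proof of (S-L5), this step should be regarded as the substantive remaining obstacle, not a formality.
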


\newpage

\providecommand{\bysame}{\leavevmode\hbox to3em{\hrulefill}\thinspace}
\providecommand{\MR}{\relax\ifhmode\unskip\space\fi MR }
\providecommand{\MRhref}[2]{%
  \href{http://www.ams.org/mathscinet-getitem?mr=#1}{#2}
}
\providecommand{\href}[2]{#2}

\end{document}